\DeclareSymbolFont{AMSb}{U}{msb}{m}{n}
\newlength\circlesize
\newcommand*\circled[1]{\tikz[baseline=(char.base)]{
		\node[shape=circle,draw,inner sep=2pt] (char) {#1};}}
\newcommand{\NN}{\mathbb{N}}
\newcommand{\QQ}{\mathbb{Q}}
\newcommand{\longsquiggly}{\xymatrix{{}\ar@{~>}[r]&{}}}
\newcommand{\SSYT}{\mathrm{SSYT}}
\newcommand{\ShST}{\mathrm{ShST}}
\newcommand{\std}{\mathrm{std}}
\newcommand{\rectify}{\mathrm{rect}}
\newcommand{\height}{\mathrm{ht}}
\newcommand{\B}{\mathcal{B}}
\newcommand{\wt}{\mathrm{wt}}
	\newtheorem{lemma}{Lemma}
	\newtheorem{theorem}[lemma]{Theorem}
	\newtheorem{proposition}[lemma]{Proposition}
	\newtheorem{corollary}[lemma]{Corollary}
	\newtheorem{question}[lemma]{Question}
	\theoremstyle{definition}
	\newtheorem{example}[lemma]{Example}
	\newtheorem{definition}[lemma]{Definition}
	\newtheorem{remark}[lemma]{Remark}
	\numberwithin{equation}{section}
	\numberwithin{figure}{section}
	\numberwithin{table}{section}
	\numberwithin{lemma}{section}
	\newcommand{\defn}[1]{{\bf #1}}
	\newcommand{\east}[1]{\ensuremath{\xrightarrow{\ #1\ }}}
	\newcommand{\west}[1]{\ensuremath{\xleftarrow{\ #1\ }}}
	\newcommand{\north}[1]{\ensuremath{\big \uparrow \!\text{\raisebox{.1ex}{\scriptsize $#1$}}}}
	\newcommand{\south}[1]{\ensuremath{\big \downarrow \!\text{\raisebox{.1ex}{\scriptsize $#1$}}}}
	\newcommand{\stepnorth}[2]{\vector(0,1){.92}\put(-.23,.4){\scriptsize$#1$}\put(0,1){#2}}
	\newcommand{\stepnorthshift}[2]{\put(-.08,0){\vector(0,1){.95}\put(-.23,.4){\scriptsize$#1$}}\put(0,1){#2}}
	\newcommand{\stepsouth}[2]{\vector(0,-1){.92}\put(.05,-.6){\scriptsize$#1$}\put(0,-1){#2}}
	\newcommand{\stepsouthshift}[2]{\put(.08,0){\vector(0,-1){.92}\put(.05,-.6){\scriptsize$#1$}}\put(0,-1){#2}}
	\newcommand{\stepeast}[2]{\vector(1,0){.92}\put(.4,-.25){\scriptsize$#1$}\put(1,0){#2}}
	\newcommand{\stepeastshift}[2]{\put(0,-.08){\vector(1,0){.92}\put(.4,-.25){\scriptsize$#1$}}\put(1,0){#2}}
	\newcommand{\stepwest}[2]{\vector(-1,0){.92}\put(-.5,.05){\scriptsize$#1$}\put(-1,0){#2}}
	\newcommand{\stepwestshift}[2]{\put(0,.08){\vector(-1,0){.92}\put(-.5,.05){\scriptsize$#1$}}\put(-1,0){#2}}
	\title{A crystal-like structure on shifted tableaux} 
	\keywords{Schubert calculus, shifted tableaux, jeu de taquin, crystal base theory}
	\subjclass[2010]{Primary 05E99; Secondary 05E05}
	\author{Maria Gillespie}
	\address{
		Mathematics Department \\
		University of California, Davis \\
		Davis, CA}
	\email{mgillespie@math.ucdavis.edu}
	\thanks{The first author was supported by the NSF MSPRF grant PDRF 1604262.}
	\author{Jake Levinson}
	\address{
		LaCIM (Laboratoire de combinatoire et d'informatique math\'{e}mathique) \\
		University of Quebec at Montreal \\
		Montreal, QC}
	\email{jakelev@umich.edu}
	\thanks{The second author was supported by a Rackham Predoctoral Fellowship and by NSERC grant PDF-502633.}
	\author{Kevin Purbhoo}
	\address{
		Mathematics Department \\
		University of Waterloo
		Waterloo, ON}
	\email{kpurbhoo@uwaterloo.ca}
\begin{document}
		
		\begin{abstract}
			We introduce coplactic raising and lowering operators $E'_i$, $F'_i$, $E_i$, and $F_i$ on shifted skew semistandard tableaux. We show that the primed operators and unprimed operators each independently form type A Kashiwara crystals (but not Stembridge crystals) on the same underlying set and with the same weight functions. When taken together, the result is a new kind of `doubled crystal' structure that recovers the combinatorics of type B Schubert calculus: the highest-weight elements of our crystals are precisely the shifted Littlewood-Richardson tableaux, and their generating functions are the (skew) Schur Q functions.
		\end{abstract}
		
		\maketitle
		
		%%%%%%%%%%%%%%%%%%%%%%%%%%%%%%%%%%%%%%%%%%%%%%%%%%%%%%%%%%%%%%%%%%%%%%
		%%%%%%%%%%%%%%%%%%%%%%%%%%%%%%%%%%%%%%%%%%%%%%%%%%%%%%%%%%%%%%%%%%%%%%
		
		\section{Introduction}
		
		A \textbf{crystal base} is a set $\B$ along with certain raising and lowering operators $E_i,F_i:\B\to \B\cup \{\varnothing\}$, functions $\varphi_i,\varepsilon_i:\B\to\mathbb{Z}\cup \{-\infty\}$, and a weight map $\wt:\B\to \Lambda$ where $\Lambda$ is a weight lattice of some Lie type.   The subscripts $i$ range over an index set $I$ corresponding to the simple roots of the root system of $\Lambda$, and the operators $E_i$ and $F_i$ raise and lower the values of $\varphi_i,\varepsilon_i,\wt$ according to the corresponding root vectors.
		
		Crystal bases were first introduced by Kashiwara \cite{Kashiwara} in the context of the representation theory of the quantized universal enveloping algebra $U_q(\mathfrak{g})$ of a Lie algebra $\mathfrak{g}$ at $q=0$.  Since then, their connections to tableau combinatorics, symmetric function theory, and other aspects of representation theory have made crystal operators and crystal bases the subject of much recent study.  (See \cite{Schilling} for an excellent recent overview of crystal base theory.)
		
		\subsection{Ordinary tableaux crystals}\label{sec:ordinary}
		
		The type A crystal base theory can be described entirely in terms of semistandard Young tableaux.  Let $\B = \SSYT(\lambda/\mu,n)$ be the set of all semistandard Young tableaux of a given skew shape $\lambda/\mu$ and with entries from $\{1,\ldots,n\}$ for some fixed $n$.  If $T\in \B$, let $w$ be its row reading word, formed by reading the rows of $T$ from bottom to top.
		
		The functions $E_i$ and $F_i$ on $\B$ are defined directly in terms of the reading word $w$, as follows. First replace each $i$ in $w$ with a right parentheses and each $i+1$ in $w$ with a left parentheses. For instance, if $w=112212112$ and $i=1$, the sequence of brackets is $))(()())($.  After maximally pairing off the parentheses, $E_i(T)$ is formed by changing the first unpaired $i+1$ to $i$, and $F_i(T)$ is formed by changing the last unpaired $i$ to $i+1$ (they are defined to be $\varnothing$ if the operation is impossible.)
		\[\varnothing\ \xleftarrow{\ E\ }\ \ ))\underline{(()())})\ \ \xleftarrow{\ E\ }\ w = ))\underline{(()())}(\ \ \xrightarrow{\ F\ }\ \ )(\underline{(()())}(\ \ \xrightarrow{\ F\ }\ \ ((\underline{(()())}(\ \ \xrightarrow{\ F\ }\ \varnothing
		\]
		The functions $\varphi_i(T)$ and $\varepsilon_i(T)$ can be defined as the smallest $k$ for which $F_i^k(T)=\varnothing$ or $E_i^k(T)=\varnothing$ respectively, and the weight function $\wt(T)$ is simply the weight vector $(m_j)$ where $m_j$ is the number of $j$'s that occur in $T$.
		
		In the case where the tableaux are of straight shape, with entries in $\{1,2\}$, the action simplifies to the following natural chain structure:
		\begin{center}
			\includegraphics{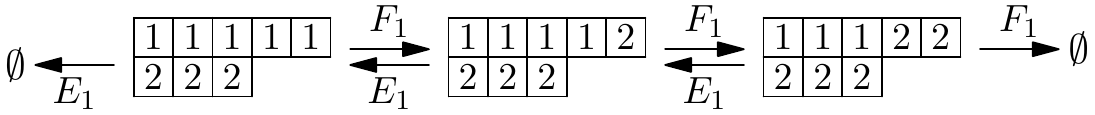}
		\end{center}
		One important property of the $E_i$ and $F_i$ operations is that they are \textbf{coplactic}, that is, they commute with all sequences of jeu de taquin slides. Thus, if we perform the same outwards jeu de taquin slide on the three tableaux above (in the second row, for example), the crystal operators must act in the same way, as shown:
		\begin{center}
			\includegraphics{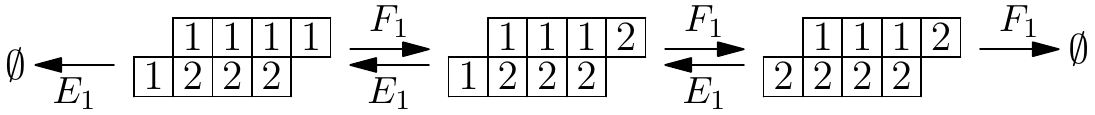}
		\end{center}
		In this sense the operations $E_i$ and $F_i$ are in fact the \textit{unique} coplactic operators that give the natural connected chain structure on rectified shapes containing only $i$, $i+1$.  Notably, Littlewood-Richardson skew tableaux (those that rectify to the highest weight tableau of a given shape) are precisely those for which $E_i(T) = \varnothing$ for all $i$.  Finally, it can be shown that the connected components $C$ of \emph{any} tableau crystal have weights $$\sum_{T\in C}x^{\wt(T)}=s_\nu,$$ where $\nu$ is the common rectification shape of every $T \in C$, and $s_\nu$ is the corresponding Schur function.  Thus, decomposing the tableau crystal $\B = \SSYT(\lambda/\mu,n)$ into its connected components recovers the Schur expansion of the skew Schur function $$s_{\lambda/\mu}=\sum_\nu c^{\lambda}_{\mu\nu}s_\nu.$$
		(See \cite{Schilling} for a more thorough introduction to the above notions.) 
		
		\subsection{Shifted tableaux; results of this paper}
		Despite the elegance of the crystal operators on ordinary semistandard tableaux, a similar structure on \textbf{shifted tableaux} has proven elusive. In \cite{GJKKK}, Grantcharov, Jung, Kang, Kashiwara, and Kim use Serrano's \emph{semistandard decomposition tableaux} \cite{Serrano} to understand the quantum queer superalgebras because, ``unfortunately, the set of shifted semistandard Young tableaux of fixed shape does not have a natural crystal structure.'' In this paper, we provide a potential resolution to this issue by defining a crystal-like structure on shifted tableaux. 
		
		%Our crystals are a new kind of `doubled' type A crystal (Theorem \ref{thm:main-doubled-typeA}); however, their highest weight elements are precisely the type B Littlewood-Richardson shifted tableaux, that is, those whose shifted rectification is the unique highest-weight shifted tableau. Correspondingly, their generating functions are the (skew) Schur Q functions.  
		%%%%%%%%%%%%%%%%
		% MARIA INSERTED THE FOLLOWING HERE:
		
		%  The basic idea is similar to that of ordinary tableau crystals.  
		
		For `straight' (non-skew) shifted tableaux on the alphabet $\{1',1,2',2\}$, there is a natural organization of the tableaux of a given shape (Figure \ref{fig:two-row}), similar to the $F_1$ chains shown above for ordinary tableaux. Namely, if the tableau has two rows, the first row may or may not contain a $2'$, giving two `chains' linked by the operators $F_1$ and $F_1'$. If instead $\lambda$ has only one row, the tableau cannot contain a $2'$, so there is only one chain's worth of tableaux. In this case $F_1=F_1'$.
		
		\begin{figure}
			\begin{center}
				\includegraphics[width=.9\linewidth]{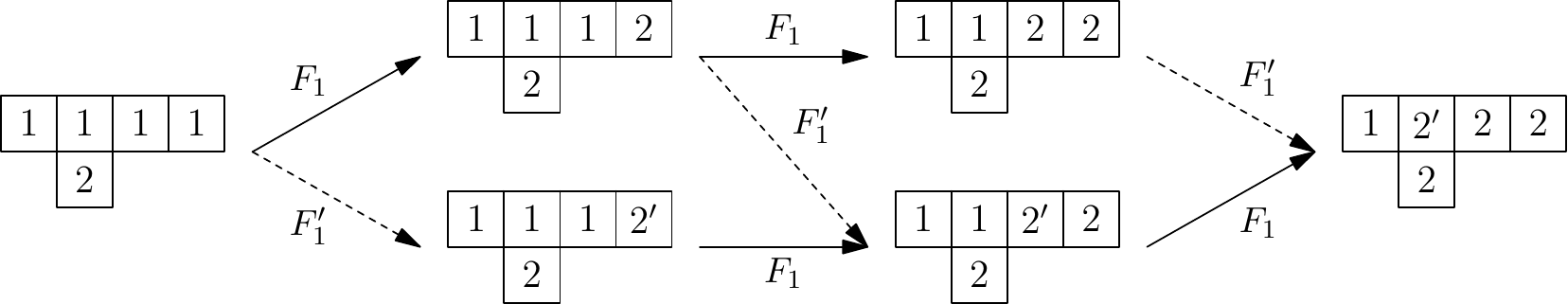} \vspace{1cm}
				
				\includegraphics[width=.85\linewidth]{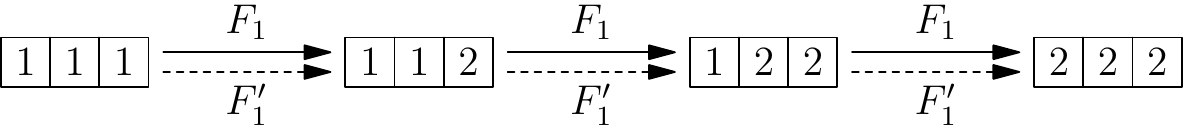}
			\end{center}
			\caption{\label{fig:two-row}The crystals of the form $\ShST(\lambda,2)$ are `two-row' and `one-row' diagrams, organizing the tableaux into `doubled' crystals. {\bf Above}: the tableaux of shapes $(4,1)$ and $(3)$. Wherever an arrow is missing, the corresponding operator is not defined. Reversing the arrows gives the partial inverses $E_1$ and $E_1'$.} 
		\end{figure}
		
		As is the case with ordinary jeu de taquin, the uniqueness of shifted rectification (and more precisely, Haiman's theory of shifted dual equivalence, see \cite{Haiman}) implies that these definitions uniquely extend to coplactic operators on all shifted skew tableaux, giving such tableaux a crystal-like structure. However, a direct description that does not rely on performing jeu de taquin -- analogous to the pairing-parentheses description of $E$ and $F$ on ordinary tableaux -- is far from obvious. The main purpose of this paper is, then, to exhibit a simple combinatorial description of the coplactic operators $E_i,E_i',F_i,F_i'$.
		
		%%%%%%%%%%%%%%%%%%
		% END MOVED SECTION
		%%%%%%%%%%%%%%%%%%
		
		Our main results are as follows. Let $\lambda/\mu$ be a shifted skew shape and let $\ShST(\lambda/\mu,n)$ be the set of shifted semistandard tableaux on the alphabet $\{1' < 1 < 2' < 2 < \cdots < n' < n\}.$
		
		\begin{theorem}\label{thm:main}
			There are combinatorially-defined lowering operators $F_i,F'_i$ ($i = 1, \ldots, n-1$) on $\ShST(\lambda/\mu,n)$, with partial inverse (raising) operators $E_i, E'_i$, depending only on the reading word of the tableau, with the following properties:
			\begin{itemize}
				\item[(i)] They are coplactic for shifted jeu de taquin.
				%\item[(ii)] The lowering operators $F_i,F'_i$ reduce the weight of a tableau by the simple root $\alpha_i$, that is, they change an $i$ or $i'$ to an $i+1$ or $i+1'$ (and possibly prime or unprime other entries).  The raising operators $E_i,E_i'$ do the opposite.
				\item[(ii)] The highest-weight elements (those for which $E_i(T) = E_i'(T) = \varnothing$ for all $i$) are precisely the type B Littlewood-Richardson tableaux.
				\item[(iii)] Let $\B$ be the induced graph on $\ShST(\lambda/\mu,n)$. Then each connected component of $\B$ has a \emph{unique} highest-weight element.
			\end{itemize}
		\end{theorem}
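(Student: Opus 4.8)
The plan is to make coplacticity the load-bearing step and then deduce (ii) and (iii) by transporting the problem to straight (rectified) shapes, where the structure is the explicit one recorded in Figure~\ref{fig:two-row}. Since each of the operators $E_i,F_i,E_i',F_i'$ alters only entries with values $i$ and $i+1$ (primed or unprimed), I would first restrict to the subtableau on the four-letter alphabet $\{i',i,(i+1)',(i+1)\}$, reducing to the case $\{1',1,2',2\}$. Because shifted jeu de taquin is a composition of elementary slides, and a slide disjoint from these entries commutes with the operators trivially, coplacticity reduces to commuting with a single elementary slide adjacent to the affected cells. I would verify this by tracking how the reading word, together with its matched and unmatched brackets, is altered by such a slide, and checking that the bracketing used to locate the letter changed by $E_i,F_i,E_i',F_i'$ is preserved. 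This local bracket-preservation analysis is the technical heart of the argument and the step I expect to be the main obstacle: the primed letters and the diagonal geometry of shifted shapes make the case analysis substantially more delicate than in type~A, and several configurations of a slide past a paired or an unpaired letter must be checked by hand. Once (i) holds, Haiman's uniqueness of the coplactic extension \cite{Haiman} identifies the reading-word operators with \emph{the} coplactic operators determined by the straight-shape structure.

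For part~(ii), coplacticity does the heavy lifting. The condition ``$E_i(T)=E_i'(T)=\varnothing$ for all $i$'' commutes with shifted jeu de taquin, so whether $T$ is highest weight depends only on its rectification $\rectify(T)$, a straight-shape tableau of some shape $\nu$. On a straight shape every $T$ satisfies $\rectify(T)=T$, so a highest-weight straight tableau is forced to equal the canonical tableau $Y_\nu$ whose $i$th row is filled with $i$'s; this I would read off directly from the explicit chain structure of Figure~\ref{fig:two-row} together with an induction on the number of letter values. Unwinding the definition, $T$ rectifies to some $Y_\nu$ exactly when its reading word satisfies the shifted lattice (ballot) condition, which is precisely the defining property of a type~B Littlewood--Richardson tableau; this identifies the highest-weight elements of $\ShST(\lambda/\mu,n)$ with the type~B LR tableaux.

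Part~(iii) is where the failure of the Stembridge axioms blocks the usual ``each component is a single highest-weight crystal'' argument, so I would instead reason through rectification. By (i), $\rectify$ intertwines all four operators, so the rectification shape $\nu$ is constant on each connected component $C$ of $\B$, and $\rectify$ maps $C$ into the straight-shape crystal $\ShST(\nu,n)$. By (ii) the only highest-weight element of $\ShST(\nu,n)$ is $Y_\nu$; since every finite component contains at least one highest-weight element (iterate the raising operators, which strictly increase $\wt$ in a fixed direction and hence cannot cycle), $\ShST(\nu,n)$ is connected. Any highest-weight element of $C$ must rectify to $Y_\nu$, so all highest-weight elements of $C$ lie in the single shifted dual equivalence class that is the rectification fibre over $Y_\nu$. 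Uniqueness therefore reduces to the claim that $\rectify$ is \emph{injective} on each component $C$ (equivalently, that it restricts to a crystal isomorphism $C \xrightarrow{\sim} \ShST(\nu,n)$): two distinct elements of $C$ with a common rectification would be simultaneously crystal-connected and dual equivalent, and I would exclude this using Haiman's description of shifted dual equivalence \cite{Haiman}, showing that an elementary dual equivalence move is never reproducible by a sequence of crystal operators within a component. Establishing this injectivity is the main difficulty of part~(iii); granting it, the unique highest-weight element of $C$ is the unique preimage of $Y_\nu$.
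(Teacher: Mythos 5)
There is a genuine gap, and it sits at the heart of the theorem. Theorem \ref{thm:main} is an \emph{existence} statement: the content is the construction of the operators, and your proposal never constructs them. Your plan for (i) verifies that ``the bracketing used to locate the letter changed by $E_i,F_i,E_i',F_i'$'' is preserved under an elementary slide --- but no such bracketing rule is ever defined, and none of the type-A form exists: the absence of a natural pairing rule on shifted tableaux is precisely the known obstruction (cf.\ the quotation from \cite{GJKKK} in the introduction). What the paper actually does is quite different. The primed operators are defined as the unique weight-shift preserving standardization (Definition \ref{def:primed-operators}), so their coplacticity is immediate since slide paths depend only on $\std(T)$. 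The unprimed operators are defined via the first-quadrant lattice walk and \emph{critical substrings} (Definition \ref{def:F}), whose admissibility depends on proximity of the walk to the axes; the final critical substring can move non-locally under a single slide, and the paper deliberately avoids any slide-by-slide analysis. Instead, coplacticity (Theorem \ref{thm:coplactic}) is proved indirectly: first semistandardness-preservation (Theorem \ref{thm:FE-semistandard}), then the key fact that $F(w)$ has the same mixed-insertion recording tableau as $w$ (via the circling criterion of Lemma \ref{lem:circling}), hence $F(T)$ is dual equivalent to $T$; this is combined with Haiman's theorem that shape, dual equivalence class, and rectification determine a shifted tableau (Proposition \ref{prop:Haiman2}), plus a check that the rectifications of $\mathrm{slide}(F(T))$ and $F(S)$ agree, which hinges on whether $F'$ is defined --- exactly the two-row ambiguity visible in Figure \ref{fig:two-row}. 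Your appeal to ``Haiman's uniqueness of the coplactic extension'' cannot substitute for this: it would only identify your operators with the coplactic ones \emph{after} you have exhibited coplactic operators agreeing with Figure \ref{fig:two-row} on straight shapes, and an abstract coplactic extension would moreover not obviously depend only on the reading word, as the theorem requires.

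Granting (i), your treatment of (ii) and (iii) is close to the paper's architecture (Proposition \ref{prop:tableau-crystals} and Corollary \ref{cor:unique-highest-weight}), though the paper obtains the highest-weight characterization more directly from the walk endpoint (Proposition \ref{prop:ballot-iff-killed}), without routing through rectification. Your connectivity argument for $\ShST(\nu,n)$ --- every component has a maximal element, and by (ii) the only one is $Y_\nu$ --- is sound. But in (iii) your mechanism for injectivity of $\rectify$ on a component is off target: there is no need to show that ``an elementary dual equivalence move is never reproducible by a sequence of crystal operators,'' and that statement is not the right tool. The correct deduction, which your write-up leaves unproven, is that coplacticity \emph{formally} implies each operator preserves the dual equivalence class: $F$ preserves shape and commutes with every slide sequence $s$, so $s(F(T)) = F(s(T))$ has the same shape as $s(T)$ for all $s$, whence $T$ and $F(T)$ are dual equivalent (this is how the paper argues that connected components are dual equivalence classes). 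All elements of a component are then dual equivalent of a common skew shape, and two of them with equal rectification coincide by Proposition \ref{prop:Haiman2}, giving injectivity at once. With that repair, (ii) and (iii) go through --- but only conditionally on the operators of part (i), which your proposal assumes rather than supplies.
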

		Consequently, decomposing $\ShST(\lambda/\mu,n)$ into its connected components yields an isomorphism of crystals
		\[\ShST(\lambda/\mu,n)\ \cong\ \bigsqcup_\nu \ShST(\nu,n)^{f_{\nu,\mu}^\lambda},\]
		where $f_{\nu,\mu}^\lambda$ is the coefficient of the Schur Q function $Q_\nu$ in the expansion of the skew Schur Q function $Q_{\lambda/\mu}$. Taking generating functions (weighting a vertex of weight $\gamma$ by $2^{\#\{i:\gamma_i\neq 0\}}$) recovers the skew type B Littlewood-Richardson rule,
		\[Q_{\lambda/\mu}(x_1, \ldots, x_n) = \sum_\nu f_{\nu,\mu}^\lambda Q_\nu(x_1, \ldots, x_n).\]
		The crystal structure also gives an automatic proof of symmetry for $Q_\lambda$ (Corollary \ref{cor:symmetry-schurQ}).
		
		Despite these connections to the type B Littlewood-Richardson rule, our crystal is not a type B crystal. Instead, it has the following `doubled' type A structure, based on considering the primed and unprimed operators considered separately.
		\begin{theorem}\label{thm:main-doubled-typeA}
			The operators $F_i,E_i,F'_i,E'_i$ commute whenever compositions are defined. Moreover, $F_i,E_i$ and $F'_i,E'_i$  independently satisfy the type A Kashiwara crystal axioms, using the same auxiliary functions $\varepsilon_i,\varphi_i,\wt$  on $\ShST(\lambda/\mu,n)$.
		\end{theorem}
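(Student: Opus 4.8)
The plan is to prove Theorem~\ref{thm:main-doubled-typeA} by reducing everything to the type A crystal structure on ordinary semistandard tableaux via an explicit \emph{standardization} or \emph{splitting} map, and then verifying the Kashiwara axioms as a consequence. Recall that a set with operators $E_i, F_i$, functions $\varepsilon_i, \varphi_i, \wt$, and index set $I$ forms a type A Kashiwara crystal when: (a) $F_i(T) = T'$ if and only if $E_i(T') = T$; (b) applying $F_i$ lowers $\wt$ by the simple root $\alpha_i$ (i.e.\ decrements the $i$th coordinate and increments the $(i{+}1)$th), while $\varphi_i$ drops by one and $\varepsilon_i$ rises by one; and (c) the string-length condition $\varphi_i(T) - \varepsilon_i(T) = \langle \wt(T), \alpha_i^\vee\rangle$ holds. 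Since the statement treats the primed operators $F'_i, E'_i$ and unprimed operators $F_i, E_i$ on an identical footing (same $\varepsilon_i, \varphi_i, \wt$), I would prove the axioms once in a form general enough to cover both families, keyed off their common combinatorial (reading-word) definition.

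First I would establish the commutation relations, since these are the cleanest to attack directly from the combinatorial definitions and they are asserted first in the theorem. The operators $E_i, F_i$ and $E'_i, F'_i$ act on the reading word by the parenthesization procedure described in Section~\ref{sec:ordinary}, adapted to the primed alphabet. I expect the primed and unprimed operators for the \emph{same} index $i$ to interact through the local behavior at the letters $i', i, (i{+}1)', (i{+}1)$, while operators for $|i - j| \geq 2$ act on disjoint portions of the alphabet and hence commute for trivial support reasons. The substantive cases are therefore $F_i F_j = F_j F_i$ (and mixed primed/unprimed) for $j = i \pm 1$; these I would reduce to a finite local check on two- or three-letter configurations by invoking coplacticity (Theorem~\ref{thm:main}(i)), which lets me rectify to a small normal form and verify the relation on a bounded number of shapes.

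Next I would verify that each family independently satisfies the Kashiwara axioms. The mutual-inverse condition (a) I would read off directly from the parenthesis-pairing definition: changing an unpaired $(i{+}1)$-type letter to an $i$-type letter and the reverse operation are manifestly inverse on the level of reading words, and I must check this descends correctly to tableaux (that the result is again a valid shifted semistandard tableau), which again follows from coplacticity by rectifying to straight shape. The weight axiom (b) is immediate from the definition of $\wt$ as the content vector, since each application of $F_i$ replaces one letter of value $i$ (primed or unprimed) by one of value $i{+}1$. For (c), I would define $\varphi_i, \varepsilon_i$ as the maximal number of times $F_i$ respectively $E_i$ can be applied and show these equal the number of unpaired right, respectively left, parentheses after the pairing procedure; the string-length identity $\varphi_i - \varepsilon_i = \langle \wt, \alpha_i^\vee\rangle$ then follows because the pairing cancels exactly $\min$ of the two counts, leaving the signed difference equal to the excess of $i$-letters over $(i{+}1)$-letters.

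The main obstacle, I expect, is confirming that the \emph{same} auxiliary functions $\varepsilon_i, \varphi_i, \wt$ serve \emph{both} families simultaneously, since a priori the primed parenthesization and the unprimed parenthesization could produce different counts of unpaired brackets. Resolving this requires showing that, although $F_i$ and $F'_i$ are genuinely different operators (they move along different chains, as Figure~\ref{fig:two-row} illustrates), the \emph{length} of the $i$-string through any tableau is the same whether measured by primed or unprimed moves. I would prove this by exhibiting that both pairing procedures, restricted to the relevant letters, cancel the same number of pairs --- intuitively because priming does not change which letters are of ``value $i$'' versus ``value $i{+}1$'' for the purpose of the simple root $\alpha_i$. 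Concretely, I would set up a single bracketing sequence on the combined alphabet and show the primed and unprimed operators correspond to two different \emph{orderings} of cancellation that nonetheless leave the same multiset of unpaired symbols; the equality of $\varphi_i$ and $\varepsilon_i$ across the two families is then a corollary. Once this coherence is in place, the Kashiwara axioms for each family follow from the classical type A verification, completing the proof.
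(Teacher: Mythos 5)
Your proposal is built on two assumptions that fail for this paper's operators, and the second one is fatal. First, the shifted operators are not given by the parenthesis-pairing rule of Section \ref{sec:ordinary}: the primed operators are defined as the unique standardization-preserving weight shifts (Definition \ref{def:primed-operators}), and the unprimed ones by transforming the final critical substring of the lattice walk (Definition \ref{def:F}); no bracketing description exists here, which is the very difficulty the paper's walk machinery is designed to overcome. Second, you define $\varphi_i$ and $\varepsilon_i$ as string lengths --- the maximal number of applications of $F_i$, resp.\ $E_i$ --- and you then propose a ``coherence'' lemma asserting that the primed and unprimed string lengths agree. Both of these are false, and the paper's remark following Theorem \ref{thm:main-doubled-typeA} flags exactly this: the crystals are \emph{not} seminormal. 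Concretely, take the highest-weight element of $\ShST((4,1),2)$, the tableau with first row $1111$ and second row $2$, reading word $21111$. Then $\langle \wt(T),\alpha_1\rangle = 3$, but by Propositions \ref{prop:rect-primed-action} and \ref{prop:rect-unprimed-action} the $F_1$-string through $T$ has length $2$ while the $F'_1$-string has length $1$ (on any two-row component the $F'_i$-string has length at most $1$, while the $F_i$-string is generally longer --- see Figure \ref{fig:two-row}). So the two string lengths differ, and neither choice satisfies axiom K2, $\varphi_1(T)-\varepsilon_1(T) = 3$. The paper's actual resolution (Definition \ref{def:crystal}) is to set $(\varphi_i,\varepsilon_i)$ equal to the coordinates of the endpoint of the $(i,i{+}1)$ lattice walk, i.e.\ the \emph{total} distance through the doubled two-row chain; K1 then holds because both $F_i$ and $F'_i$ shift the endpoint by $(-1,+1)$ (Corollary \ref{cor:F-on-walk} and Proposition \ref{prop:primed-lattice-endpoint}), and K2 holds because each $i$- or $i'$-step of the walk either increments $x$ or decrements $y$, while each $(i{+}1)$- or $(i{+}1)'$-step does the opposite. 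No bracket-counting argument can substitute for this, since it is precisely the walk's axis rules that encode the two-row structure.

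You have also misread the commutation claim: you set out to prove $F_iF_j = F_jF_i$ for $j = i\pm1$, which is false in any of these crystals (adjacent-index lowering operators fail to commute already in ordinary type A --- for the word $211$ one gets $F_2F_1(211)=213$ but $F_1F_2(211)=312$, both defined and distinct --- and likewise in the components $\ShST(\lambda,3)$ of Figure \ref{fig:crystal}). The theorem's commutation assertion concerns the four \emph{same-index} operators $F_i,E_i,F'_i,E'_i$, which is Proposition \ref{prop:commutes}, together with the easy different-index primed cases (Proposition \ref{prop:primed-operators-commute}). For that genuine statement, one piece of your strategy does match the paper: by coplacticity it suffices to check words of straight shifted tableaux, where the explicit actions of Propositions \ref{prop:rect-primed-action} and \ref{prop:rect-unprimed-action} make the verification immediate. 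Finally, be aware that your ``manifestly inverse'' claim for K1 conceals real content: in the paper, showing $E$ and $F$ are partial inverses (Proposition \ref{prop:inverses}) requires a substantial case analysis verifying that the transformation creates no later critical substrings.
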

		
		\begin{remark}
			The two Kashiwara crystals generated by the $F_i,E_i$ or the $F_i',E_i'$ operators do not satisfy the Stembridge axioms for type A, and therefore are not crystals in the sense of the representation theory of $U_q(\mathfrak{sl}_n)$. In particular, they are not \textbf{seminormal}: the auxiliary functions $\varepsilon_i$ and $\varphi_i$ do not measure the distances to the ends of an $F_i$-chain, but rather the \textit{total} distance to the end of a (possibly two-row) $F_i$/$F_i'$ chain, as in Figure \ref{fig:two-row}.
		\end{remark}
		
		Finally, we prove a uniqueness statement, that our graphs are combinatorially uniquely-determined by the relations satisfied by the $i',i,j',j$ operators, particularly $j=1+1$. To be precise, let $G$ be a finite $\mathbb{Z}^n$-weighted graph with edges labeled $i',i$ for $i=1, \ldots, n-1$. For any $i,j$, let $G^{i,j}$ be the subgraph obtained by deleting all but the $i',i,j',j$ edges.
		
		\begin{theorem}
			\label{thm:uniqueness-main}
			Suppose $G$ is connected and satisfies the following:
			\begin{itemize}
				\item For each $i$, and each connected component $C \subset G^{i,i+1}$, there is a strict partition $\lambda$ such that $C \cong \ShST(\lambda,3)$ (using the $i,i+1,i+2$ parts of the weight function).
				\item For $|i-j|>1$, for each connected component $C \subset G^{i,j}$, there are strict partitions $\lambda, \mu$ such that $C \cong \ShST(\lambda,2) \times \ShST(\mu,2)$ (using the $i,j$ parts of the weight function).
			\end{itemize}
			Then $G$ has a unique highest-weight element $g^*$ and $G \cong \ShST(\lambda,n)$, where $\lambda = \mathrm{wt}(g^*)$.
		\end{theorem}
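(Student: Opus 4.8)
The plan is to produce an explicit isomorphism $\phi\colon G \to \ShST(\lambda,n)$ by anchoring it at a highest-weight vertex and propagating along the operators, and then to deduce uniqueness of the highest-weight element at the very end. First I would establish \emph{existence} of a highest-weight element. Along every edge the raising operators $E_i,E_i'$ increase $\wt$ by $e_i-e_{i+1}$ --- a purely local statement read off from the rank-2 models, since within each window $G^{i,i+1}\cong\ShST(\kappa,3)$ the operators behave as in the type $A$ Kashiwara structure of Theorem~\ref{thm:main-doubled-typeA}. Hence any application of a raising operator strictly increases a fixed dominance-monotone linear functional on the weight, so $G$ has no raising cycles and, being finite, possesses a maximal vertex $g^*$, which is highest weight. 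Setting $\lambda := \wt(g^*)$, I would verify $\lambda$ is a strict partition: the vanishing of all raising operators at $g^*$ makes it the highest-weight element of its window component $\cong\ShST(\kappa,3)$, whose weight is the strict shape $\kappa$; letting $i$ vary and assembling the overlapping windows forces $\lambda_1>\lambda_2>\cdots$. Thus $\ShST(\lambda,n)$ is defined, and being of straight shape it has a \emph{unique} highest-weight element $h^*$ by Theorem~\ref{thm:main}(iii).

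Next I would define $\phi$ by $\phi(g^*)=h^*$ and, for any $g$, choosing an operator word $w$ with $g=w(g^*)$ (possible since $G$ is connected), setting $\phi(g)=w(h^*)$, where $w$ is reinterpreted as the corresponding composite of $E_i,E_i',F_i,F_i'$ inside $\ShST(\lambda,n)$. The entire content of the theorem is that $\phi$ is \emph{well defined}: every operator word fixing $g^*$ must act trivially on $h^*$, or more generally $G$ and $\ShST(\lambda,n)$ must satisfy the same operator relations.

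This well-definedness is the main obstacle, and I would attack it by reducing all relations to rank-2 relations and then invoking the hypotheses. The key claim is that the relations among the operators on a connected graph of this type are generated by those visible in the rank-2 subgraphs $G^{i,j}$; I would carry this local-to-global step out by induction on $n$, peeling off the top color $n-1$. By induction each component of the subgraph on colors $\{1,\dots,n-2\}$ is some $\ShST(\kappa,n-1)$, and the hypotheses on $G^{n-2,n-1}$ (the three-letter model $\ShST(\cdot,3)$) and on $G^{i,n-1}$ for $i\le n-3$ (the commuting product $\ShST(\cdot,2)\times\ShST(\cdot,2)$) pin down exactly how the color-$(n-1)$ operators glue these pieces together. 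The delicate point is that the local isomorphisms to the standard models are a priori non-canonical, so I must show they can be chosen compatibly on overlaps. Here I would use that each standard model has no nontrivial automorphism fixing its highest-weight vertex --- such an automorphism permutes edges by color, hence intertwines all operators, and so is forced to be the identity by propagation from the source --- together with the anchoring of $\phi$ at $g^*$, which removes all remaining freedom. The commutation of non-adjacent colors supplied by the product hypothesis is precisely what makes the gluing across different windows consistent.

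Finally, granting well-definedness, $\phi$ is a weight- and color-preserving graph homomorphism whose image contains $h^*$ and is closed under all operators, hence equals the connected graph $\ShST(\lambda,n)$; constructing the inverse symmetrically from $h^*$ gives injectivity, so $\phi$ is an isomorphism. Uniqueness of the highest-weight element of $G$ then follows at once, being transported from the unique highest-weight element of $\ShST(\lambda,n)$.
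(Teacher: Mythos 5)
Your overall skeleton --- anchor at a highest-weight vertex, propagate along operator words, and exploit that the standard models admit no nontrivial automorphism fixing their highest-weight vertex --- is the same in spirit as the paper's, but there is a genuine gap at exactly the point you yourself flag as ``the main obstacle.'' The well-definedness of $\phi$ rests on your key claim that all operator relations in $G$ are generated by relations visible in the rank-$2$ subgraphs $G^{i,j}$, and this claim is never proved: your induction on $n$ reduces it to the assertion that the hypotheses on $G^{n-2,n-1}$ and on $G^{i,n-1}$ ``pin down exactly how the color-$(n-1)$ operators glue'' the $\ShST(\kappa,n-1)$-components together, which is not easier than the original statement --- it \emph{is} the original statement, one rank down plus one new color. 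In particular, nothing in the sketch rules out rewiring the color-$(n-1)$ edges between two isomorphic components of the color-$\{1,\dots,n-2\}$ subgraph having the same highest weight: each vertex's $(n-1)$-edge is constrained only by its own rank-$\leq 3$ window, and promoting these local constraints to global rigidity is precisely the hard step. The automorphism-rigidity observation you invoke (correct, and also used in the paper) only canonicalizes each window \emph{once its local highest-weight vertex has already been matched}; it does not show that the identifications of overlapping windows cohere, which requires an induction in a carefully chosen order that your proposal does not supply.

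The paper supplies exactly this missing order, and in doing so reverses your logical sequence: it proves \emph{first} (Theorem \ref{thm:unique-max}) that $G$ has a unique maximal element, via a poset argument --- whenever two elements covering a common element admit a common upper bound (checked case-by-case from the chain, commutation, and $\ShST(\lambda,3)$-window structure), a finite connected poset has a unique maximum. This uniqueness cannot be deferred to the end as in your plan, because it is what makes $G$ graded by distance from $g^*$; the isomorphism is then built ball-by-ball on $G_{\leq r}$, the set of vertices reachable in at most $r$ steps from $g^*$, preserving all length functions $\hat\varphi_i,\varphi'_i,\varphi_i$ and their $\varepsilon$ counterparts. At each extension step only one conflict can arise --- two edges $g_1 \xrightarrow{\ a\ } g_3 \xleftarrow{\ b\ } g_2$ converging on a single new vertex --- and it is resolved inside one rank-$2$ window (equal or distant colors, via the grid and commuting squares) or one $\ShST(\kappa,3)$ window (adjacent colors), whose local maximum $\tilde g$ lies in $G_{\leq r-2}$ by the grading and is therefore already matched, so the canonical isomorphism of that window forces the edges to connect. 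Your rank-on-$n$ induction has no analogue of this single-vertex reconciliation step, so as written the argument does not close. (Your existence argument for $g^*$ via a dominance-monotone functional, the strictness of $\wt(g^*)$ from the windows, and the final transport of uniqueness through a completed isomorphism are all fine.)
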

		Thus, any graph that is locally (shifted-)crystal-like is globally a shifted tableau crystal. In particular, the generating function of such a graph (where we weight a vertex of weight $\gamma$ by $2^{\#\{i:\gamma_i\neq 0\}}$) is Schur-Q-positive. The analogous statement for ordinary tableaux crystals is due to Stembridge \cite{Stembridge-typeA}.
		
		\subsubsection{Lattice walks and critical strings}
		
		The key construction underlying the definitions of $E_i, F_i$ is to associate, to each word $w$ in the alphabet $\{i',i,i+1',i+1\}$, a first-quadrant {\bf lattice walk}, beginning at the origin. See Figure \ref{fig:lattice-walk-intro} for an example. This walk determines the rectification shape of $w$:
		\begin{theorem}
			The lattice walk of a word $w$ in the alphabet $\{i',i,i+1',i+1\}$ determines the shape and weight of $\rectify(w)$.
		\end{theorem}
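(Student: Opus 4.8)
The plan is to show that the lattice walk is a complete invariant for $\rectify(w)$ by exhibiting it as a quantity that is unchanged under shifted jeu de taquin and that reads off the answer correctly on a rectified (straight-shape) word. First I would dispose of the weight, which is essentially free: shifted jeu de taquin preserves content, so $\wt(\rectify(w))$ is just the pair recording how many letters of $w$ lie in $\{i',i\}$ and in $\{(i{+}1)',i{+}1\}$. Since each letter of $w$ contributes exactly one step to the walk, of a type determined by that letter, these two counts are manifestly recovered from the walk. Thus the real content of the theorem is that the walk determines the \emph{shape} $\nu$ of $\rectify(w)$.

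Next I would reduce the shape to a single statistic. Because $w$ uses only the two values $i$ and $i+1$, its rectification is a straight-shape shifted tableau whose entries lie among only two values, and such a tableau has at most two rows: the entries along the main diagonal are unprimed and strictly increasing, so their number is bounded by the number of available values (equivalently, $Q_\nu(x_1,x_2)=0$ whenever $\ell(\nu)>2$). Hence $\nu=(\nu_1,\nu_2)$ with $\nu_1>\nu_2\ge 0$, and since the content already fixes $\nu_1+\nu_2=|w|$, it suffices to prove that the walk determines one further number, say the second row length $\nu_2$.

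The core of the argument is invariance. I would show that the endpoint of the lattice walk (equivalently, the statistic $\nu_2$ it is meant to encode) is constant on each shifted jeu de taquin equivalence class, so that the walk of $w$ agrees with the walk of $\rectify(w)$. Well-definedness of shifted rectification — due to Worley and Sagan, and refined by Haiman's theory of shifted dual equivalence \cite{Haiman} — guarantees that the actual shape of $\rectify(w)$ is itself such an invariant. Granting invariance of the walk, it then remains only to evaluate the walk directly on a straight-shape tableau $T$ of shape $(\nu_1,\nu_2)$ and check that its endpoint records $(\nu_1,\nu_2)$, where the two rows of $T$ contribute the two coordinates of the total displacement; this is a routine computation on the explicit reading word of $T$. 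Rather than verifying invariance against all jeu de taquin slides, I would reduce it to the elementary shifted plactic (Knuth/Worley--Sagan) relations, which generate the relation ``same rectification'' and form a short explicit list, and check that each leaves the walk endpoint unchanged.

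The hard part will be exactly this invariance under the elementary relations, and within it the behavior of the walk at the coordinate axes. In the first quadrant the walk must reflect off the boundary, and these reflections are precisely where the genuinely shifted phenomena appear — the interchangeability of primed and unprimed entries near the diagonal, and the extra shifted relations with no type-A analogue. I would handle the interior relations first, where the walk degenerates to the ordinary two-letter (Dyck-path) bracketing and invariance is classical, and then treat the boundary relations case by case, matching each against the reflection rule; as a fallback, standardizing $w$ and invoking shifted dual equivalence would let me reduce the boundary analysis to a uniform statement about standard shifted tableaux.
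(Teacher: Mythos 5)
Your proposal is correct and follows essentially the same route as the paper: the paper likewise proves (its Proposition \ref{prop:walks}) that the walk's endpoint is invariant under the elementary shifted Knuth moves of Sagan--Worley, handling the interior moves easily (there the step directions are determined by the letters alone) and the axis-adjacent moves by explicit case analysis with the $\eta$-symmetry, and then evaluates the walk directly on reading words $2^a1^a1^b(2')2^*$ of straight shifted shapes, exactly as you outline. Your further reductions (weight recovered from the steps since jeu de taquin preserves content, at most two rows, one remaining statistic) are sound and consistent with the paper's Theorem \ref{thm:rectification}.
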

		In fact more is true: the walk almost determines the shifted dual equivalence class of $w$ (precise statements are given in Corollary \ref{cor:initial-subwords-dual} and Lemma \ref{lem:circling}). As an additional corollary, we obtain a new criterion for ballotness, which differs from existing characterizations (see \cite{Stembridge}) in that it only requires reading through $w$ once, rather than twice (backwards-and-forwards).
		\begin{theorem} \label{thm:lattice-walk-main}
			Let $w$ be a word in the alphabet $\{1',1, \ldots, n',n\}$. Then $w$ is ballot if and only if each of its lattice walks (for $i=1, \ldots, n-1$) ends on the $x$-axis; it is anti-ballot if and only if each lattice walk ends on the $y$-axis.
		\end{theorem}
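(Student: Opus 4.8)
The plan is to reduce to a single index $i$ and then read off ballotness directly from the rectification data that the walk already encodes. First I would observe that both sides of the desired equivalence are \emph{local in $i$}: the lattice walk for index $i$ depends only on the subword $w^{(i)}$ of $w$ on the letters $\{i',i,i{+}1',i{+}1\}$, so the $n-1$ walks of $w$ are precisely the walks of its $n-1$ consecutive-pair subwords. On the other side, by Theorem \ref{thm:main}(ii) the word $w$ is ballot exactly when it is a highest-weight element, i.e. $E_i(w)=E_i'(w)=\varnothing$ for all $i$; since $E_i,E_i'$ see only the letters $\{i',i,i{+}1',i{+}1\}$, this holds iff each $w^{(i)}$ is ballot on its two-letter alphabet. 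Thus it suffices to prove the statement for a single word $u$ on $\{i',i,i{+}1',i{+}1\}$ with its unique walk, and (symmetrically) the corresponding statement for anti-ballotness.

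For this two-letter case I would invoke the theorem that the walk determines both the shape $\lambda=(\lambda_1,\lambda_2)$ and the weight $(m_1,m_2)$ of $\rectify(u)$. The key point is that $u$ is ballot exactly when $\rectify(u)$ is the highest-weight tableau of its shape, which on two letters forces $\wt(\rectify(u))=\lambda$, i.e. $(m_1,m_2)=(\lambda_1,\lambda_2)$; since $m_1+m_2=\lambda_1+\lambda_2$ this reduces to the single equation $\lambda_2=m_2$. I would then identify the endpoint of the walk with exactly this discrepancy: the height $y_{\mathrm{end}}$ of the endpoint equals $\varepsilon_i(u)=m_2-\lambda_2$, the number of unmatched $(i{+}1)$-type letters, which is precisely the total distance from $u$ to the top of its $F_i/F_i'$ chain. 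Hence $y_{\mathrm{end}}=0$ iff $\lambda_2=m_2$ iff $u$ is ballot; that is, the walk ends on the $x$-axis iff $u$ is ballot.

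The anti-ballot statement then follows by the symmetry interchanging the roles of $i$ and $i{+}1$: value-complementing and reversing $u$ reflects its walk across the line $x=y$, exchanging $\varepsilon_i\leftrightarrow\varphi_i$ together with the two axes, so the companion identity $x_{\mathrm{end}}=\varphi_i(u)$ yields that the walk ends on the $y$-axis iff $u$ is anti-ballot. I expect the crux to be the middle step, namely establishing the dictionary $y_{\mathrm{end}}=\varepsilon_i(u)$ (and $x_{\mathrm{end}}=\varphi_i(u)$) between the walk's endpoint and the rectification shape. This amounts to tracking precisely how each of the four letters $i',i,i{+}1',i{+}1$ moves the lattice point — in particular how the primed letters interact with the boundary of the first quadrant — and matching that bookkeeping against the shape and weight of $\rectify(u)$ supplied by the earlier lattice-walk theorem (together with the circling data of Lemma \ref{lem:circling}). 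Once the endpoint is pinned to $(\varphi_i,\varepsilon_i)$, both equivalences are immediate.
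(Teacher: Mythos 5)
Your reduction to consecutive-pair subwords is circular as written, and this is the one genuine flaw in the proposal. You justify ``$w$ is ballot iff each $w^{(i)}$ is ballot'' by routing through Theorem \ref{thm:main}(ii), i.e.\ the characterization of ballot words as those with $E_i(w)=E_i'(w)=\varnothing$ for all $i$. But in the paper that characterization is Proposition \ref{prop:ballot-iff-killed}, and its proof runs in exactly the opposite direction: it deduces ballotness from the endpoint of the walk via Proposition \ref{prop:alternate-definedness} together with Corollary \ref{cor:ballot-walk-criterion} --- which \emph{is} the statement you are trying to prove. Even your one-index step (``$E_i,E_i'$ both kill $w^{(i)}$ iff $w^{(i)}$ is ballot on two letters'') is that same proposition. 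The repair is a single line and is precisely what the paper does: ballotness of $w$ is equivalent to ballotness of each consecutive-pair subword by a standard fact about ballot words, cited to Stembridge, with no crystal operators involved.

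Once the reduction is made legitimately, your two-letter argument coincides with the paper's. Theorem \ref{thm:rectification} gives $\lambda_2 = \tfrac{1}{2}(n-x_n-y_n)$ and $m_1 = \tfrac{1}{2}(n+x_n-y_n)$ ones in the first row, so your dictionary $y_{\mathrm{end}} = m_2 - \lambda_2$ (and $x_{\mathrm{end}} = m_1 - \lambda_2$) is a correct rearrangement of those formulas, and ``ballot iff first row all ones iff $y_n = 0$'' is exactly the paper's computation. In particular the ``crux'' you anticipate at the end --- pinning the endpoint to $(\varphi_i,\varepsilon_i)$ --- requires no further bookkeeping and no appeal to Lemma \ref{lem:circling}: Theorem \ref{thm:rectification} already supplies the shape and weight, and in the paper $(\varphi_i,\varepsilon_i)$ are simply \emph{defined} to be the endpoint coordinates (Definition \ref{def:crystal}), so there is nothing left to match. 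For the anti-ballot half your use of the symmetry is right in spirit, with one small correction: $\eta$ only complements letter values ($i'\mapsto i{+}1$, $i\mapsto (i{+}1)'$, etc.) and does \emph{not} reverse the word; the reflection of the walk across $y=x$ is the unnumbered proposition of Section \ref{sec:walks}, and since ``anti-ballot'' means $\eta(w)$ is ballot, that half is essentially definitional.
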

		
		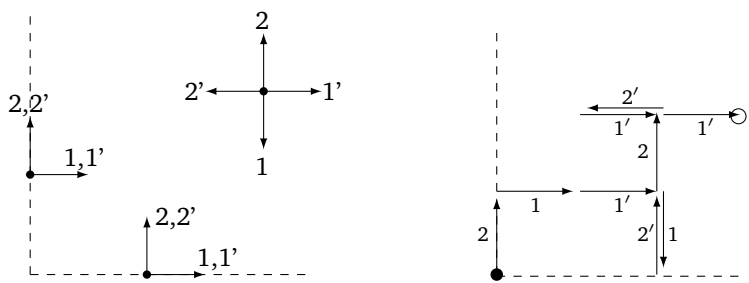
\begin{figure}
			\begin{center}
				\begin{tabular}{cc}
					\setlength{\unitlength}{3em}
					\ \ \begin{picture}(5,3)(0,0)
					\multiput(0,0)(0,0.2){16}{\line(0,1){0.1}}
					\multiput(0,0)(0.2,0){17}{\line(1,0){0.1}}
					\put(0,1.2){\circle*{0.1}}
					\put(0,1.2){\vector(0,1){.7}\vector(1,0){.7}}
					\put(-0.265,1.95){\small{2,2'}}
					\put(0.4,1.3){\small{1,1'}}%
					\put(1.4,0){\circle*{0.1}}
					\put(1.4,0){\vector(0,1){.7}\vector(1,0){.7}}
					\put(1.5,0.6){\small{2,2'}}
					\put(2.0,0.1){\small{1,1'}}%
					\put(2.8,2.2){\circle*{0.1}}
					\put(2.8,2.2){\vector(0,1){.7}\vector(1,0){.7}}
					\put(2.8,2.2){\vector(0,-1){.7}}
					\put(2.8,2.2){\vector(-1,0){.7}}
					\put(2.7,2.95){\small{2}}
					\put(3.5,2.1){\small{1'}}
					\put(2.7,1.2){\small{1}}
					\put(1.85,2.1){\small{2'}}
					\end{picture}
					&
					\ \
					\begin{picture}(3,3)(0,0)
					\setlength{\unitlength}{3em}
					\stepnorth{2}{%
						\stepeast{1}{%
							\stepeast{1'}{%
								\stepsouth{1}{%
									\stepnorthshift{2'}{%
										\stepnorthshift{2}{%
											\stepwest{2'}{%
												\stepeastshift{1'}{%
													\stepeastshift{1'}{%
													}}}}}}}}}
													\put(0,0){\circle*{0.15}}
													\put(2.9,1.9){\circle{0.18}}
													\multiput(0,0)(0,0.2){15}{\line(0,1){0.1}}
													\multiput(0,-0.02)(0.2,0){15}{\line(1,0){0.1}}
													\end{picture}
												\end{tabular}
											\end{center}
											\caption{The {\bf lattice walk} of a word $w = w_1 w_2 \cdots w_n \in \{1', 1, 2', 2\}^n$. In the interior of the first quadrant, each letter corresponds to a cardinal direction. Along the axes, primed and unprimed letters behave the same way. \textbf{Right:} The walk for $w=211'12'22'1'1'$ ends at the point $(3,2)$.\label{fig:lattice-walk-intro}} \end{figure}
										
										The lattice walk is similar to the bracketing rule for the ordinary crystal operators: the arrows that occur far from the $x$- or $y$-axis `cancel' in opposite pairs, and the transformation is done on the remaining subword. In particular, the operators $E_i,F_i$ are defined in terms of transforming {\bf critical substrings} of $w$: certain specific types of substring, which are required to occur when the lattice walk passes close to either the $x$- or $y$-axis.
										
										%indicating that the corresponding prefix of $w$ is nearly ballot or anti-ballot.
										
										\subsection{Applications and future work}
										
										As one immediate application, in \cite{GLP}, we use this structure to understand the topology of so-called Schubert curves in the real odd orthogonal Grassmannian $\mathrm{OG}(n+1,2n+1)$. In prior work (\cite{GillespieLevinson}), a local algorithm was developed for computing the real topology of Schubert curves in the ordinary Grassmannian $\mathrm{Gr}(k,n)$, based on a covering map to the circle $\mathbb{RP}^1$. The operators defining this algorithm are by their nature coplactic operators on skew semistandard tableaux, and can be written in terms of the usual crystal operators on ordinary tableaux.
										
										The development of the operators $F_i,F'_i,E_i,E'_i$ in this paper was originally motivated by the analogous geometric question in the orthogonal Grassmannian, and our operators indeed give rise to a combinatorial algorithm for computing the monodromy of the analogous covering map in type B. We believe, however, that this new combinatorial structure on shifted tableaux will also be useful more broadly.
										
										A significant question is whether our crystals form canonical bases for the representations of some quantized enveloping algebra.  For instance, the quantum queer superalgebra \cite{GJKKK} has associated crystal bases whose elements are related to shifted tableaux, and a possible link between our crystals and this or other algebras would be worth pursuing. It would also be interesting to define tensor products on an appropriate category of `doubled crystals'. We note also that our ballotness criterion (Theorem \ref{thm:lattice-walk-main}) is simpler than existing characterizations, and is well-suited for tensor-product-like constructions such as concatenations of words (see e.g.~Corollary \ref{cor:concat-ballot}).
										
										Our crystals also recover the combinatorics of the Schur $Q$-functions, giving new proofs of their symmetry and of the type B Littlewood-Richardson rule. Our uniqueness result also yields a crystal-theoretic way to find the explicit expansion of a Schur $Q$-positive symmetric function in terms of Schur $Q$-functions: one introduces operators on the underlying set, satisfying the appropriate local relations.  Theorem \ref{thm:uniqueness-main} then proves that the resulting generating function is Schur-Q-positive.  The analogous method in type A has been applied successfully in \cite{Morse-Schilling} for certain affine Stanley symmetric functions. For this purpose, it would also be interesting to find more explicit axioms, similar to Stembridge's \cite{Stembridge-typeA} for crystals of simply-laced root systems.  
										
										%One unresolved mystery is that our crystal structure is closest to that of type A crystals since the weights are in the type A weight lattice, even though the geometric motivation was from type B.  If a Stembridge-like axiomatic characterization is developed for these crystals, it may be worth exploring the analogous axioms for other simply-laced types and obtaining other types of 'doubled crystals'.
										
										\subsection{Structure of the paper}
										The paper is organized as follows.  We set notation in Section \ref{sec:notation}, then in Section \ref{sec:primed-ops} we introduce the primed operators $E_i',F_i'$ and prove that they are coplactic. In Section \ref{sec:walks}, we introduce the lattice walk and prove Theorem \ref{thm:lattice-walk-main}. In Section \ref{sec:unprimed-ops}, we define the unprimed operators $E_i,F_i$ on words, show that they are well-defined on tableaux and are coplactic, and complete the proof of Theorem \ref{thm:main}. In Section \ref{sec:crystal}, we study the joint crystal structure determined by the primed and unprimed operators and prove Theorem \ref{thm:main-doubled-typeA}. In Section \ref{sec:characters}, we prove the statements about the Schur $Q$-functions. Finally, in Section \ref{sec:uniqueness}, we prove the uniqueness statement (Theorem \ref{thm:unique-iso}).
										
										\subsection{Acknowledgments}
										
										We thank Anne Schilling, David Speyer, John Stembridge and Mark Haiman for helpful conversations pertaining to this work. Computations in Sage \cite{sage} were also useful for analyzing the combinatorics of the operators.
										
										\section{Background and Notation}\label{sec:notation}
										
										\subsection{Strings and words}
										
										Let $w = w_1w_2 \dots w_n$ be a string in symbols $\{1',1,2',2,3',3,\ldots\}$. We will normally assume that the first occurrence of $\{i,i'\}$ in $w$ is $i$, for each $i\in \{1,2,3,\ldots\}$.  Informally, we adopt the convention that the first $i$ can be treated as $i'$ for the purposes of any rule we state, and if any operation produces a string where the first symbol among $\{i,i'\}$ is $i'$, then we will implicitly change it to $i$. We make this rigorous as follows.
										
										\begin{definition}
											Let $w$ be a string in symbols $\{1',1,2',2,3',3,\ldots\}$.  The \textit{first $i$ or $i'$} of $w$ is the leftmost entry which is either equal to $i$ or $i'$.  The \textbf{canonical form} of $w$ is the string formed by replacing the first $i$ or $i'$ (if it exists) with $i$ for all $i\in \{1,2,3,\ldots\}$.  We say two strings $w$ and $v$ are \textbf{equivalent} if they have the same canonical form; note that this is an equivalence relation.
										\end{definition}
										
										\begin{definition}
											A \textbf{word} is an equivalence class $\hat{w}$ of the strings $v$ equivalent to $w$.  We have $w\in \hat{w}$, and we say that $w$ is the \textbf{canonical representative} of the word $\hat{w}$.  We often call the other words in $\hat{w}$ \textbf{representatives} of $\hat{w}$ or of $w$.  The {\bf weight} of $w$ is the vector $\mathrm{wt}(w) = (n_1, n_2, \ldots ),$ where $n_i$ is the total number of $(i)$s and $(i')$s in $w$.
										\end{definition}
										
										\begin{example}
											The canonical form of the word $1'1'2'112'$ is $11'2112'$.  The set of all representatives of $11'2112'$ is $\{1'1'2'112',11'2'112',1'1'2112',11'2112'\}$.  Note that the word $11'1$ only has two distinct representatives instead of four.
										\end{example}
										
										To make rigorous the notion of a partial operator on words, it is often helpful to start with partial operators on strings and take the natural induced operator on words.  By a \textbf{partial operator} on a set $S$ we mean an operator $A:T\to S$ defined on some subset $T\subseteq S$, and we write $A(s)=\varnothing$ when $A$ is not defined on $s$ (i.e., $s\not\in T$).
										
										\begin{definition}
											Let $A$ be a partial operator on the set of finite strings whose elements are in the alphabet $\{1,1',2,2',3,3',\ldots\}$.  We say that $A$ is \textbf{defined on a word $w$} if it is defined on some representative $v$ of $w$ and all such representatives give outputs $A(v)$ that represent the same word.  In that case we define the \textbf{induced operator} $\hat{A}$ on words by $\hat{A}(w)=\widehat{A(v)}$ if such a $v$ exists, and $\hat{A}(w)=\varnothing$ otherwise.
										\end{definition}
										
										\begin{remark}
											We will often abuse notation by referring to a word $\hat{w}$ by its canonical element $w$, and by referring to $A$ in place of $\hat{A}$ throughout.
										\end{remark}
										
										\subsection{Shifted tableaux and jeu de taquin}
										
										Recall that a \textbf{strict partition} is a strictly-decreasing sequence of positive integers, $\lambda=(\lambda_1 > \ldots > \lambda_k)$.  We say that $|\lambda|=\sum \lambda_i$ is the \textbf{size} of $\lambda$, and the entries $\lambda_i$ are the \textbf{parts} of $\lambda$.  The \textbf{shifted Young diagram} of $\lambda$ is the partial grid of squares in which the $i$-th row contains $\lambda_i$ boxes and is shifted to the right $i$ steps, as in the example shown below.  A \textbf{(shifted) skew shape} is a difference $\lambda/\mu$ of two partition diagrams, formed by removing the squares of $\mu$ from the diagram of $\lambda$, if $\mu$ is contained in $\lambda$.  
										
										\begin{center}
											\includegraphics{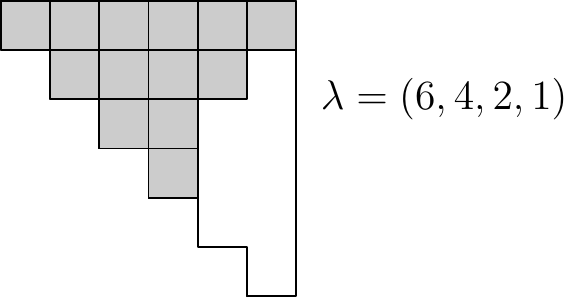} \hspace{1cm}
											\includegraphics{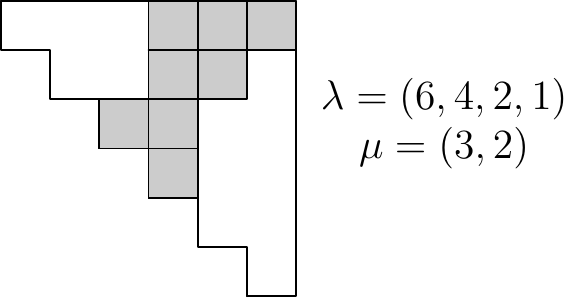}
										\end{center}
										
										A \textbf{shifted semistandard Young tableau} is a filling of the boxes with entries from the alphabet $\{1'<1<2'<2<3'<3<\cdots \}$ such that the entries are weakly increasing down columns and across rows, and such that primed entries can only repeat in columns, and unprimed only in rows.  The \textbf{(row) reading word} of such a tableau is the word formed by concatenating the rows from bottom to top (in the example below, the reading word is $3111'21'12'$).  We also require that the first $i$ or $i'$ in reading word order is always unprimed. The {\bf weight} of $T$ is the vector $\mathrm{wt}(T) = (n_1, n_2, \ldots)$, where $n_i$ is the total number of $(i)$s and $(i')$s in $T$.
										
										\begin{center}
											\includegraphics{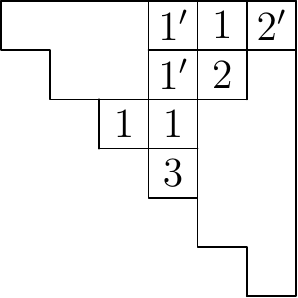}
										\end{center}
										
										The notion of \textbf{jeu de taquin} for shifted tableaux is the same as for usual tableaux: an \textbf{inner jeu de taquin slide} is the process of choosing an empty inner corner of the skew tableau and choosing either the entry to its right or below it to slide into the empty square so as to keep the tableau semistandard, then repeating the process with the new empty square, and so on until the empty square is an outer corner. An \textbf{outer jeu de taquin slide} is the reverse process, starting with an outer corner and sliding boxes outwards.
										
										There is one exception to the sliding rules: if an outer slide moves an $i$ down into the diagonal and then another $i$ to the right on top of it, that $i$ becomes primed (and vice versa for the corresponding inner slide), as shown below.
										\begin{center}
											\includegraphics{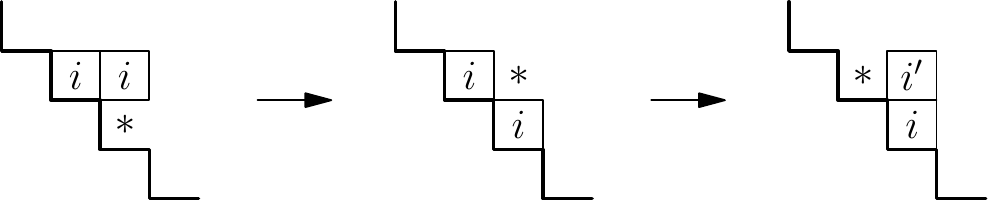}
										\end{center}
										
										We write $\rectify(T)$ or $\rectify(w)$ to denote the jeu de taquin \textbf{rectification} of any shifted semistandard tableau $T$ with reading word $w$; this is well-defined by \cite{Sagan}, \cite{Worley}. We say that $T$ is {\bf Littlewood-Richardson}, and $w$ is {\bf ballot}, if for every $i$, the $i$-th row of $\rectify(T)$ consists entirely of $(i)$s.
										
										\begin{definition}
											Let $T$ be a semistandard shifted skew tableau with reading word $w$.  We say that $T$ is in \textbf{canonical form} if $w$ is, and use the same conventions for tableaux as for words: a tableau $T$ in canonical form is identified with the set of \textbf{representatives} of $T$, tableaux formed by possibly priming the first $i$ in reading order for each $i$. For jeu de taquin, this introduces a second `special slide' by priming the first $i$ in reading order:
											\begin{center}
												\includegraphics{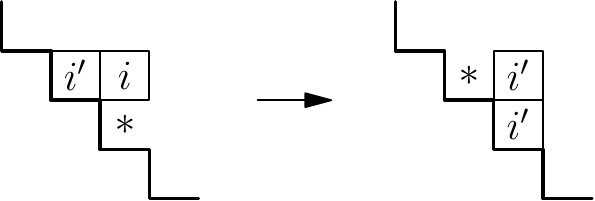}
											\end{center}
											In particular, switching representatives commutes with jeu de taquin, so it always suffices to do jeu de taquin on tableaux in canonical form.
										\end{definition}
										
										The main property that we wish our operators $E_i$, $E'_i$, $F_i$, $F'_i$ to satisfy is \textit{coplacticity}.
										
										\begin{definition}
											An operation on canonical shifted tableaux (or on their reading words) is \textbf{coplactic} if it commutes with all shifted jeu de taquin slides.
										\end{definition}
										
										\begin{definition}
											The \textbf{standardization} of a word $w$ is the word $\mathrm{std}(w)$ formed by replacing the letters in order with $1,2,\ldots,n$ from least to greatest, breaking ties by reading order for unprimed entries and by reverse reading order for primed entries.  
											
											The standardization of a shifted tableau $T$ is the standard shifted tableau $\std(T)$ formed by standardizing its reading word and performing the same changes on the corresponding letters of the tableau.
										\end{definition}
										
										\begin{example}
											The standardization of the word $1121'22'1'11$ is the word 
											$348297156$.  Note that this is the same as the standardization of every representative of the word, and in general standardization is well-defined independently of the representative.
										\end{example}
										
										\begin{proposition}\label{prop:standardization}
											A filling $T$ of a shifted skew shape is semistandard if and only if it is in canonical form and $\std(T)$ is a standard shifted tableau.
										\end{proposition}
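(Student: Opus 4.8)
The plan is to reduce the statement to a purely local comparison between pairs of adjacent cells, and then to verify, case by case, that the tie-breaking rule defining $\std$ converts the (weak) semistandard inequalities into the strict inequalities of a standard tableau. The only genuinely global hypothesis, canonical form, will be seen to sit identically on both sides of the equivalence and so can be dispatched at the end.

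First I would reformulate both sides cell-locally. For a filling $T$, the semistandard conditions---weakly increasing rows and columns, primes not repeating in a row, and unprimed entries not repeating in a column---are equivalent (using that a weakly increasing row or column forces any repeated value to occur in adjacent cells) to the following adjacency conditions: for horizontally adjacent cells $x$ (left) and $y$ (right) one needs $T(x) \le T(y)$ with equality permitted only when the common value is unprimed; and for vertically adjacent cells $x$ (top) and $y$ (bottom) one needs $T(x) \le T(y)$ with equality permitted only when the common value is primed. Likewise, since $\std(T)$ is a bijective filling by $1, \ldots, n$, it is a standard shifted tableau precisely when $\std(x) < \std(y)$ for every such adjacent pair, with strictness and transitivity promoting the adjacency conditions to full rows and columns.

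The crux is then a single lemma comparing $\std(x)$ and $\std(y)$ for adjacent cells in terms of $T(x), T(y)$. Writing $x \prec y$ for $\std(x) < \std(y)$, the definition of $\std$ orders cells first by the letter order $1' < 1 < 2' < \cdots$, breaking ties among equal unprimed letters by reading order and among equal primed letters by reverse reading order. For a horizontal pair, $x$ precedes $y$ in reading order, whereas for a vertical pair the lower cell $y$ precedes the upper cell $x$; running through the four cases $T(x) < T(y)$, $T(x) > T(y)$, $T(x) = T(y)$ unprimed, and $T(x) = T(y)$ primed shows in each case that $x \prec y$ holds exactly when the corresponding semistandard adjacency condition holds. (For instance, two equal primed entries in a row give $y \prec x$ by reverse reading order, matching the prohibition on repeated primes in a row, while two equal primed entries in a column give $x \prec y$, matching the fact that primes may repeat down a column.) Combining this lemma with the two reformulations gives that, for a filling already in canonical form, the three semistandard conditions hold if and only if $\std(T)$ is standard.

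Finally I would handle the canonical-form hypothesis. A semistandard tableau is by definition in canonical form, and $\std$ is invariant under changing representatives (as noted just after the definition of $\std$), so whether $\std(T)$ is standard does not see the first-$i$ priming convention. Thus the equivalence of the previous paragraph, valid for any canonical filling, combines with the canonical-form condition to yield exactly the asserted biconditional. The main obstacle I expect is the bookkeeping in the case analysis of the key lemma: one must keep straight the reversal of reading order for primed letters \emph{together with} the reversal of cell order (lower-before-upper) along a column, since it is the composition of these two reversals that makes primes behave correctly in columns while unprimed entries behave correctly in rows.
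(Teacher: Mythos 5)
Your proposal is correct and follows essentially the same route as the paper's proof: both reduce the statement to comparing $\std$-values of adjacent cells, observing that for a horizontal pair $\std(x)<\std(y)$ forces $x<y$ or $x=y$ unprimed, and dually for a vertical pair (where the lower cell precedes the upper in reading order), with canonical form carried along since the paper's definition of semistandard includes it. Your write-up merely makes explicit the four-case tie-breaking analysis and the representative-invariance of $\std$ that the paper treats as clear.
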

										
										\begin{proof}
											The forward direction is clear.  For the reverse direction, suppose $T$ is in canonical form and $\std(T)$ is a standard shifted tableau.  we need to check that any two adjacent entries satisfy the semistandard condition.  If $x$ is just left of $y$, then $x<y$ in $\std(T)$.  Thus either $x<y$ in $T$ or $x=y$ is unprimed.  Similarly, if $x$ is just above $y$ then either $x<y$ in $T$ or $x=y$ is primed.  Thus $T$ is semistandard (since it is also canonical.)
										\end{proof}
										
										\subsubsection{Shifted dual equivalence}
										
										Two \textit{standard} skew shifted tableaux are said to be (shifted) \textbf{dual equivalent} if their shapes transform the same way under any sequence of jeu de taquin slides. (See \cite{Assaf}, \cite{Haiman} for more in-depth discussions of dual equivalence.)  We extend this notion to \textit{semistandard} shifted tableaux by defining two tableaux to be \textbf{dual equivalent} if and only if their standardizations are, as in \cite{SaganBook}. The word `dual' refers to the recording tableau under the shifted Schensted correspondence \cite{Sagan, Worley}. 
										
										We will occasionally think of a word $w = w_1 \cdots w_n$ as a diagonally-shaped tableau (of skew shape $(2n-1, \ldots,3, 1) / (2n-3, \ldots, 1)$.) We will say that two words $w, w'$ are dual equivalent if the corresponding diagonal tableaux are dual equivalent.
										
										\subsection{The involutions $\eta_i$}
										
										We will also make use of an important symmetry on words and tableaux, as follows.
										
										\begin{definition}
											For a word $w$, we define $\eta_i(w)$ by replacing the letters $\{i',i,i+1', i+1\}$ of any representative of $w$ as follows: we send $i' \mapsto i+1$, $i \mapsto i+1'$, $i+1' \mapsto i$, and $i+1 \mapsto i'$.  (Note that the choice of representative $v$ of the word $w$ does not affect the output.)
										\end{definition}
										
										\begin{example} 
											We have $\eta_1(121'132) = 2122'31'$.
										\end{example}
										
										It will also be helpful to extend $\eta_i$ to tableaux having only $i',i,i+1',i+1$ entries.  For simplicity we consider $\eta_1$ and the letters $1$ and $2$.
										
										\newcommand{\etaT}{\boldsymbol{\eta}}
										\begin{definition}\label{def:eta-on-T}
											Let $T$ be a skew shifted tableau in the $n\times n$ staircase with entries in $\{1',1,2',2\}$.  Define $\etaT_1(T)$ to be the tableau formed by applying $\eta_1$ to the entries of $T$, reflecting the tableau across the antidiagonal of the $n\times n$ staircase, and reverting to canonical form.
										\end{definition}
										
										\begin{center}
											\raisebox{-.5\height}{\includegraphics[scale=0.8]{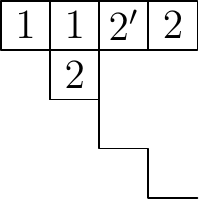}} \qquad $\xrightarrow{\ \eta_1\ }$ \qquad \raisebox{-.5\height}{\includegraphics[scale=0.8]{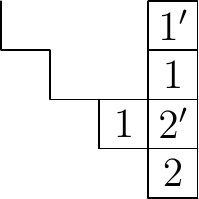}}
										\end{center}
										
										To relate this definition back to $\eta_1$ on words, we also define the \textbf{column reading word} of a tableau to be the word formed by concatenating the columns from left to right, reading each column from bottom to top.
										
										\begin{remark} \label{rmk:row-col-eta}
											It is easy to see that if $w$ is the (row) reading word of $T$ in Definition \ref{def:eta-on-T} and $v$ is the column reading word of $\etaT_1(T)$, then $v=\eta_1(w)$.
										\end{remark}
										
										\begin{remark} \label{rmk:eta-words-dual}
											By construction, $\etaT$ is a coplactic operation on tableaux. In particular, two tableaux $T$ and $T'$ are dual equivalent if and only if $\etaT_1(T)$ and $\etaT_1(T')$ are dual equivalent. For words, the situation is similar: $w$ and $w'$ are dual equivalent if and only if $\eta_1(w)$ and $\eta_1(w')$ are dual equivalent (noting that $\etaT_1$ and $\eta_1$ have the same effect on diagonally-shaped tableaux).
										\end{remark}

										%%%%%%%%%%%%%%%%
										
										\section{The operators $E'$ and $F'$}\label{sec:primed-ops}
										
										Throughout this section, we consider words consisting only of the letters $\{1',1,2',2\}$, and define only the operators $E'_1$ and $F'_1$.  For general words $w$, $E'_i$ and $F'_i$ are defined on the subword containing the letters $\{i',i,i+1',i+1\}$, treating $i$ as $1$ and $i+1$ as $2$.
										
										For simplicity we use the following shorthands.
										
										\begin{definition}
											Define $E' = E'_1$, $F' = F'_1$, $\eta= \eta_1.$
										\end{definition}
										
										To begin, we make the following observation.
										
										\begin{lemma} \label{lem:unstandardize}
											Let $s$ be a standard word (i.e. a permutation of $1, \ldots, n$), and let $n = \sum_{i=1}^k a_i$. There is at most one word $w$ of weight $(a_1,\ldots,a_k)$ and standardization $\mathrm{std}(w) = s$.
										\end{lemma}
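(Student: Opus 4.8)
The plan is to invert standardization explicitly: given $s$ and the target weight $(a_1,\ldots,a_k)$, I would reconstruct a unique candidate for the canonical representative of $w$ by reading off each letter from $s$, and then conclude that no two words can share both $s$ and the weight. Throughout write $b_i = a_1 + \cdots + a_i$ (so $b_0 = 0$ and $b_k = n$), and recall that $\std$ is well-defined on words, so it suffices to determine the canonical representative.

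First I would recover the \emph{base value} of each position, i.e.\ for which $i$ one has $w_t \in \{i',i\}$. Since standardization relabels the letters from least to greatest and every letter in $\{i',i\}$ is smaller than every letter in $\{i+1',i+1\}$, the $a_i$ letters of base value $i$ must receive precisely the consecutive values $b_{i-1}+1, \ldots, b_i$. Hence $w_t \in \{i',i\}$ if and only if $s_t \in \{b_{i-1}+1,\ldots,b_i\}$, which is forced by $s$ and the weight and partitions the positions into blocks $B_1, \ldots, B_k$ with $|B_i| = a_i$.

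The crux is recovering the priming inside each block, since the weight fixes only $|B_i|$ and not the number of primed letters. Here I would invoke the tie-breaking rule: if block $i$ contains $p$ primed letters, then the $i'$'s receive the relative values $1, \ldots, p$ in reverse reading order and the $i$'s receive $p+1, \ldots, a_i$ in reading order, so the primed letters occupy exactly the low relative values and the unprimed exactly the high ones. It then suffices to pin down the single integer $p$. Working with the canonical representative, the leftmost position $t_0$ of $B_i$ is unprimed (it is the first $i$ or $i'$), hence it is the leftmost unprimed letter and receives the smallest unprimed relative value $p+1$; therefore $p = (s_{t_0} - b_{i-1}) - 1$. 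With $p$ known, a position $t \in B_i$ is primed exactly when its relative value $s_t - b_{i-1}$ is at most $p$, so every letter of $w$ is determined. I expect this priming step to be the only real obstacle: the key realizations are that the single integer $p$ governs the entire primed/unprimed pattern of the block and that $p$ is forced by the canonical convention at the first letter. Once these are in place the base-value step is routine bookkeeping, and uniqueness follows, since two distinct primings of a block would differ in their relative values and hence in $s$.
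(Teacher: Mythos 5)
Your proposal is correct and matches the paper's proof in essence: both reconstruct $w$ from $s$ block by block, first noting that the positions with values $b_{i-1}+1,\ldots,b_i$ must carry base value $i$, then pinning down the number of primes in each block from the value at the block's leftmost (necessarily unprimed, by canonical form) position. Your explicit formula $p = (s_{t_0}-b_{i-1})-1$ is just a sharper statement of the paper's observation that if $i$ is first in reading order among $1,\ldots,a_1$ then exactly the values $1,\ldots,i-1$ become primed.
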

										
										\begin{proof}
											We construct $w$ from $s$. The numbers $1, \ldots, a_1$ of $s$ must become $(1)$s and $(1')$s. Moreover, the assignment of primes is uniquely determined: if $i$ is the first in reading order among $1, \ldots, a_1$, then $1, \ldots, i-1$ must become $(1')$s, and must occur in reverse reading order (or else no such $w$ exists). Similarly, $i, \ldots, a_1$ must become $(1)$s and must occur in reading order. Next, $a_1+1, \ldots, a_1+a_2$ must become $(2)$s and $(2')$s, and the assignment of primes is again (at most) uniquely determined. The argument continues inductively.
										\end{proof}
										
										Consequently, the following definition makes sense. Let $\alpha$ be the vector $(1,-1)$ and let $w$ be a word.
										
										\begin{definition}[Primed operators] \label{def:primed-operators}
											We define $E'(w)$ to be the unique word such that
											\[\mathrm{std}(E'(w)) = \mathrm{std}(w) \hspace{0.5cm}\text{ and }\hspace{0.5cm} \mathrm{wt}(E'(w)) = \mathrm{wt}(w) + \alpha,\]
											if such a word exists; otherwise, $E'(w) = \varnothing$. We define $F'(w)$ analogously using $-\alpha$.
										\end{definition}
										
										The key properties of $E'$ and $F'$ are as follows.
										
										\begin{proposition} \label{prop:primed-partial-inverses}
											The maps $E'$ and $F'$ are partial inverses of each other, that is, $E'(w) = v$ if and only if $w = F'(v)$.
										\end{proposition}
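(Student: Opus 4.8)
The plan is to deduce the statement formally from Definition~\ref{def:primed-operators} and the uniqueness furnished by Lemma~\ref{lem:unstandardize}, since both operators are specified purely by their effect on standardization and weight. The crucial arithmetic fact is simply that $\alpha$ and $-\alpha$ cancel, so that applying $E'$ and then $F'$ returns the weight to its starting value while leaving the standardization unchanged.

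First I would prove the forward direction. Suppose $E'(w) = v$; by definition this means $v$ is a word satisfying
\[\std(v) = \std(w) \quad\text{and}\quad \wt(v) = \wt(w) + \alpha.\]
I then want to identify $F'(v)$. By definition $F'(v)$ is the unique word, if it exists, whose standardization equals $\std(v)$ and whose weight equals $\wt(v) - \alpha$. Substituting the two displayed identities gives $\std(v) = \std(w)$ and $\wt(v) - \alpha = \wt(w)$, so the defining conditions for $F'(v)$ are exactly satisfied by $w$ itself. Thus $w$ witnesses existence, and Lemma~\ref{lem:unstandardize}, applied to the standard word $\std(w)$ together with the weight $\wt(w)$, guarantees that $w$ is the only such word; hence $F'(v) = w$.

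The reverse direction is entirely symmetric: assuming $w = F'(v)$ yields $\std(w) = \std(v)$ and $\wt(w) = \wt(v) - \alpha$, and the same cancellation $\wt(w) + \alpha = \wt(v)$ shows that $v$ satisfies the defining conditions for $E'(w)$. Uniqueness from Lemma~\ref{lem:unstandardize} then forces $E'(w) = v$, completing the equivalence.

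I do not anticipate a genuine obstacle here: once the definitions are unwound, the claim is a formal consequence of the cancellation $\alpha + (-\alpha) = 0$ and the at-most-one-word uniqueness. The only point requiring a little care is the existence clause hidden in each ``if such a word exists'': in each direction the hypothesis already supplies an explicit witness, namely the other word, so existence is automatic and Lemma~\ref{lem:unstandardize} serves only to upgrade that witness to uniqueness.
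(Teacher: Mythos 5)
Your proof is correct and is exactly the argument the paper intends: the paper's own proof simply says the statement is ``immediate from the definitions of $E'$ and $F'$,'' and your write-up spells out that immediacy via the cancellation $\alpha + (-\alpha) = 0$ and the uniqueness from Lemma~\ref{lem:unstandardize}. Nothing to change.
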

										
										\begin{proof}
											This is immediate from the definitions of $E'$ and $F'$.
										\end{proof}
										
										\begin{proposition} \label{prop:primed-eta}
											We have $E' = \eta \circ F' \circ \eta$.
										\end{proposition}
										
										\begin{proof}
											Observe that $\eta$ inverts the standardization numbering, sending $i \mapsto n+1-i$, and so $\mathrm{std}(\eta \circ F' \circ \eta(w)) = \mathrm{std}(w)$. Then observe that $\eta$ also reverses the weight.
										\end{proof}
										
										\begin{proposition} \label{prop:primed-semistandard}
											The maps $E'$ and $F'$ are well-defined on skew shifted semistandard tableaux, that is, they preserve semistandardness.
										\end{proposition}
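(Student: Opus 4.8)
The plan is to deduce the proposition directly from the standardization criterion, Proposition~\ref{prop:standardization}: a filling is semistandard precisely when it is in canonical form and its standardization is a standard shifted tableau. The point is that $E'$ and $F'$ are designed to fix the standardization while changing the weight by $\pm\alpha$. Since the standardization of a word is a bijection from positions to $\{1,\dots,n\}$, fixing it fixes which cell carries which rank; the de-standardization of Lemma~\ref{lem:unstandardize} then reconstructs each cell's letter from its rank and the target weight alone. Consequently, applying $E'$ to the reading word $w$ of a semistandard tableau $T$ relabels the letters of $T$ in place without moving any of them, so the output $E'(w)$, returned to the shape $\lambda/\mu$, is again a filling $E'(T)$ of the same shape whose reading word is $E'(w)$.

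First I would observe that $\std(E'(T)) = \std(T)$. This follows from the defining identity $\std(E'(w)) = \std(w)$ together with the fact that a standard filling of a fixed shape is determined by its reading word: both $\std(E'(T))$ and $\std(T)$ are fillings of $\lambda/\mu$ with reading word $\std(w)$, hence coincide. Since $T$ is semistandard, the forward direction of Proposition~\ref{prop:standardization} gives that $\std(T)$---and hence $\std(E'(T))$---is a standard shifted tableau.

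It then remains only to check that $E'(T)$ is in canonical form, after which the reverse direction of Proposition~\ref{prop:standardization} immediately yields that $E'(T)$ is semistandard. This holds because $E'(w)$, as an operator on words, is by construction the canonical representative of its equivalence class (and is $\varnothing$ when no such word of the target weight exists, by Lemma~\ref{lem:unstandardize}); thus whenever $E'(T)\neq\varnothing$ its reading word is canonical. The argument for $F'$ is identical with $\alpha$ replaced by $-\alpha$; alternatively one can invoke $E' = \eta\circ F'\circ\eta$ from Proposition~\ref{prop:primed-eta}, though repeating the symmetric argument is cleaner.

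I expect no serious obstacle: essentially all of the content is packaged into Proposition~\ref{prop:standardization}, and $E',F'$ were defined with exactly this reduction in mind. The only point that genuinely requires care is the canonical-form bookkeeping---verifying that the `first $i$ or $i'$ is unprimed' convention is respected by the de-standardization producing $E'(w)$, so that $E'(T)$ lands honestly in canonical form (rather than merely agreeing with a semistandard tableau up to re-priming a leading letter). This is where I would spend attention, but it is a matter of tracking the prime-assignment rules in Lemma~\ref{lem:unstandardize} rather than a substantive difficulty.
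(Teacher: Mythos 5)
Your proposal is correct and takes essentially the same route as the paper, whose entire proof is the one-liner that $E'$ and $F'$ preserve standardization, so semistandardness follows from Proposition~\ref{prop:standardization}. Your additional bookkeeping (identifying $\std(E'(T))$ with $\std(T)$ as fillings of $\lambda/\mu$, and noting that $E'(w)$ is canonical because words are by definition canonical-form equivalence classes) simply makes explicit what the paper's terse proof leaves implicit.
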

										
										\begin{proof}
											Since $E'$ and $F'$ preserve standardization, this follows from Proposition \ref{prop:standardization}. 
										\end{proof}
										
										\begin{proposition} \label{prop:primed-coplactic}
											The operations $E'$ and $F'$ are coplactic, that is, they commute with all jeu de taquin slides.
										\end{proposition}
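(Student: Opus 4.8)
The plan is to reduce the statement to the analogous fact for \emph{standard} tableaux, exploiting that $E'$ and $F'$ preserve standardization by construction. The one ingredient I need is that shifted jeu de taquin commutes with standardization: for any choice of inner or outer slide $\sigma$ and any semistandard $T$, one has $\mathrm{std}(\sigma(T)) = \sigma(\mathrm{std}(T))$, where on the right $\sigma$ denotes the slide on $\mathrm{std}(T)$ using the same corner. Equivalently, the slide path and the resulting shape depend only on $\mathrm{std}(T)$. This holds because the sliding rules compare adjacent entries, with ties broken exactly by the standardization tie-breaking convention (reading order for unprimed entries, reverse reading order for primed entries), and because the diagonal priming/unpriming in the special slide is likewise recorded by $\mathrm{std}$. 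In our setting this follows from Proposition \ref{prop:standardization} together with the cited theory (\cite{Sagan}, \cite{Worley}, \cite{SaganBook}): a slide preserves semistandardness, and its effect on $\mathrm{std}(T)$ faithfully records its effect on $T$.

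Granting this, I would argue as follows. Fix a semistandard $T$ on which $E'$ is defined, and a single slide $\sigma$. Since $E'$ alters only values and primes, not cell positions, it preserves the underlying shape; and by Definition \ref{def:primed-operators} it preserves $\mathrm{std}$. Thus $T$ and $E'(T)$ share the same shape and $\mathrm{std}(T) = \mathrm{std}(E'(T))$. Applying the commutation fact to both tableaux gives
\[
\mathrm{std}(\sigma(E'(T))) = \sigma(\mathrm{std}(E'(T))) = \sigma(\mathrm{std}(T)) = \mathrm{std}(\sigma(T)),
\]
and $\sigma(T)$, $\sigma(E'(T))$ have the same shape, since the post-slide shape depends only on $\mathrm{std}$ and the chosen corner. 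Next, jeu de taquin merely relocates entries and at most toggles a prime on the diagonal, neither of which changes the weight (as $\mathrm{wt}$ counts $i$ and $i'$ together), so $\mathrm{wt}(\sigma(E'(T))) = \mathrm{wt}(E'(T)) = \mathrm{wt}(T) + \alpha = \mathrm{wt}(\sigma(T)) + \alpha$.

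I would then conclude by uniqueness. The tableau $\sigma(E'(T))$ has the same shape and standardization as $\sigma(T)$ and weight $\mathrm{wt}(\sigma(T)) + \alpha$; since a tableau of fixed shape is determined by its reading word, Lemma \ref{lem:unstandardize} shows there is at most one such tableau, and by Definition \ref{def:primed-operators} it is exactly $E'(\sigma(T))$. Hence $\sigma(E'(T)) = E'(\sigma(T))$, so $E'$ commutes with every slide and is coplactic. The same argument applies verbatim to $F'$ with $-\alpha$ in place of $\alpha$; alternatively $F'$ is coplactic as the partial inverse of the coplactic map $E'$ (Proposition \ref{prop:primed-partial-inverses}), or via $E' = \eta \circ F' \circ \eta$ and the coplacticity of $\eta$ noted in Remark \ref{rmk:eta-words-dual}.

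The main obstacle is the first step: cleanly establishing that shifted jeu de taquin commutes with standardization, including the diagonal special slides that prime or unprime an entry and the Worley special slide built into the canonical-form convention. I would verify this one elementary move at a time, checking that each step of the empty box depends only on the relative $\mathrm{std}$-order of the two competing entries, and that the diagonal priming toggle is precisely reproduced by standardizing after the slide. Because switching representatives commutes with jeu de taquin (as noted in the excerpt), it suffices to carry this out for tableaux in canonical form, which keeps the bookkeeping of primes manageable.
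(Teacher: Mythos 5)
Your proposal is correct and takes essentially the same route as the paper's proof: the paper's entire argument is that the slide path of a jeu de taquin slide depends only on $\mathrm{std}(T)$, so sliding $T$ and $E'(T)$ (or $F'(T)$) produces a pair of tableaux with the same standardization and weights differing by $\pm\alpha$, which by Definition \ref{def:primed-operators} and Lemma \ref{lem:unstandardize} identifies the slid image of $E'(T)$ with $E'$ of the slid image of $T$. You have merely made explicit the slide-path fact, the weight bookkeeping, and the final uniqueness step that the paper states in compressed form.
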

										
										\begin{proof}
											The slide path of a jeu de taquin slide on $T$ depends only on $\mathrm{std}(T)$. Hence, if $T' = E'(T)$ or $F'(T)$, applying a jeu de taquin slide gives a new pair of tableaux with the same standardization, and with weight differing by $\pm \alpha$, as desired.
										\end{proof}
										
										We also note that the primed operators $F'_i, E'_i$ for \emph{different} $i$ commute when defined:
										
										\begin{proposition} \label{prop:primed-operators-commute}
											Let $w$ be a word on an arbitrary alphabet $\{1', 1, \ldots, n', n\}$. Then $F'_i F'_j(w) = F'_j F'_i(w)$ whenever both are defined, and likewise for all other pairs from $F'_i, E'_i, F'_j, E'_j$.
										\end{proposition}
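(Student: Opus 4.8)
The plan is to reduce everything to the characterization of the primed operators furnished by Definition~\ref{def:primed-operators} and the uniqueness statement of Lemma~\ref{lem:unstandardize}. The essential observation is that each of $E'_i$ and $F'_i$ is determined by two features: it preserves the standardization of the \emph{full} word $w$, and it shifts $\wt(w)$ by a fixed vector that depends only on the operator and not on $w$. Once both features are established, commutation of any two operators with distinct indices follows formally, because vector addition is commutative and a word is pinned down by its standardization together with its weight.

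First I would verify that $F'_i$ and $E'_i$ preserve $\std(w)$. By definition these act on the subword $u$ of letters in $\{i',i,i+1',i+1\}$, preserving $\std(u)$ and shifting $\wt(u)$ by $\pm(e_i - e_{i+1})$. Since such a shift converts one letter of $\{i,i'\}$ into a letter of $\{i+1,i+1'\}$ or vice versa, the \emph{total} number of letters lying in $\{i',i,i+1',i+1\}$ is unchanged. Consequently, in the standardization of the full word, the letters below $i$ still occupy the initial block of values $1,\ldots,m$, the letters above $i+1$ still occupy the final block $M+1,\ldots,n$, and the active letters still receive the contiguous middle block $m+1,\ldots,M$, assigned according to the internal standardization of $u$ --- which $F'_i$ and $E'_i$ preserve. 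Hence $\std(F'_i(w)) = \std(E'_i(w)) = \std(w)$. This reasoning is uniform in $i$ and is insensitive to whether two operators share letters.

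With this in hand I would fix operators $A \in \{E'_i, F'_i\}$ and $B \in \{E'_j, F'_j\}$, with associated weight-shift vectors $\delta_A \in \{\pm(e_i - e_{i+1})\}$ and $\delta_B \in \{\pm(e_j - e_{j+1})\}$. If both $AB(w)$ and $BA(w)$ are defined, then by the previous paragraph each has standardization $\std(w)$, and each has weight $\wt(w) + \delta_A + \delta_B$, since each operator contributes its fixed shift regardless of the intermediate word. By Lemma~\ref{lem:unstandardize} there is at most one word with a given standardization and weight, so $AB(w) = BA(w)$. The only delicate point, which the first step is designed to settle, is the case $|i-j|=1$, where the two four-letter windows overlap in $\{i+1',i+1\}$; but because each operator preserves the total count of its own window, the three blocks of standardization values never shift, and preservation of the full standardization reduces to the preservation of each subword standardization already guaranteed by Definition~\ref{def:primed-operators}. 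Once that is secured, the commutation is an immediate consequence of uniqueness, and the same argument applies verbatim to every pair drawn from $F'_i, E'_i, F'_j, E'_j$.
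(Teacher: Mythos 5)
Your proof is correct and takes essentially the same approach as the paper's: the paper's one-line argument likewise observes that each primed operator preserves $\mathrm{std}(w)$ while shifting $\mathrm{wt}(w)$ by a fixed vector $\pm\alpha_i$, so that commutation follows from the uniqueness statement of Lemma~\ref{lem:unstandardize}. Your block-decomposition argument for why the \emph{full-word} standardization is preserved (including the overlapping window case $|i-j|=1$) is a detail the paper leaves implicit, but it is the same underlying argument rather than a different route.
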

										
										\begin{proof}
											This is immediate from the definition, since the effect is to change $\mathrm{wt}(w)$ by the corresponding sum of vectors $\pm \alpha_i$, while preserving $\mathrm{std}(w)$.
										\end{proof}

										We now describe the action of $E'$ and $F'$ in a computationally convenient way.
										
										\begin{proposition}\label{prop:explicit-definitions-primed}
											To compute $F'(w)$, if the last $1$ in $w$ can be changed to $2'$ without affecting $\std(w)$, then $F'(w)$ is formed by making that change (and canonicalizing).  Otherwise $F'(w)=\varnothing$.
											
											For $E'(w)$, let $x$ be the last $2'$ in $w$ and let $y$ be the first $1$.  If $x$ is left of $y$ and $y$ is the only $1$ in $w$, then $E'(w)$ is obtained by changing $x$ to $1$ and $y$ to $1'$.  If simply changing $x$ to $1$ does not affect $\std(w)$, then $E'(w)$ is formed by making that change and canonicalizing.  Otherwise $E'(w)=\varnothing$.
										\end{proposition}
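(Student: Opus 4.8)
The plan is to unpack the abstract Definition \ref{def:primed-operators} by pinning down exactly which letter of $w$ is forced to change, invoking the uniqueness from Lemma \ref{lem:unstandardize} to conclude. Write $\std(w) = s$ and let $n_1$ be the number of $(1)$s and $(1')$s, so that the positions carrying standardization values $1, \ldots, n_1$ are precisely the $1$-group letters while $n_1+1, \ldots, n$ mark the $2$-group letters. Since $F'$ shifts the weight by $(-1,+1)$ while preserving $s$, the target word $F'(w)$ (if it exists) must agree with $w$ on the group of every position except the single position whose standardization value equals $n_1$, which moves from the $1$-group to the $2$-group; dually, $E'$ moves the position with value $n_1+1$ from the $2$-group to the $1$-group. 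Once I identify this switching position and its new letter, Lemma \ref{lem:unstandardize} determines the operator completely, so it suffices to match the explicit recipe against this forced change.

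First I would handle $F'$. Because the $(1)$s receive the top block $n_1-b+1, \ldots, n_1$ of $1$-group values in reading order (with $b$ the number of $(1)$s), the value $n_1$ is carried by the last $(1)$ of $w$, and this is the position that must leave the $1$-group. I would then verify that every other position retains its letter: the number of $(1')$s is governed, via Lemma \ref{lem:unstandardize}, by the value of the first $1$-group letter in reading order, which is unchanged by deleting the last $(1)$; hence all remaining $1$-group letters and all $2$-group letters keep their values and their primings. The vacated position acquires the smallest $2$-group value $n_1$, so it becomes a $(2')$ in reverse reading order, or a $(2)$ after canonicalizing if it is the first $2$-group letter. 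This forces precisely the described change. When the substitution fails to preserve $s$, no word of the prescribed weight and standardization exists and $F'(w)=\varnothing$; when it preserves $s$, the result has the correct weight and standardization and is therefore $F'(w)$ by uniqueness.

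The analysis of $E'$ is parallel but carries a genuine wrinkle from the canonical-form convention. The switching position has value $n_1+1$, which (since the $(2')$s occupy the bottom block of $2$-group values in reverse reading order) is the last $(2')$, namely $x$; it must enter the $1$-group as its largest value $n_1+1$. If $x$ lies to the right of the first $1$-group letter $y$, then the first $1$-group value is unchanged, $x$ becomes the new last $(1)$, and we are in the generic case of changing $x$ to $(1)$. The subtlety arises when $x$ lies to the left of $y$: then $x$ becomes the first $1$-group letter in the output, so by Lemma \ref{lem:unstandardize} it alone stays unprimed while every other $1$-group value must become a $(1')$ occurring in reverse reading order. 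I would observe that this is consistent with $w$ being canonical only when there was a single $(1)$ to begin with (otherwise the old $(1)$s, which occur in reading order, cannot be rearranged into reverse reading order, forcing $E'(w)=\varnothing$); in that case the old $(1)$ at $y$ is demoted to a $(1')$, giving exactly the special rule ``change $x$ to $(1)$ and $y$ to $(1')$.'' Collecting the cases yields the stated description, with the ``otherwise'' clause again absorbing every configuration in which no standardization-preserving word exists.

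I expect the main obstacle to be the careful bookkeeping of the canonical-form convention: verifying rigorously that no position other than the forced one alters its priming, and correctly separating the $E'$ special case from the $\varnothing$ case. By contrast, the structural heart of the argument, that exactly one position switches groups, is an immediate consequence of the block structure of standardization together with Lemma \ref{lem:unstandardize}.
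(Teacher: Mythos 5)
Your proposal is correct and takes essentially the same approach as the paper's proof: both use the uniqueness and reconstruction argument of Lemma \ref{lem:unstandardize} to pin down the entry standardizing to $a$ (resp.\ $a+1$) --- the last $1$ (resp.\ the last $2'$) --- as the only letter that can switch groups, and both isolate the $E'$ special case where the last $2'$ precedes a unique $1$, which must then be primed. One small imprecision: your claim in the $F'$ case that the first $1$-group value is unchanged by deleting the last $1$ fails when $w$ contains exactly one $1$ (there the first $1'$ must become unprimed), but this discrepancy is exactly absorbed by the canonicalization step you already invoke, so the argument stands.
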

										
										\begin{proof}
											Since $F'$ is the unique operator that preserves standardization and lowers the weight, say from $(a,b)$ to $(a-1,b+1)$, we see from the construction in the proof of Lemma \ref{lem:unstandardize} that the entry to change from a $1$ or $1'$ to a $2$ or $2'$ is precisely the entry that standardizes to $a$.  This is the last $1$ in reading order, and changing it to a $2'$ is the only potential way to preserve standardization.
											
											The analysis is similar for $E'$, with the exception that if the first $1$, which can act as a $1'$, is the only $1$ and the last $2'$ lowers to a $1$ to its left, we can also prime this $1$ and lower the $2'$ to preserve standardization. 
										\end{proof}
										
										\begin{remark}
											The computational definitions for $E'$ and $F'$ are asymmetric because of our choice of unprimed letters for canonical form.
										\end{remark}
										
										This also allows us to describe the action on straight shifted tableaux $T$.
										
										\begin{proposition} \label{prop:rect-primed-action}
											If $T$ has one row, $E'(T)$ (respectively $F'(T)$) is obtained by changing the leftmost $2$ to a $1$ (respectively, the rightmost $1$ to a $2$), if possible; otherwise it is $\varnothing$.
											
											If $T$ has two rows and its first row contains a $2'$, then $E'(T)$ is obtained by changing the $2'$ to a $1$, and $F'(T) = \varnothing$. If the first row does not contain a $2'$, $E'(T) = \varnothing$ and $F'(T)$ is obtained by changing the rightmost $1$ to a $2'$.
										\end{proposition}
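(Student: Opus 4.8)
The plan is to read off the row reading word of a straight one- or two-row tableau and apply the explicit descriptions of $E'$ and $F'$ from Proposition \ref{prop:explicit-definitions-primed}. First I would record a preliminary structural fact: the main diagonal entries of a shifted semistandard tableau are strictly increasing (if $T_{i,i}=T_{i+1,i+1}$, then the sandwiched entry $T_{i,i+1}$ would have to be simultaneously primed, by the column condition, and unprimed, by the row condition), so on the alphabet $\{1',1,2',2\}$ a straight tableau has at most two rows. Hence the one-row and two-row cases are exhaustive.

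For the one-row case, canonical form forbids both $1'$ and $2'$ (a primed letter would be the first of its value in reading order), so the reading word is $w = 1^a 2^b$. Applying Proposition \ref{prop:explicit-definitions-primed} directly: the last $1$ is the rightmost, and changing it to $2'$ preserves $\std(w)$, so after canonicalizing $F'$ replaces the rightmost $1$ by a $2$; dually, treating the first $2$ as the conventional $2'$, changing it to $1$ preserves $\std(w)$, so $E'$ replaces the leftmost $2$ by a $1$. Each fails precisely when the relevant letter is absent.

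The two-row case requires first pinning down the reading word. I would argue from the column conditions that the bottom row is $2^{\lambda_2}$: it cannot contain a primed letter (that letter would be the first of its value in reading order, violating canonical form), and it cannot contain a $1$, since a bottom $1$ occupies the leftmost bottom cell, forcing the top entry directly above it to equal $1'$ and hence (as row $1$ is increasing) also forcing $T_{1,1}=1'$, so $1'$ would repeat in the top row. Since the bottom row is all $2$s, the column condition forces every top entry above it to be at most $2'$, which forces the top row to have the form $1^{s_1}(2')^{q_1}2^{t_1}$ (no leading $1'$, as the first $1'$-or-$1$ of the word then lies in the top row and must be unprimed; at most one $2'$, as primes do not repeat in a row) with $q_1\in\{0,1\}$ and $s_1+q_1>\lambda_2$. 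Thus $w = 2^{\lambda_2} 1^{s_1} (2')^{q_1} 2^{t_1}$; in particular $s_1 \geq 1$ always, and $s_1 > \lambda_2 \geq 1$ when $q_1 = 0$.

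It then remains to evaluate $E'$ and $F'$ on this word in the two sub-cases, which is where the real bookkeeping lies. When the top row contains a $2'$ ($q_1 = 1$): the last $1$ has a $2'$ to its right, so changing it to $2'$ swaps two standardization values and destroys $\std(w)$, giving $F'(T) = \varnothing$; meanwhile the genuine last $2'$ lies to the right of the first $1$, so the second clause of the $E'$ rule applies and changing it to $1$ preserves $\std(w)$, giving $E' = $ ``change the $2'$ to $1$'' (one checks $F'$ inverts this, consistent with Proposition \ref{prop:primed-partial-inverses}). When the top row has no $2'$ ($q_1 = 0$): no $2'$ follows the last $1$, so $F' = $ ``change the rightmost $1$ to $2'$'' preserves $\std(w)$; whereas the only available $2'$ is the conventional first $2$, which precedes all the $1$s, so changing it to $1$ would make it the earliest $1$ and alter $\std(w)$ --- and the bound $s_1 > \lambda_2$ rules out the exceptional first clause of the $E'$ rule (which needs a unique $1$) --- hence $E'(T) = \varnothing$. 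Well-definedness of all outputs as semistandard tableaux is automatic from Proposition \ref{prop:primed-semistandard}. The main obstacle is the two-row structure analysis together with the careful standardization tracking needed to certify the two $\varnothing$ outcomes; everything else is a direct application of the explicit rules.
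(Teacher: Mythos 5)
Your proposal is correct and takes the intended route: the paper states this proposition without a written proof, treating it as an immediate application of Proposition \ref{prop:explicit-definitions-primed} to the reading words of straight one- and two-row tableaux (here $1^a2^b$ and $2^{\lambda_2}1^{s_1}(2')^{q_1}2^{t_1}$), and your write-up supplies exactly that verification, including the structural facts the paper leaves implicit. One small caution: your opening claim should be read as strict increase of the underlying \emph{values} along the diagonal --- strict increase as letters in the order $1'<1<2'<2$ would not by itself cap the number of rows at two --- but your sandwich argument does apply verbatim to the mixed-prime case $T_{i,i}=v'$, $T_{i+1,i+1}=v$ (the column condition forces the sandwiched entry to be primed, the row condition forces it unprimed), so the two-row bound stands as you argue it.
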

										
										\begin{example} Here are some maximal chains for $F'$:
											\begin{gather*}
												12211' \xrightarrow{F'} 1222'1' \xrightarrow{F'} \varnothing
												\\
												1111'1' 
												\xrightarrow{F'} 1121'1' 
												\xrightarrow{F'} 1221'1' 
												\xrightarrow{F'} 22211' 
												\xrightarrow{F'} 2222'1
												\xrightarrow{F'} 2222'2'
												\xrightarrow{F'} \varnothing
											\end{gather*}
										\end{example}

										We conclude this section with a note on the lengths of chains in the primed operators. The maximal chains for $F'$ will normally have length $1$ (as in the first example above).  We get longer chains (as in the second example above) if and only if $\rectify(w)$ has only one row.  
										
										\begin{proposition}\label{lem:maxchains}
											A maximal chain for $F'$ has length greater than $1$ if and only if $\rectify(w)$ has one row (and at least two boxes). Moreover, when $\rectify(w)$ has two rows, $F'(w) = \varnothing$ iff $E'(w) \neq \varnothing$ iff $\rectify(w)$ contains a $2'$ in its first row.
										\end{proposition}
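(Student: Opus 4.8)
The plan is to use coplacticity to reduce to straight (rectified) shapes, where the action of $F'$ and $E'$ is given explicitly by Proposition~\ref{prop:rect-primed-action}, and then read off the chain structure. Since $E'$ and $F'$ commute with all shifted jeu de taquin slides (Proposition~\ref{prop:primed-coplactic}) and rectification is a composition of such slides, the rectification map carries the $F'$-chain through $w$ bijectively onto the $F'$-chain through $\rectify(w)$, matching up edges. In particular the two chains have the same length, and $F'(w)=\varnothing \iff F'(\rectify(w))=\varnothing$, with the analogous equivalence for $E'$. Here I use that $F'$ only reassigns values and so preserves both standardization and the underlying shape; thus $\rectify(F'(w))$ and $F'(\rectify(w))$ agree, and the rectification shape is constant along the whole chain. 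It therefore suffices to prove all three assertions when $w = \rectify(w)$ is the reading word of a straight shifted tableau $T$.

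Next I would record that any straight shifted tableau on the alphabet $\{1',1,2',2\}$ has at most two rows. Indeed, the main-diagonal entries of a shifted semistandard tableau are unprimed and strictly increasing: if two consecutive diagonal entries were both equal to some value $v$, then (comparing with the entry directly between them) $v$ would appear repeated in a row, forcing $v$ unprimed, and also repeated in a column, forcing $v$ primed --- a contradiction. Since only the values $1$ and $2$ are available on the diagonal, $T$ has at most two rows.

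The result now follows by a short case analysis using Proposition~\ref{prop:rect-primed-action}. If $T$ has a single row, then it contains no $2'$ (in canonical form the first $2$-or-$2'$ is unprimed, and in a weakly increasing row a $2'$ could only precede all the $2$'s), so $T = 1^a 2^b$ and the maximal $F'$-chain is $1^k \xrightarrow{F'} 1^{k-1}2 \xrightarrow{F'} \cdots \xrightarrow{F'} 2^k$, where $k = a+b$ is the number of boxes; its ends satisfy $E'=\varnothing$ and $F'=\varnothing$ respectively, so this is the entire maximal chain and it has length $k$. Hence the chain has length $>1$ exactly when $k \geq 2$. If $T$ has two rows, Proposition~\ref{prop:rect-primed-action} shows the maximal chain consists of exactly the two tableaux that differ by whether the first row contains a $2'$: the one without a $2'$ has $E'=\varnothing$ and maps under $F'$ to the one with a $2'$, which has $F'=\varnothing$. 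Thus the chain has length exactly $1$, and simultaneously $F'(T)=\varnothing$, $E'(T)\neq\varnothing$, and ``$T$ has a $2'$ in its first row'' are equivalent. Transporting these statements back through rectification yields the displayed claim, including the ``moreover'' part.

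The argument is essentially mechanical once the reduction is set up, so I do not expect a serious obstacle. The one point needing care is the first step: verifying that coplacticity transports not merely individual applications of $F'$ but the entire chain --- including the vanishing of $E'$ and $F'$ at its two ends --- from $w$ to $\rectify(w)$, so that chain length and the definedness of $E'$ and $F'$ may legitimately be computed on the straight shape.
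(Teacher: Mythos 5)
Your proof is correct and follows exactly the paper's own argument, which reduces via coplacticity (Proposition \ref{prop:primed-coplactic}) to reading words of straight shifted tableaux and then invokes Proposition \ref{prop:rect-primed-action}. The paper states this in two lines; you have simply made explicit the details it leaves implicit (the transport of the whole chain, including definedness of $E'$ and $F'$ at its ends, and the at-most-two-rows observation), all of which check out.
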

										
										\begin{proof}
											By coplacticity, it suffices to show this for the reading word of a straight shifted tableau. Then we apply the previous Proposition. \end{proof}

										\section{The lattice walk of a word}\label{sec:walks}
										
										The first step in defining the more involved operators $F(w)$ and $E(w)$ is to associate, to each word $w$, a lattice walk in the first quadrant of the plane.  This walk is a sequence $$P_0(w), P_1(w), \ldots, P_n(w)$$ of points in $\NN \times \NN$, starting with $P_0(w) = (0,0)$.  We specify the walk by assigning a step to each $w_i$, $i=1, \dots, n$. This step will be one of the four principal direction vectors:
										\[
										\east{~~} \ =\  (1,0)
										\qquad \west{~~} \ =\  (-1,0)
										\qquad \north{} \ =\  (0,1)
										\qquad \south{} \ =\  (0,-1)\,.
										\]
										$P_i(w)$ is (as usual) the sum of the steps assigned to $w_1, \dots, w_i$. We define the walk inductively, as follows.  Suppose $i >0$, and we have assigned steps to $w_1, \dots, w_{i-1}$, so $P_0(w), \dots,P_{i-1}(w)$ have been defined.  Write $P_{i-1}(w) = (x_{i-1},y_{i-1})$.  We assign the step to $w_i$ according to
										Figure \ref{fig:directions}, with two cases based on whether or not the step starts on one of the $x$ or $y$ axes. See Figure \ref{fig:lattice-walk-intro}. We will generally write the label each step of the walk by the letter $w_i$, so as to represent both the word and its walk on the same diagram.

										\begin{figure}
											\begin{center}
												\begin{tabular}{|c|cccc|}
													\hline
													$x_iy_i=0$ & \east{1'} & \east{1} & \north{2'} & \north{2}   \\[.8ex]
													\hline
													$x_iy_i\neq0$ &\east{1'} & \south{1} & \west{2'} & \north{2}   \\[.8ex]
													\hline
												\end{tabular}
											\end{center}
											\caption{\label{fig:directions} The directions assigned to each of the letters $w_i=1'$, $1$, $2'$, or $2$ according to whether the location of the walk just before $w_i$ starts on the axes ($x_iy_i=0$) or not.}
										\end{figure}
										
										\begin{example}
											\label{ex:walk}
											Here is the walk for $w = 1221'1'111'1'2'2222'2'11'1$.
											
											\begin{center}
												\begin{picture}(5,4.2)(0,0)
												\multiput(0,0)(0,0.2){22}{\line(0,1){0.1}}
												\multiput(0,0)(0.2,0){27}{\line(1,0){0.1}}
												\put(0,0){\circle*{0.13}}
												\put(4,2){\circle{0.13}}
												\stepeast{1}{%
													\stepnorth{2}{%
														\stepnorth{2}{%
															\stepeast{1'}{%
																\stepeast{1'}{%
																	\stepsouth{1}{%
																		\stepsouth{1}{%
																			\stepeast{1'}{%
																				\stepeast{1'}{%
																					\stepnorth{2'}{%
																						\stepnorth{2}{%
																							\stepnorth{2}{%
																								\stepnorth{2}{%
																									\stepwest{2'}{%
																										\stepwest{2'}{%
																											\stepsouth{1}{%
																												\stepeast{1'}{%
																													\stepsouth{1}{%
																													}}}}}}}}}}}}}}}}}}
																													\end{picture}
																												\end{center}
																											\end{example}
																											
																											\begin{proposition}
																												The walk for $\eta(w)$ is the reflection of the walk for $w$ over the line $y=x$.
																											\end{proposition}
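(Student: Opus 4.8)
The plan is to prove the stronger pointwise statement that $P_i(\eta w) = R(P_i(w))$ for every $i$, where $R$ denotes the reflection $(x,y) \mapsto (y,x)$ across the line $y=x$; the proposition is then the special case comparing the entire walks. I would argue by induction on $i$. The base case is immediate, since $P_0(\eta w) = (0,0) = R(0,0) = R(P_0(w))$. Before starting, I would note that it suffices to verify the claim for strings (representatives), since both the walk and $\eta$ descend to words: indeed, the first occurrence of each of $\{1,1'\}$ and of $\{2,2'\}$ is always reached while the walk is on an axis (before the first $\{2,2'\}$ the walk stays on the $x$-axis, and before the first $\{1,1'\}$ it stays on the $y$-axis), where primed and unprimed letters take the same step, so the walk does not depend on the choice of representative.

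The heart of the argument is a letter-by-letter comparison of the step rules in Figure \ref{fig:directions} under $\eta$ versus under $R$. The relevant facts about $R$ are that it interchanges the two axes and fixes the origin, and that on the four principal directions it swaps east with north and swaps south with west. I would then record the (routine but essential) observation that for each letter $a \in \{1',1,2',2\}$, and in each row of Figure \ref{fig:directions} (on an axis, or in the interior), the step assigned to $\eta(a)$ equals $R$ applied to the step assigned to $a$. For example, in the interior a $1$ steps south while $\eta(1)=2'$ steps west, and west is the $R$-image of south; on an axis a $2$ steps north while $\eta(2)=1'$ steps east, and east is the $R$-image of north. The remaining six cases (three interior, three on-axis) are checked identically, and all eight verifications succeed.

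For the inductive step I would assume $P_{i-1}(\eta w) = R(P_{i-1}(w))$ and write $P_{i-1}(w) = (x,y)$, so that $P_{i-1}(\eta w) = (y,x)$. The crucial point is that the case distinction in Figure \ref{fig:directions} is governed by the predicate $x_iy_i = 0$, which is $R$-invariant: the point $(x,y)$ lies on an axis if and only if $(y,x)$ does. Hence at step $i$ the walk for $\eta w$ consults the same row of the table as the walk for $w$, and by the letter-by-letter observation its step is exactly the $R$-image of the step $d$ taken by $w$ at step $i$. Since $R$ is linear, $P_i(\eta w) = P_{i-1}(\eta w) + R(d) = R(P_{i-1}(w)) + R(d) = R(P_{i-1}(w)+d) = R(P_i(w))$, completing the induction. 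I do not expect a genuine obstacle here; the only subtlety is keeping the case distinction (on-axis versus interior) synchronized between the two walks, which the $R$-invariance of the predicate $x_iy_i=0$ handles cleanly.
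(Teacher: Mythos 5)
Your proof is correct and is essentially the paper's argument: the paper disposes of this proposition in one line (``clear by the symmetry of the lattice walk operations''), and your induction---checking that $\eta$ exchanges the step of each letter with its reflection in both rows of the table, and that the predicate $x_iy_i=0$ is invariant under $(x,y)\mapsto(y,x)$---is precisely that symmetry spelled out in full, with the well-definedness on representatives as a correct extra precaution. No gap; you have simply made explicit what the paper leaves to the reader.
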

																											
																											\begin{proof}
																												This is clear by the symmetry of the lattice walk operations.
																											\end{proof}
																											
																											The key property of this lattice walk is that its length, $n$, and its endpoint, $P_n(w) = (x_n, y_n)$, tell us almost everything about the rectification tableau $\rectify(w)$. Note that $\rectify(w)$ is a straight shifted tableau with at most two rows.
																											
																											\begin{theorem}\label{thm:rectification}
																												The shape of $\rectify(w)$ is $\lambda = (\lambda_1, \lambda_2)$, where
																												\begin{align*}
																													\lambda_1 &= \tfrac{1}{2}(n+x_n+y_n) = \#\big\{\ \north{} \text{ and/or } \east{} \text{ steps in the walk\ }\big\}, \\
																													\lambda_2 &= \tfrac{1}{2}(n-x_n-y_n) = \#\big\{\west{} \text{ and/or } \south{} \text{ steps in the walk\ }\big\},
																												\end{align*}
																												Moreover, there are $\tfrac{1}{2}(n+x_n-y_n)$ $(1)$s in the first row; there is at most one $2'$, and the remaining entries in the tableau are all $(2)$s.
																											\end{theorem}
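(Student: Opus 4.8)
The plan is to prove the statement by induction on the length $n$ of $w$, tracking $\rectify(w_1\cdots w_i)$ and the walk point $P_i(w)$ simultaneously. The key input is that incremental rectification agrees with shifted Schensted insertion \cite{Sagan, Worley}: if $R_{i-1} = \rectify(w_1\cdots w_{i-1})$, then $R_i := \rectify(w_1\cdots w_i)$ is obtained from $R_{i-1}$ by inserting $w_i$ (equivalently, by appending a box that reads last and performing the single rectifying slide). I would carry the inductive hypothesis that $R_i$ is a straight two-row tableau of the claimed form --- $1$'s followed by $2$'s in row $1$ with at most one $2'$, and row $2$ entirely $2$'s --- together with the identity
\[
P_i(w) = \big(\, \#\{1\text{'s in } R_i\} - \lambda_2(R_i),\ \ \lambda_1(R_i) - \#\{1\text{'s in } R_i\}\,\big),
\]
where $(\lambda_1,\lambda_2)$ is the shape of $R_i$. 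A direct computation shows this identity is equivalent to the three displayed formulas (note $\lambda_1+\lambda_2 = i$), so the theorem follows once the inductive step is established.

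The inductive step reduces to a clean dichotomy. Writing $(x,y) = P_{i-1}(w)$, each of the four possible outcomes of inserting $w_i$ into $R_{i-1}$ shifts $(x,y)$ by exactly one unit step: appending a $1$ to row $1$ gives an east step; appending a $2$ or $2'$ to row $1$ gives a north step; appending a $2$ to row $2$ gives a west step; and inserting a letter into row $1$ that bumps a non-$1$ down into row $2$ (raising $\#\{1\text{'s}\}$ while preserving $\lambda_1$) gives a south step. Thus I must check, for each of the four letters and in each of the two regimes of Figure \ref{fig:directions}, that insertion produces the outcome predicted by the walk. The axis conditions translate cleanly into tableau statistics: $x = 0$ says $\#\{1\text{'s}\} = \lambda_2$, and $y = 0$ says row $1$ consists entirely of $1$'s. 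Moreover the reflection symmetry of the walk under $\eta$ (which transposes the rectification and exchanges the roles of the two axes) lets me analyze $\{1', 1\}$ and the behavior near the $y$-axis, and deduce the statements for $\{2', 2\}$ near the $x$-axis by symmetry.

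The cases $w_i = 1'$ and $w_i = 2$ are uniform across the two regimes (always east, always north respectively), while the genuinely regime-dependent cases are $w_i = 1$ and $w_i = 2'$. Here the main obstacle is the shifted special slide rule of Section \ref{sec:notation}: when the rectifying slide reaches the main diagonal, an entry is primed or unprimed, and this is precisely what makes, for example, an inserted $1$ extend row $1$ (an east step) when $x = 0$ but bump a non-$1$ into row $2$ (a south step) when $x \geq 1$. The delicate bookkeeping is to track the single $2'$ through these bumps and confirm that the ``at most one $2'$'' invariant is preserved --- in particular that a $2'$ displaced toward the diagonal is recanonicalized to a $2$ rather than producing an illegal repeated prime. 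I expect this diagonal analysis, together with the interaction between the special slide and canonical form, to be the crux of the argument; the base case $i = 0$ (the empty tableau at the origin) and the remaining cases are routine. The explicit descriptions of the action on straight shapes in Proposition \ref{prop:rect-primed-action} provide a useful check on this boundary behavior.
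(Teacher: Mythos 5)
Your strategy is sound and genuinely different from the paper's. The paper first proves that the endpoint of the lattice walk is invariant under elementary shifted Knuth moves (Proposition \ref{prop:walks}), via a case analysis on three-letter moves (halved by $\eta$, which is legitimate there because $\eta$ visibly takes Knuth moves to Knuth moves and reflects the walk); Theorem \ref{thm:rectification} then follows from a short direct computation of the walk of the reading word $2^a1^a1^b(2')2^*$ of a straight shifted tableau. You instead induct on prefixes, using the Sagan--Worley fact that rectification is computed by mixed insertion, and check that each insertion moves the point $\big(\#\{1\text{s}\}-\lambda_2,\ \lambda_1-\#\{1\text{s}\}\big)$ by exactly the step prescribed by Figure \ref{fig:directions}; your invariant is indeed equivalent to the three displayed formulas. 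What each route buys: the paper's invariance lemma is a reusable standalone result (cited again, e.g., in Corollary \ref{cor:Fdefinedness-knuth}) and keeps all case analysis at the level of local three-letter moves, where the walk changes in a small window; your route proves the stronger statement that the walk position after \emph{every} initial subword records the data of the partial rectification --- close in content to Corollary \ref{cor:initial-subwords-dual} --- and parallels the step-by-step insertion tracking the paper itself performs later in the proof of Lemma \ref{lem:circling}.

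Two cautions on execution. First, your ``four outcomes'' must be read as net changes to $(\lambda_1,\lambda_2,\#\{1\text{s}\})$, not literal appendings: a single mixed insertion's bump path can bounce between the rows. For instance, inserting $2'$ at a point with $x=0$ displaces the unique $2'$ of row 1 into the head of row 2 (where canonicalization absorbs it, since it becomes the first letter of the reading word), and the column cascade then re-enters row 1 at column $\lambda_2+2$ and appends a $2$ at the end of row 1 --- a north step, as the $y$-axis rule demands; off the axis, the same insertion instead terminates at the end of row 2 (a west step) precisely because the row-1 entry above column $\lambda_2+2$ no longer admits the bump. So the axis conditions are encoded in where the cascade exits, and all eight letter/regime cases require this bump-tracking; this is exactly the crux you identify, and it does go through (note also that your invariant does not record whether the $2'$ is present, so each case must be checked in both variants --- fortunately both yield the same step direction). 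Second, the $\eta$-shortcut is shakier than it looks: $\etaT$ of a straight shape is an anti-straight skew tableau, so $\eta$ does not literally conjugate mixed insertion, and the intertwining fact you would need (that $\rectify(\eta(w))$ has the same shape as $\rectify(w)$ with reversed weight) is not available before the theorem is proved. Since the $\{2',2\}$ insertion cases are no harder than the $\{1',1\}$ ones, check them directly rather than by symmetry.
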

																											
																											\begin{corollary}
																												The shape of $\rectify(w)$ has only one row iff all
																												steps in the walk are $\east{~}$ or $\north{}$.
																											\end{corollary}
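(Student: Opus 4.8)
The plan is to deduce this immediately from Theorem \ref{thm:rectification}, which already computes the full shape of $\rectify(w)$ in terms of the walk. The only observation I need is that a straight shifted shape $\lambda = (\lambda_1, \lambda_2)$ has exactly one row if and only if its second part vanishes, i.e.\ $\lambda_2 = 0$ (here $\lambda_2 \geq 0$ always, and $\lambda_1 > \lambda_2$ when $\lambda_2 > 0$ since the shape is strict, so ``one row'' is genuinely the condition $\lambda_2 = 0$). Thus the entire content of the corollary is to translate the condition $\lambda_2 = 0$ into a statement about the steps of the walk.

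First I would invoke the formula from Theorem \ref{thm:rectification},
\[
\lambda_2 \ =\ \tfrac{1}{2}(n - x_n - y_n) \ =\ \#\big\{\ \west{~}\ \text{ and/or }\ \south{}\ \text{ steps in the walk}\ \big\}.
\]
Since each of the $n$ steps $w_1, \dots, w_n$ is assigned exactly one of the four direction vectors $\east{}, \west{}, \north{}, \south{}$, the quantity $\lambda_2$ is literally the number of steps that point west or south. Therefore $\lambda_2 = 0$ holds precisely when the walk contains no west step and no south step, which is exactly the condition that every step is $\east{}$ or $\north{}$.

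Combining the two equivalences gives the result: $\rectify(w)$ has one row $\iff \lambda_2 = 0 \iff$ all steps in the walk are $\east{}$ or $\north{}$. I do not anticipate any obstacle here, as this is a direct specialization of the preceding theorem; the only point worth stating explicitly is that the ``one row'' condition is faithfully captured by $\lambda_2 = 0$, which holds because $\lambda_2$ is a genuine (nonnegative) part of the strict partition shape. The complementary symmetric statement, that $\rectify(w)$ has two \emph{equal-length consideration aside, two full rows only when both parts are positive}, is not needed.
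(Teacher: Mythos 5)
Your proof is correct and is exactly the argument the paper intends: the corollary is stated without proof precisely because it follows immediately from Theorem \ref{thm:rectification}, via the identification $\lambda_2 = \#\{\west{~}\text{ and/or }\south{}\text{ steps}\}$, so that ``one row'' is equivalent to $\lambda_2 = 0$, i.e.\ to the absence of west and south steps. The only blemish is the garbled final sentence about the complementary statement, which should simply be deleted since it plays no role in the argument.
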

																											
																											As an additional corollary, we obtain a new criterion for ballotness:
																											
																											\begin{corollary} \label{cor:ballot-walk-criterion}
																												A word $w$ is ballot if and only if for all $i$, the lattice walk of the subword $w_i$ consisting of the letters $i,i',i+1,i+1'$ has $y_n = 0$, that is, ends on the $x$-axis. Similarly $w$ is opposite ballot (i.e. $\eta(w)$ is a ballot word) if and only if $x_n = 0$ for all $i$.
																											\end{corollary}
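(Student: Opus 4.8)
The plan is to reduce the statement to the two-letter case and then read the answer off Theorem \ref{thm:rectification}. First I would invoke the standard principle that shifted rectification commutes with restriction to an interval of values: writing $w^{(i)}$ for the subword of $w$ on the letters $\{i,i',i+1,i+1'\}$, the tableau obtained from $U=\rectify(w)$ by keeping only those letters itself rectifies to $\rectify(w^{(i)})$. Since $U$ is a straight shifted tableau, $w$ is ballot exactly when $U$ is superstandard (row $j$ all $(j)$s for every $j$), and this holds if and only if each two-value restriction of $U$ is superstandard, i.e. if and only if each $w^{(i)}$ (after relabelling $i\mapsto 1$, $i+1\mapsto 2$) is ballot. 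This reduction to consecutive letters is the content of the classical ballotness characterizations, e.g. Stembridge \cite{Stembridge}: the forward direction is immediate from the restriction principle, and the converse follows by induction on the number of letters. Thus it suffices to prove the corollary for a single word $v$ on $\{1',1,2',2\}$, namely that $v$ is ballot if and only if its walk ends on the $x$-axis.

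For this two-letter statement I would apply Theorem \ref{thm:rectification} directly. By the remark preceding it, $\rectify(v)$ is a straight shifted tableau of shape $(\lambda_1,\lambda_2)$ with at most two rows. Combining $\lambda_1=\tfrac12(n+x_n+y_n)$ with the count $\tfrac12(n+x_n-y_n)$ of $(1)$s in the first row shows that the number of non-$(1)$ entries in the first row is exactly $\lambda_1-\tfrac12(n+x_n-y_n)=y_n$. Hence the first row consists entirely of $(1)$s precisely when $y_n=0$.

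It remains to check that, for such a two-row tableau, ``first row all $(1)$s'' already forces ``second row all $(2)$s,'' so that $y_n=0$ is equivalent to full ballotness rather than merely the row-$1$ condition. By Theorem \ref{thm:rectification} the entries are the $(1)$s (all in the first row), at most one $2'$, and otherwise $(2)$s; in particular the second row contains only $2'$s and $(2)$s. But its leftmost box lies on the main diagonal and is the first entry in reading order, hence unprimed by canonical form, and since $2'<2$ no $2'$ can follow a $2$ along the row; so the second row contains no $2'$ at all. Consequently, when $y_n=0$ the first row has no non-$(1)$ entry, the unique possible $2'$ cannot occur, and the second row is entirely $(2)$s. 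This gives $v$ ballot iff $y_n=0$, and reassembling over all $i$ yields the ballot half of the corollary.

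Finally, the opposite-ballot statement follows by applying this criterion to $\eta_i$. Since $\eta_i$ acts only on the letters $\{i,i',i+1,i+1'\}$, it commutes with passage to $w^{(i)}$, so $w$ is opposite ballot iff each $\eta_i(w^{(i)})$ is ballot. By the proposition that the walk of $\eta(v)$ is the reflection of the walk of $v$ across $y=x$, the two endpoint coordinates are interchanged; hence $\eta_i(w^{(i)})$ ends on the $x$-axis iff $w^{(i)}$ ends on the $y$-axis, i.e. iff $x_n=0$. I expect the main obstacle to be the reduction to consecutive-letter subwords: the forward direction is a clean consequence of the restriction principle, but the converse — that pairwise ballotness implies global ballotness — is the step genuinely requiring the shifted jeu de taquin machinery, and one must confirm that restriction to two values interacts with the prime-on-the-diagonal conventions exactly as in the unshifted theory.
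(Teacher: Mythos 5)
Your proof is correct and takes essentially the same route as the paper's: reduce to the consecutive two-letter subwords via the standard fact cited to Stembridge, then read off from Theorem \ref{thm:rectification} that the first row of the rectification is all $(1)$s precisely when $y_n=0$, with the opposite-ballot half following from the $\eta$-reflection of the walk. Your extra verification that the second row is automatically all $(2)$s (so the row-one condition already gives full ballotness, using canonical form and semistandardness to exclude a $2'$ from the second row) is a detail the paper's proof leaves implicit, but it does not change the approach.
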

																											
																											\begin{proof}
																												It is well-known that a word is ballot if and only if each subword $w_i$ is ballot (see \cite{Stembridge}, for instance).  By definition, $w_i$ is ballot if and only if the first row of $\rectify(w_i)$ consists only of $(1)$s (after replacing $i',i \leadsto 1',1$ and $i+1',(i+1) \leadsto 2',2$). By Theorem \ref{thm:rectification}, this says $\tfrac{1}{2}(n+x_n-y_n) = \tfrac{1}{2}(n+x_n+y_n)$, that is, $y_n=0$.
																											\end{proof}

																											Our proof uses the rules of shifted Knuth equivalence, as developed in \cite{Sagan} and \cite{Worley}.  Using our primed notation, their shifted Knuth moves are as follows.
																											
																											\begin{definition}
																												Two words $w$ and $v$ are \textbf{shifted Knuth equivalent} if they differ by a sequence of elementary shifted Knuth moves on adjacent letters, consisting of either:
																												\begin{itemize}
																													\item $bac\leftrightarrow bca$ where under the standardization ordering we have $a<b<c$,
																													\item $acb\leftrightarrow cab$ where under the standardization ordering we have $a<b<c$,
																													\item Interchanging the first two letters at the start of the word,
																													\item If the first two letters in the word are unprimed and equal, priming the second; or, if the first two letters are $aa'$, un-priming the second.
																												\end{itemize}
																												In the moves $bac \leftrightarrow bca$ and $acb \leftrightarrow cab$, we call $b$ the \textbf{pivot} and $a,c$ the \textbf{switched pair}.
																											\end{definition}
																											
																											\begin{example}
																												Note that $2'12'\to 12'2'$ is a valid elementary shifted Knuth move of the second type above, since under the standardization ordering the right-hand $2'$ is less than the other $2'$.  On the other hand $212\to 122$ is not valid, because the right-hand $2$ is considered greater than the other $2$.
																											\end{example}
																											
																											Sagan and Worley (\cite{Sagan}, \cite{Worley}), in slightly different notation, showed that shifted Knuth equivalence classes on words are in one-to-one correspondence with semistandard shifted straight shape tableaux, via either JDT rectification or mixed insertion. We will show that the walk's endpoint is an invariant of the equivalence class.
																											
																											\begin{proposition}\label{prop:walks}
																												The endpoint of the lattice walk of a word $w$ is invariant under all shifted Knuth moves.
																											\end{proposition}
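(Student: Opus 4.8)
The plan is to verify invariance under each of the four elementary shifted Knuth moves separately, reducing each to a purely local computation. The key observation is that the walk is generated left to right, with the step assigned to $w_i$ depending only on $w_i$ and on whether $P_{i-1}(w)$ lies on an axis. A Knuth move alters only a short window of $w$: the two ``bulk'' moves $bac\leftrightarrow bca$ and $acb\leftrightarrow cab$ change three consecutive letters, while the two ``start'' moves change only the length-two prefix. Since the portion of $w$ before the window is unchanged, the walk agrees up to the common incoming point $P$; and if the window produces the same outgoing point in both words, then the identical suffix generates an identical remaining walk, so the endpoints agree. Thus it suffices to show each window preserves its outgoing point, starting from the common $P$.

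The two start moves are immediate: at the start the walk sits at the origin, hence on both axes, where the step rule ignores primes ($1',1\mapsto\east{}$ and $2',2\mapsto\north{}$) and is insensitive to the order of an $\east{}$ and a $\north{}$ step. A direct check then handles both interchanging the first two letters and priming or unpriming the second of an equal pair. For the bulk moves I would first record a simplification: each letter contributes to $x-y$ an amount depending only on its type — every $1$ or $1'$ contributes $+1$ and every $2$ or $2'$ contributes $-1$, whether the step is taken on an axis or in the interior. Hence the change in $x-y$ over the window depends only on the multiset of letters, which the move preserves, so it is enough to check that the window preserves the $x$-coordinate of its endpoint. Among the four letters only $1$ and $2'$ have a position-dependent $x$-step (namely $+1$ versus $0$, and $0$ versus $-1$, respectively), which further limits the bookkeeping.

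For $bac\leftrightarrow bca$ the pivot $b$ is read first, from the common point $P$, so the intermediate point is common and the task reduces to commuting the switched pair $\{a,c\}$ from that point; for $acb\leftrightarrow cab$ the pivot is read last, and one must treat the whole three-step window, since the pivot's axis/interior status can flip so as to compensate for a genuine failure of $\{a,c\}$ to commute. A short calculation classifies the ``exceptional'' data $(P,\{u,v\})$ for which two adjacent steps fail to commute: these occur precisely when one ordering of the pair touches an axis while the other does not. The role of the hypothesis $\std(a)<\std(b)<\std(c)$ is then to forbid exactly the pivot placements that would realize an uncompensated exceptional configuration. Here the tie-breaking conventions of standardization — reading order for repeated unprimed letters, reverse reading order for repeated primed letters — are essential: for instance $\{1,2'\}$ can never be the switched pair of a $bac\leftrightarrow bca$ move, because the only candidate pivots $b\in\{1,2'\}$ are excluded (a $1$-pivot would read before $a=1$ yet must exceed it in standardization, and a $2'$-pivot would read before $c=2'$ yet must fall below it).

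I expect the main obstacle to be this coupled case analysis near the axes, rather than any single conceptual point. For each admissible letter-pattern of the window one must track the axis/interior status of its $1$'s and $2'$'s, confirm the net $x$-displacement is unchanged, and invoke the standardization tie-breaks both to exclude the bad pivot placements and, for $acb\leftrightarrow cab$, to verify that the pivot indeed compensates; the cases where the window sits adjacent to an axis, so that a single step crosses between the two step rules, are the delicate ones.
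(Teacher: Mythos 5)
Your proposal is correct and takes essentially the same route as the paper's proof of Proposition \ref{prop:walks}: reduce to showing that each altered window (the two prefix moves, handled by the axis rules at the origin, and the three-letter window of a bulk move) preserves its outgoing point, then run a case analysis of the switched pair near the lines $x=0$ and $y=0$, with the standardization tie-breaks excluding the pivot placements (e.g.\ a pivot-left move with switched pair $\{1,2'\}$) that would leave a non-commuting configuration uncompensated --- exactly the content of the paper's Cases 1--4 and the accompanying figures. Your two organizational refinements --- noting that $x-y$ changes by $+1$ for each $1,1'$ and $-1$ for each $2,2'$ regardless of position, so only the $x$-coordinate of the window's endpoint needs checking, and sorting cases by pivot position rather than by the letters of the switched pair --- are sound bookkeeping simplifications, where the paper instead halves its case count via the symmetry $\eta$.
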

																											It will then suffice to check Theorem \ref{thm:rectification} on words of rectified tableaux. 
																											
																											\begin{proof}
																												The first two steps of the walk always lie on the axes, so those steps follow the rules for $xy = 0$, regardless of which letters occur. Thus from now on we assume the switched pair are not the first two letters of the word; in particular, upon reaching the pair, the walk's location is not the origin.
																												
																												Let $a,c$ be the switched pair; let $(x,y)$ be the location of the walk just before the switched pair, and let $b$ be the pivot. We show that the endpoint of the triple is preserved. Note that, for any such triple, it suffices to show that the directions of the arrows of the switched pair are unchanged, and in most cases below (but not all) this will hold.
																												
																												Note that applying $\eta$ has the effect of reversing the standardization numbering (and transposing the walk and its endpoint). In particular, $\eta$ takes Knuth moves to Knuth moves. We use this symmetry to reduce the number of cases to consider.
																												
																												\textbf{Case 1:} $\{a,c\} = \{1',2\}$. There is nothing to check because the directions of $1'$ and $2$ arrows never change.
																												
																												\textbf{Case 2:} $\{a,c\} = \{2',2\}$ or $\{1',1\}$. By applying $\eta$, it suffices to consider $\{2',2\}$, say $a=2'$ and $c=2$, and to show that the $a=2'$ arrow does not change directions after the switch. The standardization order forces the pivot to be on the left (i.e. the exchange is $bca \leftrightarrow bac$) and to be $b = 2'$ or $2$; in particular $y>0$. If $x=0$, then $a=2'$ contributes $\north{}$ in both cases; if instead $x>0$, then in both cases $a=2'$ occurs off the axes, hence gives $\west{}$.
																												
																												\textbf{Case 3:} $\{a,c\} = \{1,2\}$ or $\{1',2'\}$. By applying $\eta$, it suffices to consider $\{1,2\}$. If the pivot is on the left, then it must be $2'$ or $2$, so again $y>0$ and the reasoning is exactly as in Case 2 but for $a=1$ rather than $a=2'$.  If the pivot is on the right, then it is $b=1$ or $2'$, and we show that the location of the walk just before $b$ is unchanged under $acb\leftrightarrow cab$.   If $y>0$, the arrow $a=1$ is $\south{}$ in both cases. If instead $y=0$ (and so $x \ne 0$ since the location is not the origin), the individual steps change; see Figure \ref{fig:knuthmove-switch12} for the remaining possibilities. 
																												
																												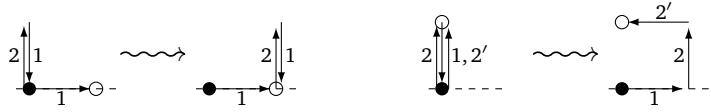
\begin{figure}[t]
																													\begin{center}
																														% up,down,right <> right,up,down (!)
																														\begin{picture}(1,1)(0,0.4)
																														\multiput(-0.2,0)(0.2,0){8}{\line(1,0){0.1}}
																														\put(0,0){\circle*{0.2}}
																														\put(1,0){\circle{0.2}}
																														\stepnorthshift{2}{%
																															\stepsouth{1}{%
																																\stepeast{1}}}
																														\end{picture}
																														$\longsquiggly$
																														\begin{picture}(1,1)(0,0.4)
																														\multiput(-0.2,0)(0.2,0){8}{\line(1,0){0.1}}
																														\put(0,0){\circle*{0.2}}
																														\put(1,0){\circle{0.2}}
																														\stepeast{1}{%
																															\stepnorth{2}{%
																																\stepsouthshift{1}}}
																														\end{picture}
																														\hspace{2cm}
																														% up,down,up <> right,up,left (!!)
																														\begin{picture}(1,1)(0,0.4)
																														\multiput(-0.2,0)(0.2,0){6}{\line(1,0){0.1}}
																														\put(0,0){\circle*{0.2}}
																														\put(0,1){\circle{0.2}}
																														\stepnorthshift{2}{%
																															\stepsouth{\ 1,2'}{%
																																\put(0.1,0){\stepnorth{}}}}
																														\end{picture}
																														$\longsquiggly$
																														\begin{picture}(1,1)(0,0.4)
																														\multiput(-0.2,0)(0.2,0){8}{\line(1,0){0.1}}
																														\put(0,0){\circle*{0.2}}
																														\put(0,1){\circle{0.2}}
																														\stepeast{1}{%
																															\stepnorth{2}{%
																																\stepwest{2'}}}
																														\end{picture}
																													\end{center}
																													\caption{The unique Knuth moves where switching a $1,2$ pair alters the individual steps of the walk. In both cases, the walk begins with $x>0, y=0$. {\bf Left}: The move $211 \leftrightarrow 121$. {\bf Right}: The move
																														$212' \leftrightarrow 122'$.}
																													\label{fig:knuthmove-switch12}
																												\end{figure}
																												
																												\textbf{Case 4:} $\{a,c\} = \{1,2'\}$. The standardization order forces the pivot to be on the right, and it must be $b = 1$ or $2'$. By applying $\eta$, we may assume $b=1$. If $x>1$ and $y>1$, both $a,c$ steps occur off the axes, so we're done; otherwise the path changes in five different ways depending on whether $x,y$ are variously $0$ or $1$ (see Figure \ref{fig:knuthmove-switch12'}). \end{proof}
																											
																											\begin{figure}
																												\begin{center}
																													$x=0, y\geq 1$: \hspace{0.1cm} 
																													% right,left,right <> up,right,down (!)
																													\begin{picture}(1,1)(0,0.4)
																													\multiput(0,-0.2)(0,0.2){7}{\line(0,1){0.1}}
																													\put(0,0){\circle*{0.2}}
																													\put(1,0){\circle{0.2}}
																													\put(0.27,0.33){\scriptsize{1,2',1}}
																													\stepeast{}{%
																														\put(0,0.1){\stepwest{}{%
																																\put(0,0.1){\stepeast{}}}}}
																													\end{picture}
																													\hspace{0.3cm}$\longsquiggly$\hspace{0.3cm}
																													\begin{picture}(1,1)(0,0.4)
																													\multiput(-0.03,-0.2)(0,0.2){7}{\line(0,1){0.1}}
																													\put(0,0){\circle*{0.2}}
																													\put(1,0){\circle{0.2}}
																													\put(-0.3,0.4){\scriptsize{2'}}
																													\stepnorth{}{%
																														\stepeast{1}{%
																															\stepsouth{1}}}
																													\end{picture}
																													\hspace{0.5cm} $x\geq 1, y=0$: \hspace{0.1cm}
																													% right,up,down <> up,down,right (!)
																													\begin{picture}(1,1)(0,0.4)
																													\multiput(-0.2,0)(0.2,0){8}{\line(1,0){0.1}}
																													\put(0,0){\circle*{0.2}}
																													\put(1,0){\circle{0.2}}
																													\stepeast{1}{%
																														\stepnorth{2'}{%
																															\stepsouthshift{1}}}
																													\end{picture}
																													\hspace{0.5cm}$\longsquiggly$\hspace{0.5cm}
																													\begin{picture}(1,1)(0,0.4)
																													\multiput(-0.2,0)(0.2,0){8}{\line(1,0){0.1}}
																													\put(0,0){\circle*{0.2}}
																													\put(1,0){\circle{0.2}}
																													\stepnorth{2'}{%
																														\stepsouthshift{1}{%
																															\stepeast{1}}}
																													\end{picture}
																													\vspace{1cm} \\ %%%
																													$x=1,y>1$: \hspace{0.2cm}
																													% down,left,right <> left,right,down (!)
																													\begin{picture}(1,1)(-1,-0.6)
																													\multiput(-1,-1.2)(0,0.2){7}{\line(0,1){0.1}}
																													\put(0,0){\circle*{0.2}}
																													\put(0,-1){\circle{0.2}}
																													\stepsouth{1}{%
																														\stepwest{2'}{%
																															\stepeastshift{1}}}
																													\end{picture}
																													\hspace{0.3cm}$\longsquiggly$\hspace{0.3cm}
																													\begin{picture}(1,1)(-1,-0.6)
																													\multiput(-1,-1.2)(0,0.2){7}{\line(0,1){0.1}}
																													\put(0,0){\circle*{0.2}}
																													\put(0,-1){\circle{0.2}}
																													\stepwest{2'}{%
																														\stepeastshift{1}{%
																															\stepsouth{1}}}
																													\end{picture}
																													\hspace{0.8cm}
																													$x>1,y=1$: \hspace{0.2cm} 
																													% down,up,down <> left,down,right
																													\begin{picture}(1,1)(0,-0.6)
																													\multiput(-0.3,-1)(0.2,0){7}{\line(1,0){0.1}}
																													\put(0,0){\circle*{0.2}}
																													\put(0,-1){\circle{0.2}}
																													\put(-0.1,0){\stepsouth{}{%
																															\put(0.1,0){\stepnorth{}{%
																																	\put(0.1,0){\stepsouth{1,2',1}}}}}}
																													\end{picture}
																													\hspace{0.1cm}$\longsquiggly$\hspace{1cm}
																													\begin{picture}(1,1)(0,-0.6)
																													\multiput(-1.2,-1)(0.2,0){8}{\line(1,0){0.1}}
																													\put(0,0){\circle*{0.2}}
																													\put(0,-1){\circle{0.2}}
																													\stepwest{2'}{%
																														\stepsouth{1}{%
																															\stepeast{1}}}
																													\end{picture}
																													\vspace{1cm} \\
																													\hspace{0.5cm} $(x,y) = (1,1)$:\hspace{1.2cm} 
																													% down,up,down <> left,right,down (!)
																													\begin{picture}(1,1)(0,-0.6)
																													\multiput(-1,-1.2)(0,0.2){7}{\line(0,1){0.1}}
																													\multiput(-1.2,-1)(0.2,0){8}{\line(1,0){0.1}}
																													\put(0,0){\circle*{0.2}}
																													\put(0,-1){\circle{0.2}}
																													\put(-0.1,0){\stepsouth{}{%
																															\put(0.1,0){\stepnorth{}{%
																																	\put(0.1,0){\stepsouth{1,2',1}}}}}}
																													\end{picture}
																													\hspace{0.1cm}$\longsquiggly$\hspace{1cm}
																													\begin{picture}(1,1)(0,-0.6)
																													\multiput(-1,-1.2)(0,0.2){7}{\line(0,1){0.1}}
																													\multiput(-1.2,-1)(0.2,0){8}{\line(1,0){0.1}}
																													\put(0,0){\circle*{0.2}}
																													\put(0,-1){\circle{0.2}}
																													\stepwest{2'}{%
																														\stepeastshift{1}{%
																															\stepsouth{1}}}
																													\end{picture}
																												\end{center}
																												\caption{Effects on the walk of the Knuth move $12'1 \leftrightarrow 2'11$. There are five cases in which the individual steps change; the starting point $\circ$ must have at least one coordinate equal to $1$ or $0$ (but is assumed not be $(0,0)$).}
																												\label{fig:knuthmove-switch12'}
																											\end{figure}
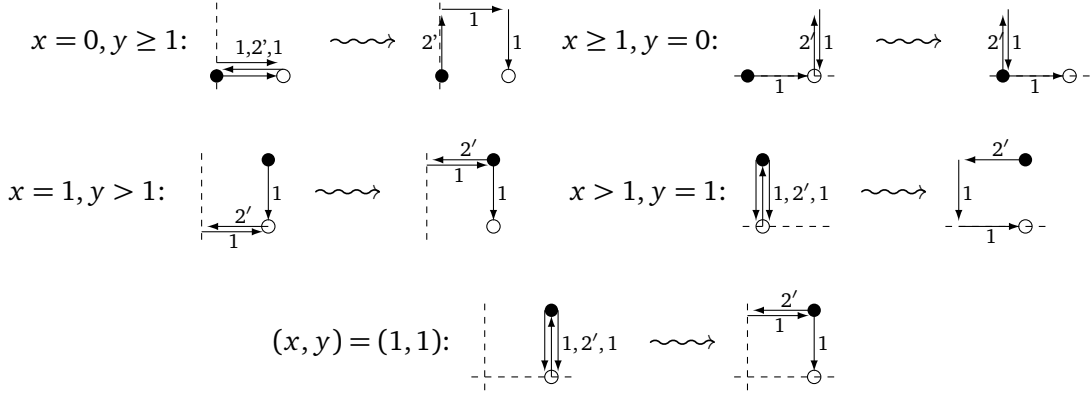
																											
																											\begin{proof}[Proof of Theorem \ref{thm:rectification}]
																												By Proposition \ref{prop:walks}, it suffices to check the statement for words of straight shifted tableaux. In particular, $w$ has the form $2^a 1^a w'$, where $w' = 1^b (2')2^*$, and the $2'$ is optional unless $b=0$, in which case it is required. Let $c$ be the number of $2'/2$s in this suffix, so that $n = 2a + b + c$ and the shape of the tableau is $(a+b+c,a)$. Unwinding the desired statements, we wish to show that the endpoint is $(b,c)$.
																												
																												If $a=0$, the walk simply moves $b$ steps right and $c$ steps up. If $a>0$, the $2^a1^a$ prefix consists of $a$ steps $\north{}$, one step $\east{}$ and $a-1$ steps $\south{}$, ending at $(1,1)$. Then, if $b>0$, the walk moves down and along the $x$-axis to $(b,0)$, then up for the remaining $2'$ and/or $2$ steps to $(b,c)$. If instead $b=0$, the suffix is just $w' = 2'2^{c-1}$, so the walk moves left and up the $y$-axis, ending at $(0,c)$. In all cases the endpoint is $(b,c)$ as desired.
																											\end{proof}
																											
																											Finally, we compute the effects of  $E',F'$ on the endpoint of the lattice walk.
																											
																											\begin{proposition} \label{prop:primed-lattice-endpoint}
																												Applying $F'$ changes precisely one step of the lattice walk, either from $\east{}$ to $\north{}$ or from $\south{}$ to $\west{}$. Consequently, it shifts the endpoint by $(-1,+1)$. For $E'$, the action is reversed.
																											\end{proposition}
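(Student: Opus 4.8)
The plan is to read off the action of $F'$ on the word directly from the explicit description in Proposition \ref{prop:explicit-definitions-primed} and then track its effect on the walk letter by letter. Recall that when $F'(w) \neq \varnothing$, it changes the \emph{last} $1$ of $w$, say at position $k$, into a $2'$ (and then canonicalizes). First I would record a structural consequence of the requirement that this change preserve $\std(w)$: in $w$ the letter $w_k$ carries the largest standardization label among the $1$s and $1'$s, and after the change it must become the minimal label of the $2'/2$ block. Since $2'$s are standardized in reverse reading order (and before any $2$), the minimal label in that block is assigned to the \emph{last} $2'$; so for $w_k$ to receive it there can be no $2'$ to the right of position $k$. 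Hence the suffix $w_{k+1}\cdots w_n$ uses only the letters $1'$ and $2$, which is the fact that will make the rest of the argument go through.

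Next I would compare the two walks position by position. Since $w$ and $F'(w)$ agree except at position $k$, the partial walks $P_0,\ldots,P_{k-1}$ coincide; write $(x,y)=P_{k-1}$. At step $k$ there are two cases according to Figure \ref{fig:directions}: if $(x,y)$ lies on an axis then the step changes from $1\mapsto \east{}$ to $2'\mapsto \north{}$ (and if canonicalization reverts this $2'$ to a $2$, the direction is still $\north{}$), while if $(x,y)$ is off the axes the step changes from $1\mapsto\south{}$ to $2'\mapsto\west{}$. I would also note that the potentially delicate combination — an off-axis starting point together with canonicalization of $w_k$ — cannot occur: $w_k$ is canonicalized only when no $2$ or $2'$ precedes it, and a walk beginning at the origin and using only $1',1$ never leaves the $x$-axis, forcing $(x,y)$ onto an axis. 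In either case $P_k$ is translated by $(-1,+1)$.

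The crux is then that this translation propagates cleanly through the suffix. Because the suffix letters are only $1'$ and $2$, whose step directions $\east{}$ and $\north{}$ do not depend on whether the current point is on an axis, a one-line induction shows $P_j(F'(w)) = P_j(w) + (-1,+1)$ for all $j\geq k$, with every step direction unchanged and every translated point remaining in the first quadrant. Thus exactly one step of the walk changes, as stated, and the endpoint shifts by $(-1,+1)$, since both $\east{}\to\north{}$ and $\south{}\to\west{}$ realize the displacement $(-1,+1)$. For $E'$ I would invoke $E'=\eta\circ F'\circ\eta$ (Proposition \ref{prop:primed-eta}) together with the fact that $\eta$ reflects the walk across the line $y=x$: reflecting the single change $\east{}\to\north{}$ or $\south{}\to\west{}$ gives $\north{}\to\east{}$ or $\west{}\to\south{}$, shifting the endpoint by $(+1,-1)$. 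The only genuine obstacle is controlling the suffix, and the structural observation from the first paragraph — that $F'$ is defined precisely when no $2'$ follows the last $1$ — is exactly what removes it.
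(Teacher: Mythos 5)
Your proof is correct and follows essentially the same route as the paper's: both use Proposition \ref{prop:explicit-definitions-primed} to conclude that every letter after the changed position is $1'$ or $2$, whose step directions do not depend on proximity to the axes, so that only the step at the changed letter flips (east to north when on an axis, south to west otherwise), and both deduce the $E'$ statement by symmetry (the paper via the partial-inverse relation, you via $\eta$). Your write-up is in fact somewhat more thorough than the paper's one-paragraph argument, since you derive the suffix structure from standardization-preservation and explicitly dispose of the canonicalization cases, which the paper leaves implicit.
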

																											
																											\begin{proof}
																												It suffices to prove the statement about $F'$. Suppose the $i$-th letter changes. Clearly, the first $i-1$ steps are unchanged, and $w_i$ changes as stated (depending on whether it is on an axis). Finally, from the explicit description of $F'$ in Proposition \ref{prop:explicit-definitions-primed}, every letter after $w_i$ is either $1'$ or $2$, so the step associated to it never changes.
																											\end{proof}
																											
																											Finally, we show that, despite the special rules governing the axes, the shape of a walk does not change drastically if we shift its start point:
																											
																											\begin{proposition}[Bounded error] \label{lem:bounded-error}
																												Let $w$ be a word; consider the walk for $w$ beginning at an arbitrary starting point $(x_0,y_0)$, not necessarily the origin. 
																												
																												If we shift the start by either $\west{}$ or $\north{}$, then the endpoint also shifts by \emph{either} $\west{}$ or $\north{}$.
																												
																												Similarly, if we shift the start by $\east{}$ or $\south{}$, the endpoint also shifts by $\emph{either}$ $\east{}$ or $\south{}$.
																											\end{proposition}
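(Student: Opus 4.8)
The plan is to prove both statements simultaneously by a coupling argument. I run two copies of the walk for $w$ in parallel, one starting at $(x_0,y_0)$ and one starting at the shifted point, writing $p_i$ and $q_i$ for their positions after the first $i$ letters and $d_i = q_i - p_i$ for the displacement between them. In the first statement the initial displacement satisfies $d_0 \in S$, where $S = \{(-1,0),(0,1)\}$ (the $\west{}$ and $\north{}$ steps), and I will show by induction on $i$ that $d_i \in S$ for every $i$; taking $i=n$ yields the claim. Along the way I use the routine fact (a separate one-line induction) that a walk started in the first quadrant never leaves it, since off-axis points have both coordinates at least $1$ and hence survive a $\south{}$ or $\west{}$ step; this guarantees the axis rules of Figure \ref{fig:directions} always apply to both walks.

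For the inductive step, observe that the step taken on the $i$-th letter depends only on $w_i$ and on whether the current point lies on an axis. Thus if $p_{i-1}$ and $q_{i-1}$ have the same axis status, the two walks take identical steps and $d_i = d_{i-1}\in S$, so the only case to analyze is when the two points \emph{straddle} an axis. Here the hypothesis $d_{i-1}\in S$ makes the analysis finite: a displacement $(-1,0)$ can straddle an axis only in the configuration $p_{i-1}=(1,y)$, $q_{i-1}=(0,y)$ with $y\ge 1$, and a displacement $(0,1)$ only in the configuration $p_{i-1}=(x,0)$, $q_{i-1}=(x,1)$ with $x\ge 1$ (all other possibilities are incompatible with a unit displacement and non-negative coordinates). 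In each configuration I read the four steps for $w_i\in\{1',1,2',2\}$ off Figure \ref{fig:directions}: for $1'$ and $2$ the on-axis and off-axis rules agree, so $d_i = d_{i-1}$, while for $1$ and $2'$ the two rules differ by $\pm(1,1)$, which is exactly the vector carrying one element of $S$ to the other. Hence $d_i\in S$ in every case, closing the induction.

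The second statement follows by symmetry. Shifting the start by $\east{}$ or $\south{}$ puts the initial displacement in $-S=\{(1,0),(0,-1)\}$; swapping the roles of the two walks turns this into an instance of the first statement, and reversing the resulting conclusion gives $d_n\in -S$, as desired.

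The only genuine content is the finite case check in the two straddling configurations, and I expect this to be entirely mechanical once the configurations are pinned down. The main thing to get right is precisely that bookkeeping: verifying that the listed configurations are the only ones compatible with a unit displacement in the first quadrant, and that the axis-versus-interior rules for the letters $1$ and $2'$ differ by exactly $(1,1)$, which is what interchanges the two elements of $S$ and thereby keeps the displacement inside $S$.
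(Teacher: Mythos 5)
Your proposal is correct and takes essentially the same route as the paper: both arguments proceed letter by letter, maintaining the invariant that the displacement between the shifted and unshifted walks stays in $\{\west{},\north{}\}$, with the only nontrivial cases being the letters $1$ and $2'$ when the two positions straddle an axis (where the on-axis and off-axis rules differ by $(1,1)$, swapping the two allowed displacements). The only cosmetic difference is in the second half, which the paper deduces by applying $\eta$ while you swap the roles of the two walks --- both are immediate symmetries.
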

																											
																											\begin{proof}
																												We prove the first statement; the second follows by applying $\eta$. Working inductively, it suffices to show that, for a single letter $a \in \{1',1,2',2\}$, if we shift its starting point by either $\west{}$ or $\north{}$, then its endpoint also shifts by either $\west{}$ or $\north{}$. This is clear if $a = 1'$ or $2$, or if the shift does not move the starting point on or off the axes.
																												
																												We check the other cases. If we shift the starting point $\north{}$ off of the $x$-axis (and we are not on the $y$-axis), we see for both $a = 1,2'$ that the endpoint shifts overall by $\west{}$: 
																												\begin{center}
																													\begin{picture}(0,1)(0,0)
																													\put(0,1){\circle{0.2}}
																													\multiput(-0.3,0)(0.2,0){4}{\line(1,0){0.1}}
																													\put(0.45,0.4){$\leadsto$}
																													\stepnorth{2'}{}
																													\end{picture}
																													\hspace{2cm}
																													\begin{picture}(0,0.5)(0,-1)
																													\multiput(-0.3,-1)(0.2,0){4}{\line(1,0){0.1}}
																													\put(-1,0){\circle{0.2}}
																													\stepwest{2'}{}
																													\end{picture}
																													\hspace{2cm}
																													\begin{picture}(0,1)(0,0)
																													\multiput(-0.3,0)(0.2,0){4}{\line(1,0){0.1}}
																													\put(1,0){\circle{0.2}}
																													\put(1.3,0.4){$\leadsto$}
																													\stepeast{1}{}
																													\end{picture}
																													\hspace{2cm}
																													\begin{picture}(0,0.5)(0,-1)
																													\multiput(-0.3,-1)(0.2,0){4}{\line(1,0){0.1}}
																													\put(0,-1){\circle{0.2}}
																													\stepsouth{1}{}
																													\end{picture}
																												\end{center}
																												Similarly, if $a$ moves $\west{}$ onto the $y$-axis (and was not already on the $x$-axis), in both cases $a = 1,2'$ the endpoint shifts (overall) by $\north{}$.
																											\end{proof}
																											
																											\begin{corollary} 
																												\label{cor:concat-ballot}
																												A concatenation of ballot words is ballot.
																											\end{corollary}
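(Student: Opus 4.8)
The plan is to reduce to a single pair of adjacent letters and then combine the ballotness criterion (Corollary~\ref{cor:ballot-walk-criterion}) with the bounded error estimate (Proposition~\ref{lem:bounded-error}). By induction on the number of factors it suffices to treat a concatenation $uv$ of two ballot words. By Corollary~\ref{cor:ballot-walk-criterion}, $uv$ is ballot if and only if, for every $i$, the lattice walk of its subword on the letters $\{i,i',i+1,i+1'\}$ ends on the $x$-axis. That subword of $uv$ is exactly the concatenation of the corresponding subwords of $u$ and of $v$ (the canonical-form convention on the first occurrence of each letter does not affect the walk, since primed and unprimed letters behave identically on the axes). By Corollary~\ref{cor:ballot-walk-criterion} again, those subwords are themselves ballot words in $\{1',1,2',2\}$ whose walks end on the $x$-axis. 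So I would fix $i$ once and for all and work with two ballot words $u,v$ in the alphabet $\{1',1,2',2\}$.

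First I would observe that the walk of $uv$ from the origin traverses the walk of $u$, arriving at a point $(p,0)$ on the $x$-axis (using that $u$ is ballot), and then traverses the walk of $v$, but started from $(p,0)$ rather than from the origin. Write $(e_x,e_y)$ for the endpoint of this second portion, so that $(e_x,e_y)$ is the endpoint of the full walk of $uv$. Since the entire walk of a word stays in $\NN\times\NN$, we have $e_y\ge 0$.

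The key step is to compare this endpoint with the endpoint of the walk of $v$ started from the origin, which is $(q,0)$ for some $q\ge 0$ because $v$ is ballot. Passing from start point $(p,0)$ to start point $(0,0)$ shifts the start by $p$ unit $\west{}$ steps, so applying Proposition~\ref{lem:bounded-error} $p$ times shows that $(q,0)$ is obtained from $(e_x,e_y)$ by a sequence of $p$ steps, each of which is $\west{}$ or $\north{}$. Reading off the $y$-coordinate gives $0 = e_y + k$, where $k\ge 0$ is the number of $\north{}$ steps; hence $e_y\le 0$. Combined with $e_y\ge 0$, this forces $e_y=0$, so the walk of $uv$ ends on the $x$-axis. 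As $i$ was arbitrary, $uv$ is ballot.

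I expect the genuine content to be carried entirely by Proposition~\ref{lem:bounded-error}, so the only delicate points are organizational rather than mathematical: confirming that the subword of a concatenation is the concatenation of subwords (legitimizing the reduction to a single alphabet), and—what I regard as the one real subtlety—applying the bounded error estimate in the correct direction. Shifting the start westward, rather than eastward, is what makes the argument clean: it produces $\west{}/\north{}$ endpoint shifts, whose $\north{}$ count is nonnegative and therefore forces $e_y\le 0$, which is exactly what rules out the walk drifting below the $x$-axis. Applying the estimate in the opposite direction would instead give $\east{}/\south{}$ shifts and require invoking $e_y\ge 0$ to suppress a $\south{}$ drift; either works, but the westward phrasing is the one I would present.
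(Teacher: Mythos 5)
Your proof is correct and takes essentially the same route as the paper's: both reduce (per index $i$) to a concatenation of two ballot subwords and compare the shifted walk of the second factor with its origin-based walk via Proposition~\ref{lem:bounded-error}, using first-quadrant confinement to pin the endpoint to the $x$-axis. The only difference is cosmetic --- the paper shifts the start of the second walk eastward from the origin to $(x_1,0)$ and rules out southward drift of the endpoint, which is the mirror image of your westward shift.
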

																											\begin{proof}
																												Suppose $w_1, w_2$ are ballot, and let $(x_1,0)$, $(x_2,0)$ be the endpoints of their lattice walks. Then, in the lattice walk for $w_1w_2$, the $w_2$ portion begins at $(x_1,0)$ rather than the origin. By Lemma \ref{lem:bounded-error}, its new endpoint moves $x_1$ steps down and/or right, but clearly can only move right. Thus the endpoint of $w_1w_2$ is at $(x_1+x_2,0)$.
																											\end{proof}

																											%%%%%%%%%%%%%%%%
																											\section{The operators $E$ and $F$}\label{sec:unprimed-ops}
																											
																											As in the previous section, throughout this section we assume that $w$ is a word consisting only of letters from $\{1',1,2',2\}$, and for simplicity we use the following notation.
																											
																											\begin{definition}
																												Define $E=E_1$, $F=F_1$ and $\eta = \eta_1$.
																											\end{definition}
																											
																											There are several parts to this section: we first define $E$ and $F$ and show that they are partial inverses, then prove the main theorem (that $E$ and $F$ are coplactic) in several steps. We show that the operators preserve semistandardness; we then show that they preserve the dual equivalence class of $T$, and finally that they are coplactic.
																											
																											%%%%%%%%%%%%%%%%
																											\subsection{Critical substrings and definition of $E$ and $F$}\label{sec:EFdefinition}
																											
																											\begin{definition}
																												A \defn{substring} of a word $w$ is a consecutive string of letters $u = w_k w_{k+1} \dots w_l$ of some representative of the word $w$.   We say $(x,y) = P_{k-1}(w)$ is the \defn{location} of $u$.
																											\end{definition}
																											
																											\begin{definition} We say that $u$ is an \defn{$F$-critical substring} if certain conditions on $u$ and its location are met. There are five types of $F$-critical substrings. Each row of the first table in Figure \ref{fig:criticals} describes one type of $F$-critical substring and a transformation that can be performed on that type of $F$-critical substring.
																											\end{definition} 
																											
																											\begin{figure}
																												\begin{center}
																													\begin{tabular}{|c|c|c|c|c|}
																														\hline
																														\multirow{2}{*}{Type} 
																														& \multicolumn{3}{c|}{Conditions} & \multirow{2}{*}{Transformation}  \\
																														& \multicolumn{1}{c}{Substring}&  \multicolumn{1}{c}{Steps} & Location &  
																														\\\hline
																														\hline
																														\multirow{2}{*}{1F} & 
																														\multirow{2}{*}{$u = 1(1')^*2'$} &
																														\east{1} ~ \east{1'} ~ \north{2'} & 
																														$y=0$ & 
																														\multirow{2}{*}{$u \to 2'(1')^*2$} \\[.5ex]\cline{3-4}
																														& &  
																														\south{1} ~ \east{1'} ~ \north{2'} & 
																														$y=1$, $x \geq 1$ & 
																														\\[.5ex]\hline
																														\multirow{2}{*}{2F} &
																														\multirow{2}{*}{$u = 1(2)^*1'$} &
																														\east{1} ~ \north{2} ~ \east{1'} & 
																														$x = 0$ & 
																														\multirow{2}{*}{$u \to 2'(2)^*1$}  \\[.5ex]\cline{3-4}
																														&& 
																														\south{1} ~ \north{2} ~ \east{1'} &
																														$x = 1$, $y \geq 1$ & 
																														\\[.5ex]\hline
																														3F & $u = 1$ & 
																														\east{1} & 
																														$y = 0$ & 
																														$u \to 2$ 
																														\\\hline
																														4F & 
																														$u = 1'$ & 
																														\east{1'} & 
																														$x  = 0$ & 
																														$u \to 2'$ 
																														\\\hline
																														\multirow{2}{*}{5F} & $u = 1$ 
																														& \south{1}
																														& \multirow{2}{*}{$x=1$, $y \geq 1$} 
																														&
																														\multirow{2}{*}{undefined} \\[.5ex]\cline{2-3}
																														& $u=2'$ & \west{2'} &&
																														\\\hline
																													\end{tabular}
																												\end{center}
																												\vspace{0.5cm}
																												
																												\begin{center}
																													\begin{tabular}{|c|c|c|c|c|}
																														\hline
																														\multirow{2}{*}{Type} 
																														& \multicolumn{3}{c|}{Conditions} & \multirow{2}{*}{Transformation}  \\
																														& \multicolumn{1}{c}{Substring}&  \multicolumn{1}{c}{Steps} & Location &  
																														\\\hline
																														\hline
																														\multirow{2}{*}{1E} & 
																														\multirow{2}{*}{$u = 2'(2)^*1$} &
																														\north{2'} ~ \north{2} ~ \east{1} & 
																														$x=0$ & 
																														\multirow{2}{*}{$u \to 1(2)^*1'$} \\[.5ex]\cline{3-4}
																														& &  
																														\west{2'} ~ \north{2} ~ \east{1} & 
																														$x=1$, $y \geq 1$ & 
																														\\[.5ex]\hline
																														\multirow{2}{*}{2E} &
																														\multirow{2}{*}{$u = 2'(1')^*2$} &
																														\north{2'} ~ \east{1'} ~ \north{2} & 
																														$y = 0$ & 
																														\multirow{2}{*}{$u \to 1(1')^*2'$}  \\[.5ex]\cline{3-4}
																														&& 
																														\west{2'} ~ \east{1'} ~ \north{2} &
																														$y = 1$, $x \geq 1$ & 
																														\\[.5ex]\hline
																														3E & $u = 2'$ & 
																														\north{2'} & 
																														$x = 0$ & 
																														$u \to 1'$ 
																														\\[.5ex]\hline
																														4E & 
																														$u = 2$ & 
																														\north{2} & 
																														$y  = 0$ & 
																														$u \to 1$ 
																														\\[.5ex]\hline
																														\multirow{2}{*}{5E} & $u = 1$ 
																														& \south{1}
																														& \multirow{2}{*}{$y=1$, $x \geq 1$} 
																														&
																														\multirow{2}{*}{undefined} \\[.5ex]\cline{2-3}
																														& $u=2'$ & \west{2'} &&
																														\\\hline
																													\end{tabular}
																												\end{center}
																												
																												\caption{\label{fig:criticals} Above, the table of $F$-critical substrings and their transformations.  Below, the table of $E$-critical substrings and their transformations.  Here $a(b)^*c$ means any string of the form $abb \dots bc$, including $ac$, $abc$, $abbc$, etc.}
																											\end{figure}

																											Note that all conditions on the starting location are on the lines $y=0$, $y=1$, $x=0$, or $x=1$.  Critical substrings can only occur
																											when the walk touches one of these four lines, which corresponds to the initial subword $w_1w_2 \dots w_{k-1}$ being either ballot, opposite ballot, or close to one of these.  (See Figure \ref{fig:criticals}.)
																											
																											The \textbf{final} $F$-critical substring of a word $w$ is the one with the latest starting index, taking the longest one in the case of a tie. We can now define the operation $F$.
																											
																											\begin{definition}\label{def:F}
																												Let $u$ be the final $F$-critical substring of $w$. Apply the transformation to $u$ according to its type. The resulting word is $F(w)$.  If the type is 5F, or if $w$ has no $F$-critical substrings, then this is undefined and $F(w) = \varnothing$.
																											\end{definition}
																											
																											\begin{remark}
																												In one case, there is a tie for the final critical substring $u$: if $u$ is the first letter of $w$. In this case $u$ counts as both $1$ (3F) and $1'$ (4F). In this case, both outputs represent the same word $F(w)$.
																											\end{remark}
																											
																											\begin{remark}
																												By definition, the critical substring is chosen from among all representatives of $w$. For example, the final $F$-critical substring of $w = 121$ is $12'$ from the representative $12'1$, giving $F(w) = 2'21$ (or $221$ in canonical form). We cannot consider the representatives separately: for example, the last critical substring of the representative $1'2'1$ is $1'$ (type 4F), but it transforms (incorrectly) to the word $2'2'1$, which is not equivalent to $221$.
																											\end{remark}
																											
																											\begin{remark}
																												The proofs in this section make heavy use of every part of the definition of $F$: the substrings, locations and transformation rules, and the use of the \emph{final} critical string. The reader may find it helpful to make a copy of Figure \ref{fig:criticals} for reference.
																											\end{remark}
																											
																											\begin{example} \label{exa:apply-F}
																												Let $w=1221'1'111'1'2'2222'2'11'1$ be the word in Example~\ref{ex:walk}.  There are two $F$-critical substrings:
																												\begin{itemize}
																													\item 
																													the substring $w_1 = 1$ is critical of type 3F (and 4F in a different representative),
																													\item the substring $w_1w_2 = 12'$ (in a different representative) is type 1F,
																													\item the substring $w_1 \cdots w_4 = 1221'$ is type 2F,
																													\item 
																													the substring $w_7 \cdots w_{10} = 11'1'2'$ is type 1F.
																												\end{itemize}
																												The latter is final, so to obtain $F(w)$ we use transformation 1F and change $11'1'2'$ to $2'1'1'2$. We may continue to apply $F$ until it is undefined, reaching the end of the $F$-chain. The result is shown in Figure \ref{fig:F-on-walks}. 
																												
																												\begin{figure}[t]
																													\begin{center}
																														\setlength{\unitlength}{2.5em}
																														\begin{picture}(4,5)(0,0)
																														\multiput(0,0)(0,0.2){32}{\line(0,1){0.1}}
																														\multiput(0,0)(0.2,0){22}{\line(1,0){0.1}}
																														\put(0,0){\circle*{0.13}}
																														\put(3,3){\circle{0.13}}
																														\stepeast{1}{%
																															\stepnorth{2}{%
																																\stepnorth{2}{%
																																	\stepeast{1'}{%
																																		\stepeast{1'}{%
																																			\stepsouth{1}{%
																																				\stepwestshift{2'}{%
																																					\stepeast{1'}{%
																																						\stepeast{1'}{%
																																							\stepnorth{2}{%
																																								\stepnorth{2}{%
																																									\stepnorth{2}{%
																																										\stepnorth{2}{%
																																											\stepwest{2'}{%
																																												\stepwest{2'}{%
																																													\stepsouth{1}{%
																																														\stepeast{1'}{%
																																															\stepsouth{1}{%
																																															}}}}}}}}}}}}}}}}}}
																																															\end{picture} \hspace{1.5cm}
																																															\begin{picture}(3,6)(0,0)
																																															\multiput(0,0)(0,0.2){32}{\line(0,1){0.1}}
																																															\multiput(0,0)(0.2,0){19}{\line(1,0){0.1}}
																																															\put(0,0){\circle*{0.13}}
																																															\put(2,4){\circle{0.13}}
																																															\stepnorth{2}{%
																																																\stepnorth{2}{%
																																																	\stepnorth{2}{%
																																																		\stepeast{1'}{%
																																																			\stepeast{1'}{%
																																																				\stepsouth{1}{%
																																																					\stepwestshift{2'}{%
																																																						\stepeast{1'}{%
																																																							\stepeast{1'}{%
																																																								\stepnorth{2}{%
																																																									\stepnorth{2}{%
																																																										\stepnorth{2}{%
																																																											\stepnorth{2}{%
																																																												\stepwest{2'}{%
																																																													\stepwest{2'}{%
																																																														\stepsouth{1}{%
																																																															\stepeast{1'}{%
																																																																\stepsouth{1}{%
																																																																}}}}}}}}}}}}}}}}}}
																																																																\end{picture}\hspace{2cm}
																																																																\begin{picture}(3.5,6)(0,0)
																																																																\multiput(0,0)(0,0.2){32}{\line(0,1){0.1}}
																																																																\multiput(0,0)(0.2,0){19}{\line(1,0){0.1}}
																																																																\put(0,0){\circle*{0.13}}
																																																																\put(1,5){\circle{0.13}}
																																																																\stepnorth{2}{%
																																																																	\stepnorth{2}{%
																																																																		\stepnorth{2}{%
																																																																			\stepeast{1'}{%
																																																																				\stepeast{1'}{%
																																																																					\stepsouth{1}{%
																																																																						\stepwestshift{2'}{%
																																																																							\stepeast{1'}{%
																																																																								\stepeast{1'}{%
																																																																									\stepnorth{2}{%
																																																																										\stepnorth{2}{%
																																																																											\stepnorth{2}{%
																																																																												\stepnorth{2}{%
																																																																													\stepwest{2'}{%
																																																																														\stepwest{2'}{%
																																																																															\stepwest{2'}{%
																																																																																\stepeastshift{1}{%
																																																																																	\stepsouth{1}{%
																																																																																	}}}}}}}}}}}}}}}}}}
																																																																																	\end{picture}
																																																																																\end{center}
																																																																																\caption{\label{fig:F-on-walks} Successive applications of the operator $F$ to the word $w$ of Example \ref{exa:apply-F}. From left to right, $F(w), F(F(w)), F(F(F(w)))$; we have $F^{(4)}(w) = \varnothing$. Note that each application of $F$ shifts the endpoint by $(-1,+1)$.}
																																																																															\end{figure}
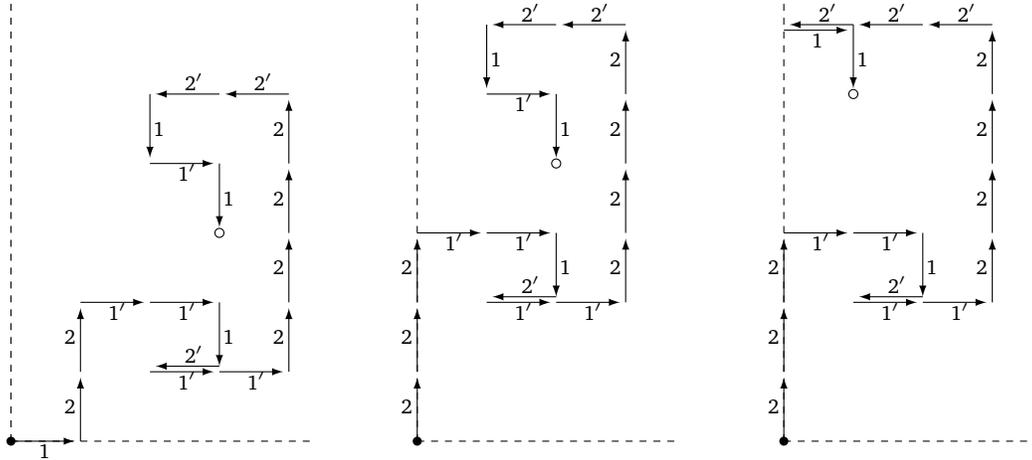
																																																																															
																																																																														\end{example}
																																																																														
																																																																														We now define $E = \eta \circ F \circ \eta$.  There is a corresponding notion of $E$-critical substrings, and transformation rules for each, given by the second table in Figure \ref{fig:criticals}.  This table is obtained by applying $\eta$ to every rule in the table
																																																																														for $F$ (swapping $1' \leftrightarrow 2$, $1 \leftrightarrow 2'$, and $x \leftrightarrow y$).
																																																																														
																																																																														Thanks to the symmetry operator $\eta$, it often suffices to prove statements for $F$ in order to obtain statements for $E$. For ease of exposition, we always opt (when possible) to prove the statements for $F$, not $E$.
																																																																														
																																																																														\subsection{$E$ and $F$ are partial inverses}
																																																																														
																																																																														We now show that $E$ and $F$ are partial inverses:
																																																																														\begin{proposition}\label{prop:inverses}
																																																																															For words $w,w'$, we have $w' = F(w)$ if and only if $E(w') = w$. Moreover, when this holds:
																																																																															\begin{itemize}
																																																																																\item[(i)] The final $E$-critical substring of $F(w)$ is the transformation of the final $F$-critical substring of $w$, and vice versa.
																																																																																\item[(ii)] The operations $E$ and $F$ invert one another case-by-case: $1E \leftrightarrow 2F$, $2E \leftrightarrow 1F$, $3E \leftrightarrow 4F$, $4E \leftrightarrow 3F$.
																																																																															\end{itemize}
																																																																														\end{proposition}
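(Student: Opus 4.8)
The plan is to use the symmetry $E=\eta\circ F\circ\eta$, with $\eta^2=\identity$, to reduce the biconditional to its forward half. Applied letter-by-letter, $\eta$ preserves the index of each letter, reflects the lattice walk across $y=x$, and carries the $F$-critical table of Figure \ref{fig:criticals} onto the $E$-critical table; hence $u$ is the final $F$-critical substring of $w$ (at indices $k,\dots,l$) if and only if $\eta(u)$ is the final $E$-critical substring of $\eta(w)$ at the same indices. Granting the forward statement---that when $F(w)\neq\varnothing$ the transform $u'$ of the final $F$-critical substring $u$ is again the final $E$-critical substring of $F(w)$, and applying $E$ to $F(w)$ returns $w$---conjugating by $\eta$ then yields $F(E(w'))=w'$ whenever $E(w')\neq\varnothing$. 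This gives the reverse implication of the iff and the reverse half of the case correspondences in (ii), so it suffices to treat the $F$-then-$E$ direction.

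Next I would carry out the local computation behind (ii). Writing $(x,y)=P_{k-1}(w)$ for the (common) location of $u$ and $u'$, one checks directly from Figure \ref{fig:criticals} that each transformation 1F--4F preserves the length of the substring, fixes all letters outside $[k,l]$, and alters the walk only by turning a single $\east{}$ into a $\north{}$ or a single $\south{}$ into a $\west{}$, so the endpoint of the substring shifts by exactly $(-1,+1)$. The point of the check is that at the \emph{unchanged} location $(x,y)$ the transformed letters $u'$ realize precisely the letter pattern, step pattern, and line condition ($x$ or $y$ in $\{0,1\}$) of one specific $E$-type, whose transformation is the exact reverse substitution. Running the four cases produces the correspondences $1E\leftrightarrow 2F$, $2E\leftrightarrow 1F$, $3E\leftrightarrow 4F$, $4E\leftrightarrow 3F$ and shows $E$ sends $u'$ back to $u$---contingent only on $u'$ being the \emph{final} $E$-critical substring of $F(w)$.

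Establishing that finality is the genuinely hard step, and is the substance of (i). Since the walks of $w$ and $F(w)$ agree through index $k-1$, I must rule out any $E$-critical substring of $F(w)$ starting at an index exceeding $k$, and confirm that $u'$ is the longest $E$-critical substring starting at $k$. Substrings starting strictly inside $u'$ are dispatched by a short inspection of each transformed pattern: its interior letters either begin no $E$-type at all (for instance interior $1'$s) or lie off the line required by the one $E$-type they could begin. The crux is the tail $w_{l+1}\cdots w_n$: its letters are unchanged, but in $F(w)$ its walk now starts at $P_l+(-1,+1)$ rather than $P_l$. I would apply the Bounded Error Lemma \ref{lem:bounded-error} to each prefix of the tail, confining the displaced walk at every step to the upper-left cone and thereby pinning down which of the lines $x,y\in\{0,1\}$ it can newly meet; I would then combine this with finality of $u$---which, together with $F(w)\neq\varnothing$, forbids any $F$-critical substring (including type 5F) from starting after index $k$ in $w$---to argue, type by type, that the $(-1,+1)$ displacement cannot manufacture a new $E$-critical configuration later than $u'$. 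Converting the ``no later $F$-critical substring in $w$'' hypothesis into the ``no later $E$-critical substring in $F(w)$'' conclusion across this shift is where the bookkeeping concentrates, and I expect it to be the main obstacle. Once it is complete, $E(F(w))=w$ follows, and the $\eta$-symmetry of the first paragraph finishes the proof.
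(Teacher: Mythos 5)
Your skeleton agrees with the paper's: reduce via $\eta$ to showing $E(F(w))=w$ when $F(w)\neq\varnothing$, note that the transformation rules invert one another case-by-case (which is how the paper also gets part (ii) -- the paper itself calls this step clear), and isolate as the crux the claim that $F(w)$ has no $E$-critical substring later than the transformed one. The gap is in the tool you propose for that crux. The Bounded Error Lemma \ref{lem:bounded-error} only constrains the \emph{endpoint} of the displaced tail: applied twice to realize a $(-1,+1)$ shift of the start, it permits the endpoint to move by $(-2,0)$, $(-1,+1)$, or $(0,+2)$, and it records nothing about which individual letters change step direction or where the displaced walk meets the lines $x,y\in\{0,1\}$ -- which is exactly the data that governs $E$-critical substrings. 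What the paper proves instead (Lemmas \ref{lem:shifting-walks-F} and \ref{lem:shifting-walks-E}) is a sharp \emph{equivalence}: a suffix contains no $F$-critical substrings (and no last letter of a 1F -- note this refinement, which your type-by-type plan would also need) if and only if shifting its start by $(-1,+1)$ shifts its entire walk \emph{rigidly} by $(-1,+1)$; and symmetrically for $E$ with $(+1,-1)$. Chaining the two directions of these lemmas is precisely what converts ``$u$ is final in $w$'' into ``no $E$-criticals in the tail of $F(w)$.'' Your plan would in effect have to reprove this equivalence -- an induction showing that absence of criticals forces specific letter patterns near the axes (e.g.\ every eastward step at $y=0$ must be a $1'$, on pain of a 3F or a 1F ending) -- so the bookkeeping you defer as ``the main obstacle'' is the actual content of the proposition, and the bounded-error cone is strictly too coarse to carry it.

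A second, smaller gap: the within-substring check is not the ``short inspection'' you describe, because critical substrings are selected over all \emph{representatives} of the word (the first $1$ may count as $1'$, the first $2$ as $2'$; see the remark after Definition \ref{def:F}). This produces genuine sub-cases that the paper argues individually: for instance, when $u$ is a 2F whose image $v_k=2'$ starts at the origin, $v_{k+1}=1$ can act as $1'$ and begin a later 2E; and when $u$ is a 3F at the origin, longer 1E or 2E substrings starting at $v_1=2=2'$ must be excluded by showing each would force a later or longer $F$-critical substring back in $w$, contradicting finality of $u$. These exclusions loop back through the finality hypothesis rather than following from the location conditions alone, and your proposal never engages the representative issue. (Your first paragraph -- the $\eta$-reduction and the case correspondences $1E\leftrightarrow 2F$, $2E\leftrightarrow 1F$, $3E\leftrightarrow 4F$, $4E\leftrightarrow 3F$ -- is correct and matches the paper.)
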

																																																																														
																																																																														It is clear that the case-by-case transformation rules invert one another; the main challenge is to show that no new critical strings are created later in the word. It will suffice (by applying $\eta$) to show that $F(E(w)) = w$ whenever $E(w) \ne \varnothing$. We first make the following observation about the effects of $E,F$ on the substrings of $w$ not containing critical strings. Let $s$ be a substring $w_k \cdots w_l$ of $w$.  Write $(x_i,y_i)$ for the location of the walk just before the letter $w_i$.

																																																																														\begin{lemma} \label{lem:shifting-walks-F}
																																																																															Suppose $x_k\ge 1$ and $(x_k,y_k)\neq (1,0)$. The following are equivalent:
																																																																															\begin{itemize}
																																																																																\item[(i)] There are no $F$-critical substrings in the portion of $w$ corresponding to $s$ and no type 1F critical substrings whose last letter $2'$ is in $s$;
																																																																																\item[(ii)] The walk of the string $s$, but starting from $(x_k-1,y_k+1)$, has locations $(x_i-1,y_i+1)$ for each $i = k, \ldots, l+1$ (this also implies $x_i > 0$ for these $i$).
																																																																															\end{itemize}
																																																																														\end{lemma}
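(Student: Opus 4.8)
The plan is to compare the two walks --- the original one, and the copy started one step to the upper-left --- letter by letter, and to show that the shift $(-1,+1)$ is maintained through $w_i$ precisely when $w_i$ avoids a short list of ``breakpoint'' configurations, each of which will turn out to be (the first or last letter of) an $F$-critical substring. Write $(x,y)=(x_i,y_i)$ for the location before $w_i$ in the original walk; under the inductive assumption that the shift has been maintained so far, the shifted walk sits at $(x-1,y+1)$, which is a legal first-quadrant point exactly when $x\ge 1$.

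First I would carry out the core local computation. The letters $1'$ and $2$ always step \east{} and \north{} respectively (Figure \ref{fig:directions}), so they preserve any shift; for $1$ and $2'$ the direction depends only on whether the current point lies on an axis. The original point is on an axis iff $xy=0$, while the shifted point $(x-1,y+1)$ is on an axis iff $x=1$ (since $y+1>0$). Comparing these two conditions over $w_i\in\{1,2'\}$ and ``$x=1$ versus $x\ge 2$,'' I find the shift is maintained except in exactly four configurations: (B1) $w_i=1$ at $(x,0)$ with $x\ge 2$; (B2) $w_i=2'$ at $(x,0)$ with $x\ge 2$; (B3) $w_i=1$ at $(1,y)$ with $y\ge 1$; and (B4) $w_i=2'$ at $(1,y)$ with $y\ge 1$. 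In the remaining cases the shift is preserved and $x_{i+1}\ge 1$. The same computation shows that, absent a breakpoint, the walk never lands on $(1,0)$: the only routes to $(1,0)$ are from the origin (needing $x=0$) or a \south{} step from $(1,1)$ (which is itself a B3 breakpoint), so the hypothesis $(x_k,y_k)\ne(1,0)$ propagates.

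Next I would match the breakpoints to Figure \ref{fig:criticals}: (B1) is a type 3F substring and (B3), (B4) are type 5F substrings, located at the offending letter itself. The subtle case is (B2): a lone \north{} step by a $2'$ from $(x,0)$ with $x\ge 2$ is \emph{not} by itself one of the listed substrings, and the main obstacle is to prove the sub-claim that such a $2'$ is always the \textbf{last letter of a type 1F substring}. I would establish this by tracing the walk backwards along the $x$-axis from $w_i$: the walk reached $y=0$ either by a \south{} step (a $1$ descending from height $1$, which opens a 1F of the second kind) or by running \east{} along the axis, and in the latter case the most recent $1$ on that axis-run opens a 1F of the first kind --- here I would invoke the canonical-form convention that the first $1/1'$ of the word is unprimed, to guarantee such a $1$ exists. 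Either way $w_i$ closes a 1F, which is exactly why condition~(i) must separately forbid 1F substrings whose final $2'$ lies in $s$, even when the 1F begins before $s$.

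With the local dichotomy and the sub-claim in hand, both implications follow by induction on $i$ from $k$ to $l$, with base case supplied by $x_k\ge 1$ and $(x_k,y_k)\ne(1,0)$. For (i) $\Rightarrow$ (ii), if the shift ever failed I would take the first breakpoint $w_i$; by the dichotomy it has type B1--B4, and each produces a 3F, 5F, or 1F-ending inside $s$, contradicting~(i), so the shift is maintained and $x_i>0$ throughout. For (ii) $\Rightarrow$ (i), maintenance of the shift forces $x_i>0$ and (as above) keeps the walk off $(1,0)$, so any critical substring in $s$, or 1F whose final $2'$ lies in $s$, would place one of its letters in a B1--B4 configuration --- with the $y$-axis types, namely 4F and the first-variant 2F, excluded outright since $x_i>0$ --- again contradicting shift-maintenance. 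I expect the step-by-step case enumeration to be routine once Figure \ref{fig:directions} is tabulated, with all the genuine content concentrated in the backward-tracing sub-claim for~(B2).
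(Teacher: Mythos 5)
Your proposal is correct and takes essentially the same route as the paper's proof: the same on-axis/off-axis step comparison isolating the same problem configurations (the paper's 3F and 5F cases together with the $2'$-stepping-north-at-$y=0$ case), the same most-recent-$1$/canonical-form argument showing such a $2'$ always closes a 1F critical substring possibly beginning before $s$, and the same special handling of the point $(1,0)$ that the paper addresses in its 3F case. The differences are purely organizational: your explicit breakpoint table (B1)--(B4) used for both implications, versus the paper's type-by-type check for (ii)$\Rightarrow$(i) and its inductive invariant (either $x_i=1$ with $w_i\in\{1',2\}$, or $x_i\ge 2$) for (i)$\Rightarrow$(ii).
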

																																																																														In other words, shifting the starting location of $s$ by $(-1,+1)$ results in shifting that entire portion of the walk by $(-1,+1)$.
																																																																														
																																																																														\begin{proof}
																																																																															(ii) $\Rightarrow$ (i): Suppose for contradiction that $s$ contains an $F$-critical substring or the last letter of a 1F.  It is straightforward to check that shifting by $(-1,+1)$ alters the walk for critical strings of type 1F (in particular the last letter $2'$ always changes direction), second kind of 2F (the first letter changes direction), and 5F, giving a contradiction. For 3F, the same is true unless the location is $(1,0)$; but we assumed $s$ does not start at $(1,0)$ and so there is a letter prior to the $1$ in $s$.  Since $x_k>0$ this letter must be a $1$ in location $(1,1)$, giving a 5F in $s$.  The other two critical strings (first kind of 2F; 4F) always begin in location $x=0$, whereas our walk has $x_i > 0$ throughout.
																																																																															
																																																																															(i) $\Rightarrow$ (ii):  Suppose there are no $F$-critical substrings in $s$.  Then first, note that any $\east{}$ arrows starting at $y=0$ in $s$ are $1'$ and not $1$ (so as to avoid 3F substrings) and if there were a $\north{}$ arrow starting at $y=0$, then it is a $2$.  Indeed, if the $\north{}$ is $w_i=2'$, let $j<i$ be the largest index less than $i$ for which $w_j=1$ (this must exist since the word is in canonical form).  Then since $y_i=0$ we cannot have $2'$ or $2$ between $w_j$ and $w_i$, and so the portion of the walk from $w_j$ to $w_i$ looks like $1(1')^\ast 2'$ starting at $y=0$ or $y=1,x\ge 1$.  This is a type 1F, giving an ending of a 1F in $s$, contradicting (i).
																																																																															
																																																																															Now, (ii) holds for $i=k$ since $x_k>0$.  With this as a base case, we induct on $i$ to show (ii) for $k\le i\le l$.  It suffices to show that either $x_i=1$ and $w_i\in \{1',2\}$, or  $x_i\ge 2$, since if $y_i=0$ then we are done since $w_i=1'$ or $2$ by the above argument, and if $y_i>0$ then $w_i$ either doesn't change or starts off the axes both before and after the shift.
																																																																															
																																																																															Assume these conditions hold for $i-1$.  Then $x_i\ge 1$ since we can't move left from the position $x_{i-1}=1$.  If $x_i=1$ and $w_i\in \{1,2'\}$ then $y\ge 1$ (from analyzing $x_{i-1}$ and $w_{i-1}$) and we have a 5F critical substring, a contradiction.  Thus either $x_i=1$ and $w_i\in \{1',2\}$, or $x_i\ge 2$, as desired.
																																																																														\end{proof}
																																																																														
																																																																														We immediately obtain the counterpart of this lemma by applying $\eta$.  (Recall that the walk of $\eta(w)$ is obtained by reflecting the walk of $w$ over the line $y=x$, and the $E$-critical substrings of $w$ of type 1E through 5E correspond to the $F$-critical substrings of $\eta(w)$ of type 1F through 5F.)
																																																																														
																																																																														\begin{lemma} \label{lem:shifting-walks-E}
																																																																															Suppose $y_k\ge 1$ and $(x_k,y_k)\neq (0,1)$. The following are equivalent:
																																																																															\begin{itemize}
																																																																																\item[(i)] There are no $E$-critical substrings in the portion of $w$ corresponding to $s$ and no type 1E critical substrings whose last letter $1$ is in $s$;
																																																																																\item[(ii)] The walk of $s$, but starting from $(x_k+1,y_k-1)$, has locations $(x_i+1,y_i-1)$ for each $i = k, \ldots, l+1$ (this also implies $y_i > 0$).
																																																																															\end{itemize}
																																																																														\end{lemma}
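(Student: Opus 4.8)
The plan is to deduce Lemma \ref{lem:shifting-walks-E} directly from the already-established Lemma \ref{lem:shifting-walks-F} by applying the involution $\eta$, exactly as the surrounding text indicates. The guiding principle is that $\eta$ reflects the entire lattice walk over the line $y=x$: if $P_i(w) = (x_i,y_i)$, then $P_i(\eta(w)) = (y_i,x_i)$, by the Proposition following Example \ref{ex:walk}. Moreover, since $E = \eta \circ F \circ \eta$ and the $E$-critical table was obtained by applying $\eta$ to the $F$-critical table (swapping $1' \leftrightarrow 2$, $1 \leftrightarrow 2'$ and $x \leftrightarrow y$), the operator $\eta$ carries the $E$-critical substrings of $w$ of types 1E through 5E bijectively onto the $F$-critical substrings of $\eta(w)$ of types 1F through 5F, preserving the positions they occupy in the word. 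So the whole content of the argument is to verify that these two dictionaries translate each clause of Lemma \ref{lem:shifting-walks-E} into the corresponding clause of Lemma \ref{lem:shifting-walks-F}.

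First I would set $w^\ast = \eta(w)$ and let $s^\ast = \eta(s)$ be the corresponding substring, occupying the same indices $k,\ldots,l$. Writing $(x_i^\ast,y_i^\ast) = (y_i,x_i)$ for the locations of the walk of $w^\ast$, the hypotheses translate cleanly: $y_k \ge 1$ and $(x_k,y_k) \neq (0,1)$ become $x_k^\ast \ge 1$ and $(x_k^\ast,y_k^\ast) \neq (1,0)$, which are precisely the hypotheses of Lemma \ref{lem:shifting-walks-F} for $w^\ast$. Next I would match the two conditions. For condition (i), the $\eta$-correspondence of critical substrings shows that $s$ contains no $E$-critical substring if and only if $s^\ast$ contains no $F$-critical substring; and since $\eta$ sends a type 1E substring $2'(2)^\ast 1$ (last letter $1$) to a type 1F substring $1(1')^\ast 2'$ (last letter $2'$), the clause ``no type 1E critical substring whose last letter $1$ lies in $s$'' becomes ``no type 1F critical substring whose last letter $2'$ lies in $s^\ast$''. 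For condition (ii), reflecting the shift $(+1,-1)$ over $y=x$ yields $(-1,+1)$, so the assertion that the walk of $s$ started from $(x_k+1,y_k-1)$ has locations $(x_i+1,y_i-1)$ is exactly the reflection of the assertion that the walk of $s^\ast$ started from $(x_k^\ast-1,y_k^\ast+1)$ has locations $(x_i^\ast-1,y_i^\ast+1)$. Thus both conditions of Lemma \ref{lem:shifting-walks-E} for $w$ are the $\eta$-images of the two conditions of Lemma \ref{lem:shifting-walks-F} for $w^\ast$, and the equivalence for $w$ follows from the equivalence for $w^\ast$.

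I expect no genuine obstacle here: the entire argument is the bookkeeping of checking that $\eta$ interchanges the two statements verbatim. The one point worth double-checking is the asymmetric clause about the ``last letter'' in condition (i), since types 1E and 1F are singled out precisely because their final letters ($1$ and $2'$, respectively) are the ones whose step direction is altered by the coordinate shift; but the identity $\eta\big(1(1')^\ast 2'\big) = 2'(2)^\ast 1$ confirms that $\eta$ pairs these final letters correctly, so even this clause transports without incident.
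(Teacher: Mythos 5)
Your proposal is correct and is exactly the paper's own argument: the paper derives Lemma \ref{lem:shifting-walks-E} from Lemma \ref{lem:shifting-walks-F} by applying $\eta$, citing the reflection of the walk over $y=x$ and the correspondence of $E$-critical substrings of $w$ with $F$-critical substrings of $\eta(w)$. You have merely spelled out the translation of hypotheses, the $1\mathrm{E}\leftrightarrow 1\mathrm{F}$ last-letter clause, and the reflected shift $(+1,-1)\leftrightarrow(-1,+1)$ in more detail than the paper's one-line proof does.
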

																																																																														
																																																																														We now use these two lemmas repeatedly to prove that $E$ and $F$ are partial inverses.
																																																																														
																																																																														\begin{proof}[Proof of Proposition \ref{prop:inverses}]
																																																																															It suffices to show (by applying $\eta$) that $E(F(w)) = w$ whenever $F(w)$ is defined.  So, suppose $F(w)$ is defined and let $v=F(w)$ where $u=w_k,\ldots, w_l$ is the final $F$-critical substring of $w$, changed to $v_k,\ldots, v_l$.  It suffices to show $v$ has no later $E$-critical substrings.
																																																																															
																																																																															\textbf{Case 1:}  Suppose $u=w_k,\ldots,w_l=1(1')^\ast 2'$ has type 1F.  Then there are no later $F$-critical substrings (nor endings of 1F critical substrings) among the tail $w_{l+1},\ldots,w_n$, and clearly $w_{l+1}$ does not start at $(1,0)$ or on the $y$-axis, so by Lemma \ref{lem:shifting-walks-F}, the tail can be shifted by $(-1,+1)$. Then, by Lemma \ref{lem:shifting-walks-E}, there are no $E$-critical substrings in $v_{l+1} \cdots v_n$. 
																																																																															
																																																																															It now remains to check that there are no $E$-critical substrings starting in $v_{k+1} \cdots v_l = (1')^*2$.  But an inspection of the definition of $E$ shows that the only possibility is a 4E at $v_l$ (since $v_k=2'$ and therefore $v_l=2$ cannot be represented as $2'$).  But $v_k=2'$ makes $y=0$ impossible before $v_l$, and we're done.
																																																																															
																																																																															\textbf{Case 2:}  Suppose $u=w_k,\ldots,w_l=1(2)^\ast 1'$ is type 2F.  Then $w_l=1'$ starts at a location not at $x=0$ or $(1,0)$ except when the location just before $w_k$ is $(1,1)$ and $l=k+1$, and in any case $w_{l+1}$ starts at a valid location for Lemma \ref{lem:shifting-walks-F}.  
																																																																															
																																																																															So, as in Case 1, applying Lemmas \ref{lem:shifting-walks-F} and \ref{lem:shifting-walks-E} we find that $v_{l+1},\ldots,v_n$ has no $E$-critical substrings, and no longer 1E criticals start among $v_k,\ldots,v_l$.  There are clearly no $E$-critical substrings of type 3E, 4E, or 5E starting after the first letter of $v_k,\ldots,v_{l}=2'(2)^\ast 1$ since the locations of the letters involved are not valid for these types.  
																																																																															
																																																																															Finally, the only way that a later 2E can start among $v_k,\ldots,v_l$ is if $v_k=2'$ starts the substring, $l=k+1$, and $v_l=v_{k+1}=1$ is the first $1$ or $1'$ of the word so that it counts as a $1'$ in the 2E critical substring.  For this to happen the $1$ must start on the $y$-axis and by the 2E location conditions this means $v_k=v_1$ starts at $(0,0)$.  But then the 2E critical substring $v_1,\ldots,v_m=21(1')^\ast 2$ changes to $1(1')^\ast 2$ starting at $(0,0)$ in $w$, which counts as a later 1F, a contradiction. 
																																																																															
																																																																															\textbf{Case 3:}  Suppose $u=w_k=1$ is type 3F.  As long as $(x_k,y_k)\neq (0,0)$, the tail $w_{k+1},\ldots, w_{n}$ has a valid starting location for Lemma \ref{lem:shifting-walks-F}, and so by the same arguments as in the previous two cases (since $u$ cannot be extended to a longer 1F) we see that $v_{k+1},\ldots,v_n$ has no later $E$-critical substrings.  In addition $v_k=2$ (a type 4E critical) is not the start of a longer 2E critical substring since then $v_k=2=2'$ is the first $2$ or $2'$ in $v$, and so the first $2$ after it in the 2E critical substring is the first $2$ or $2'$ in $w$, creating a 1F critical substring in $w$ starting at $w_k$.
																																																																															
																																																																															Otherwise, if $(x_k,y_k)=(0,0)$ (and so $k=1$) we see that the next entry $w_2$ starts on an axis in both $w$ and $v$, and hence does not change direction.  The tail starting from $w_3$ also starts in a valid location for Lemma \ref{lem:shifting-walks-F}, so the tail can be shifted and by Lemma \ref{lem:shifting-walks-E} the corresponding tail in $v$ has no $E$-critical substrings.  Any $E$-critical substring starting at $v_2$ must be a 1E or 3E with $v_2=2'$, but then $w_1w_2=12'$ is a 1F critical substring in $w$, a contradiction.  
																																																																															
																																																																															Finally, suppose there is a longer 1E or 2E critical substring starting at $v_1=2=2'$.  If it is 1E of the form $2'(2)^\ast 1$ with at least one $2$ in $(2)^\ast$, then $w$ starts with $1(2)^\ast 1$ and the last $1$ is a type 5F critical substring, a contradiction.  If it is $2'1$ then $w$ starts $11$ and $w_2$ is a later $F$-critical, also a contradiction.  Finally if it is 2E of the form $2'(1')^\ast 2$ then $w$ starts $1(1')^\ast 2$ and this is a longer 1F critical substring (since the first $2$ counts as $2'$).
																																																																															
																																																																															\textbf{Case 4:}  Suppose $u=w_k=1'$ is type 4F.  Note that the only location for which the tail $w_{k+1},\ldots,w_{n}$ does not satisfy the conditions of Lemma \ref{lem:shifting-walks-F} is when the $1'$ arrow starts from $(0,0)$ and so $k=1$.  But this is identical to the analysis in Case 3 since in this situation $u$ is both a 3F and 4F critical substring.
																																																																															
																																																																															So it suffices to consider the case when the tail of $w$ starts off-axes.  By Lemmas \ref{lem:shifting-walks-F} and \ref{lem:shifting-walks-E} there are no $E$-criticals among $v_{k+1},\ldots,v_n$.  To check that no longer $E$-criticals start from $v_k=2'$, note that 1E types are eliminated by Lemma \ref{lem:shifting-walks-E} and 2E types are in the wrong starting location.
																																																																														\end{proof}
																																																																														
																																																																														\subsection{Interaction with the endpoint of the lattice walk}
																																																																														
																																																																														The operators $E,F$ have well-behaved interactions with the shape and endpoint of the lattice walk. We state two useful facts, which follow from Lemmas \ref{lem:shifting-walks-F} and \ref{lem:shifting-walks-E}.
																																																																														
																																																																														\begin{corollary}\label{cor:F-on-walk}
																																																																															The operation $F$ changes the direction of precisely one step in the walk, either from $\south{}$ to $\west{~~}$ or from $\east{~~}$ to $\north{}$. In particular, the endpoint changes by $(-1,+1)$. For $E$, the statement is reversed.
																																																																														\end{corollary}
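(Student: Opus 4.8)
The plan is to decompose the effect of $F$ on the walk into two independent pieces: the transformation applied to the final $F$-critical substring, and the behavior of the tail that follows it. Write $u = w_k \cdots w_l$ for the final $F$-critical substring, so that $F(w)$ is obtained by transforming $u$ according to its type while leaving the prefix $w_1 \cdots w_{k-1}$ untouched. Since that prefix is unchanged, the first $k-1$ steps of the walk are identical for $w$ and $F(w)$, and the entire content of Corollary \ref{cor:F-on-walk} concerns positions $k$ through $n$. I would state this reduction first, so that the remaining work is purely local to $u$ and to the tail.

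Next I would check the four defined transformation types (1F--4F) directly against Figure \ref{fig:criticals}. In each case one reads off that the \emph{first} step of $u$ changes direction---from $\east{}$ to $\north{}$ in the first kinds of 1F and 2F and in 3F, 4F, and from $\south{}$ to $\west{}$ in the second kinds of 1F and 2F---while every other step of $u$ keeps its direction and the endpoint of the $u$-portion moves by exactly $(-1,+1)$. The reason no other step changes is that the interior letters of $u$ are $1'$ (always $\east{}$) or $2$ (always $\north{}$), whose directions are location-independent, and the final letter, although relabeled ($2' \leftrightarrow 2$ or $1' \leftrightarrow 1$), retains its step direction after the shift. This is a short, finite verification rather than a genuine calculation, so I would record it as a case table and move on.

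For the tail $w_{l+1} \cdots w_n$, the crucial point is that $u$ is the \emph{final} $F$-critical substring, so the tail contains no $F$-critical substrings and no ending of a type 1F substring. Provided its starting location satisfies the hypotheses of Lemma \ref{lem:shifting-walks-F} (namely $x \ge 1$ and location $\ne (1,0)$), that lemma shows the $(-1,+1)$ shift produced by the previous part propagates rigidly through the tail, introducing no further change of direction. I expect the main obstacle to be exactly the case where those hypotheses fail: when $u = w_1 = 1$ sits at the origin (so types 3F and 4F coincide) and the tail begins at the excluded location $(1,0)$. Here I would argue as in Case 3 of the proof of Proposition \ref{prop:inverses}: the letter $w_2$ begins on an axis both before and after the transformation and so keeps its direction, after which the location from $w_3$ onward is valid for Lemma \ref{lem:shifting-walks-F} and the remaining tail shifts rigidly. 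Combining the two pieces, precisely one step of the walk changes direction, of the stated form, and hence the endpoint shifts by $(-1,+1)$.

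Finally, the statement for $E$ follows formally via $E = \eta \circ F \circ \eta$ together with the fact that the walk of $\eta(w)$ is the reflection of the walk of $w$ across the line $y=x$: this reflection interchanges $\east{} \leftrightarrow \north{}$ and $\south{} \leftrightarrow \west{}$ and transposes the endpoint shift $(-1,+1)$ to $(+1,-1)$, which is exactly the reversed statement. Reflecting the whole argument also turns Lemma \ref{lem:shifting-walks-F} into Lemma \ref{lem:shifting-walks-E}, so no separate tail analysis for $E$ is needed.
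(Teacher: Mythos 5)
Your proposal is correct and follows essentially the same route as the paper, which derives this corollary from Lemma \ref{lem:shifting-walks-F} (and its $\eta$-reflection, Lemma \ref{lem:shifting-walks-E}) exactly as you do: a finite check that the transformation of the final critical substring flips precisely its first step ($\east{}$ to $\north{}$ or $\south{}$ to $\west{}$), a rigid $(-1,+1)$ shift of the tail justified by finality of the critical substring, and the same treatment of the exceptional origin case as in Case 3 of the proof of Proposition \ref{prop:inverses}. Your write-up is in fact more explicit than the paper's, which states the corollary without proof, leaving the verification implicit in the casework of Proposition \ref{prop:inverses}.
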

																																																																														
																																																																														\begin{corollary}
																																																																															\label{cor:initial-subwords-dual}
																																																																															The sum of the coordinates of the $k$th position in the walk is unchanged for all $k$ by $F$ and $E$. It follows that the rectification shape of any initial subword of $w$ is an invariant for $F$ and $E$.
																																																																														\end{corollary}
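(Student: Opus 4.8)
The plan is to obtain both assertions as essentially immediate consequences of the two results already in hand: Corollary \ref{cor:F-on-walk}, which describes exactly how $F$ (and, by the reversed statement, $E$) alters the walk, and Theorem \ref{thm:rectification}, which reads off the rectification shape from the walk. The whole corollary should reduce to a one-line observation about coordinate sums of the four direction vectors, followed by plugging into the shape formula.

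First I would prove that $x_k + y_k$ is invariant for every $k$. By Corollary \ref{cor:F-on-walk}, applying $F$ changes the direction of precisely one step, and the change is either $\east{} \mapsto \north{}$ or $\south{} \mapsto \west{}$. The key point is that each replacement preserves the \emph{coordinate sum} of that single step: both $\east{} = (1,0)$ and $\north{} = (0,1)$ have coordinate sum $+1$, while both $\south{} = (0,-1)$ and $\west{} = (-1,0)$ have coordinate sum $-1$. Since all other steps are left untouched and $x_k+y_k$ is simply the sum of the coordinate sums of the first $k$ steps, this quantity is unchanged at every position $k$, not merely at the endpoint. (Indeed this is consistent with the endpoint shifting by $(-1,+1)$, which also has net coordinate sum $0$.) The same conclusion holds for $E$, either by applying the identical argument to the reversed transformations $\north{}\mapsto\east{}$ and $\west{}\mapsto\south{}$, or directly from the fact that $E$ and $F$ are partial inverses (Proposition \ref{prop:inverses}).

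For the second statement, I would note that the walk of an initial subword $w_1\cdots w_k$ coincides with the first $k$ steps of the walk of $w$, since the walk is defined inductively and the step assigned to $w_i$ depends only on the location $P_{i-1}$. Hence, by Theorem \ref{thm:rectification} applied to $w_1\cdots w_k$, its rectification shape is $\left(\tfrac{1}{2}(k+x_k+y_k),\ \tfrac{1}{2}(k-x_k-y_k)\right)$, which depends on the walk only through $k$ and the coordinate sum $x_k+y_k$. Combined with the first part, this shape is therefore invariant under both $F$ and $E$. I do not expect a genuine obstacle here; the only point that warrants care is verifying that the shape depends on the walk solely through $k$ and $x_k + y_k$ (rather than on the individual coordinates), which is exactly the content of the formulas in Theorem \ref{thm:rectification}. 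Once that is observed, invariance of the shape follows automatically from invariance of the coordinate sum.
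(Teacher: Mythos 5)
Your proposal is correct and follows the paper's intended route: the paper states this corollary without a separate proof, as an immediate consequence of Corollary \ref{cor:F-on-walk} (itself derived from Lemmas \ref{lem:shifting-walks-F} and \ref{lem:shifting-walks-E}), and your argument---that the single altered step's coordinate sum is preserved under both replacements $(1,0)\mapsto(0,1)$ and $(0,-1)\mapsto(-1,0)$, so every partial sum $x_k+y_k$ is fixed, after which Theorem \ref{thm:rectification} applied to the prefix walk gives the shape $\bigl(\tfrac{1}{2}(k+x_k+y_k),\tfrac{1}{2}(k-x_k-y_k)\bigr)$---is exactly the intended deduction. Your two points of care (that the walk of an initial subword coincides with the initial segment of the full walk, and that the shape depends only on $k$ and $x_k+y_k$) are precisely the right details to check, and there are no gaps.
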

																																																																														
																																																																														We also give an alternate criterion for when $E,F$ are defined, in terms of the lattice walk rather than strings. We will use this in our proof that $E,F$ are coplactic.
																																																																														
																																																																														\begin{proposition}
																																																																															\label{prop:alternate-definedness}
																																																																															Let $w$ be a word and let $(x,y)$ be the endpoint of $w$'s lattice walk.
																																																																															\begin{itemize}
																																																																																\item[(i)] If $x=0$, then $F(w)$ is undefined.
																																																																																\item[(ii)] If $x = 1$ and the walk does not contain a $\south{}$ or $\west{}$ step, then $F(w)$ is defined.
																																																																																\item[(iii)] If $x=1$ and the walk contains a $\south{}$ or $\west{}$ step, then $F(w)$ is defined if and only if $F'(w)$ is undefined.
																																																																																\item [(iv)] If $x \geq 2$, then $F(w)$ is defined.
																																																																															\end{itemize}
																																																																															The statement for $E$ is given by applying $\eta$ (switching $x \leftrightarrow y$ and $F' \leftrightarrow E'$).
																																																																														\end{proposition}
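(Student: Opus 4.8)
Since the $E$-statements follow from the $F$-statements by applying $\eta$ (which transposes the walk, exchanging $x \leftrightarrow y$, $\south{}\leftrightarrow\west{}$, $F\leftrightarrow E$ and $F'\leftrightarrow E'$), the plan is to prove only the four assertions for $F$, arguing directly on the lattice walk of $w$; note that coplacticity of $F$ is not yet available, so I cannot reduce to rectified words. Part (i) is immediate from Corollary \ref{cor:F-on-walk}: were $F(w)$ defined, its walk would differ from that of $w$ in a single step and end at $(x-1,y+1)=(-1,y+1)$, which is impossible since every walk stays in $\NN\times\NN$.

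For (ii)--(iv) I must locate the \emph{final} $F$-critical substring (Definition \ref{def:F}) and decide whether it is of type 5F. The strategy is to examine the walk after its last visit to the $y$-axis: let $j$ be the largest index with $x_{j-1}=0$ (taking $j=1$ if the walk only touches $x=0$ at the origin). Since $x_n\ge 1$ throughout (ii)--(iv), this suffix exists and satisfies $x_i\ge 1$ for $i\ge j$, returning to $x_n$ at the end; Lemmas \ref{lem:shifting-walks-F} and \ref{lem:shifting-walks-E} then show that away from the critical substrings the suffix can be shifted rigidly by $(-1,+1)$, so the only obstructions to definedness live at the steps where the walk descends toward the line $x=1$.

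This quickly disposes of (iv) and (ii). When $x_n\ge 2$, the suffix must climb back from $x=1$ to $x\ge 2$; I will show that the last descending step at $x=1$ (the only candidate for a blocking 5F) is always extended on its right, by the $1'$ or $2'$ effecting the subsequent climb, into a genuine 2F or 1F critical substring, so that the longest-substring tie-break in Definition \ref{def:F} selects a non-5F type and $F(w)$ is defined. When $x_n=1$ and the walk has no $\south{}$ or $\west{}$ step, the rectification is one-row and, by the count in Theorem \ref{thm:rectification}, $w$ has a single $1$-ish letter; reading it as $1'$ when it lies on the $y$-axis (or as $1$ on the $x$-axis) exhibits a 3F or 4F critical substring with no later critical substring, so again $F(w)$ is defined.

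The crux is (iii): $x_n=1$ with a $\south{}$ or $\west{}$ step, so the rectification has two rows. Here the final $F$-critical substring sits at the walk's last descent to $x=1$, a step which is a priori a 5F ($1\to\south{}$ or $2'\to\west{}$ out of $(1,\cdot)$). The key claim is that this 5F is \emph{rescued} into a valid 1F (resp.\ 2F) precisely when the following letter can be read as a $2'$ (resp.\ $1'$), and that the existence of this rescuing continuation is equivalent to the failure of $F'$. I will establish this by matching the two explicit string descriptions: by Proposition \ref{prop:explicit-definitions-primed}, $F'(w)$ changes the last $1$ of $w$ to $2'$ and is undefined exactly when that change would alter $\std(w)$, and I will translate this condition into the presence of a rescuing $2'/1'$ immediately after the final descent. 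The main obstacle is exactly this identification: because the statement concerns arbitrary words, it must be carried out by a direct case analysis of the last steps of the walk near $(1,y_n)$, reconciling the walk-and-tie-break definition of $F$ (including the convention that the first $1$-ish letter may be primed) with the standardization-based behaviour of $F'$, rather than by any appeal to coplacticity or to the structure of straight-shape tableaux.
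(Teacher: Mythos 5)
Your overall skeleton coincides with the paper's: (i) via the endpoint shift (Corollary \ref{cor:F-on-walk}), (ii) via the single $1$ acting as a 3F/4F, and (iii)--(iv) by deciding whether the \emph{final} $F$-critical substring has type 5F, with $F'$-definedness translated through Proposition \ref{prop:explicit-definitions-primed} into ``the last $1$ or $2'$ of $w$ is a $1$.'' But the key claim you build (iii) and (iv) on --- that the candidate 5F at the walk's last descent to $x=1$ is ``rescued into a valid 1F (resp.\ 2F) precisely when the following letter can be read as a $2'$ (resp.\ $1'$)'' --- is false as stated, in two ways. First, a 5F of the form $2'$ (a westward step from $x=1$) can never be extended rightward into a 1F or 2F: both of those substrings \emph{begin} with a $1$, and the $2'$ terminating a 1F always sits at $y=0$ stepping north, never west. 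Second, when $F$ is defined the deciding critical substring need not be near the last descent at all. Take $w = 21122'1'$: the endpoint is $(1,1)$ with a south and a west step, so this is case (iii); $F'$ is undefined (the last $1$, $w_3$, is followed by a $2'$), and $F$ is indeed defined --- but the final critical substring is the 4F at $w_6 = 1'$ (location $x=0$), well after the west-$2'$ 5F at $w_5$, which is rescued by nothing adjacent to it. Conversely, for $w = 21122'2'12$ the final critical substring is the west-$2'$ 5F at $w_5$, whose \emph{immediately following} letter is a $2'$, yet $F(w) = \varnothing$; so your ``precisely when'' criterion returns the wrong answer in both directions.

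What is missing is the global suffix analysis that the paper substitutes for your local rescue. For (iv), the west-$2'$ case is handled by Lemma \ref{lem:xgeq2-Fcritical} (if the walk has $x=0$ at some step and $x\geq 2$ later, the suffix contains an $F$-critical substring), which produces a \emph{later} critical substring rather than an extension. For (iii), the paper proves: if the final critical substring is a 5F $w_i=1$, then the next letter \emph{that is not a $2$} cannot exist (each of $1$, $1'$, $2'$ yields a later or longer critical substring, with type depending on whether $y=0$), so the suffix is $(2)^*$ and the last $1$-or-$2'$ is $w_i$ itself; and if the final critical substring is a 5F $2'$, then $x\leq 1$ for the rest of the word and the east step restoring $x=1$ must be a genuine $1$. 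Note also two smaller slips in your mechanism: the rescuing letter is the next letter that is not a $2$ (for 2F) or not a $1'$ (for 1F), not the ``following letter''; and the 1F rescue of a south-$1$ requires the location condition $y=1$, which your sketch never tracks --- a south-$1$ at $(1,y)$ with $y\geq 2$ admits no 1F at all. Finally, in (iv) your argument presumes a descent exists; when it does not, definedness still needs the observation (which you use only in (ii)) that the first $1$ of $w$ counts as a 4F, guaranteeing a non-5F critical substring exists. These repairs are exactly the content of the paper's proof, so as written your central step has a genuine gap.
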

																																																																														
																																																																														\begin{lemma} \label{lem:xgeq2-Fcritical}
																																																																															Suppose some step $w_i$ of $w$ has $x=0$, and $x \geq 2$ at some later point (possibly the endpoint). Then there is an $F$-critical string in the suffix $w_i w_{i+1} \cdots$.
																																																																														\end{lemma}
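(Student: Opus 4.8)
The plan is to locate the critical substring explicitly by analyzing the walk during its last visit to the $y$-axis before it first reaches $x = 2$. Interpreting ``$w_i$ has $x = 0$'' as saying that the location $P_{i-1}(w)$ of the step $w_i$ lies on the $y$-axis, let $T$ be the smallest index $\ge i$ with $P_T(w)$ having $x$-coordinate $2$; this exists and satisfies $T \ge i+1$, since $x$ changes by at most one per step and $P_{i-1}$ has $x = 0$. Let $c$ be the largest index with $i-1 \le c < T$ and $P_c(w)$ on the $y$-axis (this exists, as $c = i-1$ is a candidate). By maximality of $c$ and minimality of $T$, every intermediate location $P_{c+1}, \dots, P_{T-1}$ has $x$-coordinate exactly $1$, while $P_c = (0, y_c)$ and $P_T$ has $x = 2$. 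Since $c+1 \ge i$, everything from $w_{c+1}$ onward lies in the suffix $w_i w_{i+1}\cdots$, so it suffices to produce a critical substring there.

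First I would analyze the step $w_{c+1}$ leaving the $y$-axis. The only way off the $y$-axis is an $\east{~}$ step, and on the axis an $\east{~}$ arrow comes from a $1$ or a $1'$ (Figure \ref{fig:directions}), so $w_{c+1} \in \{1, 1'\}$. If it is a $1'$, then $w_{c+1}$ is a type 4F critical substring ($\east{1'}$ at $x = 0$), and we are done; if it is a $1$ starting at the origin, it is a type 3F critical substring ($\east{1}$ at $y=0$). So I may assume $w_{c+1} = 1$ with $y_c \ge 1$, which is precisely the first letter of a potential type 2F substring $1(2)^*1'$ at location $x=0$.

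Next I would track the walk along the line $x = 1$. Between $c+1$ and $T-1$ the location stays at $x=1$, so no intermediate step may be an $\east{~}$ (that would reach $x = 2$ before $T$) or a $\west{~}$ (that would return to $x=0$). Reading off Figure \ref{fig:directions}, the only remaining possibilities at $(1, y)$ are a $1$ giving $\south{}$ (which requires $y \ge 1$), a $2$ giving $\north{}$, or, at $(1,0)$, a $2'$ giving $\north{}$. If any intermediate step is a $\south{}$, it is a type 5F critical substring (Figure \ref{fig:criticals}) and we are done. Otherwise every intermediate step is a $\north{}$; since the walk begins this stretch at $(1, y_c)$ with $y_c \ge 1$ and only moves up, it never returns to the $x$-axis, which rules out the $2'$ case and forces every intermediate step to be a $2$. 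Finally $w_T$ is an $\east{~}$ step from $(1, y_{T-1})$ with $y_{T-1} \ge 1$, hence off the axes, hence a $1'$. Thus $w_{c+1}\cdots w_T = 1(2)^* 1'$ starting at $x = 0$, which is exactly a type 2F critical substring.

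I expect the main obstacle to be the final branch: verifying that the middle of the substring consists only of $2$'s (rather than a stray $2'$), so that the substring is genuinely of the clean form $1(2)^*1'$ rather than a near-miss. This is precisely what the observation ``$y$ only increases, so the walk stays above the $x$-axis'' supplies, and it is the reason to take $c$ to be the \emph{last} visit to the $y$-axis before $T$: this confines the stretch between $c$ and $T$ to the line $x=1$ strictly above the $x$-axis, where the direction rules are unambiguous. A minor bookkeeping point, handled by the convention that critical substrings may be read from any representative of $w$, is to check that $w_{c+1}$ really carries the letter ($1$ or $1'$) demanded by the relevant critical type; this is automatic here, since an off-axis $\east{~}$ step forces $1'$ and an off-axis $\north{}$ step forces $2$.
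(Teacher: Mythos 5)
Your proof is correct and takes essentially the same route as the paper's: both track the walk as it leaves the $y$-axis and then travels along the line $x=1$, finding a type 4F (or 3F) string at the departing step, a type 5F string at any interrupting south or west step, and otherwise the type 2F string $1(2)^*1'$. The only differences are bookkeeping — you anchor at the last axis visit before the first arrival at $x=2$ and use 3F at the origin, whereas the paper skips the initial north steps, handles the origin case via the convention that the first $1$ counts as $1'$ (type 4F), and examines the next non-$2$ letter.
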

																																																																														
																																																																														\begin{proof}[Proof of Lemma]
																																																																															First, skipping any immediate $\north{}$ steps, we may assume $w_i = 1'$ or $1$. If $w_i = 1'$, it is type 4F. This includes the case where $w_i$ is the first $1$ of $w$ (since it counts as both), so in the case $w_i = 1$ we may assume $y>0$. Let $w_j$ be the next letter that is not $2$ ($w_j$ exists because the endpoint does not have $x=1$). If $w_j = 1'$, $w_i \cdots w_j$ is 2F. If $w_j = 1$ or $2'$, then $w_j$ is 5F.
																																																																														\end{proof}
																																																																														
																																																																														\begin{proof}[Proof of Proposition \ref{prop:alternate-definedness}]
																																																																															For (i), we have $F(w) = \varnothing$ because the endpoint cannot shift by $(-1,+1)$. For (ii), by inspection, $w$ has a single $1$, which counts as a 4F-critical string and is final. For (iii) and (iv), the first $1$ of $w$ again counts as 4F, so $F$ will be undefined if and only if the final critical string is type 5F. We consider (iii) and (iv) separately.
																																																																															
																																																																															(iii): We show that the final $F$-critical string is a 5F if and only if the last $1$ or $2'$ in $w$ is a $1$ (that is, $F'$ is defined).
																																																																															
																																																																															$(\Rightarrow)$: Suppose the final critical string $w_i$ has type 5F. If $w_i = 1$, let $w_j$ be the next letter that is not $2$. Then $w_j$ cannot be $1$ (type 5F or 3F, depending on whether $y=0$), $1'$ (type 2F), or $2'$ (type 5F or 1F, depending on whether $y=0$). So $w_j$ cannot exist and the entire suffix contains only $(2)$s. If $w_i = 2'$ instead, then $x=0$ after it, so by Lemma \ref{lem:xgeq2-Fcritical} we must have $x \leq 1$ for the remainder of the word. But then $x=0$ immediately after the last $2'$ of $w$, and there must be a $1$ or $1'$ later to reach $x=1$ at the endpoint. But it cannot be $1'$ (type 4F), hence is a $1$ as desired. 
																																																																															
																																																																															($\Leftarrow$): Suppose the last $1$ or $2'$ is $w_i = 1$ at location $(x_i,y_i)$. Since all further steps are $1'$ (right) and $2$ (up), we must have $x_i=0$ or $1$. Either way, $x_{i+1}=1$, so in fact the suffix is just $(2)^*$. If $x_i=1$, then $w_i$ is type 5F and we're done. If $x_i=0$, consider the largest index $j < i$ such that $x_j=1$ ($j$ exists because the walk must be off-axes at some point if it has a $\south{}$ or $\west{}$ step). Then $w_j = 2'$ is 5F-critical and is final by inspection.
																																																																															
																																																																															(iv) Suppose $x \geq 2$ and the final $F$-critical string $w_i$ has type 5F. If $w_i = 2'$, Lemma \ref{lem:xgeq2-Fcritical} gives a later critical string (since $x_{i+1}=0$), so we must have $w_i = 1$. Let $w_j$ be the next letter that is not $2$, which must exist because the endpoint has $x \geq 2$. But we cannot have $w_j = 1'$ (type 2F) or $1$ (type 3F or 5F, depending on whether $y=0$) or $2'$ (type 1F or 5F, depending on whether $y=0$), a contradiction.
																																																																														\end{proof}
																																																																														
																																																																														As a corollary, we take the first step toward showing that $E$ and $F$ are coplactic:
																																																																														
																																																																														\begin{corollary} \label{cor:Fdefinedness-knuth}
																																																																															Let $w, w'$ be Knuth equivalent. Then $F(w)$ is defined iff $F(w')$ is defined.
																																																																														\end{corollary}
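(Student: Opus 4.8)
The plan is to deduce the corollary directly from the endpoint-based definedness criterion of Proposition \ref{prop:alternate-definedness}, by checking that every quantity appearing in that criterion is a Knuth invariant. First I would record that, by Proposition \ref{prop:walks}, the endpoint $(x,y)$ of the lattice walk is unchanged under shifted Knuth moves, and the length $n$ of the word is obviously unchanged (the weight is preserved). Hence the case split of Proposition \ref{prop:alternate-definedness} according to whether $x=0$, $x=1$, or $x\ge 2$ is identical for $w$ and $w'$.

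Next I would observe that the remaining distinguishing condition, ``the walk contains a $\south{}$ or $\west{}$ step,'' is also a Knuth invariant. Indeed, by Theorem \ref{thm:rectification} the number of such steps equals $\lambda_2 = \tfrac12(n-x-y)$, a function of the (invariant) endpoint and length alone; equivalently, it is the condition that $\rectify(w)$ has two rows. With this in hand, cases (i), (ii), and (iv) of Proposition \ref{prop:alternate-definedness} are settled immediately: in each, the definedness of $F$ is determined entirely by $x$ together with the presence or absence of a $\south{}$/$\west{}$ step, all of which agree for $w$ and $w'$.

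The one case requiring a further input is (iii), where $x=1$ and the walk contains a $\south{}$ or $\west{}$ step; there $F(w)$ is defined if and only if $F'(w)$ is \emph{un}defined. Here I would invoke the coplacticity of $F'$ (Proposition \ref{prop:primed-coplactic}): since Knuth-equivalent words have the same rectification and $F'$ commutes with jeu de taquin slides, $F'(w)$ is defined exactly when $F'(\rectify(w))$ is, so definedness of $F'$ is itself a Knuth invariant. Alternatively, in this two-row case Proposition \ref{lem:maxchains} expresses definedness of $F'$ purely in terms of whether $\rectify(w)$ carries a $2'$ in its first row, which is manifestly determined by the rectification.

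Combining the three observations shows that $F(w)$ and $F(w')$ are defined under identical conditions, proving the corollary. I do not anticipate a genuine obstacle beyond carefully matching up the four cases; the only substantive subtlety is case (iii), where the reduction to $F'$ must be made and where one must lean on the already-established coplacticity of the primed operators rather than on any new combinatorial analysis of critical substrings.
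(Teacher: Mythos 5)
Your proposal is correct and takes essentially the same route as the paper: the paper's proof likewise reduces to Proposition \ref{prop:alternate-definedness} and notes that the three relevant data --- the walk's endpoint, the presence of a downward or leftward step, and the definedness of $F'$ --- are Knuth invariants by Proposition \ref{prop:walks}, Theorem \ref{thm:rectification}, and Proposition \ref{prop:primed-coplactic} respectively. Your case-by-case discussion simply spells out in detail what the paper compresses into two sentences.
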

																																																																														
																																																																														\begin{proof}
																																																																															By Proposition \ref{prop:alternate-definedness}, whether $F$ is defined depends only on the endpoint of the walk, whether the walk contains any $\south{}$ or $\west{}$ steps, and whether $F'$ is defined. This data is preserved under Knuth moves by Propositions \ref{prop:primed-coplactic} and \ref{prop:walks} and Theorem \ref{thm:rectification}.
																																																																														\end{proof}
																																																																														
																																																																														Finally, we obtain a characterization of ballotness depending only on our operators:
																																																																														
																																																																														\begin{proposition} \label{prop:ballot-iff-killed}
																																																																															A word $w$ in the alphabet $\{1',1,2',2\}$ is ballot if and only if $E(w) = E'(w) = \varnothing$. It is anti-ballot if and only if $F(w) = F'(w) = \varnothing$.
																																																																														\end{proposition}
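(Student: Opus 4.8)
The plan is to push everything through the endpoint $(x_n,y_n)$ of the lattice walk. By Corollary~\ref{cor:ballot-walk-criterion}, a word in $\{1',1,2',2\}$ is ballot iff $y_n=0$ and is anti-ballot (i.e.\ $\eta(w)$ is ballot) iff $x_n=0$. I would first dispatch the anti-ballot claim formally: since $E=\eta\circ F\circ\eta$ and $E'=\eta\circ F'\circ\eta$ (Proposition~\ref{prop:primed-eta}) with $\eta^2=\identity$, we have $E(\eta(w))=\eta(F(w))$ and $E'(\eta(w))=\eta(F'(w))$, and $\eta$ (being an involution on words, fixing $\varnothing$) kills a word iff it kills its image. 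Hence $\eta(w)$ is ballot iff $F(w)=F'(w)=\varnothing$, so the anti-ballot statement is exactly the ballot statement applied to $\eta(w)$. It therefore suffices to prove: $w$ is ballot iff $E(w)=E'(w)=\varnothing$.

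For this I need two definedness inputs. For $E$, I would transport Proposition~\ref{prop:alternate-definedness} through $\eta$ (swapping $x\leftrightarrow y$, $F'\leftrightarrow E'$, and preserving the set $\{\south{},\west{}\}$), which yields: if $y_n=0$ then $E(w)=\varnothing$; if $y_n\geq 2$ then $E(w)$ is defined; if $y_n=1$ and the walk has no $\south{}$ or $\west{}$ step then $E(w)$ is defined; and if $y_n=1$ and the walk has a $\south{}$ or $\west{}$ step then $E(w)$ is defined if and only if $E'(w)=\varnothing$. For $E'$, the only fact required is the elementary obstruction that $y_n=0\Rightarrow E'(w)=\varnothing$: by Proposition~\ref{prop:primed-lattice-endpoint} a defined $E'$ shifts the endpoint by $(+1,-1)$, which would move $(x_n,0)$ out of $\NN\times\NN$.

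The forward direction is then immediate. If $w$ is ballot then $y_n=0$, so $E(w)=\varnothing$ by the first bullet above and $E'(w)=\varnothing$ by the endpoint obstruction. For the backward direction I would argue the contrapositive: assuming $y_n\geq 1$, I show that at least one of $E(w),E'(w)$ is defined. If $y_n\geq 2$, or if $y_n=1$ with no $\south{}$/$\west{}$ step, then $E(w)$ is already defined. In the one remaining case, $y_n=1$ with a $\south{}$ or $\west{}$ step, the transported Proposition~\ref{prop:alternate-definedness} says $E(w)$ is defined precisely when $E'(w)=\varnothing$; so if $E'(w)=\varnothing$ then $E(w)\neq\varnothing$. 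Thus $y_n\geq 1$ always forces one of the two operators to be defined, and so $E(w)=E'(w)=\varnothing$ forces $y_n=0$, i.e.\ $w$ is ballot.

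The only case carrying real content is $y_n=1$ together with a descending or leftward step, which is exactly where the definedness of $E$ and of $E'$ are complementary; this is the sole place the $\eta$-transported form of Proposition~\ref{prop:alternate-definedness}(iii) is used, and it is why the criterion must mention \emph{both} operators at once. Every other value of $y_n$ is settled by $E$ alone, and the $E'$ half of the forward direction is the trivial quadrant obstruction. I expect no serious difficulty beyond bookkeeping, since all the genuine work is already packaged in Proposition~\ref{prop:alternate-definedness} and Corollary~\ref{cor:ballot-walk-criterion}; the main thing to get right is the $\eta$-transport of cases and the verification that $\{\south{},\west{}\}$ is preserved so that case (iii) transports correctly.
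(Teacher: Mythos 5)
Your proposal is correct and is essentially the paper's own argument reflected through $\eta$: the paper proves the anti-ballot statement directly, noting that if $F(w)=F'(w)=\varnothing$ then Proposition \ref{prop:alternate-definedness} forces the endpoint to have $x=0$ (hence anti-ballot by Corollary \ref{cor:ballot-walk-criterion}), and conversely that $x=0$ kills both operators via the $(-1,+1)$ endpoint shift, with the ballot half following by symmetry. Your explicit contrapositive case analysis over $y_n$, including the complementary case $y_n=1$ with a $\south{}$ or $\west{}$ step, is exactly an unpacking of the same appeal to Proposition \ref{prop:alternate-definedness} on the $E,E'$ side, so there is no substantive difference.
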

																																																																														
																																																																														\begin{proof}
																																																																															We prove the anti-statement. If both are undefined, Proposition \ref{prop:alternate-definedness} shows that the endpoint has $x=0$, hence $w$ is anti-ballot by Corollary \ref{cor:ballot-walk-criterion}. Conversely, if $w$ is anti-ballot, the endpoint has $x=0$. Then $F$ and $F'$ must be undefined because either would cause the endpoint to shift by $(-1,+1)$.
																																																																														\end{proof}
																																																																														
																																																																														\subsection{The operators $E$ and $F$ on tableaux}
																																																																														
																																																																														Let $T$ be a skew, shifted semistandard tableaux with entries in $\{1',1,2',2\}$. We define $E(T), F(T)$ by applying $E,F$ to the row reading word of $T$. (We will see later, Proposition \ref{prop:rows-columns}, that using the column word results in the same action.) The main claim of this subsection is that this gives a well-defined action on \emph{semistandard} tableaux:
																																																																														
																																																																														\begin{theorem} \label{thm:FE-semistandard}
																																																																															The tableaux $F(T), E(T)$ are semistandard (when defined).
																																																																														\end{theorem}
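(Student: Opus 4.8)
The plan is to follow the paper's convention and treat $F$ in detail, the argument for $E$ being the mirror image obtained by replacing each $F$-critical substring and its transformation by the corresponding $\eta$-image ($E$-critical) substring. To handle $F$, I would verify semistandardness directly at the cells whose entries the critical-substring transformation changes. By the definition of semistandardness (and Proposition \ref{prop:standardization}, which guarantees that being in canonical form plus the local increasing conditions suffice), it is enough to check that $F(T)$ is canonical---automatic, since $F$ canonicalizes by construction---and that every pair of horizontally or vertically adjacent cells of $F(T)$ still satisfies the row/column inequalities and the primed-repeat rules. Inspecting Figure \ref{fig:criticals}, the transformation alters only the first letter $w_k$ of the final $F$-critical substring $u = w_k \cdots w_l$ (sending $1$ or $1'$ to $2'$ or $2$), together with the last letter $w_l$ in types 1F and 2F. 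Thus at most two cells of $T$ change, and one only needs to examine the at-most-four tableau-neighbors of each.

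I would dispatch the horizontal neighbors first. The left neighbor is always harmless: each transformation \emph{strictly increases} the entry of the changed cell (since $1' < 1 < 2' < 2$), and the left neighbor was weakly below the old value, hence strictly below the new one, so neither the row inequality nor a primed repeat can be violated. The right neighbor is the delicate horizontal case, since raising an entry might break the row inequality or create a forbidden repeat. Here the explicit form of $u$, the semistandardness of $T$, and crucially the \emph{finality} and \emph{longest-substring} conventions of Definition \ref{def:F} exclude the bad configurations: for example, the reading-word successor $w_{k+1}$ of a type-3F cell $1 \to 2$ cannot be a $1$ (it would be a later type-3F substring, contradicting finality) nor a $2'$ (it would extend $u$ to a longer type-1F substring), leaving only $w_{k+1} = 2$, which gives the admissible in-row step $2,2$. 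Each of the four defined types is closed off by a short argument of this flavor.

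The main obstacle is the vertical neighbor directly \emph{below} a changed cell. Whenever the transformation produces an unprimed $2$ (type 3F, and the final letter of type 1F), an unprimed $1$ (final letter of type 2F), or a $2'$ in place of a $1'$ (type 4F), a conflicting entry immediately below would break the column inequality or force a forbidden unprimed-repeat; unlike the horizontal case, this below-neighbor is not adjacent in the reading word, so its value is not directly controlled. The key is that each such change occurs only when the walk touches a specific line ($y \in \{0,1\}$ or $x \in \{0,1\}$), and this location condition encodes the (near-)ballotness of the initial subword $w_1 \cdots w_{k-1}$. I would show that a conflicting entry in the column below the changed cell is incompatible with the location condition: a $2'$ or $2$ sitting directly beneath a type-3F $1$, say, contributes an upward step to the walk that cannot be cancelled before position $k$, forcing $y > 0$ there and thereby precluding the type-3F requirement $y = 0$; equivalently, such a configuration generates a later critical substring or a type-5F substring, so that $F$ either selects a different substring or is undefined---exactly the situations in which no transformation is applied.

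I expect this final step---translating the one-dimensional walk-location data into a constraint on the two-dimensional column structure of $T$---to be the crux, complicated by the fact that a single critical substring may span several rows of $T$. The cleanest route is likely to isolate a small lemma relating the walk's $y$-coordinate at position $k$ (respectively its $x$-coordinate) to the entries lying below $w_k$ in its column, and then to apply that lemma uniformly across the four transformation types, using the $\eta$-symmetry to halve the casework.
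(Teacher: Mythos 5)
Your treatment of $F$ is essentially the paper's own argument: checking semistandardness cell-by-cell at the (at most two) changed entries, disposing of the left/above neighbors by monotonicity, handling the right neighbor via finality/maximality of the critical substring (your 3F analysis matches the paper's), and isolating exactly the lemma you predict for the below-neighbor --- the paper's Lemma \ref{lem:special-ssyt-cases} relates the walk's location at a cell $a$ to the entry $b$ directly beneath it, and the contradictions are produced, as you suggest, by exhibiting later or longer critical substrings (types 1F, 3F, 5F). So the $F$ half of your plan is sound and is the paper's approach.

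The genuine gap is your reduction of $E$ to $F$ by ``$\eta$-symmetry.'' At this point in the development that reduction is circular. The symmetry $\etaT$ on tableaux (Definition \ref{def:eta-on-T}) reflects across the antidiagonal, and by Remark \ref{rmk:row-col-eta} it carries the \emph{row} reading word of $T$ to the \emph{column} reading word of $\etaT(T)$. Since $E$ and $F$ are defined on tableaux via the row word, $E(T) = \etaT\circ F\circ\etaT(T)$ only if one already knows that applying $F$ to the column word changes the tableau the same way as applying $F$ to the row word --- but that is Proposition \ref{prop:rows-columns}, which the paper proves \emph{after} and \emph{using} coplacticity (Theorem \ref{thm:coplactic}), which in turn rests on the very semistandardness theorem you are proving. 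Without that, your $\eta$-route forces you to redo the entire adjacency analysis for column words (where the uncontrolled neighbor is now horizontal), which is no savings. The paper instead proves the $E$ cases directly on row words, and these are substantially harder than the $F$ cases, not mirror images: the vertical case for $E$ requires the second statement of Lemma \ref{lem:special-ssyt-cases} (producing 5E-critical strings from a $1$ earlier in reading order), an analysis of the entry to the left of $a$, and a change of representative (a $2$ reinterpreted as a final 3E). Your estimate that $\eta$ ``halves the casework'' therefore fails; the $E$ half needs its own argument, and it is the bulk of the proof.
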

																																																																														
																																																																														We first show the following. Let $a,b \in T$ with $b$ just below $a$.
																																																																														
																																																																														\begin{lemma} \label{lem:special-ssyt-cases}
																																																																															If the lattice walk has $y=0$ at $a$, then $b=1$ or $1'$.
																																																																															
																																																																															Next, suppose $y=0$ at $b$, and $a \in \{1,2'\}$, and there is a $1$ somewhere before $a$ in reading order. Then there is a 5E-critical string in $a$'s row.
																																																																														\end{lemma}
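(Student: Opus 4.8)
The plan is to derive everything directly from the step-assignment rules of Figure~\ref{fig:directions}, using the column-strictness of the tableau and the strictness of the shifted shape, after translating each hypothesis on the walk into a statement about ballotness via Corollary~\ref{cor:ballot-walk-criterion}. Write $a=w_k$ and let $a'$ denote the entry read immediately before $a$. One preliminary fact I will use repeatedly: by Theorem~\ref{thm:rectification} a \emph{nonempty} ballot prefix rectifies to a strict two-row shape $(\lambda_1,\lambda_2)$, so its walk ends at $(\lambda_1-\lambda_2,\,0)$ with $x$-coordinate at least $1$ (using $\lambda_1>\lambda_2$ when $\lambda_2>0$, and $\lambda_1$ equal to the length otherwise).

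For the first claim, assume the walk is at $y=0$ just before $a$, i.e.\ the prefix ending before $a$ is ballot. Scanning Figure~\ref{fig:directions}, the only steps ending on the $x$-axis are an East step taken on the axis or a South step taken from height $1$; either way $a'\in\{1,1'\}$. If $a$ is leftmost in its row, then $a'$ is the rightmost entry of the row below and so $a'\ge b$, which together with $a'\in\{1,1'\}$ forces $b\in\{1,1'\}$ immediately. Otherwise $a'$ is the left neighbor of $a$, and I would argue by contradiction from $b\in\{2,2'\}$. The shifted geometry splits this into two subcases according to whether the cell below $a'$ exists. When it does not, the row containing $b$ begins at $b$'s column, so $b$ is the leftmost (diagonal) entry of its row; here the restricted alphabet $\{1',1,2',2\}$ together with column-strictness and the strict decrease of row lengths forces that row to be a short bottom row of $2$'s and $2'$s, so the walk climbs off the origin and cannot return to the axis before $a$. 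When the cell $b'$ below $a'$ does exist, column-strictness gives $a'\le b'\le b$, and I would descend leftward along the two rows, combining this chain of inequalities with the height bookkeeping of Figure~\ref{fig:directions} to show the walk stays at height $\ge 1$ up to $a$, contradicting $y=0$.

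For the second claim, assume instead $y=0$ just before $b$, that $a\in\{1,2'\}$, and that some $1$ precedes $a$. Since $b\ge a$ we have $b\in\{2',2\}$, so reading $b$ on the axis is a North step and lifts the walk to height $\ge1$; applying the contrapositive of the first claim to the pair $(a,b)$ keeps the walk at height $\ge1$ just before $a$. The goal is to exhibit, somewhere in $a$'s row, a $1$ taken South or a $2'$ taken West from height exactly $1$ with $x\ge1$ -- precisely a $5E$-critical string (Figure~\ref{fig:criticals}). First I would use the preliminary fact together with the hypothesis that a $1$ precedes $a$ to guarantee $x\ge1$ by the time the walk is at the relevant height. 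Then, since every entry of $a$'s row weakly left of $a$ is $\le a\le 2'$ and so never makes an upward move off the axis, I would track the height across the row and show it must attain the value $1$ at such a $1$ or $2'$; in the generic situation $a$ itself, read off the axis at height $1$, is the $5E$-critical string.

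The hardest point in both claims is the interior height bookkeeping -- making rigorous the statement ``the $2$'s and $2'$s push the walk up while column-strictness prevents the $1$'s from bringing it back to the axis,'' given that the number of available South steps in a row is constrained only indirectly, through the entries lying column-strictly below it. I expect the cleanest implementation is the leftward descent of the first claim, each step reducing to the vertically adjacent pair one column to the left until the row ends or the configuration becomes one already settled by the predecessor analysis.
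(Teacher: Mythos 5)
Your peripheral reductions are sound: the observation that the letter read just before $a$ must be a $1$ or $1'$ (the only steps that can end on the $x$-axis are an East step taken on the axis or a South step from height $1$), the resulting one-line treatment of the case where $a$ is leftmost in its row, and the reuse of the first claim's contrapositive to get $y\geq 1$ at $a$ in the second claim are all correct, and the last of these is a nice shortcut the paper doesn't use. But in both halves you explicitly defer the decisive step, and what you defer is the entire content of the lemma. For the first claim, the ``leftward descent along the chain $a'\le b'\le b$'' cannot by itself bound the height: the walk's height at $a$ is governed by a \emph{global count}, not by adjacent-cell comparisons. The paper's proof counts: let $k$ be the number of $2'$s and $2$s left of $b$; then $y\geq k+1$ just after $b$ (of those $k+1$ letters at most one is a $2'$, and a $2'$ steps West only when already at $y\geq 1$), the rest of $b$'s row never decreases $y$, and since only unprimed $1$s off the axis step South, semistandardness pairs each $1$ left of $a$ with a $2'$ or $2$ below it --- so at most $k$ South steps occur before $a$, except possibly one extra $1$ at the diagonal cell of $a$'s row. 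That exceptional cell needs its own argument, absent from your sketch: such a diagonal $1$ is necessarily the first $1$ or $1'$ of the whole word, all earlier letters are $2$s and $2'$s keeping the walk on the $y$-axis, so it steps East, not South. (Your diagonal subcase is also incomplete as stated: that a missing cell below $a'$ forces $b$ onto the diagonal uses strictness of $\mu$ --- a cell of $\mu$ below $a'$ would put $a'$ itself in $\mu$ --- and ``cannot return to the axis before $a$'' is again a count, though a short one in that configuration.)

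For the second claim the gap is sharper: your ``generic situation,'' with $a$ itself read at height $1$, is not generic. Letting $k$ be the number of $2$s right of $b$, the height is exactly $k+1$ at the end of $b$'s row, and the entries left of $a$ (all $1$ or $1'$) only preserve or decrease it, so at $a$ one knows only $1\leq y\leq k+1$. The $5E$-critical string must then be located among the entries to the \emph{right} of $a$: semistandardness forces at least $k$ of them to be unprimed $1$s (they sit above the $2$s --- this also forces $a=1$ when $k>0$), followed by an entry in $\{1,2'\}$, so the descending walk hits height exactly $1$ at a $1$ (South) or a $2'$ (West) with $x\geq 1$, still within $a$'s row. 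Your sketch tracks only entries weakly left of $a$ and asserts that the height ``must attain the value $1$ at such a $1$ or $2'$'' without the two inputs that make this true: the upper bound $y\leq k+1$ at $a$, matched against the guaranteed supply of South-stepping $1$s after $a$. Likewise your $x\geq 1$ claim needs the paper's case split: if $x=0$ at $b$, then (by your own preliminary fact) $b$ is the first letter of the word, and the hypothesized $1$ before $a$ is then the unique cell left of $a$ and contributes an initial East step. In short, the proposal is a reasonable strategy outline, but the ``interior height bookkeeping'' you flag as the hardest point is exactly the proof, and it is missing.
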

																																																																														
																																																																														\begin{proof}
																																																																															Let $k$ be the number of $(2')$s and $(2)$s to the left of $b$. If the leftmost is not on the diagonal, then there are at most $k$ $(1)$s to the left of $a$; otherwise, there may be $(k+1)$ $(1)$s, but the first contributes $\east{}$ to the walk. Now if $b \in \{2',2\}$, then we have $y \geq k+1$ in the walk just after $b$, hence $y \geq k+1$ at the end of $b$'s row. But then we have at most $k$ downwards steps prior to $a$, contradicting the condition $y=0$.
																																																																															
																																																																															For the second statement, let $k$ be the number of $(2)$s to the right of $b$ (by semistandardness, $b \geq 2'$ and every entry to its right is a $2$). Then from $a$ to the end of the upper row, there are at least $k$ $(1)$s (including $a$ itself if $k>0$), followed by an entry $a' \in \{1,2'\}$:
																																																																															\[\begin{ytableau}
																																																																															a \\
																																																																															b & 2 & 2 & 2
																																																																															\end{ytableau} \qquad \leadsto \qquad 
																																																																															\begin{ytableau}
																																																																															1 & 1 & 1 & a' & \cdots \\
																																																																															b & 2 & 2 & 2
																																																																															\end{ytableau}\]
																																																																															We claim that $x \geq 1, y \leq k+1$ at $a$. Hence $a'$ or some entry before it has $y=1$, giving a 5E-critical string. The condition on $y$ is clear because $y=k+1$ at the end of $b$'s row, and any entry to the left of $a$ is $1$ or $1'$. As for $x$: if $x=0$ at $b$, then $b$ is the first letter of the word, so (by our assumption on $a$) there must be a single $1$ to the left of $a$, giving an initial $\east{}$ step. Otherwise $x \geq 1$ at $b$, hence at the end of $b$'s row, hence at $a$.
																																																																														\end{proof}
																																																																														
																																																																														\begin{proof}[Proof of Theorem \ref{thm:FE-semistandard}]
																																																																															We use the representatives for $T$ and $F(T)$ in which the final $F$-critical string appeared, not necessarily the canonical form. In particular, $F(T)$ has the same entries as $T$ except for the first and last letters of the final $F$-critical string. Note that changing representatives never violates semistandardness.
																																																																															
																																																																															Let $a,b$ be adjacent squares in $T$:
																																																																															$\raisebox{0cm}{\small \begin{ytableau}a&b\end{ytableau}}$ \ or \ $\raisebox{.5\height}{\small \begin{ytableau}a\\ b\end{ytableau}}$. 
																																																																															Let $a',b'$ be the corresponding squares in $F(T)$ or $E(T)$. If neither square changes, there is nothing to check. If both squares change, they must be the first and last letters of a 1F or 2F critical string and, by semistandardness of $T$, must take the form 
																																																																															\[\raisebox{-.25\height}{\begin{ytableau}1 & 2'\end{ytableau}}\ \xrightarrow{\ 1F\ }\
																																																																															\raisebox{-.25\height}{\begin{ytableau}2' & 2\end{ytableau}} \qquad \text{or} \qquad 
																																																																															\raisebox{.25\height}{\begin{ytableau}1' \\ 1\end{ytableau}}
																																																																															\ \xrightarrow{\ 2F\ }\
																																																																															\raisebox{.25\height}{\begin{ytableau}1 \\ 2'\end{ytableau}}\]
																																																																															In these cases we see that semistandardness is preserved. Next, suppose $a$ changes and $b$ does not. For $E(T)$, we're done because the new value always has $a' < a$. For $F(T)$, we have $a' > a$, so there are a few cases.
																																																																															
																																																																															{\bf Horizontal case for $F$}. If $a = 1' \leadsto 1$ or $2' \leadsto 2$ (1F and 2F, last letters), $b$ is large enough ($b \geq 1$ or $2$) because $T$ is semistandard. If $a = 1 \leadsto 2'$ (1F and 2F, first letters), we cannot have $b=2'$ by the definition of the substring (and our assumption that $b$ does not change), so $b=2$ as desired.  Finally, if $a = 1 \leadsto 2$ (3F) or $a = 1' \leadsto 2'$ (4F), suppose $b \in \{1,2'\}$. Then $b$ or $ab$ gives a later or longer 1F, 3F or 5F critical string (depending on which coordinates are zero at $a$).
																																																																															
																																																																															{\bf Vertical case for $F$}. 
																																																																															First, if $a = 1 \leadsto 2'$ (1F and 2F, first letter), we have $b \geq 2'$ by semistandardness. Next, if $a = 2' \leadsto 2$ (1F, last letter) or $1 \leadsto 2$ (3F), Lemma \ref{lem:special-ssyt-cases} forces $b \ne 1',1$ (note that $y=0$ at $a$), so in fact $b$ cannot exist. Finally, if $a = 1' \leadsto 1$ (2F, last letter) or $1' \leadsto 2'$ (4F), suppose $b \in \{1',1\}$. Since $x=0$ at $a$, $b$ cannot be the preceding letter (and $a$ is leftmost in its row), so $b$ has an entry $c$ to its right, with an entry $d$ above it. By semistandardness, $d \in \{1,2'\}$, giving a later 5F (note that $y\ne0$ since $a$ is not the first letter of the word). This completes this case.
																																																																															
																																																																															Finally, suppose $b$ changes and $a$ does not. For $F(T)$, there is nothing to check because $b' > b$. In $E(T)$, we have $b' < b$, so there are a few cases.
																																																																															
																																																																															{\bf Horizontal case for $E$}. If $b = 2' \leadsto 1$ (1E and 2E, first letters), then $a \leq 1$ since $T$ is semistandard. If $b = 2 \leadsto 2'$ (2E, last letter), then $a = 1'$ from the definition of the substring. If $b = 1 \leadsto 1'$ (1E, last letter) then the substring instead indicates $a=2$. This violates semistandardness, so in fact $a$ cannot exist. If $b = 2' \leadsto 1'$ (3E), observe that, since $x=0$ at $b$, we can't have $a \in \{1,1'\}$. So, by semistandardness, $a$ again does not exist. Lastly, if $b = 2 \leadsto 1$ (4E), then $a \notin \{2',2\}$ since we must have $y=0$ at $b$. 
																																																																															
																																																																															{\bf Vertical case for $E$}. If $b = 2 \leadsto 2'$ or $1 \leadsto 1'$ (1E and 2E, last letter), then $a$ is small enough since $T$ is semistandard. In all remaining cases, we have $b \in \{2',2\} \leadsto b' \in \{1',1\}$. We know $a \ne 2$ by semistandardness, but we need to show $a = 1'$. So, assume for contradiction that $a \in \{1,2'\}$; we will find various later $E$-critical strings.
																																																																															
																																																																															First, suppose $b = 2 \leadsto 1$ (4E). There can't be a $1$ or $1'$ prior to $a$ in reading order, since Lemma \ref{lem:special-ssyt-cases} would then find a 5E-critical string in the row of $a$ (since $y=0$ at $b$). But if there are no $(1)$s or $(1')$s before $b$ clearly $x=0$ at $b$ as well, so $b$ is the first letter of the word and counts as a final 3E in a different representative of $T$; we deal with it below.
																																																																															
																																																																															In the remaining cases, $b = 2'$. If there is an entry $\tilde{a}$ to $a$'s left, semistandardness forces $\tilde{a} = 1'$ or $1$, and the square below $\tilde{a}$ (to $b$'s left) to be empty. So in fact Lemma \ref{lem:special-ssyt-cases} applies again ($\tilde{a}$ and $b$ are on the diagonal, so $b$ is the first letter of the word and has location $y=0$). So we may assume $a$ is leftmost in its row and $b \cdots a = 2' (2)^* a$.
																																																																															
																																																																															If $b = 2' \leadsto 1'$ (3E), then either $a=1$, making $b \cdots a = 2' (2)^*1$ type 1E-critical, or $a=2'$ and is itself 3E-critical. If $b = 2' \leadsto 1$ (1E, first letter), then $a$ must be the final $1$ of the 1E string. But this contradicts our assumption that $a$ did not change in $F(T)$.
																																																																															
																																																																															Finally, suppose $b = 2' \leadsto 1$ (2E, first letter). If the location has $x \geq 2$ at $b$, then in fact the same counting argument as in Lemma \ref{lem:special-ssyt-cases} 
																																																																															shows that $x \geq 1$ and $y \leq k+1$ at $a$ (where $k$ is the number of $(2)$s to the right of $b$), yielding a 5E-critical string in $a$'s row. Otherwise, $b$'s location is either $(0,0)$ or $(1,1)$. Either way, note that the 2E must be just $2'2$, with a $2$ to the right of $b$, which forces $a=1$ by semistandardness. Thus $b\cdots a = 2'(2)^*1$ is 1E-critical.
																																																																														\end{proof}
																																																																														
																																																																														Having shown that our operators are well-defined on tableaux, our remaining goal is to show that they are coplactic (Theorem \ref{thm:coplactic}). Before proceeding, we end this section with some final details on the interaction of $E,F$ with the tableau structure. We first describe the action of $E$ and $F$ on straight shifted tableaux.
																																																																														
																																																																														\begin{proposition} \label{prop:rect-unprimed-action} Let $T$ be a straight shifted semistandard tableau with entries $\{1',1,2',2\}$.  If $T$ has no $2'$ in the first row, then $F$ changes the rightmost $1$ to a $2$ (or gives $\varnothing$ if this breaks semistandardness), and $E$ changes the leftmost $2$ in the first row to a $1$ (or gives $\varnothing$ if there is no such $2$). 
																																																																															
																																																																															If $T$ has a $2'$ in the first row, then $F$ changes the substring $12'$ to $2'2$ (or gives $\varnothing$ if this breaks semistandardness), and $E$ does the opposite (or gives $\varnothing$ if there is no $2$).
																																																																														\end{proposition}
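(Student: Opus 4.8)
The plan is to reduce the statement to an explicit computation on the reading word, using the structural description of rectified tableaux from Theorem~\ref{thm:rectification}. Since $T$ is straight it equals its own rectification, so by that theorem $T$ has at most two rows, its second row consists entirely of $(2)$s, and its first row has the form $1^p (2')^\epsilon 2^q$ with $\epsilon \in \{0,1\}$; writing $\lambda_2$ for the length of the second row, the reading word is $w = 2^{\lambda_2}\,1^p\,(2')^\epsilon\,2^q$. First I would write down the lattice walk of this word: it runs up the $y$-axis for the prefix $2^{\lambda_2}$, takes a single $\east{}$ step on the first $(1)$, then a descending run of $\south{}$ steps for the remaining $(1)$s, which either reaches the $x$-axis at $(1,0)$ (after which the surviving $(1)$s give $\east{}$ steps along $y=0$) or halts one unit above it at $(1,1)$; the trailing $(2')$ and $(2)$s then carry the walk off the axes again. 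I would also record at the outset that, by Theorem~\ref{thm:FE-semistandard}, $F(T)$ and $E(T)$ are semistandard of the same (straight) shape, so the finite set of straight tableaux of shape $\lambda$ with entries in $\{1',1,2',2\}$ is closed under $F$ and $E$; together with Proposition~\ref{prop:inverses} this lets me compute $F$ in detail and then recover $E$ as its inverse, as a cross-check on the direct analysis.

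For $F$, the key observation is that the rightmost $(1)$ of $w$ is the last $(1)$ in reading order, so every later letter is a $(2)$ or the single $(2')$, and these contribute $\north{}$ steps, which never begin an $F$-critical substring; hence the final $F$-critical substring occupies the rightmost $(1)$ together with the following $(2')$ if present, and its type is read off from the local geometry of the walk. With no $2'$ in the first row, the rightmost $(1)$ is an $\east{}$ step at height $y=0$, of type 3F (transforming $1 \to 2$), unless it sits directly above a box of the second row (equivalently $\lambda_2 \ge 1$ and $p = \lambda_2+1$), in which case it is the final $\south{}$ step based at $(1,1)$ and is type 5F. With a $2'$, the pair $12'$ is type 1F (transforming $12' \to 2'2$) when the $(2')$ is a $\north{}$ step at $y=0$, and is type 5F when $p=\lambda_2$, so that the $(2')$ is a $\west{}$ step based at $(1,1)$. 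In each family I would then verify that the 5F (undefined) case is exactly the case in which the proposed transformation would place an unprimed $(2)$ directly above a second-row $(2)$ — that is, exactly when it breaks semistandardness — by comparing the column of the changed box (namely $p$, resp.\ $p+1$) with the last column $\lambda_2+1$ occupied by the second row.

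For $E$ I would carry out the mirror-image analysis of $E$-critical substrings (or, equivalently, invert $F$ via Proposition~\ref{prop:inverses}). Here the relevant letter is the leftmost $(2)$ of the first row: with no $2'$ it is a $\north{}$ step at $y=0$ and is type 4E (transforming $2 \to 1$), while with a $2'$ the pair $2'2$ is type 2E (transforming $2'2 \to 12'$); in either case $E(T) = \varnothing$ precisely when $q=0$, i.e.\ when there is no such $(2)$, and one checks that the latest-starting critical substring then degrades to a 5E. The step I expect to be the main obstacle is making the critical-substring bookkeeping airtight in these boundary situations: I must allow the \emph{empty-middle} instances $2'2$ of the patterns $2'(2)^*1$ and $2'(1')^*2$ (it is an easy but fatal error to demand a nonempty middle, which would spuriously make $E$ undefined), and I must track the canonical-form representatives — the first $(1)$ may be read as $1'$ and the first $(2)$ as $2'$ — and confirm that the additional critical substrings they create all occur near the start of $w$ and so never become the \emph{final} one. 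Once these boundary cases are pinned down, reading the transformation off the surviving critical substring and translating it back to the tableau yields exactly the stated actions.
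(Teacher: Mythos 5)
Your proposal follows essentially the same route as the paper: reduce by coplacticity is not even needed since $T$ is straight, enumerate the three word forms $1^a2^b$, $2^a1^b2^c$ ($b>a>0$), $2^a1^b2'2^c$ ($b\geq a>0$), locate the final critical substring via the lattice walk, and observe that the 5F/5E edge cases ($b=a+1$, resp.\ $b=a$) are exactly where the transformation would break semistandardness; the paper then gets $E$ for free from Proposition \ref{prop:inverses}, whereas you redo the mirror $E$-analysis (harmless extra work, and your identification of the types 4E/2E and of the degradation to 5E when $q=0$ is correct). One bookkeeping claim in your plan is wrong, however, and it is precisely the point you promised to make airtight: in the one-row case $1^a2^b$ with $b\geq 1$, the representative-induced critical substring does \emph{not} occur ``near the start of $w$'' --- reading the first $2$ as $2'$ produces the substring $12'$ starting at the rightmost $1$, which by the tie-breaking rule (longest substring at the latest start) \emph{is} the final critical substring, of type 1F, superseding the bare 3F you assert; similarly for $E$, when $q\geq 2$ the final critical substring is the 2E string $2'2$ rather than the 4E you name. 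The conclusion survives because the 1F transformation $12'\to 2'2$ (resp.\ 2E, $2'2\to 12'$) canonicalizes to exactly the stated action of changing the rightmost $1$ to $2$ (resp.\ the leftmost $2$ to $1$) --- which is how the paper itself handles this case (``the substring $12$ is type 1F, becoming $22$ after canonicalizing'') --- so you should replace the ``never become the final one'' confirmation with this canonicalization observation rather than expect it to check out as stated.
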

																																																																														
																																																																														\begin{proof}
																																																																															We prove the claim about $F$, since $E$ is just the inverse. The word of $T$ is either $1^a2^b$, or $2^a1^b2^c$ with $b>a>0$, or $2^a1^b2'2^c$ with $b \geq a > 0$. In the case $1^a2^b$, if $b>0$, the substring $12$ is type 1F, becoming $22$ (after canonicalizing). If $b=0$, the last $1$ is type 3F. In the case $2^a1^b2^c$, the final $1$ is type 5F if $b=a+1$ and type 3F otherwise. Similarly, in the case $2^a1^b2'2^c$, the substring $2'$ is type 5F if $b=a$; otherwise the substring $12'$ is type 1F. (The edge cases $b=a+1$ and $b=a$, leading to 5F critical strings, are also the cases where semistandardness would break.)
																																																																														\end{proof}
																																																																														
																																																																														We also show that our use of row words rather than column words was harmless. We prove this \emph{assuming} $E$ and $F$ are coplactic (and do not use this statement in the proof of that fact). 
																																																																														
																																																																														Recall that $\etaT$ is the operation on tableaux corresponding to the map $\eta$ on words.
																																																																														
																																																																														\begin{proposition} \label{prop:rows-columns}
																																																																															Let $w, v$ be the row and column reading words of a tableau $T$. Then applying $F$ to $w$ changes the entries of $T$ in the same way as applying $F$ to $v$.
																																																																														\end{proposition}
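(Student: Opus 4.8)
The plan is to compare the usual (row-word) operator $F$ with the operator $F_{\mathrm{col}}$ defined by applying $F$ to the \emph{column} reading word $\mathrm{col}(T)$ and interpreting the result as a change of entries of $T$, and to prove $F_{\mathrm{col}} = F$. The main tool is the antidiagonal symmetry $\boldsymbol{\eta}$ of Definition \ref{def:eta-on-T}, which trades column words for row words. Writing $\mathrm{row}(T),\mathrm{col}(T)$ for the two reading words, Remark \ref{rmk:row-col-eta} gives $\mathrm{col}(\boldsymbol{\eta}(T)) = \eta(\mathrm{row}(T))$; applying this identity to $\boldsymbol{\eta}(T)$ in place of $T$ and using $\boldsymbol{\eta}^2 = \mathrm{id}$ and $\eta^2 = \mathrm{id}$ yields the companion identity $\mathrm{row}(\boldsymbol{\eta}(T)) = \eta(\mathrm{col}(T))$.

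First I would establish that $F_{\mathrm{col}} = \boldsymbol{\eta}\circ E\circ\boldsymbol{\eta}$ as operations on tableaux, where $E,F$ are the usual row-word operators. Tracking reading words through the composite $S=\boldsymbol{\eta}(T)$, $S'=E(S)$, $\boldsymbol{\eta}(S')$, and using $\mathrm{row}(E(S)) = E(\mathrm{row}(S))$ together with the two displayed identities and the word-level relation $\eta\circ E\circ\eta = F$ (immediate from the definition $E=\eta\circ F\circ\eta$), one computes
\[\mathrm{col}\big(\boldsymbol{\eta}\, E\, \boldsymbol{\eta}(T)\big) = \eta\big(E(\eta(\mathrm{col}(T)))\big) = F(\mathrm{col}(T)).\]
Since $\boldsymbol{\eta}\, E\, \boldsymbol{\eta}(T)$ has the same shape as $T$, this says exactly that it is the tableau obtained by applying $F$ to $\mathrm{col}(T)$, i.e. $F_{\mathrm{col}} = \boldsymbol{\eta}\, E\, \boldsymbol{\eta}$. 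In particular $F_{\mathrm{col}}$ is coplactic, being a composite of the coplactic maps $\boldsymbol{\eta}$ (Remark \ref{rmk:eta-words-dual}) and $E$ (Theorem \ref{thm:coplactic}).

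Now both $F$ and $F_{\mathrm{col}}$ are coplactic, so each is determined by its action on straight-shape tableaux: given skew $T$, fix a rectifying sequence of slides; coplacticity recovers $F(T)$ and $F_{\mathrm{col}}(T)$ from $F(\rectify(T))$ and $F_{\mathrm{col}}(\rectify(T))$ by the reverse slides, and this reduction respects the $\varnothing$ outputs. It therefore suffices to prove $F(T_0) = F_{\mathrm{col}}(T_0)$ for every straight shifted tableau $T_0$ with entries in $\{1',1,2',2\}$. For one-row $T_0$ the row and column words coincide and there is nothing to check. For two rows, $T_0$ has row word $2^a1^b2^c$ (with $b>a>0$) or $2^a1^b2'2^c$ (with $b\ge a>0$), as in the proof of Proposition \ref{prop:rect-unprimed-action}; I would write down the column word in each case (for instance $1(21)^a1^{b-a-1}2^c$ in the first case, and the analogous word with a lone $2'$ in the second), apply the critical-substring definition of $F$ to it, and check that the box it modifies is exactly the box modified on the row word by Proposition \ref{prop:rect-unprimed-action} --- namely the rightmost $1$ of the first row, or the $12'$ pair --- including the edge cases $b=a+1$ and $b=a$, where both sides return $\varnothing$ via a type-5F critical string.

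The bookkeeping of reading words through $\boldsymbol{\eta}$ in the first step and the finite case check in the last step are both mechanical, so the genuine content is the reduction itself. The main point to get right is that $F_{\mathrm{col}} = \boldsymbol{\eta}\, E\, \boldsymbol{\eta}$ is coplactic and that definedness is preserved under de-rectification; I expect the straight-shape verification to be the most delicate point in practice, since one must correctly produce the column word of each shifted two-row tableau and handle the special priming rule at the diagonal.
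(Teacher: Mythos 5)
Your proposal is correct and takes essentially the same route as the paper: the paper's proof likewise defines the column-word operator $\tilde{F}$, observes that $\tilde{F} = \boldsymbol{\eta} \circ E \circ \boldsymbol{\eta}$ (hence coplactic and semistandardness-preserving, via Remark \ref{rmk:row-col-eta} and Theorems \ref{thm:FE-semistandard} and \ref{thm:coplactic}), and checks agreement with $F$ on straight shifted shapes by inspection from Proposition \ref{prop:rect-unprimed-action}. The only difference is expository: you spell out the reading-word bookkeeping behind the identity and the explicit column words $1(21)^a1^{b-a-1}2^c$ (with the $2'$ variants) for the straight-shape check, which the paper compresses into ``by inspection.''
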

																																																																														\begin{proof}
																																																																															Let $\tilde{F}(T)$ be obtained by applying $F$ to $v$. Note that if $T$ is a shifted straight shape, then $\tilde{F}(T) = F(T)$ by inspection from Proposition \ref{prop:rect-unprimed-action}.  We now claim that $\tilde{F}$ preserves semistandardness and is coplactic; hence $\tilde{F}(T) = F(T)$ for all tableaux $T$. To show this, we recall the action $\etaT$ on tableaux (Definition \ref{def:eta-on-T}), and we observe that $\tilde{F}(T) = \etaT \circ E \circ \etaT(T)$. The two desired properties hold for $\etaT$ by Remark \ref{rmk:row-col-eta} and for $E$ by Theorems \ref{thm:FE-semistandard} and \ref{thm:coplactic}, hence they hold for the composition.
																																																																														\end{proof}

																																																																														\subsection{Compatibility with dual equivalence}
																																																																														
																																																																														In this section, we show $F(T)$ and $E(T)$ are shifted dual equivalent to $T$ (when defined). We first recall the definition of dual equivalence in terms of mixed insertion and shifted RSK.  We follow the conventions used in \cite{Sagan}; for convenience, we extend the usual definition from standard to semistandard shifted tableaux.
																																																																														
																																																																														\begin{definition}
																																																																															For letters $a$ and $b$ in $\{1'<1<2'<2<3'<3<\cdots\}$, we say that $a\prec_{\mathrm{row}} b$ if either $a$ is a primed letter and $a\le b$, or if $a$ is unprimed and $a<b$.  We say that $a\prec_{\mathrm{col}} b$ if either $a$ is primed and $a<b$ or $a$ is unprimed and $a\le b$.   
																																																																														\end{definition}
																																																																														
																																																																														\begin{definition}
																																																																															Given a shifted semistandard tableau $T$ of straight shape $\lambda$ and a letter $a\in \{1',1,2',2,3',3,\ldots\}$, the \textbf{mixed insertion} of $a$ into $T$ is the tableau formed by the following process.
																																																																															\begin{enumerate}
																																																																																\item Insert $a$ into the first row $R_1$, i.e., let $b$ be the leftmost entry of $R_1$ for which $a\prec_{\mathrm{row}} b$.  If there is no such $b$ then place $a$ at the end of row $R_1$ and stop the process. Otherwise, replace $b$ with $a$ and `bump out' $b$. If $b$ was the leftmost entry in the row, proceed to step 2; otherwise, repeat step 1, inserting $b$ into the next row.
																																																																																\item Insert $b$ into the next \textit{column} to its right, bumping out the topmost entry $y$ for which $b\prec_{\mathrm{col}} y$, or placing $b$ at the bottom of the column if $y$ does not exist. Repeat step 2 (inserting $y$ into the following column) until the process terminates.
																																																																															\end{enumerate}
																																																																														\end{definition}
																																																																														
																																																																														\begin{example}
																																																																															The mixed insertion of $1$ into ${\scriptsize \begin{ytableau}1&1&1&2'&2\\\none & 2&2&2\end{ytableau}}$ is computed as follows.
																																																																															
																																																																															In Step 1, we bump out the $2'$ in the top row and insert it into the diagonal square on the second row, bumping out a $2$:
																																																																															\[{\scriptsize \begin{ytableau}1&1&1&2'&2\\\none & 2&2&2\end{ytableau}}\qquad \longsquiggly \qquad {\scriptsize \begin{ytableau}1&1&1&1&2\\\none & 2&2&2\end{ytableau}} \qquad \longsquiggly \qquad {\scriptsize \begin{ytableau}1&1&1&1&2\\\none & 2'&2&2\end{ytableau}}\]
																																																																															We then column-insert the $2$ in the subsequent column, bumping the $2$ in the next column to the right, and so on, until the last $2$ ends up in a new column. Finally, we canonicalize the tableau:
																																																																															\[{\scriptsize \begin{ytableau}1&1&1&1&2\\\none & 2'&2&2\end{ytableau}}
																																																																															\qquad \longsquiggly\qquad
																																																																															{\scriptsize \begin{ytableau}1&1&1&1&2&2\\\none & 2'&2&2\end{ytableau}}
																																																																															\qquad\longsquiggly\qquad
																																																																															{\scriptsize \begin{ytableau}1&1&1&1&2&2\\\none & 2&2&2\end{ytableau}}.\]
																																																																														\end{example}
																																																																														
																																																																														\begin{definition}
																																																																															The mixed insertion of $a$ into $T$ is \textbf{Schensted} if no column bumping occurs, and it is \textbf{non-Schensted} otherwise.
																																																																														\end{definition}
																																																																														
																																																																														\begin{definition}
																																																																															A \textbf{circled tableau} is a tableau in which some entries may be circled.
																																																																														\end{definition}

																																																																														\begin{definition}
																																																																															Let $w = w_1 \cdots w_n$ be a word.  The \textbf{shifted RSK insertion} of $w$ is the pair $(P,Q)$, with $P$ a semistandard \textbf{insertion tableau} and $Q$ a circled, standard \textbf{recording tableau}, constructed recursively as follows. Start with $(P,Q) =(\varnothing,\varnothing)$. For $i=1,\ldots,n$, mixed-insert $w_i$ into $P$. Then add $i$ to $Q$ in the same outer corner that was filled in $P$ at the $i$-th step, and if that step was non-Schensted, circle $i$ in $Q$.
																																																																														\end{definition}
																																																																														
																																																																														\begin{example}
																																																																															The shifted RSK insertion of $w = 22111'2'1$ is
																																																																															\[\Bigg(\
																																																																															\raisebox{1ex}{\begin{ytableau}
																																																																																1 & 1 & 1 & 1 \\
																																																																																\none & 2 & 2 & 2
																																																																																\end{ytableau}}\ ,\
																																																																															\raisebox{1ex}{\begin{ytableau}
																																																																																1 & 2 & \circled{3} & \circled{5}\\
																																																																																\none & 4 & 6 & \circled{7}
																																																																																\end{ytableau}}\ \Bigg).
																																																																															\]
																																																																														\end{example}
																																																																														
																																																																														\begin{remark}
																																																																															For a skew shifted semistandard tableau $T$, the insertion tableau $P$ of its reading word is the same as the jeu de taquin rectification of $T$ (see \cite{Sagan}, \cite{Worley}). Note also that standardizing $T$ results in standardizing $P$ and leaving $Q$ unchanged.
																																																																														\end{remark}
																																																																														
																																																																														In the case where $w$ only has entries from $\{1',1,2',2\}$, we can easily compute which entries will be circled in the $Q$ tableau.   
																																																																														
																																																																														\begin{lemma}\label{lem:circling}
																																																																															Let $w=w_1\cdots w_n$ be a word in $\{1',1,2',2\}^n$ in canonical form and let $Q$ be its RSK recording tableau.  Then the number $i$ is circled in $Q$ if and only if $w_i$ satisfies one of the following conditions.
																																																																															\begin{enumerate}
																																																																																\item\label{condition1} $w_i=1'$,
																																																																																\item\label{condition2} $w_i=1$ or $2'$, and $i>1$, and $w_1, \ldots, w_{i-1} \in \{2',2\}$,
																																																																																\item\label{condition4} $w_i=1$ or $2'$, and $i > j$, where $w_j$ is the first downwards or leftwards step of the lattice walk of $w$, and the most recent $1$ or $2'$ before $w_i$ is a $2'$. (Equivalently, if $i > j$ and $F'(w_1, \ldots, w_{i-1}) = \varnothing$.)
																																																																															\end{enumerate}
																																																																														\end{lemma}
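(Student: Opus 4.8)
The plan is to compute the recording tableau $Q$ directly by tracking, at each insertion step, whether the step is Schensted (no column bumping, so $i$ is uncircled) or non-Schensted ($i$ is circled). Since $P$ always has at most two rows when $w\in\{1',1,2',2\}^n$, the insertion process is very constrained: inserting a letter into the first row either appends it (Schensted), or bumps a single entry that then either lands at the end of the second row (still Schensted) or triggers column-insertion in the second row (non-Schensted). So the circling of $i$ is governed by whether the letter $w_i$, upon row-insertion, bumps an entry out of the \emph{leftmost} position of the first row (forcing us into step 2 and hence column-insertion). First I would make this dichotomy precise: $i$ is circled exactly when $w_i$ bumps the leftmost first-row entry, \emph{or} when $w_i$ is itself primed and gets placed in the second row's diagonal via the special bumping rule. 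This reduces the lemma to understanding, at the moment we insert $w_i$, what sits in the first row of the current $P$.

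Next I would connect the state of $P$ after inserting $w_1\cdots w_{i-1}$ to the lattice walk of that prefix. By the remark following shifted RSK, $P$ is the rectification of the prefix, and by Theorem~\ref{thm:rectification} its shape, the number of $(1)$s in its first row, and the presence of a $2'$ are all read off from the endpoint $(x_{i},y_{i})$ of the prefix walk (here $(x_i,y_i)=P_{i-1}(w)$ is precisely the walk location just before the step $w_i$). Concretely: the first row of the current $P$ consists of some $(1)$s followed by $(2')$/$(2)$s, and by the theorem the count of $(1)$s is $\tfrac12(i-1+x_i-y_i)$, with a $2'$ present iff the shape has two rows and $\lambda_2>0$ in a way detected by the walk being off the $x$-axis. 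I would then check, case by case on $w_i\in\{1',1,2',2\}$, exactly when inserting $w_i$ bumps the leftmost first-row entry.

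With this dictionary in hand, the three conditions should fall out as follows. Condition~(1), $w_i=1'$, always triggers column insertion because a primed letter row-inserts using $\prec_{\mathrm{row}}$ in a way that bumps into the first column (it behaves as the special primed slide), so these are always circled. Condition~(2) is the ``opposite-ballot prefix'' case: if $w_1,\ldots,w_{i-1}\in\{2',2\}$ then the walk has stayed on the $y$-axis, so the current $P$ is a single column of $(2')$/$(2)$s with no $(1)$s, and inserting a $1$ or $2'$ bumps the top (leftmost) entry and forces column-insertion. Condition~(3) is the subtle one: once the walk has left the axes (first down/left step at index $j$), a $1$ or $2'$ inserted as $w_i$ bumps the leftmost first-row entry precisely when there are ``too few'' $(1)$s in the first row, and I would show this numerical condition is equivalent to the first row having been ``filled past'' the $(1)$ block — which the lemma phrases as: the most recent $1$ or $2'$ before $w_i$ was a $2'$, equivalently $F'(w_1\cdots w_{i-1})=\varnothing$ (by Proposition~\ref{lem:maxchains}, $F'$ of the prefix is undefined iff the rectified prefix is two-rowed with a $2'$ in its first row). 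I expect the main obstacle to be exactly this third case: carefully matching the insertion-bumping behavior against the walk/$F'$ characterization, handling the boundary interaction with the diagonal (the special primed slide) and the canonical-form convention on the first $1$. The cleanest route is probably to prove the equivalence of the two formulations in condition~(3) separately — $F'(\text{prefix})=\varnothing \iff$ the previous $1$-or-$2'$ was a $2'$ — using Proposition~\ref{prop:explicit-definitions-primed}, and then show each is equivalent to ``$w_i$ bumps the leftmost entry,'' so that the circling condition is pinned down from both directions.
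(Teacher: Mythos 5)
Your high-level plan is the same as the paper's: follow the mixed insertion step by step, use the fact that the insertion tableau $P$ has at most two rows, read the state of $P$ off the prefix walk via Theorem \ref{thm:rectification}, and translate ``$R_1$ contains a $2'$'' into ``$F'(w_1\cdots w_{i-1})=\varnothing$'' via Propositions \ref{prop:explicit-definitions-primed} and \ref{lem:maxchains}. The genuine gap is in your ``dichotomy.'' You assert that $i$ is circled exactly when $w_i$ bumps the leftmost entry of $R_1$, or when $w_i$ \emph{is itself primed} and lands on the second row's diagonal. The correct dichotomy (the one the paper proves) is: $i$ is circled iff either (a) $w_i$ bumps out the leftmost entry of $R_1$, or (b) $w_i$ bumps a \emph{non-leftmost} entry $b$ of $R_1$, and $b$ in turn bumps out the leftmost entry of $R_2$; since $R_2$ consists entirely of $2$s, the definition of $\prec_{\mathrm{row}}$ shows mechanism (b) fires precisely when $b=2'$, i.e.\ when $R_1$ contains a $2'$. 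The primed letter that matters in (b) is the bumped entry $b$, not $w_i$: condition (\ref{condition4}) of the lemma circles \emph{unprimed} letters $w_i=1$. Your criterion fails on the paper's own example $w=22111'2'1$: there $w_7=1$ is circled (it bumps the $2'$ out of $R_1$, and that $2'$ bumps the leftmost $2$ of $R_2$), yet $w_7$ is unprimed and does not bump the leftmost entry of $R_1$, which is a $1$.

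The same confusion undermines your treatment of condition (\ref{condition4}) directly: you propose to show that a $1$ or $2'$ ``bumps the leftmost first-row entry precisely when there are too few $(1)$s in the first row,'' but once the prefix contains any $1$ or $1'$, the leftmost entry of $R_1$ is a $1$ or $1'$, and neither $w_i=1$ nor $w_i=2'$ ever bumps it (check $\prec_{\mathrm{row}}$); so under condition (\ref{condition4}) mechanism (a) never fires, and the circling there is due entirely to (b). (A smaller slip: under condition (\ref{condition2}) the prefix rectifies to a single \emph{row} of $2$s, not a column.) Once the dichotomy is corrected, the rest of your plan does go through and matches the paper: $P$ has two rows iff the walk has a downwards or leftwards step (first one at index $j$, by Theorem \ref{thm:rectification}); after $w_j$ is inserted, $R_1$ contains a $2'$ iff $w_j=2'$; and thereafter the invariant ``$R_1$ contains a $2'$ iff the most recent $1$ or $2'$ was a $2'$'' propagates left to right, which is exactly condition (\ref{condition4}), with the $F'$ reformulation supplied by Proposition \ref{lem:maxchains}.
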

																																																																														
																																																																														\begin{proof}
																																																																															At step $i-1$ of the RSK process, the recording tableau $P$ is a one or two-row tableau of one of the forms illustrated in Figure \ref{fig:two-row}.  Let $R_1$ and $R_2$ be the two (possibly empty) rows of $P$.  Then the only two possibilities for the entry $i$ being circled in $Q$ is if the insertion of $w_i$ into $P$ either bumps out the first entry of $R_1$, or bumps an entry into $R_2$ which in turn bumps out its first entry.
																																																																															
																																																																															First we analyze the cases in which $w_i$ bumps out the first entry of $R_1$.  If $w_i=1'$ it cannot be the first entry of the word (since $w$ is in canonical form) and is always less than the first entry in $\prec_{\mathrm{row}}$ order, hence always bumps it.     This gives condition \ref{condition1}.   If $w_i=1$ or $2'$, it bumps out the first entry of $R_1$ if and only if $i>1$ and there are no $(1)$s already in $R_1$. Equivalently, there should be no $(1)$s or $(1')$s among $w_1, \ldots, w_{i-1}$, since such entries will always end up in $R_1$.  This gives condition \ref{condition2}.    Finally, $w_i=2$ can never bump out the very first entry in $R_1$. 
																																																																															
																																																																															Now, we consider when the insertion of $w_i$ can bump an entry $b$ in $R_1$ to row $R_2$, and $b$ in turn bumps out the first entry of $R_2$ (which must be a $2$ by semistandardness).  This never occurs if $w_i=1'$, because the algorithm switches to column insertion immediately, nor if $w_i=2$, since then $w_i$ is placed at the end of $R_1$. So suppose $w_i=1$ or $2'$ and $w_i$ bumps out an entry $b$ that is not at the start of row $R_1$. Then $w_i$ is circled iff $b=2'$ (by definition of $\prec_{\mathrm{row}}$).
																																																																															Recall also that there can be at most one $2'$ in $R_1$ (in which case it is the entry $b$ and is bumped out), and this occurs if and only if $P$ has two nonempty rows and $F'(P)=\varnothing$.
																																																																															
																																																																															We are left with determining when this situation occurs. First note that $P$ has two rows if and only if the walk has at least one downwards or leftwards step, by Theorem \ref{thm:rectification}.  Let $w_j$ be the first such step.  Once $w_j$ is inserted (it is never circled), the insertion tableau $P$ has a $2'$ in $R_1$ if and only if $w_j=2'$. Let $w_i$ be the next $1$ or $2'$ after $w_j$. Then we see that $i$ is circled if and only if $w_j=2'$, and when step $i$ is complete, $R_1$ contains a $2'$ if and only if $w_i=2'$.  Continuing this argument to the right gives condition \ref{condition4}.
																																																																														\end{proof}
																																																																														
																																																																														The following was shown in \cite{Haiman}.
																																																																														
																																																																														\begin{proposition}[\cite{Haiman}, Lemma 2.11 and Theorem 2.12] \label{prop:Haiman1}
																																																																															Two shifted standard tableaux of the same skew shape are dual equivalent if and only if their row words have the same mixed-insertion recording tableau.
																																																																														\end{proposition}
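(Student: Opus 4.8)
Since this proposition is attributed to Haiman \cite{Haiman}, in the paper it is simply invoked; the plan below is how I would prove it from scratch, following the RSK-theoretic template. The guiding principle is that the two tableaux $P$ and $Q$ produced by shifted RSK carry complementary information: $P$ is the \emph{Knuth} (equivalently, jeu de taquin) invariant, while $Q$ records positional data and will turn out to be the dual-equivalence invariant. Concretely, by the Remark identifying the insertion tableau of the reading word with $\rectify(T)$, the tableau $P$ is constant along any jeu de taquin slide sequence, so the whole content of the proposition is that, \emph{restricted to a fixed skew shape}, the recording tableau $Q$ is exactly as fine an invariant as dual equivalence.

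First I would establish the engine of the argument, a commutation lemma: if $T$ is a standard skew shifted tableau and $T^{\flat}$ is obtained from $T$ by a single jeu de taquin slide (inner or outer), then both the resulting skew shape and the recording tableau $Q(T^{\flat})$ depend only on $Q(T)$ and on which cell was slid, and \emph{not} on $P(T)$. Granting this, the proposition follows in both directions. For the reverse implication, if $Q(T) = Q(T')$ with $T, T'$ of the same shape, then applying a common slide sequence to both produces identical shapes at every stage (by induction on the number of slides, using the lemma), which is precisely dual equivalence. For the forward implication, if $T$ and $T'$ are dual equivalent, I would rectify both using a common slide sequence: dual equivalence guarantees the intermediate shapes coincide throughout, and the recording tableau assembled from this sequence of shapes agrees for $T$ and $T'$. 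It then remains only to identify this rectification-recording tableau with the mixed-insertion $Q$ (up to the circling conventions of \cite{Sagan}, \cite{Worley}), which is itself a standard compatibility lemma of shifted RSK and is consistent with Lemma \ref{lem:circling}.

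The main obstacle is the commutation lemma, i.e. the $P$-independence of the slide action on $Q$. There are two natural routes. The direct one: a slide changes the reading word by a sequence of shifted Knuth moves, which fix $P$ by \cite{Sagan}, \cite{Worley}; one then checks case-by-case how a single Knuth move — including the special priming and unpriming moves at the front of the word — alters the recording tableau, verifying that the new shape and the new $Q$ never reference the \emph{values} stored in $P$. The cleaner route exploits a symmetry of shifted RSK relating the roles of $P$ and $Q$ under an appropriate involution on words: under this symmetry the statement ``$Q$ transforms $P$-independently under slides'' is converted into the already-known fact that $P$ is a jeu de taquin invariant, reducing the lemma to Knuth-invariance of $P$ together with bookkeeping for the circling convention. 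Either way, the delicate point specific to the shifted setting is the behavior of the diagonal priming moves, which have no analogue in the ordinary type A theory and must be tracked separately.
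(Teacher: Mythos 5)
The paper does not prove this statement at all: it is imported verbatim from Haiman (Lemma 2.11 and Theorem 2.12 of \cite{Haiman}), so there is no internal proof to compare against, and you correctly recognized this. Judged on its own terms, your outline is a reasonable reconstruction of how the result is established in the literature: your ``cleaner route,'' which converts the $P$-independence of the slide action on $Q$ into the known jeu-de-taquin/Knuth-invariance of $P$ via a symmetry of shifted insertion exchanging the roles of the two tableaux, is essentially the actual mechanism in Haiman's development of mixed insertion, and you rightly isolate the shifted-specific hazards, namely the special moves at the front of the word (swapping the first two letters, and the priming/unpriming move), which have no type A analogue.

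Two cautions, one of which is a genuine soft spot. First, your ``commutation lemma'' --- that the slid shape and $Q(T^{\flat})$ depend only on $Q(T)$ and the slide cell --- is not really a stepping stone toward the theorem; it is very nearly the theorem itself in different clothing, so all the mathematical content lives in the deferred case analysis or symmetry argument, and the sketch should not be mistaken for a proof. Second, and more concretely: in the forward direction you propose to assemble $Q$ ``from the sequence of shapes'' along a common rectification, but in the shifted setting the shape growth determines only the \emph{uncircled} recording tableau; the circled entries, which record the non-Schensted insertion steps, are extra data invisible to the shapes. This is not a cosmetic convention to be bookkept away: the paper's own argument that $F(w)$ is dual equivalent to $w$ (leading to Corollary \ref{cor:FE-dual-equiv}) splits in exactly this way --- equal shape growth at every step (Corollary \ref{cor:initial-subwords-dual}) yields agreement of the recording tableaux ignoring circles, after which a separate analysis via Lemma \ref{lem:circling} is required to match the circlings. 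Your parenthetical ``up to the circling conventions of \cite{Sagan}, \cite{Worley}'' therefore conceals a step that must be argued in earnest before the forward implication is complete.
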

																																																																														
																																																																														While the above proposition is only stated for standard tableaux, note that our expanded definition of mixed insertion above is compatible with standardization of tableaux, and so by Lemma \ref{lem:unstandardize} it also holds for semistandard tableaux. We now show:
																																																																														
																																																																														\begin{proposition}
																																																																															When defined, $F(w)$ and $E(w)$ are dual equivalent to $w$.
																																																																														\end{proposition}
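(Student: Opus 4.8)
The plan is to reduce the statement to a comparison of shifted RSK recording tableaux. By $\eta$-symmetry it suffices to treat $F$: since $E = \eta \circ F \circ \eta$, $\eta^2 = \mathrm{id}$, and $\eta$ preserves dual equivalence (Remark \ref{rmk:eta-words-dual}), the claim for $E$ follows once we know $F(v)$ is dual equivalent to $v$ for every word $v$. By Proposition \ref{prop:Haiman1} --- extended to the semistandard setting via standardization, using that the recording tableau $Q$ is unaffected by standardization and that both $w$ and $F(w)$ have the same diagonal skew shape --- two words are dual equivalent if and only if they have the same recording tableau $Q$. Thus it suffices to prove $Q(w) = Q(F(w))$ as circled standard tableaux.

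A recording tableau carries two pieces of data: the sequence of outer corners filled at each step (the underlying standard tableau) and the set of circled entries. The first piece is immediate: the shape after step $i$ is the shape of the insertion tableau of the prefix $w_1 \cdots w_i$, which equals the rectification shape of that prefix, and this is an invariant of $F$ by Corollary \ref{cor:initial-subwords-dual}. Hence $w$ and $F(w)$ fill the same corner at every step, the underlying standard tableaux agree, and it remains only to match the circling.

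For the circling I would invoke the explicit criterion of Lemma \ref{lem:circling} and check that each of its three conditions has the same truth value at position $i$ for $w$ and for $v := F(w)$. Writing $u = w_k \cdots w_l$ for the final $F$-critical substring, the letters of $v$ agree with those of $w$ except at $w_k$ (and, for types 1F and 2F, at $w_l$), so positions $i < k$ are trivially unchanged. The substantive cases are $i = k$, the middle positions $k < i < l$, the endpoint $i = l$, and the tail $i > l$. For these I would use Corollary \ref{cor:F-on-walk} ($F$ alters exactly one walk step and shifts the endpoint by $(-1,+1)$) together with the shifting Lemmas \ref{lem:shifting-walks-F} and \ref{lem:shifting-walks-E}, which guarantee that the tail walk is merely translated by $(-1,+1)$ and that no new critical substrings appear. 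In particular, the location of the first downward-or-leftward step and the $F'$-definedness of each prefix --- the data governing condition (3) of Lemma \ref{lem:circling} --- are tracked across the transformation; condition (1) is local to the letter, so there only the 2F endpoint $1' \to 1$ needs attention.

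The main obstacle I anticipate is precisely this circling bookkeeping in the tail $i > l$ and at the mutated endpoints, where the letters preceding position $i$ have genuinely changed. The subtlety is that a naive change could switch a position from uncircled to circled; the reason this never happens is the \emph{final}-critical-substring convention, which forces the prefix configurations that would otherwise create a spurious circling (a changed $2'$ becoming the most recent $1$-or-$2'$, or an all-$\{2',2\}$ prefix) to instead produce a later critical substring, contradicting finality. Making each such ruling-out rigorous, type by type and according to which coordinates of the walk location vanish, is the technical heart of the argument; the walk-translation Lemmas \ref{lem:shifting-walks-F} and \ref{lem:shifting-walks-E} are what keep this case analysis finite and manageable.
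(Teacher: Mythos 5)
Your proposal is correct and takes essentially the same approach as the paper: reduce to equality of circled recording tableaux via Proposition \ref{prop:Haiman1} (extended through standardization), obtain the uncircled part from Corollary \ref{cor:initial-subwords-dual}, and match circlings position-by-position via Lemma \ref{lem:circling}, with the finality of the critical substring ruling out spurious circling changes --- precisely the mechanism the paper's proof employs. The only differences are organizational: the paper carries out the deferred case analysis only for types 1F and 3F, deducing 2F/4F (and $E$) via $\eta$ and Proposition \ref{prop:inverses}, and it handles the tail not with Lemmas \ref{lem:shifting-walks-F}--\ref{lem:shifting-walks-E} but with the direct observations that the last $1$-or-$2'$ within the transformed substring is a $2'$ in both words and that the index of the first downward/leftward step is $F$-invariant by Corollary \ref{cor:F-on-walk}.
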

																																																																														
																																																																														\begin{proof}
																																																																															It suffices to show that, under mixed insertion, $F(w)$ has the same circled recording tableau as $w$. We first show this when the final $F$-critical string has type 1F or 3F. It then follows for 2F and 4F by applying $\eta$ and Proposition \ref{prop:inverses} (see Remark \ref{rmk:eta-words-dual}).
																																																																															
																																																																															By Corollary \ref{cor:initial-subwords-dual}, the insertion tableaux $P_{w}$ and $P_{F(w)}$ have the same shape at every step of the insertion process. Therefore the recording tableaux $Q_{w}$ and $Q_{F(w)}$ are the same if we ignore the circling. We are left with showing that $Q_{w}$ and $Q_{F(w)}$ have the same circled entries.
																																																																															
																																																																															\textbf{Case 1F:} Suppose the final $F$-critical substring $u$ in $w$ is type 1F, so $u=1(1')^\ast 2'$ with starting location either $y=0$ or $y=1$, $x\ge 1$. Then $u$ changes to $s=2'(1')^\ast 2$ in $F(T)$.  Clearly the circlings prior to $u$ in reading order are unchanged, and the $(1')$s are circled indices in both $u$ and $s$. Any later circlings are also unchanged because, in both $u$ and $s$, the last $1$ or $2'$ in these substrings is a $2'$.
																																																																															
																																																																															We next check that $u_1=1$ is a circled index if and only if $s_1=2'$ is. First note that $u_1$ satisfies condition \ref{condition2} of Lemma \ref{lem:circling} if and only if $s_1$ does. For condition \ref{condition4}, note that the index $j$ of the first downwards or leftwards step in the walk is preserved by $F$, by Corollary \ref{cor:F-on-walk}. Thus $i$ is uncircled if $i \leq j$; if instead $i > j$, then $i$ is circled if and only if the previous $1$ or $2'$ is a $2'$.
																																																																															
																																																																															Finally, since the $2'$ in $u$ follows $u_1=1$, its index is uncircled by conditions \ref{condition2} and \ref{condition4}.  The corresponding $2$ in $s$ is also uncircled because $(2)$s are never circled.
																																																																															
																																																																															\textbf{Case 3F:}  Suppose the final $F$-critical substring $u$ of $w$ is type 3F, so $u=1$, becoming $s=2$ in $F(T)$. Let $j$ be the index of the first downwards or leftwards step in the walk. Note that $i \ne j$ because $y=0$ at $u$, and if $j$ does not exist then we're done by Lemma \ref{lem:circling}.
																																																																															
																																																																															First, $u$ itself cannot satisfy condition \ref{condition2} of Lemma \ref{lem:circling} because it starts at $y=0$. For condition \ref{condition4}, if $i < j$, $i$ is uncircled, and changing $u$ to $s$ does not affect the circling of any later letters (because $w_j$ will be a $1$ or $2'$). If $i > j$, the walk must have had $y>0$ at some point prior to $u$.  To reach $y=0$ at $u$, the last $1$ or $2'$ before $u$ must have been a downwards $1$, with no $(2')$s (or $(2)$s) between it and $u$. But then $u=1$ is an uncircled index (as is $s=2$), and changing $u$ to $s$ does not affect the circling of any later letters.
																																																																														\end{proof}
																																																																														
																																																																														\begin{corollary} \label{cor:FE-dual-equiv}
																																																																															When defined, $F(T)$ and $E(T)$ are dual equivalent to $T$.
																																																																														\end{corollary}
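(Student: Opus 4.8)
The plan is to deduce the tableau statement directly from the word statement established in the preceding proposition, using Haiman's characterization of dual equivalence (Proposition \ref{prop:Haiman1}). The essential observation is that the preceding proposition does not merely assert word dual equivalence: its proof establishes the sharper fact that $w$ and $F(w)$ have \emph{identical} mixed-insertion recording tableaux (including the circling), and likewise for $E$. This equal-recording-tableau statement is precisely the invariant that controls tableau dual equivalence, so no new combinatorial work about jeu de taquin slides is required.

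First I would recall that, by the definition of the operators on tableaux, $F(T)$ is obtained by applying $F$ to the row reading word $w$ of $T$; hence the row reading word of $F(T)$ is exactly $F(w)$, and similarly the row word of $E(T)$ is $E(w)$. Moreover, $F(T)$ and $T$ share the same underlying skew shape, since $F$ only alters entries and never the shape. Next I would invoke Proposition \ref{prop:Haiman1} in its semistandard form (valid here because mixed insertion is compatible with standardization, as noted after that proposition): two shifted semistandard tableaux of the same skew shape are dual equivalent if and only if their row reading words have the same recording tableau. Since $T$ and $F(T)$ have the same skew shape, and since the preceding proposition's proof shows that $w$ and $F(w)$ carry the same recording tableau, we conclude that $T$ and $F(T)$ are dual equivalent; the argument for $E$ is identical.

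I do not expect any genuine obstacle here, as the corollary is essentially a translation from words to tableaux. The only point requiring care is conceptual rather than technical: one must recognize that the word-level proposition was proved by \emph{comparing recording tableaux}, and that this recording-tableau equality is exactly the hypothesis needed to apply Haiman's criterion at the level of tableaux. In other words, the entire content of the corollary is the remark that the diagonal-tableau (word) and genuine-tableau notions of dual equivalence are both detected by the same recording tableau, so the already-proven equality transfers without modification.
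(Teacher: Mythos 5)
Your proposal is correct and matches the paper's own proof essentially verbatim: the paper likewise observes that $T$ and $F(T)$ are semistandard of the same skew shape, that their row reading words were shown (in the preceding proposition's proof) to have the same mixed-insertion recording tableau, and then applies Proposition \ref{prop:Haiman1} in its semistandard extension. Your added emphasis that the word-level proof establishes equality of the circled recording tableaux---exactly the invariant Haiman's criterion requires---is the same (correctly identified) key point.
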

																																																																														
																																																																														\begin{proof}
																																																																															We have shown that $T$ and $F(T)$ are semistandard tableaux of the same skew shape, and that their row words have the same mixed-insertion recording tableau.  The result now follows by Proposition \ref{prop:Haiman1}.
																																																																														\end{proof}
																																																																														
																																																																														\subsection{Coplacticity}
																																																																														
																																																																														We now use the machinery we have developed to prove the main result of this paper, that $F$ and $E$ are coplactic.  We also require the following result of Haiman \cite{Haiman}:
																																																																														
																																																																														\begin{proposition}[\cite{Haiman}, Theorem 2.13] \label{prop:Haiman2}
																																																																															A skew shifted tableau is uniquely determined by its shape, dual equivalence class, and jeu de taquin equivalence class (rectification).
																																																																														\end{proposition}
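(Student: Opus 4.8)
The plan is to prove the uniqueness by reducing to standard tableaux and then reconstructing $T$ from the three pieces of data via \emph{reverse} jeu de taquin, with the slide path pinned down by the dual equivalence class (packaged, if desired, through the recording-tableau description in Proposition \ref{prop:Haiman1}).

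First I would reduce to the standard case. Rectification commutes with standardization, since jeu de taquin slide paths depend only on $\std(T)$ (as used in the proof of Proposition \ref{prop:primed-coplactic}), and dual equivalence of semistandard tableaux is \emph{defined} through standardization. Hence two semistandard tableaux $T,T'$ that share a shape, a dual equivalence class, and a rectification have standardizations $\std(T),\std(T')$ sharing these same three invariants. Moreover rectification preserves weight, so $\wt(T)=\wt(\rectify(T))=\wt(\rectify(T'))=\wt(T')$. Thus if the standard case forces $\std(T)=\std(T')$, then Lemma \ref{lem:unstandardize} (equal standardization and equal weight determine the word) forces $T=T'$, and it suffices to treat standard skew shifted tableaux.

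For standard tableaux, fix the skew shape $\theta$ and a single schedule of inner jeu de taquin slides that clears one inner corner at a time. Running this schedule on $T$ yields, by well-definedness of rectification, the straight tableau $R:=\rectify(T)$, and the sequence of \emph{shapes} it passes through is precisely the data carried by the dual equivalence class (via Proposition \ref{prop:Haiman1}, dual equivalence is the same as having a common mixed-insertion recording tableau, which encodes this slide path). Since $T$ and $T'$ have the same shape $\theta$ and are dual equivalent, the shapes evolve identically under this common schedule, so the same schedule also rectifies $T'$ to a straight tableau of shape $\nu=\mathrm{shape}(R)$, which must equal $R$. I would then run the reverse (outer) slides starting from the common endpoint $R$: a single jeu de taquin slide is a bijection once its slide cell is fixed, and the (identical) shape evolution determines exactly which outer corner is re-filled at each reverse step. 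Hence, applied to $R$ with this common corner sequence, the reverse process reconstructs a single tableau, giving $\std(T)=\std(T')$ and completing the reduction.

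The main obstacle is making ``the dual equivalence class determines the slide path'' fully rigorous, i.e.\ matching the abstract shape-evolution under rectification to a concrete recording of corners so that the reverse slides issued from $R$ are \emph{literally identical} for $T$ and $T'$. A secondary subtlety, specific to the shifted setting, is the special slide in which an entry is primed or unprimed as it crosses the diagonal: because reverse slides can toggle these primes, one must check that the reconstruction remains single-valued under the exceptional rule. These are exactly the points where the shifted theory departs from classical jeu de taquin, and they form the heart of Haiman's argument.
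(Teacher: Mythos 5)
The paper offers no proof of this proposition to compare against: it is imported verbatim from Haiman (\cite{Haiman}, Theorem~2.13), and the uniqueness statement is used as a black box. Judged on its own merits, your argument is correct and is essentially the standard proof of the uniqueness direction. Your reduction to standard tableaux is exactly right: standardization preserves the shape, dual equivalence of semistandard tableaux is \emph{defined} via standardizations, rectification commutes with standardization (the paper itself records that the slide path depends only on $\std(T)$, and that standardizing $T$ standardizes the insertion tableau $P$), and rectification preserves weight, so Lemma~\ref{lem:unstandardize} finishes the reduction. Moreover, the two ``obstacles'' you flag at the end are not actually gaps. First, in this paper (following Haiman) dual equivalence of standard tableaux is defined as ``the shapes transform the same way under any sequence of slides,'' so the identical shape evolution of $T$ and $T'$ under a common slide schedule is definitional; invoking Proposition~\ref{prop:Haiman1} and recording tableaux is unnecessary packaging, and the cells involved at each step are recovered as the symmetric differences of consecutive shapes, which pins down the reverse schedule rigorously. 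Second, the exceptional prime-toggling slide requires two \emph{equal} entries meeting at the diagonal, which cannot happen in a standard tableau (all entries distinct and unprimed); so after your own standardization step the special rule never fires, and each slide is reversible in the classical sense --- inner and outer slides are mutually inverse, as in \cite{Sagan}, \cite{Worley} and as built into the paper's definition of outer slides as the reverse process. With those two observations your proof is complete, and it is presumably close in spirit to the uniqueness half of Haiman's own argument (his theorem also contains an existence assertion, which neither the paper's citation nor your proposal needs).
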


																																																																														\begin{theorem}\label{thm:coplactic}
																																																																															The operations $F$ and $E$ are coplactic.
																																																																														\end{theorem}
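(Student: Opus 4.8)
The plan is to deduce coplacticity from Haiman's structure theorem (Proposition \ref{prop:Haiman2}), which says that a skew shifted tableau is determined by its shape, its dual equivalence class, and its rectification. Since $E = \eta \circ F \circ \eta$ and $\eta$ is coplactic (Remark \ref{rmk:eta-words-dual}), it suffices to treat $F$. Two of the three invariants are already under control: $F(T)$ is semistandard of the same shape as $T$ (Theorem \ref{thm:FE-semistandard}) and is dual equivalent to $T$ (Corollary \ref{cor:FE-dual-equiv}). I would therefore reduce the theorem to the single claim that $F$ commutes with rectification,
\[ \rectify(F(T)) = F(\rectify(T)) \]
for all $T$ on which $F$ is defined. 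Granting this, coplacticity follows quickly: for any jeu de taquin slide $j$, the tableaux $F(jT)$ and $jF(T)$ have the same shape; both are dual equivalent to $jT$ (the first by Corollary \ref{cor:FE-dual-equiv}, the second because $F(T)$ is dual equivalent to $T$ and applying a fixed slide preserves dual equivalence, which is immediate from the definition); and both rectify to $F(\rectify(T))$, using $\rectify(jT) = \rectify(T)$ together with the claim applied to $T$ and to $jT$. Proposition \ref{prop:Haiman2} then forces $F(jT) = jF(T)$.

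It remains to prove the commutation-with-rectification claim, equivalently that $F$ sends shifted-Knuth-equivalent words to shifted-Knuth-equivalent words. Both $\rectify(F(w))$ and $F(\rectify(w))$ are straight shifted tableaux with at most two rows. They have the same shape, since $F(w)$ and $w$ are dual equivalent and hence have equal rectification shapes; and they have the same weight, since $F$ always changes the weight by $(-1,+1)$ and, via Corollary \ref{cor:F-on-walk} and Theorem \ref{thm:rectification}, changes the number of $(1)$s in the first row accordingly. When the rectification has a single row this already finishes the argument, as there is a unique one-row straight tableau of each shape and weight.

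The crux, and the step I expect to be the main obstacle, is the two-row case. Here a straight shifted tableau is \emph{not} determined by its shape and weight: it may or may not contain a (necessarily unique) $2'$ in its first row, and one must show that $\rectify(F(w))$ and $F(\rectify(w))$ agree on the presence of this $2'$. On straight shapes this much is easy, since inspecting Proposition \ref{prop:rect-unprimed-action} shows that $F$ preserves $2'$-presence when defined, so $F(\rectify(w))$ contains a $2'$ if and only if $\rectify(w)$ does. Thus everything comes down to showing that $F$ does not alter the $2'$-presence of the rectification, i.e.\ that $\rectify(F(w))$ contains a $2'$ if and only if $\rectify(w)$ does. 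By Proposition \ref{lem:maxchains} this is precisely the assertion that $F'$-definedness, equivalently $E'$-definedness, is invariant under $F$: that $E'(F(w)) \ne \varnothing$ if and only if $E'(w) \ne \varnothing$.

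I would prove this compatibility between the primed and unprimed operators directly, \emph{without} appealing to the later commutation result (Theorem \ref{thm:main-doubled-typeA}, whose proof in turn rests on coplacticity). The needed tools are in place: the primed operators are governed by the last $1$ or $2'$ of $w$ (Proposition \ref{prop:explicit-definitions-primed}), while $F$ transforms the final $F$-critical substring, and the walk-shifting Lemmas \ref{lem:shifting-walks-F} and \ref{lem:shifting-walks-E}, together with Proposition \ref{prop:alternate-definedness}, describe exactly how the relevant portion of the lattice walk, and hence the data controlling $F'$, is displaced by $F$. Tracking this displacement through the cases of the final critical substring (types 1F through 4F, reduced via $\eta$) should establish the $2'$-invariance, and with it the commutation claim and the theorem. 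An alternative that bottoms out at the same $2'$-bookkeeping is to extend the mixed-insertion analysis behind Lemma \ref{lem:circling} from the recording tableau to the insertion tableau, verifying $P(F(w)) = F(P(w))$ step by step.
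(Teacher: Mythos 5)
Your proposal is correct and takes essentially the same approach as the paper: the paper likewise reduces via Haiman's rigidity (Proposition \ref{prop:Haiman2}) and dual equivalence (Corollary \ref{cor:FE-dual-equiv}) to matching rectifications, observes that shape and weight leave at most two candidates in the two-row case distinguished by the $2'$ in the first row, and resolves this by exactly your invariant --- that $F'$-definedness is preserved by $F$ --- citing the circling/mixed-insertion analysis of Lemma \ref{lem:circling} and Corollary \ref{cor:FE-dual-equiv}, i.e.\ the ``alternative'' route you name last is in fact the paper's route. The only cosmetic differences are that the paper argues slide-by-slide (comparing $\mathrm{slide}(F(T))$ with $F(S)$) rather than first proving $\rectify(F(T)) = F(\rectify(T))$, and that it explicitly handles definedness under slides via Corollary \ref{cor:Fdefinedness-knuth}, which your sketch should also invoke.
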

																																																																														
																																																																														\begin{proof}
																																																																															Let $T$ be a tableau and let $S$ be obtained from $T$ by a JDT slide. If $F(T) = F(S) = \varnothing$, there is nothing to show, so assume $F(T) \ne \varnothing$. By Corollary \ref{cor:Fdefinedness-knuth}, $F(S) \ne \varnothing$ as well. Let $\mathrm{slide}(F(T))$ be the tableau obtained from $F(T)$ by the slide initiated in the same square. We show $\mathrm{slide}(F(T)) = F(S)$. Note that this also automatically proves the claim for tableaux on larger alphabets, since any JDT slide restricts to a JDT slide on the $(i,i+1)$-strip of a tableau.
																																																																															
																																																																															First, since $F(T)$ is dual equivalent to $T$, they slide to dual equivalent tableaux, so $\mathrm{slide}(F(T))$ is dual equivalent to $S$ and so also to $F(S)$.
																																																																															By Proposition \ref{prop:Haiman2}, it suffices to show that $\mathrm{slide}(F(T))$ and $F(S)$ also have the same rectification.
																																																																															
																																																																															Observe next that $F(S)$ and $\mathrm{slide}(F(T))$ have the same weight and rectification shape. This leaves at most two possibilities for their rectifications (see the diagrams in Figure \ref{fig:two-row}). In the case with two possibilities, the rectification shape has two rows and the distinguishing feature is whether or not $F'$ is defined, that is, whether or not the last $1$ or $2'$ in the word is a $2'$ (equivalently, appending a $1$ or $2'$ to the word should add a circled entry in the recording tableau). By the analysis in the proofs of Lemma \ref{lem:circling} and  Corollary \ref{cor:FE-dual-equiv}, this property is invariant under $F$. So, the following are equivalent:
																																																																															\begin{center}
																																																																																\begin{tabular}{rll}
																																																																																	$F'$ defined on $F(S)$
																																																																																	& $\Leftrightarrow F'$ defined on $S$ & (by the above)\\
																																																																																	& $\Leftrightarrow F'$ defined on $T$ & ($F'$ is coplactic)\\
																																																																																	& $\Leftrightarrow F'$ defined on $F(T)$ & (by the above)\\
																																																																																	& $\Leftrightarrow F'$ defined on $\mathrm{slide}(F(T))$ & ($F'$ is coplactic).
																																																																																\end{tabular}
																																																																															\end{center}
																																																																															Therefore, $F(S)$ and $\mathrm{slide}(F(T))$ have the same rectification, as desired.
																																																																														\end{proof}
																																																																														
																																																																														\noindent In the proof above, we showed that (in the case where $\rectify(T)$ has two rows) $F'$ is defined on $T$ if and only if it is defined on $F(T)$. In fact more is true:
																																																																														
																																																																														\begin{proposition} \label{prop:commutes}
																																																																															The operations $F, F', E, E'$ commute whenever both possible compositions are defined.
																																																																														\end{proposition}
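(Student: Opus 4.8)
The plan is to reduce everything to straight (rectified) shapes using coplacticity, where the four operators have completely explicit local descriptions, and then to verify commutation there by a short finite check.

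First I would dispose of the pairs that come from partial inverses. Since $E$ and $F$ are partial inverses (Proposition \ref{prop:inverses}), whenever both $EF(w)$ and $FE(w)$ are defined they both equal $w$; the same argument handles $E',F'$ via Proposition \ref{prop:primed-partial-inverses}. For the genuinely mixed pairs I would exploit the involution $\eta$. Since $E = \eta F \eta$ and $E' = \eta F' \eta$ (Proposition \ref{prop:primed-eta} and the definition of $E$), and $\eta$ is an involution that preserves definedness, conjugating the identity $FF' = F'F$ by $\eta$ yields $EE' = E'E$, and conjugating $FE' = E'F$ yields $EF' = F'E$. Hence it suffices to prove the two statements $FF' = F'F$ and $FE' = E'F$ (as partial operators, i.e.\ whenever both orders are defined).

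The key reduction is coplacticity. All four operators are coplactic (Proposition \ref{prop:primed-coplactic} and Theorem \ref{thm:coplactic}) and preserve the shape, so each preserves the dual equivalence class; this is recorded for $F,E$ in Corollary \ref{cor:FE-dual-equiv}, and for $F',E'$ it follows because a shape-preserving coplactic operation transforms identically to $T$ under all slides. Consequently, for any composition, $AB(T)$ and $BA(T)$ are both dual equivalent to $T$ (hence to each other) and have the same shape as $T$. Moreover definedness of each operator is a rectification invariant: for $F,E$ by Corollary \ref{cor:Fdefinedness-knuth} (via Proposition \ref{prop:alternate-definedness}), and for $F',E'$ because, by Proposition \ref{prop:rect-primed-action}, definedness depends only on $\rectify(w)$. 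Thus ``both compositions defined'' transfers between $T$ and $\rectify(T)$, and since $A,B$ commute with the rectification slides we have $\rectify(AB(T)) = AB(\rectify(T))$ and $\rectify(BA(T)) = BA(\rectify(T))$. Therefore, once we check $AB(R) = BA(R)$ for every straight shifted $R$, the tableaux $AB(T)$ and $BA(T)$ share their shape, dual equivalence class, and rectification, and Haiman's uniqueness (Proposition \ref{prop:Haiman2}) forces $AB(T) = BA(T)$.

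It then remains to verify $FF' = F'F$ and $FE' = E'F$ on straight shifted tableaux using the explicit rules of Propositions \ref{prop:rect-primed-action} and \ref{prop:rect-unprimed-action}. The one-row case is immediate, since there $F = F'$ and $E = E'$. In the two-row case one parametrizes a tableau by the number $p$ of $1$s in the first row together with a bit $\epsilon$ recording whether the first row contains a $2'$; these are exactly the two linked chains of Figure \ref{fig:two-row}. Here $F$ stays within a chain (lowering $p$ while fixing $\epsilon$), while $F'$ carries the no-$2'$ chain to the $2'$ chain and $E'$ reverses it. A direct computation then settles each case: for instance both $F'F$ and $FF'$ send a first row $1^p 2^q$ to $1^{p-2}2'2^{q+1}$, and both $FE'$ and $E'F$ send $1^p 2' 2^q$ to $1^p 2^{q+1}$. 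The main obstacle is organizing the coplactic reduction correctly — in particular confirming that definedness, not merely the outputs, is a rectification invariant for all four operators, so that the hypothesis ``both compositions are defined'' genuinely descends to the straight shape; after that, the residual straight-shape verification is a routine inspection of the doubled-chain structure.
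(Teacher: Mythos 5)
Your proposal is correct and takes essentially the same route as the paper, whose entire proof is the coplacticity reduction to straight shifted tableaux followed by inspection of Propositions \ref{prop:rect-primed-action} and \ref{prop:rect-unprimed-action}. Your additional scaffolding --- the $\eta$-conjugation to cut down the cases, the check that definedness descends to the rectification, and the appeal to Haiman's Proposition \ref{prop:Haiman2} to transport the straight-shape identity back to skew shapes --- simply makes explicit the details the paper's two-line proof leaves implicit, and your straight-shape computations (e.g.\ $1^p2^q \mapsto 1^{p-2}2'2^{q+1}$ under both $FF'$ and $F'F$) are accurate.
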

																																																																														
																																																																														\begin{proof}
																																																																															By coplacticity, it suffices to prove this statement for words of straight shifted tableaux. The statement then follows from Propositions \ref{prop:rect-primed-action} and \ref{prop:rect-unprimed-action}. (See Figure \ref{fig:two-row} for an illustration.)
																																																																														\end{proof} 
																																																																														
																																																																														\begin{remark}
																																																																															In fact $F\circ F'$ is defined  on $w$ if and only if $F'\circ F$ is defined, and likewise with $E$ and $E'$. For $\{F,E'\}$ and $\{F',E\}$, the statement is instead that if both operators are defined on $w$, then so are both compositions.
																																																																														\end{remark}
																																																																														
																																																																														We also note that $F$ and $F'$ (and likewise $E$ and $E'$) coincide if and only if the rectification shape of the word has one row.
																																																																														
																																																																														\begin{corollary}
																																																																															If $\rectify(w)$ has only one row, then $F(w) = F'(w)$ and $E(w) = E'(w)$.  Otherwise, (when defined)
																																																																															$F(w)$ has the same number of primed symbols as $w$, and $F'(w)$ has one more primed symbol than $w$, which shows that they are different.
																																																																														\end{corollary}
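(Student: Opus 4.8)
The plan is to reduce everything to straight-shape tableaux via coplacticity and then read off the claims directly from the explicit straight-shape descriptions we have already established.

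First I would invoke Proposition~\ref{lem:maxchains} (or equivalently Theorem~\ref{thm:rectification} together with Proposition~\ref{prop:rect-primed-action} and Proposition~\ref{prop:rect-unprimed-action}) to handle the one-row case. When $\rectify(w)$ has one row, the straight-shape tableau contains no $2'$, and in that situation Proposition~\ref{prop:rect-primed-action} says $F'$ changes the rightmost $1$ to a $2$ while Proposition~\ref{prop:rect-unprimed-action} says $F$ does exactly the same thing (since there is no $2'$ in the first row). Hence $F(w)=F'(w)$ on straight shape, and the same comparison gives $E(w)=E'(w)$. Since $F,F',E,E'$ are all coplactic (Propositions~\ref{prop:primed-coplactic} and Theorem~\ref{thm:coplactic}), and rectification shape is preserved under jeu de taquin, the equality $F(w)=F'(w)$ propagates from the straight-shape representative to every $w$ with one-row rectification.

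Next I would treat the two-row case by tracking the number of primed symbols. By coplacticity it again suffices to examine the straight-shape word, which by Theorem~\ref{thm:rectification} has the form $2^a1^b2^c$ or $2^a1^b2'2^c$. In the two-row case Proposition~\ref{prop:rect-primed-action} shows $F'$ changes the rightmost $1$ to a $2'$, which \emph{increases} the number of primes by exactly one (and does not otherwise alter the primed content). By contrast Proposition~\ref{prop:rect-unprimed-action} shows that $F$ either changes the rightmost $1$ to a $2$ (when there is no $2'$ in the first row) or changes a substring $12'$ to $2'2$ (when there is); in both cases the total number of primed symbols is unchanged. Since standardization and hence the number of primed letters is preserved under jeu de taquin slides (the prime-count is determined by the reading word up to canonical form), these parity statements transfer from the straight-shape representative to arbitrary $w$. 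Comparing the two prime-counts then shows $F(w)\neq F'(w)$, as claimed.

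The main obstacle, such as it is, is purely bookkeeping: one must be careful that ``number of primed symbols'' is well-defined on words (i.e.\ invariant under the equivalence relation and under switching representatives), and that it is genuinely preserved under slides so that the straight-shape computation controls the skew case. This is where I would lean on the fact that $F$ preserves $\mathrm{std}(w)$-compatible data and that changing representatives commutes with jeu de taquin (as established in the canonical-form discussion of Section~\ref{sec:notation}); the prime-count of $F(w)$ versus $w$ can then be read off from the straight-shape action without any further case analysis.
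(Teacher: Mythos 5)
Your one-row argument is correct and is essentially the paper's route (the paper treats this corollary as immediate, and its earlier Proposition \ref{lem:maxchains} is proved in exactly this way): since $w$ is recovered from $\rectify(w)$ by a fixed sequence of outward slides, and all four operators commute with slides (Proposition \ref{prop:primed-coplactic}, Theorem \ref{thm:coplactic}), the coincidence of the straight-shape actions in Propositions \ref{prop:rect-primed-action} and \ref{prop:rect-unprimed-action} on one-row tableaux pulls back to give $F(w)=F'(w)$ and $E(w)=E'(w)$ for every $w$ with one-row rectification.

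The two-row half, however, has a genuine gap: your transfer step rests on the claim that ``the number of primed letters is preserved under jeu de taquin slides,'' and this is false. Shifted jeu de taquin includes the special slides of Section \ref{sec:notation}, which convert an unprimed $i$ into $i'$ when it crosses the diagonal (and the representative-switching slide primes or unprimes the first $i$ in reading order), so the prime count is not a slide invariant; worse, even the \emph{difference} in prime counts between $w$ and $F'(w)$ is not slide-invariant, which is exactly the quantity you need to transfer. Concretely, $F'(121')=221$ (one checks $\mathrm{std}$ is preserved and then canonicalization unprimes both new primed letters), so the prime count \emph{drops} by one, while on the rectification $F'(112)=122$, where it is unchanged --- two $F'$-applications related by slides with different prime-count differences. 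This example lives in the one-row case, where the corollary makes no prime-count claim, but it refutes the general principle your two-row argument invokes, and you give no two-row-specific substitute. The correct argument is word-level and does not pass through rectification: each $F$-transformation in Figure \ref{fig:criticals} --- $1(1')^*2' \to 2'(1')^*2$, $1(2)^*1' \to 2'(2)^*1$, $1 \to 2$, $1' \to 2'$ --- visibly preserves the number of primes of the representative in which it is applied, while $F'$ changes the last $1$ to a $2'$ (Proposition \ref{prop:explicit-definitions-primed}), adding exactly one. The remaining bookkeeping is to check that switching representatives and re-canonicalizing nets to zero: the scenarios in which canonicalization would remove an extra prime (the new $2'$ becoming the first $2$ or $2'$ of the word, or the word's unique $1$ being consumed so that a later $1'$ is unprimed) each force the lattice walk of $w$ to consist only of upward and rightward steps, hence force $\rectify(w)$ to have one row by the corollary to Theorem \ref{thm:rectification}, contradicting the two-row hypothesis. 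With the prime counts established, $F(w)\neq F'(w)$ follows since both have weight $\wt(w)-\alpha_1$ but different numbers of primed symbols.
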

																																																																														
																																																																														\section{Doubled crystal structure}\label{sec:crystal}
																																																																														
																																																																														We now consider the joint combinatorial structure built out of the primed and unprimed operators on words and tableaux from an arbitrary alphabet. Our main goal is to prove Theorem \ref{thm:main-doubled-typeA} on this `doubled' crystal structure.
																																																																														
																																																																														\subsection{Extending the alphabet: $E_i$, $E'_i$, $F_i$, and $F'_i$}
																																																																														
																																																																														We extend our operators to words and tableaux in the infinite alphabet $$\{1',1,2',2,3',3,\ldots\}.$$
																																																																														
																																																																														\begin{definition}
																																																																															%  For words in the alphabet $\{1',1,2',2,3',3,\ldots\}$, we
																																																																															We define $E_1$, $E'_1$, $F_1$, and $F'_1$ to be the result of applying the operators $E$, $E'$, $F$, and $F'$ respectively to the subword consisting of letters $1',1,2',2$.  We similarly define $E_i$, $E'_i$, $F_i$, and $F'_i$ to be the analogous operators on the restriction of the word to the letters $\{i,i+1,i',i+1'\}$.
																																																																														\end{definition}
																																																																														
																																																																														\begin{example}
																																																																															We have $F_1(2112'3)=212'23$ and $F_2(2112'3)=31123$.
																																																																														\end{example}
																																																																														
																																																																														We therefore have a sort of `doubled crystal' structure on tableaux of a given shape and rectification shape.  Two examples are shown in Figure \ref{fig:crystal}.
																																																																														
																																																																														\begin{figure}[t]
																																																																															\begin{center}
																																																																																\includegraphics[height=10cm]{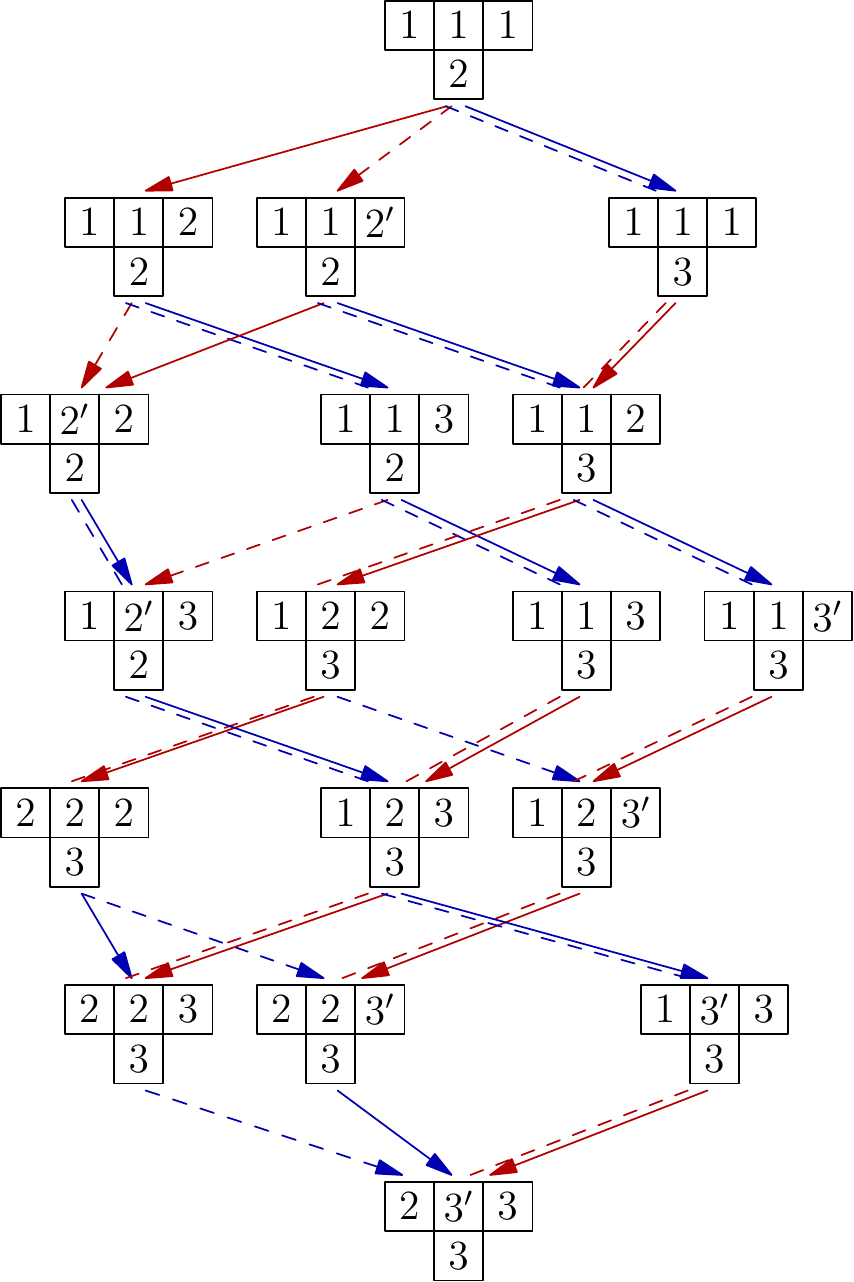} \hspace{1cm} \includegraphics[height=10cm]{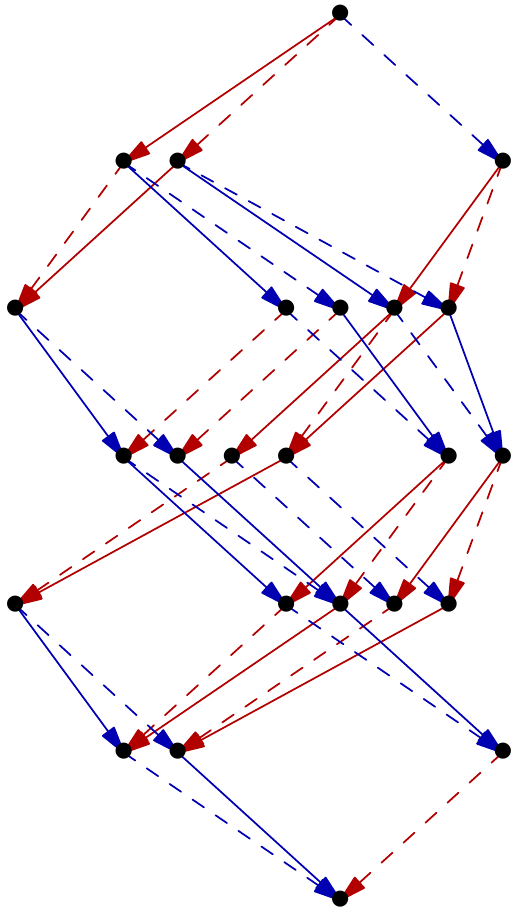}
																																																																															\end{center}
																																																																															\caption{\label{fig:crystal} {\bf Left:} The crystal graph for $\lambda = (3,1)$ with operators $F_i$, $F_i'$ for $i\in\{1,2\}$.  The $F_1$ and $F_1'$ arrows are respectively the red solid and dashed arrows (pointing leftwards), and $F_2$ and $F_2'$ are the blue solid and dashed arrows (pointing rightwards). {\bf Right:} The graph structure underlying the crystal for $\lambda = (4,2,1)$, with vertices arranged according to their weight (`clustered' vertices have the same weight).} 
																																																																														\end{figure}
																																																																														
																																																																														\begin{definition}
																																																																															Let $\ShST(\lambda/\mu,m)$ be the set of all shifted semistandard tableaux of shape $\lambda/\mu$ in the alphabet $\{1',1,\ldots,m',m\}$ (in canonical form).
																																																																														\end{definition}
																																																																														
																																																																														\begin{proposition} \label{prop:tableau-crystals}
																																																																															The unique element $T \in \ShST(\lambda,m)$ for which $E_i(T) = E'_i(T) = \varnothing$ for all $i$ is the tableau $T_{\mathrm{HIGH}}$, whose $i$-th row has all entries equal to $i$ (for all $i$).
																																																																															
																																																																															It follows that every element of $\ShST(\lambda,m)$ can be obtained from every other by some sequence of operations $E_i$, $E_i'$, $F_i$, and $F_i'$ (for various $i\in \{1,\ldots,m-1\}$).
																																																																														\end{proposition}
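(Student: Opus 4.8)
The plan is to prove the first (characterization) statement, from which the second (connectivity) statement follows by a standard monovariant argument together with the partial-inverse property of our operators.

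First I would identify the highest-weight condition with ballotness. Proposition \ref{prop:ballot-iff-killed} says that a word in $\{1',1,2',2\}$ is ballot if and only if it is killed by both $E$ and $E'$. Applying this to the two-letter subword of the reading word on $\{i',i,i+1',i+1\}$ (relabelling $i\mapsto 1$ and $i+1\mapsto 2$) shows that $E_i(T)=E_i'(T)=\varnothing$ exactly when that subword is ballot. Combined with the standard fact, noted in the proof of Corollary \ref{cor:ballot-walk-criterion}, that a word is ballot if and only if each of its two-letter subwords $w_i$ is ballot, the condition $E_i(T)=E_i'(T)=\varnothing$ for all $i$ is equivalent to the reading word of $T$ being ballot, i.e.\ to $T$ being Littlewood--Richardson. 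For a straight shape $\lambda$ we have $\rectify(T)=T$, so the Littlewood--Richardson condition forces row $i$ of $T$ to consist entirely of $(i)$s; this filling is exactly $T_{\mathrm{HIGH}}$, and it is semistandard since its rows are constant and its columns strictly increase from $i$ to $i+1$. This establishes simultaneously that $T_{\mathrm{HIGH}}$ is highest weight and that it is the unique such tableau.

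For the second statement I would introduce the monovariant $s(T)=\sum_i i\,n_i$, where $n_i$ is the number of $(i)$s and $(i')$s in $T$. Each of $E_i$ and $E_i'$ converts one $(i+1)$ or $(i+1')$ into an $(i)$ or $(i')$ (the weight changes by increasing $n_i$ by one and decreasing $n_{i+1}$ by one), so it decreases $s$ by exactly $1$, while $s\ge 0$ is bounded below. By Theorem \ref{thm:FE-semistandard} and Proposition \ref{prop:primed-semistandard} the operators preserve semistandardness and shape, and since the entries only decrease they remain in $\{1',1,\ldots,m',m\}$; hence starting from any $T\in\ShST(\lambda,m)$ and repeatedly applying any available raising operator, we stay in $\ShST(\lambda,m)$ and terminate after finitely many steps at a tableau $T^\ast$ with $E_i(T^\ast)=E_i'(T^\ast)=\varnothing$ for all $i$, which by the first part equals $T_{\mathrm{HIGH}}$. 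Finally, because $E_i,E_i'$ are partial inverses of $F_i,F_i'$ (Propositions \ref{prop:inverses} and \ref{prop:primed-partial-inverses}), the reversed sequence of lowering operators connects $T_{\mathrm{HIGH}}$ back to $T$; as $T$ was arbitrary, the graph on $\ShST(\lambda,m)$ is connected, so every element is reachable from every other.

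The content really lies in the characterization: the main point to get right is the reduction of the multi-letter highest-weight condition to ballotness via two-letter subwords, followed by the identification, for straight shapes, of the Littlewood--Richardson tableau with $T_{\mathrm{HIGH}}$. Once the monovariant $s$ and the partial-inverse property are in hand, the termination and reversibility argument giving connectivity is routine.
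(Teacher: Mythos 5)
Your proof is correct, but it takes a somewhat different route from the paper's, which proves both claims in a single pass: given $T \neq T_{\mathrm{HIGH}}$, the paper takes the smallest $i$ whose row contains an entry larger than $i$, lets $j$ be the largest entry of that row, and observes that the $(j-1,j)$ subword of the reading word then ends in $j$ or $j'$ and hence is not ballot; Proposition \ref{prop:ballot-iff-killed} then guarantees that $E_{j-1}(T)$ or $E'_{j-1}(T)$ is nonempty, and finiteness of the set of weights (with $\lambda$ the unique highest) forces any maximal chain of raising operators to terminate at $T_{\mathrm{HIGH}}$. You instead prove the characterization first and globally: killed by all $E_i, E'_i$ iff every $(i,i{+}1)$-subword is ballot (Proposition \ref{prop:ballot-iff-killed} together with the well-known subword criterion already invoked in the proof of Corollary \ref{cor:ballot-walk-criterion}) iff the reading word is ballot iff $T$ is Littlewood--Richardson, and on a straight shape rectification is the identity, so the LR condition pins down $T_{\mathrm{HIGH}}$ exactly; connectivity then follows from your monovariant $s(T)=\sum_i i\,n_i$, which every raising operator decreases by exactly $1$, combined with the partial-inverse property. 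The two arguments share the same engine---Proposition \ref{prop:ballot-iff-killed}---but your organization cleanly decouples uniqueness from connectivity and makes explicit the identification of highest-weight elements with LR tableaux (which the paper states separately as part (ii) of Theorem \ref{thm:main}), at the cost of leaning on the external ballot-subword fact; the paper's local search is more hands-on, naming concretely which index to raise at each step, and its termination argument via finiteness of weights is coarser than your exact monovariant but equally sufficient. Both are complete proofs.
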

																																																																														
																																																																														We call the graph associated to $\ShST(\lambda,m)$ a {\bf shifted tableau crystal}. We will give a more precise definition in the next section.
																																																																														
																																																																														\begin{proof}
																																																																															Let $T \in \ShST(\lambda,m)$ be any tableau. It suffices to transform $T$ to $T_{\mathrm{HIGH}}$ using a sequence of $E_i,E_i'$ operators.
																																																																															
																																																																															If $T \ne T_{\mathrm{HIGH}}$, let $i$ be the smallest integer such that the $i$-th row contains an entry larger than $i$. Let $j$ be the largest entry of row $i$. The $(j-1,j)$ subword of $w$ is not ballot (since its last letter is $j$ or $j'$), hence by Proposition \ref{prop:ballot-iff-killed}, at least one of $E_{j-1}(T)$ or $E'_{j-1}(T)$ is nonzero and has higher weight. Since the set of possible weights is finite and $T_{\mathrm{HIGH}}$ is the unique tableau with weight $\lambda$, and this weight is highest, we must reach $T_{\mathrm{HIGH}}$ after a composition of finitely-many $E'$ and $E$ operators for various indices.
																																																																														\end{proof}
																																																																														
																																																																														The shifted tableaux crystals are `universal' in the following sense. Let $S$ be any finite set of words or tableaux in the alphabet $\{1',1, \ldots, m', m\}$, closed under $E_i, E'_i, F_i$ and $F'_i$ for $i = 1, \ldots, m-1$. (The main examples are $S = \ShST(\lambda/\mu,m)$, or the set of all words of a given length.) We will call $S$ a {\bf shifted word crystal}. Consider the graph structure on $S$ with edges labeled $i,i'$ corresponding to these operators.
																																																																														
																																																																														\begin{corollary} \label{cor:unique-highest-weight}
																																																																															Each connected component of $S$ has a unique highest-weight element $s^*$, and is isomorphic, as a weighted, edge-labeled graph, to the shifted tableau crystal $\ShST(\lambda,m)$, where $\lambda = \mathrm{wt}(s^*)$.
																																																																														\end{corollary}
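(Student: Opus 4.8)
The plan is to use jeu de taquin rectification to identify each connected component with a straight-shape crystal $\ShST(\nu,m)$. Fix a connected component $C$ of $S$; treating words as diagonally-shaped tableaux when necessary, every element of $C$ is a shifted skew tableau of some fixed shape $\lambda/\mu$. The first step is to observe that rectification intertwines all four operators: since $E_i,E_i',F_i,F_i'$ are coplactic (Theorem~\ref{thm:coplactic} and Proposition~\ref{prop:primed-coplactic}) and rectification is a composition of jeu de taquin slides, we have $\rectify(F_i(T)) = F_i(\rectify(T))$ and similarly for the other operators, with definedness on $T$ matching definedness on $\rectify(T)$ (Corollary~\ref{cor:Fdefinedness-knuth}). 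Because each operator maps $\ShST(\nu,m)$ into $\ShST(\nu,m)\cup\{\varnothing\}$ without changing the straight shape $\nu$, the rectification shape is constant along edges and hence constant on $C$; call it $\nu$. Thus $\rectify$ restricts to a weight-preserving, edge-label-preserving morphism $\rectify\colon C \to \ShST(\nu,m)$.

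The next step is to show this morphism is a bijection. For injectivity, note that all four operators preserve the shifted dual equivalence class (Corollary~\ref{cor:FE-dual-equiv}), so every element of the connected component $C$ lies in a single dual equivalence class. Since the skew shape $\lambda/\mu$ is also fixed on $C$, Haiman's uniqueness theorem (Proposition~\ref{prop:Haiman2}) implies that an element of $C$ is determined by its rectification, so $\rectify$ is injective on $C$. For surjectivity, observe that $\rectify(C)$ is a nonempty subset of $\ShST(\nu,m)$ closed under each operator: if $\rectify(T)\in\rectify(C)$ and $F_i(\rectify(T))\neq\varnothing$, then $F_i(T)\neq\varnothing$ as well, $F_i(T)$ lies in the same connected component $C$, and $\rectify(F_i(T)) = F_i(\rectify(T))$ lies in $\rectify(C)$; the same holds for $E_i,E_i',F_i'$. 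By Proposition~\ref{prop:tableau-crystals}, $\ShST(\nu,m)$ is connected under these operators, and a nonempty subset closed under all of them (in both directions, as $E$ and $F$ are partial inverses) must be the whole set. Hence $\rectify(C)=\ShST(\nu,m)$ and $\rectify$ is an isomorphism of weighted, edge-labeled graphs.

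Finally, I would transport structure across the isomorphism: by Proposition~\ref{prop:tableau-crystals}, $\ShST(\nu,m)$ has a unique highest-weight element $T_{\mathrm{HIGH}}$, of weight $\nu$, so $C$ has a unique highest-weight element $s^*$ with $\rectify(s^*)=T_{\mathrm{HIGH}}$; since rectification preserves weight, $\lambda := \wt(s^*) = \nu$, giving $C\cong\ShST(\lambda,m)$ as claimed. I expect the main obstacle to be the injectivity step: it is the one place where the argument must go beyond formal, operator-level bookkeeping and invoke the genuinely structural input that a connected component is a single dual equivalence class (Corollary~\ref{cor:FE-dual-equiv}) together with Haiman's theorem (Proposition~\ref{prop:Haiman2}). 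Everything else is a routine consequence of coplacticity and the already-established structure of the straight-shape crystals.
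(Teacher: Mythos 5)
Your proof is correct and takes essentially the same approach as the paper, whose two-line argument (``by coplacticity, $S$ has the same weighted, labeled graph structure as the crystal obtained by replacing each $w$ by its rectification; now apply Proposition~\ref{prop:tableau-crystals}'') is exactly your argument in compressed form, with your injectivity step (dual equivalence plus Haiman's Proposition~\ref{prop:Haiman2}) making explicit what the paper leaves implicit. One small citation nit: Corollary~\ref{cor:FE-dual-equiv} covers only the unprimed $E,F$, while for $E',F'$ preservation of the dual equivalence class follows directly from the fact that they preserve standardization (Definition~\ref{def:primed-operators}).
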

																																																																														\begin{proof}
																																																																															We may assume $S$ is connected. By coplacticity, $S$ has the same weighted, labeled graph structure as the shifted tableau crystal obtained by replacing each $w \in S$ by its rectification. The statement now follows from Proposition \ref{prop:tableau-crystals}.
																																																																														\end{proof}
																																																																														
																																																																														A more precise statement is that the connected components of $S$ are shifted dual equivalence classes.
																																																																														
																																																																														\begin{corollary}
																																																																															Each connected component of $\B=\ShST(\lambda/\mu,m)$, or of a shifted word crystal, is a shifted dual equivalence class. 
																																																																														\end{corollary}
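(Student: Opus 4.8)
The plan is to prove two inclusions: every connected component of $\B$ lies inside a single shifted dual equivalence class, and conversely every dual equivalence class lies inside a single connected component. Together these force the two partitions of $\B$ to coincide.

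\textbf{Components refine dual equivalence classes.} Each of the operators $E_i,E_i',F_i,F_i'$ preserves the skew shape and is coplactic (Theorem \ref{thm:coplactic} and Proposition \ref{prop:primed-coplactic}, extended to the full alphabet as noted in the proof of Theorem \ref{thm:coplactic}). A shape-preserving coplactic operation $A$ automatically sends each tableau to a dual-equivalent one: for any sequence $\sigma$ of jeu de taquin slides, coplacticity gives $\sigma(A(T)) = A(\sigma(T))$, which has the same shape as $\sigma(T)$ because $A$ does not alter the shape, so $T$ and $A(T)$ transform identically under jeu de taquin. (For $F_i,E_i$ this is Corollary \ref{cor:FE-dual-equiv}, and for $F_i',E_i'$ it also follows from the fact that they preserve $\std$.) Hence any two vertices joined by an edge of $\B$ are dual equivalent, and since dual equivalence is transitive, every connected component is contained in a single dual equivalence class.

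\textbf{Dual equivalence classes refine components.} Fix $T \in \B$ and let $C$ be its connected component, $D$ its dual equivalence class, and $\nu$ the shape of $\rectify(T)$. By the previous paragraph $C \subseteq D$ (the class containing $C$ must be $D$ since $T \in C \cap D$), so it suffices to prove $|C| \geq |D|$. On one hand, Corollary \ref{cor:unique-highest-weight} shows that $\rectify$ restricts to a bijection $C \to \ShST(\nu,m)$, whence $|C| = |\ShST(\nu,m)|$. On the other hand, every element of $D$ has rectification shape $\nu$ (dual equivalent tableaux rectify to the same shape, directly from the definition of dual equivalence) and lies in $\ShST(\lambda/\mu,m)$, so $\rectify$ maps $D$ into $\ShST(\nu,m)$; moreover this map is injective on $D$, because a skew tableau of fixed shape $\lambda/\mu$ is determined by its dual equivalence class together with its rectification (Proposition \ref{prop:Haiman2}). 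Therefore $|D| \leq |\ShST(\nu,m)| = |C|$, and combined with $C \subseteq D$ this gives $C = D$.

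\textbf{The word-crystal case and the main subtlety.} The identical argument applies to a shifted word crystal $S$: viewing each word as its diagonal tableau, Corollary \ref{cor:unique-highest-weight} again makes $\rectify\colon C \to \ShST(\nu,m)$ a bijection, Proposition \ref{prop:Haiman2} applies to the diagonal shape, and the same cardinality squeeze yields $C = D$. The step requiring the most care is the first inclusion for the operators on the full alphabet, namely that $F_i,E_i,F_i',E_i'$ preserve the dual equivalence class of the \emph{entire} tableau and not merely of its $\{i',i,i+1',i+1\}$-content; this is exactly what the coplacticity-plus-shape-preservation argument supplies. I emphasize that the cardinality squeeze lets us avoid producing, by reverse jeu de taquin, a preimage in $D$ for every element of $\ShST(\nu,m)$: injectivity of $\rectify$ on $D$ together with the count $|C| = |\ShST(\nu,m)|$ is already enough to conclude.
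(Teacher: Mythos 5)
Your proof is correct, and its first half --- connected components refine dual equivalence classes, via coplacticity together with shape-preservation (Corollary \ref{cor:FE-dual-equiv} for $F_i,E_i$, preservation of $\std$ for the primed operators) --- is exactly the paper's first step. You diverge in the converse direction. The paper argues constructively: by Corollary \ref{cor:unique-highest-weight} each component contains a highest-weight element, which by Proposition \ref{prop:ballot-iff-killed} is Littlewood--Richardson and is the \emph{unique} LR element of the dual equivalence class; hence two dual-equivalent elements lie in components sharing that element and are connected through it. You instead run a cardinality squeeze: $C \subseteq D$, then $|C| = |\ShST(\nu,m)|$ via the rectification isomorphism of Corollary \ref{cor:unique-highest-weight}, and $|D| \leq |\ShST(\nu,m)|$ because $\rectify$ is injective on $D$ by Haiman's determination theorem (Proposition \ref{prop:Haiman2}). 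Both routes rest on Corollary \ref{cor:unique-highest-weight}, and the paper's uniqueness-of-the-LR-element claim is itself a short consequence of Proposition \ref{prop:Haiman2}, so the two arguments draw on the same toolbox; what differs is that the paper exhibits the actual path between two dual-equivalent elements (through the common highest-weight vertex), while your squeeze is non-constructive, certifying equality of the two partitions by counting. In exchange, your version makes the reliance on Haiman's theorem explicit and, as you note, avoids having to produce jeu-de-taquin preimages in $D$ for each element of $\ShST(\nu,m)$. Two small points worth spelling out: the identification $\nu = \wt(s^*)$ for the highest-weight element $s^*$ of $C$ holds because $s^*$ is ballot (Proposition \ref{prop:ballot-iff-killed}) and so rectifies to the highest-weight tableau of shape $\nu$; and when applying Proposition \ref{prop:Haiman2} to \emph{semistandard} tableaux in $D$, note that equal rectifications force equal weights, so injectivity reduces to the standard case via standardization and Lemma \ref{lem:unstandardize}.
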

																																																																														
																																																																														\begin{proof}
																																																																															Since the crystal operators are coplactic, any two elements in the same connected component are dual equivalent. Then, by Corollary \ref{cor:unique-highest-weight}, the connected component contains the unique Littlewood-Richardson element of the dual equivalence class. Thus, conversely, if two elements are dual equivalent, they are connected (via the highest-weight element).
																																																																														\end{proof}
																																																																														
																																																																														\subsection{Embedded type A Kashiwara crystals}
																																																																														
																																																																														We now show that the primed and unprimed operators, considered independently, form type A Kashiwara crystals. We first recall the definition of an abstract Kashiwara crystal over the $\mathrm{GL}_m$ root system.  (Note: these are defined for general root systems in \cite{Kashiwara}, and we state the restricted definition below.)
																																																																														
																																																																														\begin{definition}
																																																																															A \textbf{Kashiwara crystal} (for $\mathrm{GL}_m$) is a nonempty set $\B$ together with maps $$e_i,f_i:\B\to \B\cup \{\varnothing\},$$
																																																																															$$\varepsilon_i,\varphi_i:\B\to \mathbb{Z}\cup \{-\infty\},$$
																																																																															$$\mathrm{wt}:\B\to \mathbb{Z}^m$$
																																																																															where $i\in \{1,\ldots,m-1\}$, satisfying the following axioms.
																																																																															\begin{enumerate}
																																																																																\item[K1.]  If $X,Y\in \B$ then $e_i(X)=Y$ if and only if $f_i(Y)=X$.  If this is the case then
																																																																																\begin{align*}
																																																																																	\varepsilon_i(Y)&=\varepsilon_i(X)-1 \\ \varphi_i(Y)&=\varphi_i(X)+1 \\
																																																																																	\mathrm{wt}(Y)&=\mathrm{wt}(X)+\alpha_i
																																																																																\end{align*}
																																																																																where $\alpha_i$ is the weight vector $(0,0,\ldots,0,1,-1,0,0,\ldots,0)$ having $1$ in the $i$th position and $-1$ in the $(i+1)$st position.  
																																																																																
																																																																																\item[K2.] For any $i\in \{1,\ldots,m-1\}$ and any $X\in \B$, we have $$\varphi_i(X)=\langle \mathrm{wt}(X),\alpha_i\rangle+\varepsilon_i(X)$$ where $\langle,\rangle$ is the standard dot product on vectors.
																																																																																
																																																																																\item[K3.] If $\varphi_i(X)=-\infty$ then $\varepsilon_i(X)=-\infty$ and vice versa.  Moreover if this is the case then $e_i(X)=f_i(X)=\varnothing$.
																																																																															\end{enumerate}
																																																																														\end{definition}
																																																																														
																																																																														\noindent We will use the same auxiliary functions for two overlapping crystal structures.
																																																																														
																																																																														\begin{definition}\label{def:crystal}
																																																																															Let $\B=\ShST(\lambda/\mu,m)$. For $T\in \B$, we let $\mathrm{wt}(T)$ be the weight, and for $i=1, \ldots, m-1$ we let
																																																																															\[(\varphi_i(T),\varepsilon_i(T)) := (x^{(i)},y^{(i)})\]
																																																																															be the coordinates of the endpoint of the $i,i+1$ lattice walk associated to $T$.\end{definition}

																																																																														\begin{proposition}
																																																																															The data of Definition \ref{def:crystal}, with $e_i=E_i$ and $f_i=F_i$ (or $e_i=E'_i$ and $f_i=F'_i$), satisfies the axioms of a Kashiwara crystal.
																																																																														\end{proposition}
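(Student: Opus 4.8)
The plan is to verify each of the three Kashiwara axioms K1, K2, K3 directly, handling the primed and unprimed cases in parallel. Throughout I would write $e_i, f_i$ for either $(E_i, F_i)$ or $(E'_i, F'_i)$, and abbreviate $(\varphi_i(T), \varepsilon_i(T)) = (x,y)$ for the endpoint of the $i,i+1$ lattice walk. Since all four operators act only on the subword of letters $\{i', i, (i+1)', i+1\}$, it suffices to reason about that subword, and I can freely invoke the word-level results from the previous sections; these transfer to tableaux because the operators are defined via reading words and preserve semistandardness, by Proposition \ref{prop:primed-semistandard} and Theorem \ref{thm:FE-semistandard}.

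For K1, the biconditional $e_i(X) = Y \iff f_i(Y) = X$ is precisely the partial-inverse property, which is Proposition \ref{prop:inverses} in the unprimed case and Proposition \ref{prop:primed-partial-inverses} in the primed case. Assuming $Y = e_i(X)$, I would then check the three numerical conditions. The weight identity $\mathrm{wt}(Y) = \mathrm{wt}(X) + \alpha_i$ holds because a raising operator converts exactly one letter from $\{(i+1)', i+1\}$ to $\{i', i\}$, changing the $i$-th weight coordinate by $+1$ and the $(i+1)$-st by $-1$. For the auxiliary functions I would invoke Corollary \ref{cor:F-on-walk} (resp. Proposition \ref{prop:primed-lattice-endpoint}), which state that $f_i$ shifts the walk endpoint by $(-1,+1)$; hence $e_i$ shifts it by $(+1,-1)$, giving $\varphi_i(Y) = \varphi_i(X)+1$ and $\varepsilon_i(Y) = \varepsilon_i(X)-1$ exactly as required.

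For K2 I need $\varphi_i(X) - \varepsilon_i(X) = \langle \mathrm{wt}(X), \alpha_i\rangle$, i.e. $x - y = n_i - n_{i+1}$. The clean way is a direct step-count: reading off Figure \ref{fig:directions}, every $i$ or $i'$ contributes an \east{} or \south{} step, and every $i+1$ or $(i+1)'$ contributes a \north{} or \west{} step, regardless of position relative to the axes. Since \east{} and \south{} each change $x-y$ by $+1$ while \north{} and \west{} each change it by $-1$, summing over all steps yields $x - y = n_i - n_{i+1} = \langle \mathrm{wt}(X), \alpha_i\rangle$. (This also follows from Theorem \ref{thm:rectification}, but the direct count avoids rectifying.) Finally, K3 is vacuous: $(\varphi_i, \varepsilon_i) = (x,y)$ are the coordinates of a walk in the first quadrant, hence always nonnegative integers, so neither function ever takes the value $-\infty$ and the hypothesis of K3 is never met.

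I do not expect a serious mathematical obstacle here, since every ingredient has been prepared in earlier sections; the only real care is bookkeeping of conventions — confirming that $E_i$ (not $F_i$) is the weight-raising operator matching $e_i$, that the endpoint shift of $e_i$ is $(+1,-1)$ and not $(-1,+1)$, and that the identification $(\varphi_i, \varepsilon_i) = (x,y)$ (rather than $(y,x)$) is consistent with $\langle \mathrm{wt}, \alpha_i\rangle = n_i - n_{i+1}$ carrying the correct sign in K2. Getting any of these orientations backwards would flip a sign, so I would double-check them against the two-row examples in Figure \ref{fig:two-row}.
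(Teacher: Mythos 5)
Your proposal is correct and matches the paper's own proof essentially step for step: K1 via the partial-inverse propositions together with Corollary \ref{cor:F-on-walk} and Proposition \ref{prop:primed-lattice-endpoint} for the endpoint shift, K2 via the same direct step-count (each $i$ or $i'$ step increments $x$ or decrements $y$, each $i+1$ or $(i+1)'$ does the opposite), and K3 dismissed as vacuous since $(\varphi_i,\varepsilon_i)$ are first-quadrant coordinates. Your closing sign-convention checks are sensible bookkeeping but introduce nothing beyond what the paper does.
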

																																																																														
																																																																														\begin{proof}
																																																																															Since $-\infty$ is not an output of either $\varepsilon_i$ or $\varphi_i$, we do not need to check axiom K3. For axiom K1, we know that $E_i$ and $F_i$ are partial inverses, as are $E'_i, F'_i$. The claim about the weight function follows from the definitions, and the claims about $\varphi_i,\varepsilon_i$ are just Corollary \ref{cor:F-on-walk} (for $E_i,F_i$) and Proposition \ref{prop:primed-lattice-endpoint} (for $E_i', F_i'$).
																																																																															
																																																																															Finally, axiom K2 follows directly from the definition of the lattice walk: we have
																																																																															\begin{align*}
																																																																																\langle \mathrm{wt}(T),\alpha_i\rangle &= \wt(T)_i-\wt(T)_{i+1} \\
																																																																																&= \#\{i',i \text{ steps}\} - \#\{i+1',i+1 \text{ steps}\},
																																																																															\end{align*}
																																																																															and we wish to show that this difference is equal to \[\varphi_i(T)-\varepsilon_i(T) = x^{(i)}-y^{(i)}.\] 
																																																																															But each $i$ or $i'$ step either increments $x$ or decrements $y$, and each $i+1$ or $i+1'$ does the opposite. Thus Axiom K2 is satisfied.
																																																																														\end{proof}
																																																																														
																																																																														\section{Characters and Schur $Q$-functions}\label{sec:characters}
																																																																														
																																																																														We now define characters of our crystals, recovering many of the combinatorial properties of Schur $Q$-functions (and their duals, Schur $P$-functions).
																																																																														
																																																																														\begin{definition}
																																																																															The \textbf{character} of a word $\hat{w}$, or of a tableau $T$ with reading word $\hat{w}$, is $$\sum_{w\in \hat{w}}x^{\mathrm{wt}(w)},$$ where $x^{\alpha}=x_1^{\alpha_1}x_2^{\alpha_2}\cdots $ for any tuple $\alpha$. This is just $2^{k}x^{\mathrm{wt}(\hat{w})}$, where $k$ is the number of nonzero parts of $\mathrm{wt}(w)$. The \textbf{character} of a collection of words or tableaux is the sum of the characters of its entries.
																																																																														\end{definition}
																																																																														
																																																																														The Schur $Q$-functions and Schur $P$-functions are certain specializations of Hall-Littlewood polynomials which live in $\Lambda_{\mathbb{Q}}(X)=\Lambda_{\mathbb{Q}}(x_1,x_2,\ldots)$, the ring of symmetric functions over $\mathbb{Q}$ (see \cite{Macdonald} for a thorough introduction to symmetric function theory).  More precisely, they are dual bases for the subring $\Gamma\subset \Lambda_{\QQ}(X)$ generated by the odd-degree power sum symmetric functions $$p_k(X)=x_1^k+x_2^k+x_3^k+\cdots,\hspace{1cm} k=1,3,5,7,\ldots.$$  (See \cite{Stembridge}.)  First defined by Schur \cite{Schur}, the Schur $Q$- and $P$-functions were later shown to exhibit the following combinatorial formulas in \cite{Stembridge}.
																																																																														
																																																																														\begin{definition}
																																																																															Let $\lambda/\mu$ be a shifted skew shape.  Define $\ShST_Q(\lambda/\mu,m)$ to be the set of all shifted semistandard tableaux of shape $\lambda/\mu$ in alphabet $\{1',1,\ldots,m',m\}$ in which the canonical form restriction is lifted, i.e., $i'$ is allowed at the start of the $i,i'$-subword.  Also define $\ShST_P(\lambda/\mu)$ to be the set of all shifted semistandard tableaux of shape $\lambda/\mu$ with entries from the alphabet $\{1',1,\ldots,m',m\}$ in which primes are not allowed on the staircase diagonal.
																																																																														\end{definition}
																																																																														
																																																																														\begin{definition}\label{def:SchurQ}
																																																																															The \textbf{Schur $Q$-function} $Q_{\lambda/\mu}$ is the symmetric function given by 
																																																																															\[Q_{\lambda/\mu}(X)=\sum_{T\in \ShST_Q(\lambda/\mu)} x^{\mathrm{wt}(T)}\]
																																																																															and the \textbf{Schur $P$-function} $P_{\lambda/\mu}$ is given by 
																																																																															\[P_{\lambda/\mu}(X)=\sum_{T\in \ShST_P(\lambda/\mu)} x^{\mathrm{wt}(T)}.\]
																																																																														\end{definition}
																																																																														
																																																																														\begin{proposition}
																																																																															The character of $\B = \ShST(\lambda/\mu)$ is the Schur $Q$-function $Q_{\lambda/\mu}(x)$.
																																																																														\end{proposition}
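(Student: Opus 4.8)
The plan is to recognize the two sides as two different bookkeeping schemes for the same family of fillings. By definition the character of $\B = \ShST(\lambda/\mu)$ is
\[
\sum_{T\in\ShST(\lambda/\mu)}\ \sum_{S\in\hat T} x^{\wt(S)},
\]
the outer sum ranging over canonical tableaux and the inner sum over the representatives of $T$ (those obtained by optionally priming the first $i$ in reading order, for each $i$). On the other hand $Q_{\lambda/\mu}(X)=\sum_{S\in\ShST_Q(\lambda/\mu)}x^{\wt(S)}$, where $\ShST_Q(\lambda/\mu)$ is the set of shifted semistandard fillings of shape $\lambda/\mu$ with the canonical-form restriction dropped. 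So it suffices to produce a weight-preserving bijection
\[
\ShST_Q(\lambda/\mu)\ \cong\ \bigsqcup_{T\in\ShST(\lambda/\mu)}\hat T .
\]

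First I would use canonicalization as the forward map: every $S\in\ShST_Q(\lambda/\mu)$ has a canonical form $T$, obtained by un-priming the first entry of each $i,i'$-subword, and $T\in\ShST(\lambda/\mu)$. The reverse operation --- optionally priming the first $i$ in reading order for each value $i$ --- is exactly the recipe that generates the representatives $\hat T$, so the fiber of canonicalization over a given $T$ is precisely $\hat T$, and distinct canonical tableaux have disjoint fibers because the canonical form is unique. Since neither priming nor un-priming a first occurrence changes the weight, this bijection preserves $\wt$, and reindexing gives
\[
Q_{\lambda/\mu}(X)=\sum_{S\in\ShST_Q(\lambda/\mu)}x^{\wt(S)}=\sum_{T\in\ShST(\lambda/\mu)}\ \sum_{S\in\hat T}x^{\wt(S)},
\]
which is the character of $\B$.

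The only real content is the verification that priming and un-priming the first occurrence of each value really do move fillings back and forth between $\ShST(\lambda/\mu)$ and $\ShST_Q(\lambda/\mu)$ while preserving semistandardness, so that the fibers of the canonicalization map are exactly the classes $\hat T$ and no filling is missed or double counted. Un-priming the first occurrence of $i$ enlarges a single entry at the lowest-leftmost cell containing that value, and priming the first $i$ shrinks it; in each direction one checks that the row and column inequalities and the prime-repetition rules are maintained, which is routine. I would also note that the $k=\#\{i:\wt(T)_i\neq 0\}$ values present in $T$ give $k$ independent priming choices at distinct cells, so $\hat T$ has exactly $2^k$ elements and the inner sum equals $2^k x^{\wt(T)}$, matching the evaluation recorded in the definition of the character and confirming that there is no coincidence among representatives that would spoil the count.
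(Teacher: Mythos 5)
Your proposal is correct and takes essentially the same route as the paper: the paper's proof is a one-line appeal to the definitions (the character sums $x^{\wt}$ over all representatives of each canonical tableau, and these representatives are exactly the fillings comprising $\ShST_Q(\lambda/\mu)$), which is precisely the canonicalization bijection you spell out. Your added verifications --- that priming/unpriming first occurrences preserves semistandardness and that the fiber over $T$ has exactly $2^k$ elements --- are correct and simply make explicit what the paper treats as immediate.
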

																																																																														
																																																																														\begin{proof}
																																																																															This follows immediately from the definition of the character, Definition \ref{def:SchurQ}, and Proposition \ref{prop:tableau-crystals}.
																																																																														\end{proof}
																																																																														
																																																																														It was shown in \cite{Stembridge} that the Schur $Q$-functions satisfy the following Littlewood-Richardson-type rule:
																																																																														$$Q_\mu Q_\nu=\sum 2^{\ell(\mu)+\ell(\nu)-\ell(\lambda)}f^{\lambda}_{\mu\nu}Q_\lambda$$
																																																																														where $f^{\lambda}_{\mu\nu}$ is the number of skew shifted Littlewood-Richardson tableaux (this definition requires the reading word to be in canonical form) of shape $\lambda/\mu$ and weight $\nu$.   It is easy to see that this is equivalent to the rule $$P_\mu P_\nu=\sum f^{\lambda}_{\mu\nu}P_\lambda$$ as well.  Since $Q$ and $P$ are dual under the Hall inner product on $\Lambda_\mathbb{Q}$, these coefficients also appear in the expansion of skew Schur $Q$-functions in terms of straight shapes: 
																																																																														\begin{equation} \label{eqn:skew-LR-schurQ}
																																																																														Q_{\lambda/\mu} = \sum_\nu f_{\mu\nu}^\lambda Q_\nu.
																																																																														\end{equation}
																																																																														
																																																																														By Corollary \ref{cor:unique-highest-weight} and \ref{prop:ballot-iff-killed}, our crystal graphs give a combinatorial interpretation of this last equation in terms of the connected components of the crystal for shape $\lambda/\mu$.
																																																																														
																																																																														\begin{corollary}
																																																																															By decomposing $\B=\ShST(\lambda/\mu,m)$ into its connected components, we get an isomorphism
																																																																															\[\ShST(\lambda/\mu,m) = \bigsqcup_\nu \ShST(\nu,m)^{f_{\mu,\nu}^\lambda}.\]
																																																																															Comparing characters recovers equation \eqref{eqn:skew-LR-schurQ}.
																																																																														\end{corollary}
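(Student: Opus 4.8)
The plan is to assemble the decomposition from two structural results already in hand---that every connected component has a unique highest-weight element and is isomorphic to a straight-shape crystal (Corollary \ref{cor:unique-highest-weight}), and that the highest-weight elements are exactly the ballot (Littlewood--Richardson) tableaux (Proposition \ref{prop:ballot-iff-killed})---and then to count components by isomorphism type.

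First I would apply Corollary \ref{cor:unique-highest-weight} with $S = \ShST(\lambda/\mu,m)$: every connected component $C$ has a unique highest-weight element $T^*$, and is isomorphic as a weighted, edge-labeled graph to $\ShST(\nu,m)$, where $\nu = \mathrm{wt}(T^*)$. I would observe that $\nu$ is a strict partition and is in fact the common rectification shape of all elements of $C$. Indeed, since $T^*$ is highest-weight it is Littlewood--Richardson, hence rectifies to the tableau $T_{\mathrm{HIGH}}$ of some shape (Proposition \ref{prop:tableau-crystals}), whose weight equals its shape; and by coplacticity every element of $C$ rectifies to the same $T_{\mathrm{HIGH}}$, so $\nu$ is well-defined as the shape of that rectification.

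Next I would identify the highest-weight elements explicitly. The condition $E_i(T) = E'_i(T) = \varnothing$ for all $i$ says, by the general-alphabet form of Proposition \ref{prop:ballot-iff-killed} applied to each $(i,i+1)$-subword, that every such subword of the reading word is ballot; since a word is ballot if and only if all of its $(i,i+1)$-subwords are (as used in the proof of Corollary \ref{cor:ballot-walk-criterion}), this is exactly the condition that $T$ is Littlewood--Richardson. Consequently the number of connected components isomorphic to $\ShST(\nu,m)$ equals the number of highest-weight elements of weight $\nu$, that is, the number of shifted Littlewood--Richardson tableaux of shape $\lambda/\mu$ and weight $\nu$, which is $f^\lambda_{\mu\nu}$ by definition. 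Grouping the components by isomorphism type then yields the claimed decomposition $\ShST(\lambda/\mu,m) \cong \bigsqcup_\nu \ShST(\nu,m)^{f^\lambda_{\mu\nu}}$.

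Finally I would compare characters. The character of $\ShST(\lambda/\mu,m)$ is $Q_{\lambda/\mu}$ and the character of each $\ShST(\nu,m)$ is $Q_\nu$ (by the proposition immediately preceding this corollary), and characters are additive over disjoint unions, so the isomorphism gives $Q_{\lambda/\mu} = \sum_\nu f^\lambda_{\mu\nu} Q_\nu$, recovering equation \eqref{eqn:skew-LR-schurQ}. I expect no substantive obstacle, since all of the analytic difficulty is carried by the coplacticity and uniqueness-of-highest-weight results already proved; the only point that genuinely requires care is verifying that the isomorphism-type label $\nu$ of a component coincides with the weight of its Littlewood--Richardson highest-weight element (equivalently, with the common rectification shape), so that the multiplicity is literally the number $f^\lambda_{\mu\nu}$ rather than merely proportional to it.
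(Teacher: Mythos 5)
Your proposal is correct and takes essentially the same route as the paper, which deduces this corollary in one line from Corollary \ref{cor:unique-highest-weight} (unique highest-weight element per component, isomorphic to $\ShST(\nu,m)$ with $\nu$ its weight) together with Proposition \ref{prop:ballot-iff-killed} (highest-weight means ballot, i.e.\ Littlewood--Richardson). Your extra verification that the component label $\nu$ coincides with the weight of the ballot highest-weight element (equivalently the common rectification shape) just makes explicit what the paper leaves implicit.
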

																																																																														
																																																																														We also obtain a new combinatorial proof of symmetry of the Schur $Q$-functions.
																																																																														
																																																																														\begin{corollary} \label{cor:symmetry-schurQ}
																																																																															The function $Q_{\lambda/\mu}(x_1,x_2,\ldots)$ is symmetric in the variables $x_i$.
																																																																														\end{corollary}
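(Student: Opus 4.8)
The plan is to prove that $Q_{\lambda/\mu}$ is invariant under each adjacent transposition $s_i$ swapping $x_i \leftrightarrow x_{i+1}$, which suffices for full symmetry. Since $Q_{\lambda/\mu}$ is the character of $\B = \ShST(\lambda/\mu)$ and swapping $x_i$ and $x_{i+1}$ only affects the $i$-th and $(i{+}1)$-st weight coordinates, I would fix $i$ and restrict attention to the four operators $E_i, F_i, E_i', F_i'$. By coplacticity and the argument of Corollary \ref{cor:unique-highest-weight} (treating $i, i+1$ as $1, 2$), the graph generated on $\B$ by these operators decomposes into connected components, each isomorphic as a weighted, edge-labeled graph to some $\ShST(\nu, 2)$ with $\nu$ a strict partition of at most two parts. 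Within a single component only the $i$-th and $(i{+}1)$-st weight coordinates vary and all other coordinates are constant, so the character factors over components. Thus $s_i$-invariance of $Q_{\lambda/\mu}$ reduces to showing that the two-variable character of each $\ShST(\nu, 2)$ is symmetric in its two variables.

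Next I would pin down the weight range of such a component. By Proposition \ref{prop:tableau-crystals} the unique highest-weight element has weight $(\nu_1, \nu_2)$ and $\varepsilon_i = 0$; by axiom K2 together with Corollary \ref{cor:F-on-walk} and Proposition \ref{prop:primed-lattice-endpoint} (each of $F_i, F_i'$ lowers the weight by $\alpha_i$), the unique lowest-weight element has $\varphi_i = 0$ and weight $(\nu_1 - (\nu_1 - \nu_2),\, \nu_2 + (\nu_1 - \nu_2)) = (\nu_2, \nu_1)$. Hence the weights occurring are exactly $(\nu_1 - j, \nu_2 + j)$ for $0 \le j \le \nu_1 - \nu_2$, an interval symmetric under coordinate-swap, with level $j$ matched to level $(\nu_1 - \nu_2) - j$.

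It then remains to show that the weighted vertex-count at level $j$ equals that at level $(\nu_1 - \nu_2) - j$. The conceptual reason is the symmetry $\eta_i$: by Proposition \ref{prop:primed-eta} and its unprimed analogue $E = \eta \circ F \circ \eta$, the map $\eta_i$ swaps the $i$ and $i{+}1$ weight coordinates while interchanging $F_i \leftrightarrow E_i$ and $F_i' \leftrightarrow E_i'$, so it realizes an edge-reversing, weight-swapping automorphism of the ambient word crystal and carries each component to its reversal with weights swapped. Since $\ShST(\nu, 2)$ has a unique highest and a unique lowest weight element, its underlying weighted graph is self-dual under this reversal, so reversal maps the level-$j$ vertices bijectively to the level-$((\nu_1-\nu_2)-j)$ vertices. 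Concretely, for the one-row and two-row diagrams of Figure \ref{fig:two-row} one checks using Propositions \ref{prop:rect-primed-action} and \ref{prop:rect-unprimed-action} that interior levels are doubletons and the extreme levels singletons, realizing the central symmetry of the diagram. For a two-row component both coordinates stay positive, so the $2^k$ weighting in the character is constant across the diagram and the palindromic counts give symmetry; the one-row case is the explicit symmetric sum $2(x^a + y^a) + 4\sum_{0 < p < a} x^p y^{a-p}$, where the extreme levels carry weight $2^1$ and interior levels $2^2$.

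The main obstacle is this involution/self-duality step. Because our crystals are not seminormal (see the Remark following Theorem \ref{thm:main-doubled-typeA}), one cannot simply reverse a single $F_i$-string as in the classical Kashiwara/Stembridge argument; the correct symmetry reverses the combined $F_i/F_i'$ doubled chain, and one must verify that this reversal is genuinely weight-swapping. A secondary technical point is that $\eta_i$ does not literally preserve the shape $\lambda/\mu$ — the tableau version $\etaT_i$ of Definition \ref{def:eta-on-T} reflects across the antidiagonal and changes the shape — so the involution must be understood at the level of the abstract weighted graph of each $\ShST(\nu, 2)$ component rather than as a shape-preserving map on $\B$ itself.
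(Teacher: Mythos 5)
Your proposal is correct and follows essentially the same route as the paper: restrict to the $F_i,F_i'$ operators for a fixed $i$, use coplacticity to decompose $\B$ into components isomorphic to the one-row and two-row diagrams of Figure \ref{fig:two-row}, and observe that the resulting weighted monomial sums ($2,4,\ldots,4,2$ and $4,8,\ldots,8,4$) are palindromic, hence symmetric in $x_i,x_{i+1}$. The extra layer about the $\eta_i$-involution and self-duality of the doubled chain is not needed (and is not in the paper), since your concrete check of level multiplicities via Propositions \ref{prop:rect-primed-action} and \ref{prop:rect-unprimed-action} already completes the argument.
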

																																																																														
																																																																														\begin{proof}
																																																																															It suffices to show that $Q_{\lambda/\mu}$ is symmetric under swapping $x_i,x_{i+1}$ for each $i$.  Consider the crystal $\B=\ShST(\lambda/\mu)$ whose character is $Q_{\lambda/\mu}$, and consider only the operators $F_i$ and $F_i'$ for a single fixed value of $i$.
																																																																															
																																																																															These operators decompose $\B$ into a disjoint union of two-row and one-row diagrams for $F_i,F_i'$.  Within a given such diagram, the weights $x^{\wt(T)}$ of the entries $T$ are constant except for the exponents of $x_i$ and $x_{i+1}$.  Factoring out the other variables, we see that in two-row diagrams, we obtain a symmetric sum of monomials $$4x_i^a x_{i+1}^b+8x_i^{a-1}x_{i+1}^{b+1}+\cdots+ 8x_i^{b+1}x_{i+1}^{a-1}+4x_i^{b}x_{i+1}^a$$ for some $a$ and $b$, and in one-row diagrams we obtain a symmetric sum of monomials $$2x_i^a x_{i+1}^b + 4x_i^{a-1}x_{i+1}^{b+1}+\cdots+ 4x_i^{b+1}x_{i+1}^{a-1}+2x_i^{b}x_{i+1}^a.$$  Thus the entire polynomial is symmetric in $x_i$ and $x_{i+1}$ as desired.
																																																																														\end{proof}
																																																																														
																																																																														\section{Uniqueness of shifted tableau crystals}
																																																																														\label{sec:uniqueness}
																																																																														
																																																																														We end by proving a strengthened form of Corollary \ref{cor:unique-highest-weight}: we show that the crystals $\B = \ShST(\lambda,n)$ are determined by their local combinatorial structure -- specifically the interactions between the $i,i',j,j'$ operators. In particular, we show that any connected graph with the same local interactions as these is isomorphic to some $\ShST(\lambda,n)$. The analogous statement for ordinary (non-shifted) tableaux is due to Stembridge \cite{Stembridge-typeA}, and the proofs in this section are modeled on his, though our local axioms are less explicit (see axiom (A4) below).
																																																																														
																																																																														We formalize the statement as follows. Let $G$ be a finite directed graph with vertices weighted by $\mathbb{Z}^n$ and edges labeled $i,i'$ for $1 \leq i \leq n-1$. As usual, let $\alpha_i$ be the vector
																																																																														\[\alpha_i = (0, \ldots, 0, 1,-1, 0, \ldots, 0)\]
																																																																														with $1$ in the $i$-th spot. Consider the following axioms that may apply to $G$:
																																																																														
																																																																														\begin{itemize}
																																																																															\item[(A1)] \textit{Edge labelings.} For each vertex $v \in G$ and index $i \in \{1, \ldots, n-1\}$ there is at most one incoming and one outgoing edge labeled $i$, and at most one labeled $i'$. If $v \to w$ is an $i$ or $i'$ edge, then $\wt(w) = \wt(v) - \alpha_i$, where $\wt$ is the weight.
																																																																														\end{itemize}
																																																																														
																																																																														Supposing (A1) holds, $G$ has the structure of a poset with covering relations indicated by the edges, and for each vertex $v$ we can define $f_i(v)$ and $f_i'(v)$ to be the unique next vertex along an $i$ or $i'$ edge respectively if it exists, and $\varnothing$ otherwise.  We similarly define $e_i, e_i'$ to be the (partial) inverse operations.  
																																																																														
																																																																														\begin{remark}
																																																																															In what follows, we will sometimes write $f_{i'}$ for $f_i'$ and $e_{i'}$ for $e_i'$.
																																																																														\end{remark}
																																																																														
																																																																														We also let $\hat\varepsilon_i(v),\hat\varphi_i(v)$ denote the distances to the ends of the $i$-string of $v$, i.e., $$\hat\varphi_i(v)=\max\{k: f_i^k(v)\neq \varnothing\}\hspace{0.5cm}\text{ and }\hspace{0.5cm}\hat\varepsilon_i(v)=\max\{k: e_i^k(v)\neq \varnothing\}.$$
																																																																														We define $\varphi'_i(v)$ and $\varepsilon_i'(v)$ analogously, using $f_i'$ and $e_i'$. 
																																																																														
																																																																														Next, we describe the interactions of the $i,i',j,j'$ edges when $|i-j| \ne 1$.
																																																																														\begin{itemize}
																																																																															\item[(A2)] \textit{Chains.} For each $v$ and each $i$, the vertices connected to $v$ by $i,i'$ edges collectively form either a two-row grid
																																																																															\[\xymatrix{
																																																																																\bullet \ar[r]^i \ar[d]_{i'} & \bullet \ar[r]^i \ar[d]_{i'}  & \cdots \ar[r]^i & \bullet \ar[d]_{i'} \\
																																																																																\bullet \ar[r]^i & \bullet \ar[r]^i & \cdots \ar[r]^i & \bullet
																																																																															}\]
																																																																															or a single row with coinciding $i$ and $i'$ edges:
																																																																															\[\xymatrix{
																																																																																\bullet \ar@<2pt>[r]^i \ar@<-2pt>[r]_{i'} & \bullet \ar@<2pt>[r]^i \ar@<-2pt>[r]_{i'} & \cdots \ar@<2pt>[r]^i \ar@<-2pt>[r]_{i'} & \bullet.
																																																																															}\]
																																																																														\end{itemize}
																																																																														When (A2) holds, we define $\varphi_i(v), \varepsilon_i(v)$ to be the total distance from $v$ to the highest- and lowest-weight vertices of its $(i,i')$-connected component. Thus for a two-row grid,
																																																																														$$\varphi_i(v) = \hat\varphi_i(v) + \varphi'_i(v), \hspace{1cm} \varepsilon_i(v) = \hat\varepsilon_i(v) + \varepsilon'_i(v)$$
																																																																														whereas for the single row
																																																																														$$\varphi_i(v) = \hat\varphi_i(v) = \varphi'_i(v), \hspace{1cm} \varepsilon_i(v) = \hat\varepsilon_i(v) = \varepsilon'_i(v).$$

																																																																														\begin{itemize}
																																																																															\item[(A3)] \textit{Nonadjacent indices commute.} If $|i-j| > 1$, then
																																																																															\[\raisebox{3.5ex}{\xymatrix{
																																																																																	\bullet \ar@{-}[r]^a \ar@{-}[d]_b & \bullet \\
																																																																																	\bullet
																																																																																}} \qquad \text{implies} \qquad \raisebox{3.5ex}{\xymatrix{
																																																																																\bullet \ar@{-}[r]^a \ar@{-}[d]_b & \bullet \ar@{-}[d]_b \\
																																																																																\bullet \ar@{-}[r]^a & \bullet
																																																																															}}\]
																																																																															for any $a \in \{i,i'\}$ and $b \in \{j,j'\}$. Here, the absence of arrowheads means that the $x$ and $y$ edges may be oriented in either direction.  In other words, if $f_a(v)\neq \varnothing$ and $f_b(v)\neq\varnothing$ then $f_bf_a(v)=f_af_b(v)\neq \varnothing$, and similarly for $e_a$ and $e_b$.
																																																																														\end{itemize}
																																																																														\begin{remark}
																																																																															Note that axioms (A2)-(A3) imply the first condition of Theorem \ref{thm:uniqueness-main}, namely, if $|i-j|>1$, then each connected component formed by the $i',i,j',j$-edges is a product of one- or two-row chains.
																																																																														\end{remark}
																																																																														
																																																																														Finally, we ask that the $i,i'$ and $i+1,i+1'$ edges have the same interactions as in the shifted tableau crystals $\ShST(\lambda,3)$ (on the alphabet $\{1',1,2',2,3',3\}$). To be precise, for each $i$, let $G^{(i,i+1)}$ be the subgraph obtained by deleting all but the $i',i,i+1',i+1$ edges, and subtracting $i-1$ from the edge labels. Let $\wt^{(i,i+1)}(v) \in \mathbb{Z}^3$ be the truncation of the weight vector to the $i,i+1,i+2$ spots.
																																																																														\begin{itemize}
																																																																															\item[(A4)] \textit{Local crystal structure}. For each $i$ and each connected component $C \subset G^{(i,i+1)}$, there is a strict partition $\lambda$ so that $C \cong \ShST(\lambda,3)$ as a weighted, edge-labeled graph (using $\wt^{(i,i+1)}$).
																																																																														\end{itemize}
																																																																														We will see that this implicit condition suffices to make $G$ globally isomorphic to a shifted tableau crystal. 
																																																																														A natural question is whether (A4) can be replaced by a more explicit set of axioms:
																																																																														
																																																																														\begin{question} \label{question:finitely-many-axioms}
																																																																															Is there a finite list of relations between the $i,i',i+1,i+1'$ edges, depending only on the weight and length functions, that implies axiom (A4)?
																																																																														\end{question}
																																																																														
																																																																														We now prove:
																																																																														
																																																																														\begin{theorem} \label{thm:unique-max}
																																																																															Let $G$ be a finite connected graph satisfying (A1)-(A4). Then $G$ has a unique maximal element $g^*$, and $\wt(g^*)$ is a shifted partition (possibly with trailing zeros).
																																																																														\end{theorem}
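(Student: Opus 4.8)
The plan is to handle the three assertions separately. For existence of a maximal element, I would introduce a potential $\phi(v)=\sum_j c_j\,\wt(v)_j$ with strictly decreasing coefficients $c_1>\cdots>c_n$; by (A1) every edge changes the weight by $-\alpha_i$, so $\phi$ strictly decreases along each edge, and hence any vertex maximizing $\phi$ is a source, i.e.\ satisfies $e_i(g)=e_i'(g)=\varnothing$ for all $i$. For the weight of such a $g$: for each $i\le n-2$, the element $g$ is the highest-weight element of its component $C\subset G^{(i,i+1)}$, which by (A4) is isomorphic to $\ShST(\lambda,3)$, and by Proposition \ref{prop:tableau-crystals} the highest-weight element there has weight equal to the strict partition $\lambda$. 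Thus each consecutive triple $(\wt(g)_i,\wt(g)_{i+1},\wt(g)_{i+2})$ is a strict partition, and chaining these overlapping windows shows that $\wt(g)$ is a strict partition with possible trailing zeros (the edge cases $n\le 2$ following directly from the grid/row shape forced by (A2)).

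Uniqueness is the heart of the matter, and I would prove it by induction on $n$, in the spirit of Stembridge \cite{Stembridge-typeA}. Let $G_1$ be the subgraph keeping only the edges labeled $1,1',\ldots,(n-2),(n-2)'$. Since the index-$i$ operators with $i\le n-2$ leave the $n$-th weight coordinate fixed, each connected component $H$ of $G_1$, with weight truncated to coordinates $1,\ldots,n-1$, again satisfies (A1)--(A4), now at rank $n-1$ (the base case $n\le 2$ being immediate from (A2)). By the inductive hypothesis each $H$ has a unique maximal element $h^\ast_H$; let $M=\{h^\ast_H\}$ and let $r\colon G_1\to M$ be the retraction sending each vertex to the top of its component. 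Every maximal element of $G$ lies in $M$, being in particular $G_1$-maximal, and is exactly an $m\in M$ with $e_{n-1}(m)=e_{n-1}'(m)=\varnothing$. It therefore remains to show that a unique such $m$ exists.

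To finish, I would transport the operators $e_{n-1},f_{n-1},e_{n-1}',f_{n-1}'$ to $M$ by post-composing with $r$, and show that the resulting structure on $M$, now involving only the single index $n-1$, is a disjoint union of two-row grids and single rows as in (A2), hence has a unique source. Here (A3) does most of the work: for $i\le n-3$ the index-$(n-1)$ operators commute with the index-$i$ operators, so they commute with the retraction $r$, which is thus \`transparent' to them; the only genuine interaction is with index $n-2$, which by (A4) is controlled entirely by the explicit combinatorics of $\ShST(\lambda,3)$. The main obstacle is precisely this last point: proving that the induced operators on $M$ are well-defined and satisfy the chain axioms amounts to understanding how the $\{e_{n-2},e_{n-2}'\}$-highest-weight elements inside a single $\ShST(\lambda,3)$ are organized under $e_{n-1},e_{n-1}'$, together with checking that connectivity of $G$ passes to the induced structure on $M$. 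Granting this, the induced structure on $M$ is a single connected grid or row with a unique source, and that source is the unique maximal element $g^\ast$ of $G$, completing the induction.
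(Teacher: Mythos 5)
Your existence argument and your derivation that $\wt(g^*)$ is a strict partition are both fine, and the latter essentially matches the paper's (both use maximality of $g^*$ in each component of $G^{(i,i+1)}$ together with (A4) and Proposition \ref{prop:tableau-crystals}). The problem is uniqueness, which you rightly call the heart of the matter and which your proposal does not actually prove: the whole argument funnels through the claim that the operators $r\circ e_{n-1}$, $r\circ e_{n-1}'$ induce on the set $M$ of $G_1$-maxima a well-defined structure of disjoint one-row/two-row chains whose induced graph is connected, and you explicitly write ``Granting this.'' That granted step is precisely where the work lies, and it is not routine in this setting: these graphs are \emph{not} Stembridge crystals (the paper emphasizes that the Stembridge axioms fail), so the standard simply-laced techniques for commuting raising operators past a highest-weight retraction do not transfer. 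Concretely, you would need to analyze, inside each component $C\cong\ShST(\lambda,3)$ of $G^{(n-2,n-1)}$, how the $(n-1)$- and $(n-1)'$-strings move the $\{e_{n-2},e_{n-2}'\}$-highest elements around --- including the degenerate one-row components where $f_i=f_i'$, which make even the expected chain shape of the induced structure unclear --- and separately prove that connectivity of $G$ descends to the induced graph on $M$: a path in $G$ between two elements of $M$ passes through non-maximal vertices, and lifting it to $M$ is itself an argument, not an observation. As written, your proof is conditional on an unproved structural lemma.

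For comparison, the paper's route is much shorter and needs no induction on $n$. It verifies a single local property: whenever two \emph{distinct} elements $y_1,y_2$ cover a common element $z$ (say $f_a(y_1)=f_b(y_2)=z$), they admit a common upper bound --- via the grid case of (A2) when $\{a,b\}=\{i,i'\}$, via (A3) when the numerical labels satisfy $|i-j|>1$, and via (A4) plus Proposition \ref{prop:tableau-crystals} (the relevant component of $G^{(i,i+1)}$ has a global maximum) when $|i-j|=1$. It then invokes a purely order-theoretic fact: a finite connected poset with this property has a unique maximal element (take a maximal element $x$ of the set of vertices comparable to two or more maxima; two suitable covers of $x$ then have no common upper bound, a contradiction). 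If you want to salvage your inductive scheme, the missing lemma about $\ShST(\lambda,3)$ is exactly the kind of local input (A4) can supply, but it must be proved rather than granted; alternatively, observe that once you have the common-upper-bound property for pairs of covers, uniqueness follows globally with no induction at all.
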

																																																																														
																																																																														\begin{proof}[Proof of Theorem \ref{thm:unique-max}]
																																																																															First observe that $G$, viewed as a poset, has the following property: whenever two distinct elements $y_1, y_2$ cover a common element $z$, say 
																																																																															$$f_a(y_1)=f_b(y_2)=z,$$ then $y_1, y_2$ have a common upper bound. (If $\{a,b\} = \{i,i'\}$ for some $i$, this follows from the `grid' case of axiom (A2); if $|a-b| > 1$, it follows from axiom (A3), and if $|a-b| = 1$, it follows from the fact that $y_1, y_2, z$ are in the same connected component of $G^{(i,i+1)}$, which  has a common maximum by axiom (A4) and Proposition \ref{prop:tableau-crystals}.)
																																																																															
																																																																															But any finite connected poset $P$ with this property has a unique global maximum. To see this, let $X = \{$ vertices comparable to two or more maxima $\} \subset P$. If $X \ne \varnothing$, let $x$ be a maximal element of $X$, and let $y,y'$ be covers of $x$ which are comparable to distinct (and unique since $y,y' \notin X$) maximal elements of $P$. But, $y,y'$ must also have a common upper bound, a contradiction. Thus $X = \varnothing$; by connectedness, we're done.
																																																																															
																																																																															Finally, we check that $\lambda = \wt(g^*)$ is a strict partition (possibly with trailing zeros). Observe that $g^*$ is maximal when viewed as part of each subgraph $G^{(i,i+1)}$. So, axiom (A4) and Proposition \ref{prop:tableau-crystals} implies that $\lambda_i > \lambda_{i+1}$ (unless both are $0$).
																																																																														\end{proof}
																																																																														
																																																																														\begin{theorem} \label{thm:unique-iso}
																																																																															Any two finite connected graphs $G,H$ satisfying (A1)-(A4), and with the same highest-weight vector $\lambda = \wt(g^*) = \wt(h^*)$, are canonically isomorphic.
																																																																														\end{theorem}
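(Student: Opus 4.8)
The plan is to build the isomorphism by transport of structure from the highest-weight vertices. By Theorem~\ref{thm:unique-max}, both $G$ and $H$ have unique maximal elements $g^*,h^*$, and by hypothesis $\wt(g^*)=\wt(h^*)=\lambda$. I would set $\phi(g^*)=h^*$ and extend $\phi$ to all of $G$ by path-following: since $G$ is connected, every vertex $g$ is reachable from $g^*$ by a word $\mathbf{u}$ in the operators $e_i,e_i',f_i,f_i'$, and I define $\phi(g)=\mathbf{u}(h^*)$, applying the same word in $H$. Any label- and weight-preserving graph isomorphism must carry the unique maximum to the unique maximum and commute with all operators, so it must agree with this rule; hence \emph{uniqueness} of the isomorphism (and therefore its canonicity) is automatic, and the entire content of the theorem is the \emph{well-definedness} of $\phi$, i.e.\ independence of the chosen word $\mathbf{u}$. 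Note that $\ShST(\lambda,n)$ itself satisfies (A1)--(A4) (by the crystal structure of the earlier sections: (A1) is the weight rule, (A2) is Figure~\ref{fig:two-row}, (A3) is Proposition~\ref{prop:primed-operators-commute}, and (A4) is coplacticity together with Proposition~\ref{prop:tableau-crystals}), so taking $H=\ShST(\lambda,n)$ recovers Theorem~\ref{thm:uniqueness-main}.

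To prove well-definedness I would argue by downward induction on the weight poset, defining $\phi(g)$ from the already-defined values on the strictly-higher-weight vertices that cover $g$. The only thing to check is a local consistency statement: if $g$ has two distinct upward edges, with labels $a\ne b$ and upward neighbors $y_a=e_a(g)$, $y_b=e_b(g)$, then $f_a(\phi(y_a))=f_b(\phi(y_b))$ (distinct upward neighbors force distinct labels by (A1)). This splits into three cases governed by the local axioms. If $\{a,b\}=\{i,i'\}$, the two neighbors lie in the same $(i,i')$-component of $g$, a grid (or single row) by (A2). Otherwise $a,b$ involve two different indices $i\ne j$. If $|i-j|>1$, axiom (A3) gives $z:=e_ae_b(g)=e_be_a(g)$, so $y_a=f_b(z)$ and $y_b=f_a(z)$; by the inductive definition $\phi(y_a)=f_b(\phi(z))$ and $\phi(y_b)=f_a(\phi(z))$, and commutativity of $f_a,f_b$ at $\phi(z)$ in $H$ (axiom (A3) for $H$) yields the desired equality. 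If $|i-j|=1$, the two neighbors lie in a single connected component $C$ of $G^{(i,i+1)}$, which by (A4) is isomorphic to $\ShST(\mu,3)$ for some strict partition $\mu$ and hence has a unique maximal element (Proposition~\ref{prop:tableau-crystals}).

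In each case the resolution is the same in spirit: the two upward neighbors share a common upper bound $z$ inside a \emph{local} subgraph whose isomorphism type is pinned down by the axioms and by the weights of its vertices. The image of that subgraph under $\phi$ lands in the corresponding local subgraph of $H$, which (applying the same axiom to $H$) is a copy of the \emph{same} grid or the same $\ShST(\mu,3)$; matching the unique maxima and the weights forces $\phi$ restricted to the subgraph to be the unique weight- and label-preserving isomorphism onto its image. Consistency of the two upward routes $y_a\to z$ and $y_b\to z$ then follows from consistency of this single local isomorphism, which is available inductively since $z$ and all intermediate vertices have weight strictly above $g$. Having obtained a well-defined graph morphism $\phi$ commuting with all operators and preserving weights, I would finish by constructing $\psi:H\to G$ symmetrically from $h^*\mapsto g^*$; the compositions $\psi\phi$ and $\phi\psi$ fix the respective maxima and commute with operators, so by the uniqueness remark of the first paragraph they are the identities, proving $\phi$ is an isomorphism.

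The main obstacle is the passage from the local confluence statements to genuine global well-definedness: a priori two words $\mathbf{u},\mathbf{v}$ taking $g^*$ to $g$ could differ by a relation not visible in any single $G^{(i,j)}$. The key point making the induction go through is precisely the common-upper-bound (confluence) property established in the proof of Theorem~\ref{thm:unique-max}: whenever two vertices cover a common element they admit a common upper bound, so any two upward paths can be reconciled by a sequence of the three elementary local moves above, each of which $\phi$ respects. I expect the delicate bookkeeping to lie in the $|i-j|=1$ case, where one must verify that the local subgraph isomorphisms are mutually compatible along their overlaps; this is exactly where the inductive hypothesis (that $\phi$ is already a bona fide isomorphism on everything of higher weight) has to be invoked with care.
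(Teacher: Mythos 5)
Your overall strategy coincides with the paper's: since every edge subtracts a simple root $\alpha_i$, the graph is graded, so your downward induction on the weight poset is the same as the paper's induction on the balls $G_{\leq r}$ of radius $r$ from $g^*$, and your three-case analysis (labels $\{i,i'\}$ via (A2), $|i-j|>1$ via (A3), $|i-j|=1$ via (A4) plus uniqueness of local maxima from Theorem \ref{thm:unique-max} and Proposition \ref{prop:tableau-crystals}) is exactly the paper's case split. But there is a genuine gap in the well-definedness step: your claim that ``the only thing to check is a local consistency statement'' is not correct. Before consistency even arises you must show the extension is \emph{defined}: given an upward cover $y_a = e_a(g)$, you need $f_a(\phi(y_a)) \neq \varnothing$ in $H$, and this is not automatic --- in particular, when $g$ has a single upward edge your consistency check is vacuous and yields nothing. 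Likewise, in your case $|i-j|=1$ you need $\phi(\tilde g)$ to be the \emph{local maximum} of its component of $H^{(i,i+1)}$, and in the double-edge situation (one neighbor $y$ joined to $g$ by both an $i$ and an $i'$ edge) you must know the image is again a double edge. None of these follow from your stated inductive hypothesis: an isomorphism of upper truncations preserves the upward lengths $\hat\varepsilon_i, \varepsilon'_i$ (those chains stay in the truncation) but carries no information about the downward lengths $\hat\varphi_i, \varphi'_i$, which are computed in the full graphs $G,H$ and which govern precisely whether outgoing $f$-edges exist and where.

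This is exactly why the paper's inductive invariant is strictly stronger than yours: $T: G_{\leq r} \to H_{\leq r}$ is required to preserve all six functions $\hat\varepsilon_i, \varepsilon'_i, \varepsilon_i, \hat\varphi_i, \varphi'_i, \varphi_i$ in addition to the weight, and the final paragraph of the paper's proof is devoted to re-verifying this invariant after each extension (using (A2)--(A3) for $j \neq i\pm 1$ and (A4) for $j = i \pm 1$). With that invariant in hand, $G_{\leq r-1}$ and $H_{\leq r-1}$ have matching sets of outward edges, double edges are detected, and local maxima map to local maxima, after which your confluence argument closes the induction just as in the paper. To repair your write-up, either adopt the paper's invariant outright, or argue via (A4) that weight together with upward data determines downward data (inside each local component $\cong \ShST(\mu,3)$ the analogue of Kashiwara's axiom K2 holds, so $\varphi$-type lengths are recoverable from $\varepsilon$-type lengths and the weight); but some such bookkeeping must be carried \emph{through} the induction --- it cannot be deferred to your final symmetric construction of $\psi$, since the well-definedness of $\psi$ faces the identical problem. (Your closing device of building $\psi$ symmetrically and canceling the compositions is otherwise a perfectly good alternative to the paper's direct surjectivity bookkeeping.)
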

																																																																														
																																																																														Since we have shown that $\ShST(\lambda,n)$ satisfies (A1)-(A4), we conclude:
																																																																														
																																																																														\begin{corollary} \label{cor:iso-to-crys}
																																																																															If $G$ is a finite connected graph satisfying (A1)-(A4), with highest-weight element $g^*$, there is a canonical isomorphism $G \cong \ShST(\lambda,n)$, where $\lambda = \wt(g^*)$.
																																																																														\end{corollary}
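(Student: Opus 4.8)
The corollary is the special case $H = \ShST(\lambda,n)$ of Theorem \ref{thm:unique-iso}: as noted just above its statement, $\ShST(\lambda,n)$ is a finite connected graph satisfying (A1)--(A4) whose unique maximal element has weight $\lambda$ (Proposition \ref{prop:tableau-crystals}), so the theorem supplies the required canonical isomorphism. I therefore focus on Theorem \ref{thm:unique-iso}, following Stembridge's local-to-global strategy. The plan is to build $\phi\colon G\to H$ by transport from the top. By Theorem \ref{thm:unique-max}, $G$ and $H$ have unique maximal elements $g^*,h^*$, both of weight $\lambda$; set $\phi(g^*)=h^*$. Since each edge drops the weight by some $\alpha_i$, the pairing $\langle \wt(g^*)-\wt(v),\,(n,n-1,\ldots,1)\rangle$ increases by exactly $1$ along every edge, so the poset from (A1) is graded; I induct on this height $k$. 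The induction hypothesis is that $\phi$ is a weight-preserving bijection, commuting with all $e_a,f_a$ (for $a\in\{i,i'\}$), on the vertices of height $\le k$, and I extend it to height $k+1$ by forcing $\phi(f_a(v))=f_a(\phi(v))$ whenever $\phi(v)$ is defined.

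The crux is well-definedness. If $w$ has height $k+1$ and is covered from above by $v_1=e_{a_1}(w)$ and $v_2=e_{a_2}(w)$, I must show $f_{a_1}(\phi(v_1))=f_{a_2}(\phi(v_2))$, so that $\phi(w)$ is independent of the incoming edge. This confluence splits into three cases. When $\{a_1,a_2\}=\{i,i'\}$ for a single $i$, axiom (A2) forces the $(i,i')$-component of $w$ to be a two-row grid, and its commuting-square structure together with the induction hypothesis above $w$ yields the equality. When $|i-j|>1$, axiom (A3) produces the common upper bound $z=e_{a_1}e_{a_2}(w)=e_{a_2}e_{a_1}(w)$; applying the induction hypothesis at $z$ and then (A3) \emph{in $H$} gives $f_{a_1}\phi(v_1)=f_{a_1}f_{a_2}\phi(z)=f_{a_2}f_{a_1}\phi(z)=f_{a_2}\phi(v_2)$.

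The delicate case is $|i-j|=1$. Here $w,v_1,v_2$ lie in a single connected component $C\subset G^{(i,i+1)}$, and by axiom (A4) $C\cong \ShST(\lambda',3)$. I would argue that $\phi$ carries the unique highest-weight element $c^*$ of $C$ — which lies above $w$ and is thus already assigned — to the highest-weight element of the corresponding component $C'\subset H^{(i,i+1)}$: since $\phi$ commutes with operators above $w$, the image $\phi(c^*)$ is annihilated by $e_i,e_i',e_{i+1},e_{i+1}'$, hence is highest-weight in $C'$, forcing $C'\cong\ShST(\lambda',3)$ with the same $\lambda'$. Because $\ShST(\lambda',3)$ is generated from its top by the operators (Proposition \ref{prop:tableau-crystals}) and $\phi$ commutes with them wherever defined, $\phi$ must agree with the unique crystal isomorphism $C\to C'$ on all of $C$ lying above $w$; confluence at $w$ then follows from that of $\ShST(\lambda',3)$ itself. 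I expect this adjacent-index case to be the main obstacle, as it is the only place where the rank-$2$ data fails to commute outright and one must invoke the full rank-$3$ shifted-crystal structure and its uniqueness from Corollary \ref{cor:unique-highest-weight}.

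Once $\phi$ is established as a well-defined, weight- and edge-preserving map commuting with all operators, I would finish by constructing $\psi\colon H\to G$ symmetrically. Both composites $\psi\circ\phi$ and $\phi\circ\psi$ fix the respective maximal element and commute with every operator, so by connectedness (every vertex is reached from the top by operators) each composite is the identity. Hence $\phi$ is an isomorphism of weighted, edge-labeled graphs, and it is canonical as the unique such map sending $g^*\mapsto h^*$. Specializing $H=\ShST(\lambda,n)$ gives Corollary \ref{cor:iso-to-crys}.
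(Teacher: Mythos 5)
Your reduction is exactly the paper's: the corollary is proved there in one line, by noting that $\ShST(\lambda,n)$ satisfies (A1)--(A4) and applying Theorem \ref{thm:unique-iso} with $H = \ShST(\lambda,n)$. Your sketch of Theorem \ref{thm:unique-iso} also follows the paper's argument closely: top-down inductive transport from $g^*$ (the paper's $G_{\leq r}$ coincides with your height filtration, thanks to the gradedness you justify by pairing with $(n,n-1,\ldots,1)$ --- a small point the paper merely asserts), with confluence split into the same three cases: $\{i,i'\}$ via the grid of (A2), $|i-j|>1$ via (A3), and $|i-j|=1$ via (A4) together with the facts that the two local components have highest-weight elements of equal weight and that $\ShST(\lambda',3)$ admits no nontrivial automorphisms. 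So the approach is the same; however, two sub-points that the paper handles explicitly are glossed in your write-up, and both trace to one omission: the paper's induction hypothesis additionally carries preservation of the length functions $\hat\varphi_i,\varphi'_i,\varphi_i$ and $\hat\varepsilon_i,\varepsilon'_i,\varepsilon_i$, while yours does not.

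First, totality of the extension: you set $\phi(f_a(v)) := f_a(\phi(v))$, but you never argue that $f_a(\phi(v)) \neq \varnothing$ whenever $f_a(v) \neq \varnothing$; without this, $\phi$ need not be defined on all of $G_{\leq k+1}$, and your closing bijectivity argument via $\psi$ collapses. The paper gets this for free from its stronger hypothesis: since $T$ preserves all the length functions, $G_{\leq r-1}$ and $H_{\leq r-1}$ have the same set of outward-pointing edges for each edge label. Second, the coinciding-edge case: your $\{a_1,a_2\}=\{i,i'\}$ case asserts that (A2) forces a two-row grid, but this fails when $v_1 = v_2$, i.e., when $f_i(v)=f_{i'}(v)=w$ is a double edge (the one-row alternative in (A2)); there you must show $f_i(\phi(v)) = f_{i'}(\phi(v))$ in $H$, which (A2) alone does not give. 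The paper disposes of this by noting that double edges are detected by the length functions at the top of the $(i,i')$-string. Both points are repairable inside your own framework: your argument that $\phi$ agrees with the unique isomorphism of local $(i,i+1)$-components $C \to C'$ on the already-constructed part (gradedness ensures downward paths from the local maximum stay within height $\leq k$) yields edge-existence and carries coinciding edges to coinciding edges, since the canonical local isomorphism does. Relatedly, note that your adjacent-index case already invokes the bijection hypothesis at height $\leq k$ (to pull back a hypothetical raising edge at $\phi(c^*)$), so bijectivity must be maintained stage by stage --- via the symmetric well-definedness in $H$, as in the paper's level-by-level isomorphisms --- rather than established only at the end.
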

																																																																														
																																																																														\begin{proof}[Proof of Theorem \ref{thm:unique-iso}]
																																																																															We build the isomorphism $T : G \to H$ inductively. Explicitly, for each $r \geq 0$, we set
																																																																															\[G_{\leq r} = \{g \in G \text{ reachable in at most } r \text{ steps from } g^*\},\]
																																																																															and we define $H_{\leq r}$ similarly. We build compatible isomorphisms
																																																																															\[T : G_{\leq r} \xrightarrow{\ \sim\ } H_{\leq r}\]
																																																																															that preserve the weight and the functions $\varepsilon_i, \varepsilon'_i, \hat\varepsilon_i$ and $\varphi_i, \varphi'_i, \hat\varphi_i$ for all $i$. Since $G$ and $H$ are finite, we are done by taking $r$ large enough.
																																																																															
																																																																															For $r=0$, the isomorphism identifies the highest-weight elements, and axiom (A4) determines $\varphi_i,\varphi'_i,\hat\varphi_i, \varepsilon_i,\varepsilon'_i,\hat\varepsilon_i$ for each $i$. %Explicitly, we have
																																																																															%\[(\hat\varphi_i,\varphi_i',\varphi_i) = \begin{cases}
																																																																															%(\lambda_i-\lambda_{i+1}-1,1,\lambda_i-\lambda_{i+1}) & i < n \\
																																																																															%(\lambda_i,\lambda_i,\lambda_i) & i \geq n
																																																																															%\end{cases}\]
																																																																															%(Explicitly, for $i < n$ we have $\varphi'_i = 1, \hat\varphi_i = \lambda_i - \lambda_{i+1} - 1$, and $\varphi_i = \lambda_i - \lambda_{i+1}$; for $i=n$ we have $\varphi'_i = \hat\varphi_i = \varphi_i = \lambda_n$, and for $i > n$ we have $\varphi'_i = \hat\varphi_i = \varphi_i = 0$.)
																																																																															%(Of course, $\hat\varepsilon_i = \varepsilon'_i = \varepsilon_i = 0$ for the highest weight element.)
																																																																															
																																																																															Now suppose $T$ exists for $r-1$. We must extend $T$ to an isomorphism $G_{\leq r}\rightarrow H_{\leq r}$. We note that, since $T$ preserves all the length functions, $G_{\leq r-1}$ and $H_{\leq r-1}$ have the same set of outward-pointing edges for each edge label.  We can map the end nodes of these edges to the corresponding nodes in $H$ in a well-defined way as long as, whenever two outwards edges point to the same element of $G_{\leq r} \setminus G_{\leq r-1}$, their isomorphic images in $H_{\leq r} \setminus H_{\leq r-1}$ also point to a single element. 
																																																																															
																																																																															To show that this holds, suppose we have
																																																																															\[\xymatrix{
																																																																																g_1 \ar[dr]_a && g_2 \ar[dl]^b &&
																																																																																T(g_1) \ar[dr]_a && T(g_2) \ar[dl]^b \\
																																																																																& g_3 &&&& ?
																																																																															}\]
																																																																															Note that if $g_1 = g_2$ and $\{a,b\} = \{i,i'\}$ for some $i$ (a `double edge'), then the isomorphic image in $H$ is again a double edge, since by axiom (A2) those are detected by the length functions (at the top of the $i,i'$-string).  In addition, axiom (A1) forces any double edge to be of this form.  So we may assume $g_1 \ne g_2$. 
																																																																															
																																																																															Let $i,j$ be the numerical values of $a,b$.
																																																																															
																																																																															{\bf Case 1}: $i = j$ or $|i - j| > 1$. Then by (A2)-(A3) there is a covering element $t \in G_{\leq r-2}$, forming a square. Hence, on the $H$ side, $T(t)$ covers $T(g_1), T(g_2)$, and a second application of (A2)-(A3) forces the square to commute in $H$.
																																																																															
																																																																															{\bf Case 2}: $|i-j| = 1$. Without loss of generality $j = i+1$. On the $G$ side, we see that $g_1, g_2, g_3$ are in the same connected component $C$ of $G^{(i,i+1)}$. Therefore they are all dominated by the local high-weight element $\tilde{g} \in C$, and $\tilde{g} \in G_{\leq r-2}$ (by Theorem \ref{thm:unique-max}, and since $G$ is graded). Therefore $T(g_1)$ and $T(g_2)$ are in the same local component in $H$, with local maximum $T(\tilde{g})$. The local components are canonically isomorphic because $\wt(\tilde{g}) = \wt(T(\tilde{g}))$ 
																																																																															(and have no nontrivial automorphisms), hence the edges must connect.
																																																																															
																																																																															Having constructed $T : G_{\leq r} \to H_{\leq r}$, we only need to check that it preserves the weight and length functions. Let $g \in G_{\leq r} \setminus G_{\leq r-1}$. By hypothesis, there is an edge $\tilde{g} \to g$, say of numerical value $i$. By axiom (A1) and induction,
																																																																															\[\wt(g) = \wt(\tilde{g}) - \alpha_i = \wt(T(\tilde{g})) - \alpha_i = \wt(T(g)).\]
																																																																															Next, by axioms (A2)-(A3), the $j$-lengths of $g$ equal those of $\tilde{g}$ for all $j \neq i\pm 1$. By induction, the same holds for $T(g)$ and  $T(\tilde{g})$. Finally, for $j = i\pm 1$, we again apply axiom (A4). The $i\pm 1$ lengths at $g$ and $T(g)$ must equal those of the corresponding element of the tableau crystal isomorphic to the local connected component.
																																																																														\end{proof}

																																																																													\end{document}